%% file: schubert_bialgebra_separate_paper.tex
\documentclass{amsart}
\pdfoutput=1
\usepackage{algorithm}
\usepackage{algpseudocode}
\usepackage{hyperref}
\usepackage{fancyvrb}
\usepackage{amsmath}
\usepackage{amsfonts}
\usepackage{amsthm}
\usepackage{amssymb}
\usepackage{array}
\usepackage{cases}
\usepackage{caption}
\usepackage{xcolor}
\usepackage{tikz}
\usepackage{tikz-cd}
\usepackage{mathtools}
\usetikzlibrary{calc}
\usepackage{centernot}
\usepackage{mathtools}
\usepackage{graphicx}
\usepackage[margin=1in]{geometry}
\usepackage[makeroom]{cancel}
\usepackage{stackengine}
\usepackage{mathrsfs}
\usepackage{enumitem}

\makeatletter
\newcommand{\sqcupplus@inner}[2]{%
  \vcenter{\hbox{\ooalign{%
    \hfil$\m@th#1\sqcup$\hfil\cr
    \hfil\raisebox{0.15ex}{$\m@th#1\scriptscriptstyle+$}\hfil\cr
  }}}%
}
\newcommand{\sqcupplus}{\mathbin{\mathpalette\sqcupplus@inner\relax}}
\makeatother

\newtheorem{lemma}{Lemma}[subsection]
\newtheorem{corollary}[lemma]{Corollary}

\newtheorem{proposition}[lemma]{Proposition} 
\newtheorem{proposition2}{Proposition}[section]
\newtheorem{theorem}{Theorem}[section]
\newtheorem{conjecture}[theorem]{Conjecture}

\theoremstyle{definition}
\newtheorem{definition}[lemma]{Definition}
\newtheorem{example}[lemma]{Example}

\theoremstyle{remark}

\newtheorem*{remark}{Remark}

\numberwithin{equation}{subsection}

\DeclareMathOperator{\sch}{\mathfrak{S}}
\newcommand{\shup}{{\uparrow}}
\newcommand{\downvar}[1]{\stackrel{#1}{\searrow}}
\newcommand{\wof}[1]{w_{#1}}

\newcommand{\wtt}{\mathtt{wt}}
\newcommand{\QY}{\ensuremath{\mathbb{QY}}}
\newcommand{\tom}[1]{\xrightarrow{#1}}
\newcommand{\Tom}[1]{\xRightarrow{#1}}

\newcommand{\rtt}{\mathtt{rt}}
\newcommand{\trm}{\mathtt{trim}}
\newcommand{\zeromap}{\mathcal{Z}}
\newcommand{\hr}{\mathtt{ht}}
\newcommand{\clp}{\mathtt{clip}}
\newcommand{\BRC}{\mathcal{BRC}}
\newcommand{\RC}{\mathcal{RC}}
\newcommand{\ltb}{\mathrm{little}}
\newcommand{\slide}{\mathfrak{F}}

\newcommand{\dsch}{\ensuremath{\Xi}}
\newcommand{\coma}{\mathcal{A}}
\newcommand{\dcoma}{\mathcal{D}}

\title{The Schubert bialgebra}
\author{Matthew J. Samuel}

\newcommand{\dom}{\mathfrak{d}}

\newcommand{\code}{\mathfrak{c}}

\newcommand{\forest}{\mathfrak{P}}
\newcommand{\fcode}{\mathfrak{c}_\forest}
\newcommand{\finv}{P_{\forest}}

\newcommand{\maxd}{\mathfrak{m}}
\newcommand{\ddv}[1]{\delta}

\begin{document}

\begin{abstract}
We introduce a \emph{Schubert bialgebra} $\coma=\bigoplus_{n\geq 0}\mathbb{Z}[x_1,\ldots,x_n]$, in which polynomials in different numbers of variables multiply to zero and the coproduct splits the variables into two consecutive blocks, and study its graded dual $\dcoma$, whose product is concatenation of weak compositions. The Schubert polynomials $\sch_u^n$ form a basis of $\coma$; we give an explicit formula for the dual Schubert basis $\dsch_u^n$ of $\dcoma$ via a transition formula to an elementary symmetric basis, and show that the Schubert structure constants $c_{u,v}^w$ are the coefficients of the coproduct of $\dcoma$ in the dual Schubert basis. The product of $\dcoma$ is then lifted to an associative product $\sqcupplus$ on the free abelian group $\BRC$ of bounded RC graphs, built from a transition map $\zeromap$ on RC graphs that realizes the Lascoux--Sch\"utzenberger transition formula and generalizes Little's bumping algorithm. Quotienting $\BRC$ by ideals coming from Demazure crystal equivalence, Nadeau--Tewari forest equivalence, and quasi-Yamanouchi equivalence yields manifestly positive Littlewood--Richardson rules for the dual Schubert, dual key, dual forest, and dual slide bases of $\dcoma$. Read on $\coma$, these are branching rules: enumerative formulas for splitting a Schubert, key, forest, or slide polynomial across two blocks of variables, and in the Schubert case they compute the Levi branching coefficients of Ressayre--Richmond for $\mathrm{GL}_p\times\mathrm{GL}_q\hookrightarrow\mathrm{GL}_{p+q}$.
\end{abstract}

	\maketitle

\section{Introduction}

Schubert polynomials $\sch_u$, $u\in S_\infty$, form a $\mathbb{Z}$-basis of $\mathbb{Z}[x_1,x_2,\ldots]$ whose structure constants $c_{u,v}^w$ are the intersection numbers of Schubert varieties in the flag variety. A positive combinatorial formula for the $c_{u,v}^w$ in general remains open. A related, and more tractable, family of coefficients arises when one asks not how Schubert polynomials multiply but how they \emph{split}: expanding $\sch_w(x_1,\ldots,x_{p+q})$ in the basis $\{\sch_u(x_1,\ldots,x_p)\sch_v(x_{p+1},\ldots,x_{p+q})\}$ produces the branching coefficients for the Levi subgroup $\mathrm{GL}_p\times\mathrm{GL}_q\hookrightarrow\mathrm{GL}_{p+q}$ studied in the branching Schubert calculus of Ressayre--Richmond \cite{ressayre2009branching}. This paper introduces an algebraic framework in which these splitting problems become multiplication problems in the dual of a bialgebra, and a combinatorial framework, a ring of RC graphs, in which they acquire manifestly positive enumerative solutions for the Schubert basis and for the key, forest, and slide bases as well.

\subsection*{The Schubert bialgebra and its dual} In initial sections, we introduce a bialgebra $\coma$ that we call the Schubert bialgebra. This bialgebra has a basis of Schubert polynomials $\sch_u^n$ for $u\in S_\infty$, where the last descent of $u$ is not past index $n$. The structure constants for multiplication in this basis are the same as the structure constants for multiplication of Schubert polynomials. The difference from the usual realm of study of Schubert polynomials, which is the polynomial ring in infinitely many variables, is that this algebra is graded into length components that interact trivially. That is,
$$\coma = \bigoplus_{n=0}^\infty \mathbb{Z}[x_1,\ldots,x_n]\text{.}$$
This allows us to consistently define a ``splitting'' coproduct that is compatible with the usual product of polynomials but actively requires the triviality of products between length components to achieve a bialgebra structure.

We focus primarily on the dual $\dcoma$ of this algebra, where the product is derived from this splitting (deconcatenation) coproduct on $\coma$. Specifically, let $\mathrm{Comp}_n$ be the set of weak compositions of length $n$. Then, additively,
$$\dcoma = \bigoplus_{n=0}^\infty \mathbb{Z}^{\oplus \mathrm{Comp}_n}$$
and multiplicatively the product of two basis elements indexed by $a\in \mathrm{Comp}_p$ and $b\in \mathrm{Comp}_q$ is the basis element indexed by the concatenation of $a$ and $b$, which is an element of $\mathrm{Comp}_{p+q}$. We will denote this product by $\sqcupplus$, writing $a\sqcupplus b$ for the concatenation. The coproduct on $\dcoma$ is given by splitting sums of exponents, that is to say
$$\Delta(c) = \sum_{a+b=c} a\otimes b$$
where $a$ and $b$ are weak compositions of the same length as $c$ and $a+b$ denotes the pointwise sum. The counit is given by $\varepsilon(c)=0$ unless $c=[000\cdots 0]$ for some number of $0$'s, in which case $\varepsilon(c)=1$. With these definitions, $\coma$ and $\dcoma$ are dual bialgebras over $\mathbb{Z}$ (with $\coma$ non-unital: the constant $1\in\coma_n$ is a unit for $\coma_n$ only), a fact that is straightforward to verify. Neither $\coma$ nor $\dcoma$ is connected, and neither is a Hopf algebra. A ``Littlewood--Richardson rule'' for a basis of the dual algebra is equivalent to a branching/splitting formula for the corresponding polynomial basis under the partition of variables into two blocks $\{x_1,\ldots,x_p\}$ and $\{x_{p+1},\ldots,x_{p+q}\}$.

The algebraic structure of $\dcoma$ is not itself new; what is new is the product on the dual Schubert basis $\dsch_u^n$ of $\dcoma$ indexed by $u\in S_\infty$ and $n$ at least as large as the last descent of $u$, defined by
$$\langle \dsch_u^m, \sch_v^n\rangle = \delta_{u,v}\delta_{m,n}$$
where we take $\langle -,-\rangle$ to be the pairing where for each composition $c\in\mathrm{Comp}_n$,
$$\langle c, x^a\rangle = \delta_{c,a}$$
where $a$ is any composition. (It is worth pointing out that $c$ and $c$ padded with an additional $0$ are considered different here.) We present an explicit formula for the dual Schubert basis in $\dcoma$ with integer coefficients.

A dual basis to the Schubert polynomials is not new either: Postnikov and Stanley \cite{postnikov2009chains} introduced a family of dual Schubert polynomials $\mathfrak{D}_w$ in the polynomial ring with respect to a particular integral pairing, characterized by a formula that mimics the action of divided difference operators on Schubert polynomials. The relationship between the two bases is that the coefficients of $\mathfrak{D}_u$ in the divided power algebra are, up to a scalar factor, the same as the coefficients of $\dsch_u^n$ in the weak composition basis. 

In addition, the dual forest basis $\{\forest_a^*\}$ of $\dcoma$ is defined by $\langle\forest_a^*,\forest_b\rangle=\delta_{a,b}$. As we record in Proposition~\ref{proposition:forestdualcoefficients}, this dual basis coincides, up to a divided-power normalization, with the volume polynomials of Nadeau--Spink--Tewari \cite{nadeau2024quasisymmetric} and, geometrically, with the Kronecker dual of the homology basis $\{[X(F)]\}\subset H_\bullet(\mathrm{QFl}_n)$ of Bergeron--Gagnon--Nadeau--Spink--Tewari \cite{bergeron2025qsymflag}. We also consider the dual bases of key polynomials and slide polynomials.

\subsection*{Littlewood--Richardson rules for dual bases}

Each of these dual bases in fact has nonnegative structure constants with $\sqcupplus$, for which we also give rules in terms of RC graphs. For an RC graph $R$ with $n$ rows and an integer $0\leq p\leq n$, we associate to $R$ two RC graphs $\clp^p(R)\in\RC_p$ and $\trm^p(R)\in\RC_{n-p}$, which the reader should picture as a decomposition of $R$ across the horizontal cut between rows $p$ and $p+1$. The map $\trm^p$ is simple: it discards the top $p$ rows and keeps the remaining $n-p$ rows (suitably reindexed). The map $\clp^p$ is subtler. It is built by iterating an auxiliary map $\zeromap:\RC_n^0\to\RC_{n-1}$ (Definition \ref{definition:zeromap}), where $\RC_n^0$ denotes the set of RC graphs whose last row is empty. The map $\zeromap$ realizes, on the level of RC graphs, the Lascoux--Schützenberger transition formula for Schubert polynomials (see \cite{notes}), and ends up being a generalization of the main result of \cite{little2003combinatorial}, though this was not intentional. The whole construction preserves a wealth of additional structure (crystal operators, forest invariants) that we exploit later. Indeed, the product of $\dcoma$ is lifted to a product $\sqcupplus$ on the free abelian group $\BRC$ of bounded RC graphs using $\clp^p$ and $\trm^p$, for which $\dcoma$ is a quotient (\S\ref{section:brc}). The nonnegativity of the structure constants of all of these bases of $\dcoma$ is a consequence of this quotient structure.

For the dual Schubert basis $\{\dsch_u^n\}$ for permutations $u$ with last descent at most $n$, we have the following result.

\begin{theorem}[LR rule for dual Schubert polynomials]\label{theorem:LR}
Suppose $u,v\in S_\infty$ and $p,q\geq 0$ are integers such that the last descent of $u$ is at most $p$ and the last descent of $v$ is at most $q$. Define integers $d_{u,v}^w(p, q)$ by the equation
$$\dsch_u^p \sqcupplus \dsch_v^q = \sum_{w} d_{u,v}^w(p, q)\,\dsch_w^{p+q}.$$
Then for any fixed choices of RC graphs $U\in \RC_p(u)$, $V\in\RC_q(v)$, and a permutation $w\in S_\infty$, we have that $d_{u, v}^w(p, q)$ is the number of RC graphs $R\in \RC_{p+q}(w)$ such that $\clp^p(R)=U$ and $\trm^p(R)=V$, and is in particular nonnegative.
\end{theorem}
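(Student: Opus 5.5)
By duality, $d_{u,v}^w(p,q)$ is the coefficient of $\sch_u^p\otimes\sch_v^q$ in the coproduct $\Delta(\sch_w^{p+q})$, that is, in the expansion of $\sch_w(x_1,\ldots,x_{p+q})$ into products $\sch_u(x_1,\ldots,x_p)\sch_v(x_{p+1},\ldots,x_{p+q})$. Indeed, the product $\sqcupplus$ on $\dcoma$ is dual to the deconcatenation coproduct on $\coma$, so
$$d_{u,v}^w(p,q)=\langle \dsch_u^p\sqcupplus\dsch_v^q,\sch_w^{p+q}\rangle=\langle \dsch_u^p\otimes\dsch_v^q,\Delta\sch_w^{p+q}\rangle .$$
The plan is to compute $\Delta\sch_w^{p+q}$ combinatorially via RC graphs. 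We write $\sch_w=\sum_{R\in\RC_{p+q}(w)}x^{\wtt(R)}$ and split each monomial as $x^{a}\otimes x^{b}$, where $a$ is the weight of the top $p$ rows and $b$ the weight of the bottom $q$ rows. We must then reorganize this sum into a sum over pairs $(U',V')$ of RC graphs for Schubert polynomials in $p$ and $q$ variables.

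\textbf{Step 1: the bijection.} I would show that the map
$$R\mapsto(\clp^p(R),\trm^p(R))$$
is weight-preserving in the sense that the weight of $\clp^p(R)$ equals the top-$p$ portion of $\wtt(R)$ and the weight of $\trm^p(R)$ equals the bottom portion. The bottom claim is clear for $\trm^p$. For $\clp^p$ it follows because each application of $\zeromap$ removes an empty last row and realizes the transition formula without changing the other row weights. By construction $\clp^p$ iterates $\zeromap$ after deleting the bottom rows, so top weights are preserved.

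\textbf{Step 2: the fiber statement.} The key claim is that for fixed $w$, the number of $R\in\RC_{p+q}(w)$ with $\clp^p(R)=U$ and $\trm^p(R)=V$ depends only on the permutations of $U$ and $V$, not on the graphs themselves. Call this number $N(u,v)$. Given this, we obtain
$$\sch_w(x_1,\ldots,x_{p+q})=\sum_{u,v}N(u,v)\,\sch_u^p(x_1,\ldots,x_p)\,\sch_v^q(x_{p+1},\ldots,x_{p+q}),$$
which by linear independence of the $\sch_u^p\otimes\sch_v^q$ gives $N(u,v)=d_{u,v}^w(p,q)$. Nonnegativity is then automatic.

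\textbf{Main obstacle.} The hard part is the independence of the fiber count from the choice of $U$ and $V$. For $V$ it should be easy: the bottom rows of $R$ are literally $V$ up to reindexing, and I expect that replacing $V$ by another $V'\in\RC_q(v)$ can be done by swapping out the bottom rows. To justify this, I would need that $\clp^p$ depends only on the permutation read from the bottom rows, or rather that the top part interacts with the bottom only through $v$. For $U$, I would use the structure the introduction advertises: $\clp^p$ (equivalently $\zeromap$) commutes with the crystal operators on the top rows (or with chute/ladder moves), and these act transitively on $\RC_p(u)$. This would give bijections between the fibers over $U$ and over $U'$. Failing a transitive action, I would instead argue by a sign-free counting identity. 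Summing Step 1 over all $R$ gives $\sum_{U,V}|{\rm fiber}(U,V)|\,x^{\wtt U}\otimes x^{\wtt V}=\Delta\sch_w$. One could then try to show directly that the fiber counts are constant, using a triangularity argument on dominant representatives: test each $u$ against its unique RC graph of leading weight equal to the code of $u$, and induct.
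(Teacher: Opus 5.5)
Your reduction is the same as the paper's. You identify $d_{u,v}^w(p,q)$ with the branching coefficient $b_{u,v}^w(p,q)$ (Lemma~\ref{lemma:dualbranch}), use that $\clp^p$ and $\trm^p$ split the weight, and finish with fiber constancy plus linear independence of the products $\sch_u(x_1,\ldots,x_p)\sch_v(x_{p+1},\ldots,x_{p+q})$. The $V$-half of fiber constancy is also fine once stated precisely: $T\cup\shup^pV'\in\RC_{p+q}(w)$ iff $\wof{T}=w(\shup^pv')^{-1}$ with lengths adding, and $\clp^p$ depends only on $T$.

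The gap is the $U$-half, which is the entire content of the theorem, and none of your three mechanisms supplies it.
\begin{itemize}
\item Crystal operators are not transitive on $\RC_p(u)$, because $\RC(u)$ has one Demazure component per key in the key expansion of $\sch_u$. For example, $\sch_{2143}=\kappa_{(2,0,0)}+\kappa_{(1,0,1)}$, and the graph $\{(1,1),(1,3)\}$ is a component by itself. So commutation of $\zeromap$ with $e_i,f_i$ (Corollary~\ref{corollary:crystaliso}) gives constancy only on components. That is exactly Lemma~\ref{lemma:keyfibers}, which yields the dual key rule, not this one.
\item Ladder/chute moves are transitive on $\RC(u)$, but you give no reason that $\zeromap$ intertwines them, and the paper uses no such property.
\item The triangularity fallback cannot work in principle. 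The identity $\sum_{U,V}m(U,V)\,x^{\wtt(U)}\otimes x^{\wtt(V)}=\Delta_{p,q}(\sch_w^{p+q})$ is only the monomial expansion of $\sch_w$ regrouped. It constrains sums of $m(U,V)$ over all $U$ of a given weight, across different permutations, but never an individual $m(U,V)$.
\end{itemize}

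The missing ingredient is Theorem~\ref{theorem:zero_bijection}: $\zeromap$ is a weight-preserving bijection from $\RC(w,n)^0$ onto the disjoint union of the \emph{full} sets $\RC_{n-1}(w')$, $w'\in T_n(w)$. Hence every such $U$ has exactly one preimage. Iterating down the heights, weight preservation keeps rows $p+1,\ldots$ empty. So for $u_v=w(\shup^pv)^{-1}$, the fiber of $\zeromap^{h-p}$ on $\RC_{\leq p}(u_v)$ over any $U\in\RC_p(u)$ is in bijection with the transition chains $u_v=u_h,\ u_{h-1}\in T_h(u_h),\ \ldots,\ u_p=u$. Its size $N_p(u_v,u)$ therefore depends only on $u$ (Lemma~\ref{lemma:clipfibers}).

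The hard step in that theorem is injectivity (Lemma~\ref{lemma:zero_injective}). It is proved via the bumpless-pipe-dream description of $\zeromap$, the commutation $\zeromap\circ\trm=\trm\circ\zeromap$, and disjointness of the sets $P_1(w')$ for distinct $w'\in T_n(w)$. Surjectivity then follows by counting against the multiplicity-free transition formula, which on its own only matches generating functions.
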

Cast as a splitting rule for $\sch_w$, Theorem~\ref{theorem:LR} fits the framework of \emph{branching Schubert calculus} of Ressayre--Richmond \cite{ressayre2009branching} for the Levi inclusion $\mathrm{GL}_p\times \mathrm{GL}_q\hookrightarrow \mathrm{GL}_{p+q}$ (with $P=B$): the Ressayre--Richmond branching coefficients $d_w^{\tilde w}$ in this case are exactly our $d_{u,v}^w(p,q)$, and Theorem~\ref{theorem:LR} supplies for them an explicit positive cardinality formula. The associativity of $\sqcupplus$ encodes the compatibility of these splittings across all $p,q\geq 0$ simultaneously, packaging the entire family of Levi comorphisms into the single algebra $(\dcoma,\sqcupplus)$.

The dual forest basis has nonnegative structure constants for the same structural reason. The coefficient $f_{a,b}^c$ in $\forest_a^*\sqcupplus\forest_b^*=\sum_c f_{a,b}^c\,\forest_c^*$ counts, within the forest equivalence class of composition $c$, the RC graphs whose row-splitting lands in forest-classes of compositions $a$ and $b$; the count is well-defined on equivalence classes and is nonnegative (Theorem~\ref{theorem:LRforest}). Read on the quasisymmetric flag variety $\mathrm{QFl}_n$ of Bergeron--Gagnon--Nadeau--Spink--Tewari \cite{bergeron2025qsymflag}, whose cohomology has the forest polynomials as a basis, this is a branching rule for the variable-splitting comorphism $H^\bullet(\mathrm{QFl}_{p+q})\to H^\bullet(\mathrm{QFl}_p)\otimes H^\bullet(\mathrm{QFl}_q)$ (\S\ref{subsection:qflbranch}).

The key polynomials $\kappa_a$ (Demazure characters in type $A$) form a $\mathbb{Z}$-basis of $\mathbb{Z}[x_1,x_2,\ldots]$ indexed by weak compositions; we defer definitions to \S\ref{section:crystals}. The dual key basis $\{\kappa_a^*\}$ of $\dcoma$ is defined by $\langle\kappa_a^*,\kappa_b\rangle=\delta_{a,b}$; its structure as a bialgebra basis appears to be new. The dual key rule (Theorem~\ref{theorem:LRkey}) has the same shape as the dual Schubert and dual forest rules but replaces forest equivalence classes with Demazure crystal components: $k_{a,b}^c$ counts, within the crystal component of highest weight $c$, those RC graphs whose row-splitting produces highest-weight pieces of compositions $a$ and $b$. We later found that nonnegativity of the $k_{a,b}^c$ also follows from Mathieu's existence theorem for excellent filtrations on tensor products of Demazure modules \cite{mathieu1990filtrations}; an explicit positive combinatorial rule had not previously appeared.

In a similar vein, the dual slide basis $\{\slide_a^*\}$ of $\dcoma$ is defined by $\langle\slide_a^*,\slide_b\rangle=\delta_{a,b}$, where $\slide_b$ is the slide polynomial of Assaf--Searles \cite{assaf2017slide}. We give a Littlewood--Richardson rule for this basis as well.

These formulas bear a striking similarity to each other, which is no coincidence: they are all consequences of the same underlying machinery, and in particular of the same maps $\clp^p$ and $\trm^p$ on RC graphs. A general ``dual LR rule factory'' is given in Theorem \ref{theorem:general_lr_template}, which gives a recipe for producing such rules whenever one has a family of polynomials with a positive combinatorial formula in terms of RC graphs that is compatible with the maps $\clp^p$ and $\trm^p$. The dual Schubert, dual key, dual forest, and dual slide bases all fit into this framework, and thus yield the four rules above as special cases.

\subsection*{Relation to the forest Littlewood--Richardson rule}

The first version of this work, arXiv:2606.23876v1, also contained a Littlewood--Richardson rule for the \emph{product} of forest polynomials themselves, that is, for the structure constants $\beta_{a,b}^c$ in $\forest_a\forest_b=\sum_c\beta_{a,b}^c\forest_c$. That result has been split off into the companion paper \cite{samuelforest}, which will appear as a revised version of that arXiv submission containing only the forest rule. The companion paper builds on the row-decomposition maps $\clp^p$, $\trm^p$, and $\zeromap$ and on the crystal and forest-equivalence properties of $\zeromap$ established here; it restates the definitions and results it needs from the present paper with precise references, and the present paper does not depend on anything proved there.

\subsection*{Further directions}

Beyond this, there is potentially much to explore. For example, the coproduct of $\dcoma$ has as its structure constants the structure constants of Schubert polynomials, which have evaded expression in terms of positive formulas for decades, and at the time of this writing no positive, combinatorial formula is known for the structure constants of Schubert polynomials that applies in all cases. At the very least, this coproduct is a new potential method of attack on this and related problems, since, to our knowledge, it was previously unknown. An expression for $\dsch_w^n$ in the composition basis leads to a simple, purely mechanical method for computing all $c_{u,v}^w$ (generalized Littlewood-Richardson coefficients for Schubert polynomials) by leveraging the extremely simple form the coproduct takes on the weak composition basis, as well as a neat positive formula for the expansion of a weak composition $[a]$ in the Schubert basis.

For the reader familiar with bumpless pipe dreams \cite{lam2021back}, our transition map for RC graphs is realized trivially for bumpless pipe dreams. Specifically, while (reduced) bumpless pipe dreams are typically defined in such a way that they cannot deviate from a square shape, there is indeed a consistent way to remove rows from a bumpless pipe dream unambiguously. This was observed by Yu in \cite[Lemma~3.18]{Yu2025EmbeddingBP} in the definition of a $k$-row BPD. Weigandt in \cite{Weigandt2021} defines a transition formula $\mathbf{t}:\mathrm{BPD}_n\to\mathrm{BPD}_{n-1}$, and while the image is realized as square, it agrees with Yu's notion of an $n-1$-row BPD. Under the Gao--Huang bijection \cite{GaoHuang2023} between RC graphs and bumpless pipe dreams, $\zeromap$ is the deletion of the last row (Proposition~\ref{proposition:zerotrunc}); this is how the properties of $\zeromap$ and the associativity of our product are proved in Section~\ref{section:bpd}.

This simplicity does not come for free, however; the $\trm^p$ operation on BPDs is no longer trivial.

We conjecture a far stronger result that would result in a generalization of the Schubert bialgebra to $K$-theory. Let $\mathrm{MBPD}(n)$ denote the set of $n$-row marked bumpless pipe dreams and let $\mathrm{Pipe}(n)$ denote the set of $n$-row not-necessarily-reduced pipe dreams. Huang, Shimozono, and Yu in \cite{HuangShimozonoYu2024} present a bijection $\Phi:\mathrm{MBPD}(n)\to\mathrm{Pipe}(n)$.

For $P\in\mathrm{Pipe}(n)$ and $p\leq n$, let 
$$\clp^p(P) = \Phi(\mathbf{t}^{n-p}(\Phi^{-1}(P))).$$
Define a function $\sqcupplus:\mathrm{Pipe}(m)\times\mathrm{Pipe}(n)\to\mathbb{Z}^{\mathrm{Pipe}(m+n)}$ by
$$P\sqcupplus Q = \sum_{R\in \mathrm{Pipe}(m+n), \clp^m(R) = P, \trm^m(R) = Q} R.$$
Then
\begin{conjecture}
Define
$$\mathrm{BPipe} = \bigoplus_{n=0}^\infty \mathbb{Z}^{\mathrm{Pipe}(n)}.$$
Then we have the following:
\begin{itemize}
\item $\sqcupplus:\mathrm{Pipe}(n)\times\mathrm{Pipe}(m)\to\mathbb{Z}^{\mathrm{Pipe}(n+m)}$ extends to an associative product on $\mathrm{BPipe}$ for which it is a graded ring.
\item The map $\mathbf{f}:\mathrm{BPipe}\to\BRC$ such that
$$\mathbf{f}(P)=\begin{cases}
  P & \text{if } P\text{ is reduced} \\
  0 & \text{otherwise}
\end{cases}$$
induces a surjective homomorphism of rings 
$$\mathbf{f}:\left(\mathrm{BPipe}, \sqcupplus\right) \to \left(\BRC, \sqcupplus\right).$$

\item Let $G_w^n$ for $w\in S_\infty$ be the basis of $\dcoma$ dual to the basis of Grothendieck polynomials (with the $n$ representing the max number of variables). Then the map $\mathrm{BPipe}\to \dcoma$ defined on the $n$-row pipe dream $P\in\mathrm{Pipe}(w,n)$ for $w\in S_\infty$ by
$$P\mapsto G_w^n$$
extends to a surjective homomorphism of rings.
\end{itemize}
\end{conjecture}

In effect this would generalize Little's bumping algorithm to the $0$-Hecke words of permutations that come from pipe dreams representing the monomials of Grothendieck polynomials, yielding word-level combinatorial transition algorithm.
\subsection*{Outline of the paper}

Sections \ref{section:bialgebra} and \ref{section:dualalgebra} establish the algebraic foundations: \S\ref{section:bialgebra} (\nameref{section:bialgebra}) constructs $\coma$, identifies its Schubert basis $\sch_u^n$, and exhibits the elementary basis $\mathcal{E}_n$ of $\coma_n$; \S\ref{section:dualalgebra} (\nameref{section:dualalgebra}) builds the graded dual $\dcoma$, identifies the dual Schubert basis $\dsch_u^n$, and records the coproduct.

Section \ref{section:brc} (\nameref{section:brc}) defines the module $\BRC$ of bounded RC graphs, the row-decomposition maps $\zeromap$, $\clp^p$, $\trm^p$, and the associative product $\sqcupplus$ on $\BRC$; this culminates in Theorem \ref{theorem:LR}, the Littlewood--Richardson rule for the dual Schubert basis, and in Theorem \ref{theorem:general_lr_template}, a general Littlewood--Richardson principle for admissible equivalence relations on $\BRC$ (Definition~\ref{definition:admissible}) from which the rules of the later sections are derived.

Sections \ref{section:crystals}--\ref{section:slide} produce three further LR rules by quotienting $\BRC$ by successively coarser equivalence relations. Section \ref{section:crystals} (\nameref{section:crystals}) imports the Demazure crystal structure on RC graphs from \cite{assaf2018demazure} and identifies an ideal $\Theta\subset\BRC$ whose quotient yields Theorem \ref{theorem:LRkey}, the LR rule for the dual key basis. Section \ref{section:forest} (\nameref{section:forest}) does the analogous construction for the forest invariant of \cite{nadeau2024forest}, producing an ideal $\Gamma\subset\BRC$ whose quotient yields Theorem \ref{theorem:LRforest}, the LR rule for the dual forest basis, and interprets it as a branching rule on the quasisymmetric flag variety. Section \ref{section:slide} (\nameref{section:slide}) introduces the quasi-Yamanouchi map $\QY$ and the corresponding ideal $\Sigma\subset\BRC$, yielding Theorem \ref{theorem:lrfslide}, a positive product rule for the dual slide basis.

The appendix is devoted to the proof of Proposition \ref{proposition:pullindex}, which would otherwise be distracting.

\clearpage

\section{The Schubert Bialgebra and its dual}\label{section:bialgebra}
\subsection{Some preliminaries}

\begin{definition}[Two Pieri relations $\tom{k}$ and $\Tom{k}$]\label{definition:pieri}
The relations $\tom{k}$ and $\Tom{k}$ are defined on $S_\infty$ as follows, with different notation, in \cite{sottile}. For $v,w\in S_\infty$, we have $v\tom{k} w$ if there is a sequence of transpositions $t_{a_1b_1},\ldots,t_{a_mb_m}$ such that the following hold:
\begin{enumerate}
\item  $a_i \leq k < b_i$ for all $i$.
\item $\ell(ut_{a_1b_1}\cdots t_{a_i b_i}) = \ell(u)+i$ for all $i$.
\item $w = ut_{a_1b_1}\cdots t_{a_m b_m}$.
\item \label{item:thintom} The integers $a_1,\ldots,a_m$ are pairwise distinct.
\end{enumerate}
For $\Tom{k}$, only (\ref{item:thintom}) changes:
\begin{itemize}
  \item[(\ref*{item:thintom}')] \label{item:varthintom} The integers $b_1,\ldots,b_m$ are pairwise distinct.
\end{itemize}

Notice that necessarily $m\leq k$ for $\tom{k}$, but there is no upper bound on $m$ for $\Tom{k}$. This reflects the fact that the following theorem holds:
\begin{theorem}[{\cite[Theorem~1]{sottile}}]
  Let $p\geq 0$, $k>0$ be integers and let $u$ be a permutation.
  \begin{itemize}
    
    \item[\textrm{I}. ] We have the formula
    $$\sch_u(x)h_{p,k}(x) = \sum_{\substack{u\Tom{k} w \\ \ell(u,w) = p}} \sch_w(x)$$
    where $h_{p,k}(x)$ is the complete homogeneous symmetric polynomial of degree $p$ in the first $k$ variables.
    \item[\textrm{II}. ] We have the formula
    $$\sch_u(x)e_{p,k}(x) = \sum_{\substack{u\tom{k} w \\ \ell(u,w) = p}} \sch_w(x)$$
    where $e_{p,k}(x)$ is the elementary symmetric polynomial of degree $p$ in the first $k$ variables.
  \end{itemize}
\end{theorem}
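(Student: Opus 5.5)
We will prove both parts by induction on $k$, with an inner induction on $p$. The only multiplication rule we use is Monk's rule, in the form
$$\sch_u\,x_k=\sum_{\substack{j>k\\ \ell(ut_{kj})=\ell(u)+1}}\sch_{ut_{kj}}-\sum_{\substack{i<k\\ \ell(ut_{ik})=\ell(u)+1}}\sch_{ut_{ik}},$$
which follows from the usual Monk formula for $x_1+\cdots+x_k=\sch_{s_k}$ by telescoping. The symmetric functions satisfy the recursions
$$e_{p,k}=e_{p,k-1}+x_k\,e_{p-1,k-1},\qquad h_{p,k}=h_{p,k-1}+x_k\,h_{p-1,k}.$$
The base cases $p=0$ and $k=1$ are immediate: $e_{p,1}$ is $x_1$ or $0$, and $h_{p,1}=x_1^p$, which is handled by iterating Monk's rule with $k=1$.

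For part II, assume the formula for $e_{p,k-1}$ and $e_{p-1,k-1}$. Then $\sch_u e_{p,k}$ equals
$$\sum_{u\tom{k-1}w,\ \ell(u,w)=p}\sch_w\;+\;\sum_{u\tom{k-1}v,\ \ell(u,v)=p-1}\sch_v\,x_k.$$
Expanding the second sum by Monk's rule gives a signed sum over pairs $(\text{chain } u\tom{k-1}v,\ \text{transposition } t)$, where $t$ is either $t_{kj}$ with $j>k$ (positive) or $t_{ik}$ with $i<k$ (negative).

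We then compare these terms with the chains $u\tom{k}w$. Such chains split into two kinds:
\begin{itemize}
\item chains with no $a_i=k$, which are exactly the chains $u\tom{k-1}w$ except that their upper indices $b_i$ may equal $k$;
\item chains with exactly one $a_i=k$.
\end{itemize}
Using Sottile's normal form for chains, we reorder so that the step with $a_i=k$ comes last. This matches the positive Monk terms applied to $\tom{k-1}$-chains whose $b$'s avoid $k$. What remains is the following:
\begin{itemize}
\item the $\tom{k-1}$-chains with some $b_i=k$ must cancel against the negative Monk terms $t_{ik}$;
\item the positive terms whose underlying chain already uses $b=k$ must cancel as well.
\end{itemize}
We will build a sign-reversing involution that moves the step $t_{ik}$ between the end of the chain and its position inside the chain. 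This uses the fact that the $a$'s are distinct and that a length-additive chain with fixed $(a_i)$ is determined by its endpoint. Part I is handled the same way, using the recursion for $h_{p,k}$, with distinctness of the $b$'s in place of distinctness of the $a$'s.

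The main obstacle is the cancellation step. It requires a canonical form for $\tom{k}$- and $\Tom{k}$-chains in which each endpoint $w$ is reached by exactly one chain. Sottile characterizes this by ordering the transpositions, for example by increasing $b$ or increasing $a$. The hard part is verifying that the involution preserves length-additivity, which amounts to a local analysis of the values $u(a),u(b),u(k)$.

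If the involution becomes unwieldy, we will fall back on the geometric argument. Since $e_{p,k}$ and $h_{p,k}$ are pulled back from $\mathrm{Gr}(k,\infty)$ as special Schubert classes, the coefficient of $\sch_w$ is an intersection number of $X_u$ with a special Schubert variety and $X_{w_0w}$. This number is $0$ or $1$, and it is nonzero exactly when the projection to the Grassmannian realizes the chain condition. The key input for that route is the fact that these triple intersections are transverse and reduce to a product of projective lines.
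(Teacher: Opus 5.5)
The paper does not prove this statement. It quotes it from Sottile \cite{sottile} and uses it as a black box, so your argument has to stand on its own. It does not yet do so: the cancellation step is deferred, and the mechanism you describe for it already fails in the smallest case.

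Take $u=\mathrm{id}$ and $k=p=2$, so $\sch_{\mathrm{id}}e_{2,2}=x_1x_2=\sch_{231}$. The unique $\tom{2}$-chain to $231$ is $t_{23},t_{13}$. No $\tom{2}$-chain ending at $231$ has its $a=k$ step last, because no $t_{1b}$ with $b\geq 3$ covers the identity. So your ``normal form'' with the $a=k$ step moved to the end does not exist here.

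On the recursion side, $e_{2,1}=0$, and $\sch_{\mathrm{id}}e_{1,1}=\sch_{213}$ comes from the $\tom{1}$-chain $t_{12}$, whose $b$ equals $k$. Monk's rule gives $\sch_{213}x_2=\sch_{231}$ with no negative term, since $[2,1,3]\,t_{12}$ is not a cover. So the only surviving term is a positive Monk term on a chain that uses $b=k$, which is exactly the kind of term you assert must cancel. There is also no $\tom{1}$-chain avoiding $b=k$ for it to match.

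The correspondence you need is not a reordering of the same transpositions. It is a rewriting via $t_{ik}t_{kj}=t_{kj}t_{ij}$; here $t_{12}t_{23}=t_{23}t_{13}$. A proof must show how such rewritings propagate through longer chains while preserving length-additivity and distinctness of the $a$'s. It must also show that the remaining terms cancel in pairs. That analysis is the substance of the theorem, and your proposal only names it as ``the main obstacle.''

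Three further points are unsupported.

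\begin{enumerate}
\item The theorem sums over endpoints $w$, not over chains, and distinct admissible chains can share an endpoint. For $u=2143$ and $k=2$, both $t_{14},t_{23}$ and $t_{23},t_{14}$ are length-additive with distinct $a$'s and distinct $b$'s. Both end at $3412$, which occurs in $\sch_{2143}e_{2,2}$ with coefficient $1$. You therefore need a canonical ordering of chains that you prove exists and is unique. Invoking ``Sottile's normal form'' while proving Sottile's theorem is circular, and you do not say which normal form you mean.
\item ``Part I is handled the same way'' is not justified. The recursion $h_{p,k}=h_{p,k-1}+x_kh_{p-1,k}$ keeps $k$ fixed in the second term. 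Monk's rule is then applied to $\Tom{k}$-sums rather than $\Tom{k-1}$-sums, so the terms that must cancel are of a different kind.
\item The geometric fallback assumes what is to be proved. That the relevant intersection numbers are $0$ or $1$, and are nonzero exactly when a chain exists, is the statement of the theorem, not an input to it.
\end{enumerate}
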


\end{definition}

\begin{definition}[Divided differences and skew divided differences]\label{definition:divdiff}
For $i\geq 1$ let $s_i$ act on polynomials in $x$ by exchanging $x_i$ and $x_{i+1}$, and define the divided difference operator
$$\partial^i = \frac{1}{x_i-x_{i+1}}(1-s_i)\text{.}$$
The $\partial^i$ satisfy the nil-Coxeter relations, so for $w\in S_\infty$ with reduced word $w=s_{i_1}\cdots s_{i_k}$ the operator $\partial^w=\partial^{i_1}\cdots\partial^{i_k}$ is well defined. We write $\delta^i$ and $\delta^w$ for the same operators acting on the $y$ variables of a double Schubert polynomial, so $\delta^i=\frac{1}{y_i-y_{i+1}}(1-s_i)$ with $s_i$ now exchanging $y_i$ and $y_{i+1}$. On double Schubert polynomials,
$$\partial^i\sch_w(x;y)=\begin{cases}\sch_{ws_i}(x;y)&\text{if }\ell(ws_i)<\ell(w),\\ 0&\text{otherwise,}\end{cases}\qquad\qquad \delta^i\sch_w(x;y)=\begin{cases}-\sch_{s_iw}(x;y)&\text{if }\ell(s_iw)<\ell(w),\\ 0&\text{otherwise,}\end{cases}$$
so that $\delta^v\sch_w(x;y)=(-1)^{\ell(v)}\sch_{vw}(x;y)$ whenever $\ell(vw)=\ell(w)-\ell(v)$. Since each $\delta^i$ changes sign under the substitution $y\mapsto -y$, the signs cancel on $\sch_w(x;-y)$:
$$\delta^v\bigl[\sch_w(x;-y)\bigr]=\sch_{vw}(x;-y)\qquad\text{whenever }\ell(vw)=\ell(w)-\ell(v)\text{.}$$

The \emph{skew divided difference operators} $\delta_u^w$ for $u\leq w$ in Bruhat order were defined originally in \cite{notes}; we use the conventions of \cite{samuelleibniz}, in which they are characterized by the Leibniz formula
$$\delta^w(pq) = \sum_{u\leq w} \delta^u(p)\,\delta_u^w(q)$$
for arbitrary polynomials $p$ and $q$, together with its iterated form $\delta_u^w(pq)=\sum_{u\leq v\leq w}\delta_u^v(p)\,\delta_v^w(q)$. The operators $\partial_u^w$ on the $x$ variables are defined identically.
\end{definition}

\begin{definition}[Lehmer code, dual code, dominant permutations]\label{definition:codestuff}
For a permutation $w\in S_\infty$, the \emph{(right) Lehmer code} of $w$ is the sequence $\code(w)=(c_1,c_2,\ldots)$ defined by
$$c_i \;=\; \#\{\,j>i : w(j)<w(i)\,\}.$$
This is a weak composition with $\sum_i c_i=\ell(w)$, and the map $w\mapsto\code(w)$ is a bijection from $S_\infty$ onto the set of weak compositions of finite support. We will sometimes write $\code_i(w):=c_i$. The \emph{dual code} of $w$ is
$$\code^*(w) \;:=\; \code(w^{-1}).$$

A permutation $\mu\in S_\infty$ is called \emph{dominant} if $\code(\mu)$ is a partition (a weakly decreasing sequence of nonnegative integers, padded by $0$s). For a partition $\lambda$, we write $\dom_\lambda$ for the dominant permutation with
$$\code^*(\dom_\lambda) \;=\; \lambda.$$
\end{definition}

\begin{definition}[Pulling out variables from double Schubert polynomials, the relation $\downvar{i}$] \label{definition:polysequence}
		For a permutation $v'$ such that $v'\in S_{n}$, define
		$$\varphi_{i,n}(v')(j)=\begin{cases}
			v'(j)&\mbox{ if }j<i\\
			n+1&\mbox{ if }j=i\\
			v'(j-1)&\mbox{ if }i<j\leq n+1\\
			j&\mbox{ if }j>n+1
		\end{cases}$$
		Now fix $v\in S_n$ and $i\geq 1$ an integer, we define a relation $\downvar{i}$ by declaring that $v\downvar{i} v'$ if 
		$$v\Tom{i}\varphi_{i,n}(v')$$
		Equivalently, $v\downvar{i} v'$ if whenever $v\in S_n$ and $n$ is minimal, we have that $v$ satisfies the relation $\Tom{i}$ with respect to the permutation obtained from $v'$ by inserting $n+1$ at position $i$. We note that this concept was introduced by Bergeron and Sottile in \cite{bsskew}.
		
		If $v\downvar{i} v'$, we define a set of integers $Q_i(v',v)$ by
		$$Q_i(v',v)=\{v(j)\mid j>i\mbox{ and }v'(j-1)=v(j)\}$$
		Given the fact that any element of $S_\infty$ fixes all but finitely many positive integers, it follows that $Q_i(v',v)$ is a finite set.
		
		The permutations $v'$ such that $v\downvar{i} v'$ arise when pulling a variable out of a Schubert polynomial and expressing the coefficients of the powers of this variable as Schubert polynomials in the remaining variables, as is done in \cite{coprod}, from which this definition essentially comes.

  \end{definition}
  
\begin{definition}[Upward shifting of permutations, maximum right descent]
		For a permutation $v\in S_\infty$, we define $\shup{v}$ to be the permutation defined by
		$$\shup{v}(i) = \begin{cases}
			v(i-1)+1&\mbox{ if }i>1\\
			1&\mbox{ if }i=1
		\end{cases}$$
		Recursively, we define $\shup^k(v)$ to be $\shup(\shup^{k-1}(v))$. In the literature, this is sometimes written as
    $$\shup^kv = 1^k\times v$$

		For a permutation $w$, we define $\maxd(w)$ to be the maximum right descent of $w$. That is
		$$\maxd(w) = \max\{i\mid w(i)>w(i+1)\}$$

	\end{definition}

The next proposition specializes, in the case of ordinary Schubert polynomial $\sch_v(x)$, to a formula that isolates a chosen index $i$: one can express $\sch_v(x)$ as a sum of terms of the form $x_i^p\sch_{v'}(x^{(i)})$, thereby effectively extracting the variable $x_i$ and leaving Schubert polynomials in the remaining variables \cite[Theorem~5.1]{bsskew}. Proposition \ref{proposition:pullindex} is the double Schubert polynomial version of this, which, as far as we know, is new.

\begin{proposition} \label{proposition:pullindex}
Let $v\in S_\infty$ and let $i>0$ be an integer. Then we have
$$\sch_v(x;y)=\sum_{v\downvar{i} v'}\sch_{v'}(x^{(i)};y)\prod_{b\in Q_i(v',v)}(x_i - y_b)$$
\end{proposition}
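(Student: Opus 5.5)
We would prove Proposition~\ref{proposition:pullindex} by induction using divided differences in the $y$ variables, reducing to the single-variable statement \cite[Theorem~5.1]{bsskew} as a base case. First we specialize to the top case: take $n$ with $v\in S_n$ and use the longest element $w_0\in S_n$. There $\sch_{w_0}(x;y)=\prod_{j+k\leq n}(x_j-y_k)$. Pulling out $x_i$ in this product gives the single term $\sch_{v'}(x^{(i)};y)\prod_{b=1}^{n-i}(x_i-y_b)$, where $v'$ is the longest element of $S_{n-1}$. We would check directly that $w_0\downvar{i}v'$ holds only for this $v'$, and that $Q_i(v',w_0)=\{1,\ldots,n-i\}$, which proves the formula for $w_0$. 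Then we descend to arbitrary $v$ by applying $\delta^u$ with $u=vw_0$, using $\delta^u\sch_{w_0}(x;-y)=\sch_{v}(x;-y)$ from Definition~\ref{definition:divdiff}. We work throughout with $y\mapsto -y$ so that the signs vanish. Because the $\delta$ operators act only on $y$, it suffices to show that a single $\delta^j$ transforms the right-hand side for $v$ into the right-hand side for $s_jv$ whenever $\ell(s_jv)<\ell(v)$.

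The key computation is $\delta^j$ applied to one summand $\sch_{v'}(x^{(i)};y)\prod_{b\in Q}(x_i-y_b)$. By the Leibniz rule, $\delta^j(fg)=\delta^j(f)g+s_j(f)\delta^j(g)$. The first factor is handled by $\delta^j\sch_{v'}=\pm\sch_{s_jv'}$ or $0$. For the second factor, $\delta^j\prod_{b\in Q}(x_i-y_b)$ is nonzero only when exactly one of $j,j+1$ lies in $Q$. In that case it equals $\pm\prod_{b\in Q\setminus\{j,j+1\}}(x_i-y_b)$, that is, one linear factor is dropped. Equivalently, since $s_j$ of the product is the product over $s_j(Q)$, the result is a product over $s_jQ$ plus a correction. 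We then need the following combinatorial matching on the index sets. The pairs $(v',Q)$ with $v\downvar{i}v'$, after left multiplication by $s_j$ (acting on values), should biject with the pairs for $s_jv$. The terms in which $s_j$ preserves $v'$ but changes $Q$ must cancel or combine. The terms in which the product degree drops must correspond to $s_jv'$, and $Q$ loses an element.

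To organize this we would rewrite the conditions using $\varphi_{i,n}$. The relation $v\Tom{i}\varphi_{i,n}(v')$ is a Pieri chain in which the values $b$ that are not moved are exactly $Q$. Left multiplication by $s_j$ swaps the values $j,j+1$. The standard exchange lemmas for $k$-Bruhat/Pieri chains (as in \cite{sottile,bsskew}) describe how $s_j$ interacts with a chain: either it commutes through, or it shortens the chain by one transposition. That second case corresponds precisely to dropping a linear factor.

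We expect the main obstacle to be this bijective bookkeeping: showing that the case analysis on whether $j$ and $j+1$ lie in $Q$, or in the moved set, matches exactly the transformation $v\mapsto s_jv$ of the index set of the sum. This includes the cancellation of the pairs of terms $\pm\sch_{s_jv'}\prod$ produced by the two Leibniz pieces. As a fallback we would instead check the claim by specializing $y=0$, where it reduces to \cite[Theorem~5.1]{bsskew}. We would then use the characterization of double Schubert polynomials as the unique family satisfying $\delta^j$-recursions with correct top element. The right-hand side family must then satisfy the same recursion, which is the same computation but organized as a uniqueness argument.
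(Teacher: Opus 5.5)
Your overall reduction is sound, and the paper starts the same way: $\sch_v(x;y)=\pm\delta^{vw_0}\sch_{w_0}(x;y)$, with the $x_i$-factors of $\sch_{w_0}$ split off. But the proposal stops exactly where the content of the proposition lies. The inductive claim, that $\delta^j$ carries the right-hand side for $v$ to the right-hand side for $s_jv$, is left as ``bookkeeping.'' The exchange lemmas for Pieri chains that you invoke do not, as stated, deliver it.

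Concretely, the second Leibniz piece is $s_j\bigl(\sch_{v'}(x^{(i)};y)\bigr)\,\delta^j\prod_{b\in Q}(x_i-y_b)$. Here $s_j\sch_{v'}=\sch_{v'}-(y_j-y_{j+1})\,\delta^j\sch_{v'}$ is not a Schubert polynomial. Writing $y_j-y_{j+1}=(x_i-y_{j+1})-(x_i-y_j)$ produces further terms indexed by $(s_jv',Q')$ for modified sets $Q'$. You would then have to show that, after all cancellations, what survives is exactly the set of pairs $(v'',Q_i(v'',s_jv))$ with $s_jv\downvar{i}v''$, each with coefficient $1$. Nothing in the proposal controls how the relation $\downvar{i}$ and the sets $Q_i$ transform under $v\mapsto s_jv$.

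Your fallback does not close this. The case $y=0$ is Bergeron--Sottile and says nothing about the $y$-dependence. The uniqueness characterization still requires verifying the same $\delta^j$-recursion for the right-hand side.

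The paper avoids the one-step analysis entirely. It applies $\delta^{vw_0}$ in one stroke via the skew Leibniz formula $\delta^w(pq)=\sum_{u\leq w}\delta^u(p)\,\delta_u^w(q)$. Here $p$ is the double Schubert polynomial in $x^{(i)}$, so that $\delta^u p=\pm\sch_{v'}(x^{(i)};y)$, and $q=\prod_j(x_i-y_j)$. It then evaluates $\delta_u^w q$ by the Molev--Sagan-type Pieri formula of Proposition~\ref{proposition:pieri}. That formula says $\delta_u^w q$ vanishes unless $u\tom{k}w$, and otherwise equals $\pm\prod_{q'\in Q}(x_i-y_{q'})$, where $Q$ is the set of values in positions $\leq k$ that the chain leaves fixed. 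Conjugating by $w_0$ converts $u\tom{k}vw_0$ into $v\Tom{i}\varphi_{i,n}(v')$ and $Q$ into $Q_i(v',v)$. That Pieri formula is precisely the identity your bookkeeping would have to reprove, so you must either cite it or prove it.

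There is also a smaller error in your base case. For $w_0\in S_n$, the cofactor of $\prod_{b=1}^{n-i}(x_i-y_b)$ is $\sch_{v'}(x^{(i)};y)$ with $v'=w_0s_is_{i+1}\cdots s_{n-1}=s_{n-i}\cdots s_1w_0$. This is the dominant permutation with code $(n-1,\ldots,n-i+1,n-i-1,\ldots,1,0)$, not the longest element of $S_{n-1}$; the two agree only for $i=1$. Your value $Q_i(v',w_0)=\{1,\ldots,n-i\}$ is correct for this $v'$.
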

See the Appendix for a proof.

\subsection{Definition}

We define a commutative algebra $\coma$ over the integers as follows. For each $n\geq 0$, define $\coma_n$ to be the polynomial ring over $\mathbb{Z}$ in the variables $x_1,\ldots,x_n$ (so $\coma_0=\mathbb{Z}$). Then define
$$\coma =\bigoplus_{n=0}^\infty \coma_n$$
The multiplication within each $\coma_n$ is the usual multiplication of polynomials, and if $a\in \coma_m$ and $b\in \coma_n$ with $m\neq n$, then
$$ab = 0\text{.}$$
That is, $\coma$ is the direct sum of the rings $\coma_n$, including $\coma_0$; it is a commutative ring without unit, and the constant polynomial $1\in\coma_n$ is a unit for the summand $\coma_n$ only. The $\mathbb{Z}$-module structure is the evident one.

\begin{remark}[Fat identities]\label{remark:fatidentity}
The constant polynomial $1\in\coma_n$ acts as a unit on $\coma_n$ itself but is \emph{not} a unit of $\coma$ as a whole: by the rule above it annihilates every $\coma_m$ with $m\neq n$. We will refer to these length-$n$ units informally as ``fat identities''; $\coma$ itself has no unit. This is the conceptual hinge that makes $\coma$ a bialgebra rather than a Hopf algebra. The triviality of cross-length products is what allows the deconcatenation coproduct $\Delta$ defined below to be a homomorphism of rings (if $1\in\coma_0$ acted as a global unit, $\Delta$ would fail to be multiplicative already on $x_1\cdot x_1x_2=0$); conversely, it forces $\coma$ to fail to be connected, so $\coma$ admits no antipode.
\end{remark}

Each $\coma_n$ has a basis consisting of elements $x_a$, where $a$ is a weak composition of length $n$, and the notation indicates that
$$x_a = x_1^{a_1}\cdots x_n^{a_n}$$
The direct sum therefore has a basis that can canonically be identified with the union of these.

We define a coproduct $\Delta:\coma\to\coma\otimes\coma$ on the basis element $x_c$ by
$$\Delta(x_c) = \sum_{ab=c} x_a\otimes x_b$$
where the equation $ab=c$ indicates that $a$ concatenated with $b$ is equal to $c$. We also define a counit $\varepsilon:\coma\to \mathbb{Z}$ by $\varepsilon(x_a)=0$ unless $a$ is the empty composition.

\begin{lemma}
	With $\Delta$ and $\varepsilon$, $\coma$ is a coassociative, counital coalgebra.
\end{lemma}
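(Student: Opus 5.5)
Since $\Delta$ and $\varepsilon$ are defined on the $\mathbb{Z}$-basis $\{x_c\}$ and extended linearly, it suffices to check coassociativity and the counit axioms on basis elements $x_c$, where $c$ ranges over weak compositions of every length $n\geq 0$ (including the empty composition, which indexes $1\in\coma_0$). We identify $\coma\otimes\coma$ and $\coma\otimes\coma\otimes\coma$ with free $\mathbb{Z}$-modules on pairs and triples of weak compositions, so each axiom reduces to comparing two explicit sums of basis tensors indexed by ways of cutting the word $c=(c_1,\ldots,c_n)$.

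For coassociativity, compute both sides. First,
$$(\Delta\otimes\mathrm{id})\Delta(x_c)=\sum_{ab=c}\ \sum_{a'a''=a} x_{a'}\otimes x_{a''}\otimes x_b.$$
Second,
$$(\mathrm{id}\otimes\Delta)\Delta(x_c)=\sum_{ab=c}\ \sum_{b'b''=b} x_a\otimes x_{b'}\otimes x_{b''}.$$
Each sum is indexed bijectively by triples $(d,e,f)$ of (possibly empty) weak compositions with $def=c$. Equivalently, the index set is pairs of cut points $0\leq i\leq j\leq n$, with $d=(c_1,\ldots,c_i)$, $e=(c_{i+1},\ldots,c_j)$ and $f=(c_{j+1},\ldots,c_n)$. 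Concatenation of words is associative, so the two sums run over the same set of triples with the same summand $x_d\otimes x_e\otimes x_f$, and the two sides agree.

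For the counit, note that $\varepsilon(x_a)$ is $1$ if $a$ is empty and $0$ otherwise. Then
$$(\varepsilon\otimes\mathrm{id})\Delta(x_c)=\sum_{ab=c}\varepsilon(x_a)\,x_b .$$
The only surviving term has $a=\emptyset$, $b=c$, giving $x_c$ under the identification $\mathbb{Z}\otimes\coma\cong\coma$. Symmetrically, $(\mathrm{id}\otimes\varepsilon)\Delta(x_c)=x_c$, using the term $a=c$, $b=\emptyset$.

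The only real care needed is bookkeeping. The empty composition must be allowed as a factor in every splitting, and compositions of different lengths, including a composition and its zero-padded version, must be treated as distinct basis elements, as the paper stipulates. There is no genuine obstacle: the statement is the standard fact that deconcatenation on a free monoid is coassociative and counital. Note that the product structure and the vanishing of cross-length products play no role here; they matter only for multiplicativity of $\Delta$, which is a separate claim.
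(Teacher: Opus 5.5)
Your proof is correct and follows the paper's argument: both identify $(\Delta\otimes\mathrm{id})\Delta(x_c)$ and $(\mathrm{id}\otimes\Delta)\Delta(x_c)$ with the sum over three-block splittings $def=c$, and obtain the counit axioms from the splittings in which one block is empty. You simply write out the bookkeeping that the paper's terse proof leaves implicit.
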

\begin{proof}
	We have 
	$$\Delta(x_d^n)=\sum x_a^p\otimes x_c^q$$
	Applying $\Delta$ to either tensor factor results in
	$$\sum x_a^p\otimes x_b^q\otimes x_c^r$$
	The symmetry of this is exactly the coassociativity condition. Seeing that we may choose $p=n$ or $q=n$, the definition of the counit gives us the result that $\coma$ is counital as well under $\Delta$ and $\varepsilon$.
\end{proof}

\begin{lemma}
	$\Delta:\coma\to\coma\otimes\coma$ is a homomorphism of rings.
\end{lemma}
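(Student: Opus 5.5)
Since $\Delta$ is additive by definition, it suffices to check multiplicativity on pairs of monomial basis elements $x_c\in\coma_n$ and $x_d\in\coma_m$. The ring structure on $\coma\otimes\coma$ is the componentwise one, $(a\otimes b)(a'\otimes b')=aa'\otimes bb'$. Under it, $\coma\otimes\coma=\bigoplus_{p,q}\coma_p\otimes\coma_q$ is again a direct sum of rings, and products between different bidegrees $(p,q)\neq(p',q')$ vanish. We split into two cases according to whether $n=m$.

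\textbf{Case $n\neq m$.} Here $x_cx_d=0$ in $\coma$, so $\Delta(x_cx_d)=0$. On the other side,
$$\Delta(x_c)\Delta(x_d)=\sum_{ab=c}\sum_{a'b'=d}x_ax_{a'}\otimes x_bx_{b'}.$$
A term is nonzero only if $\mathrm{len}(a)=\mathrm{len}(a')$ and $\mathrm{len}(b)=\mathrm{len}(b')$. That would force $n=\mathrm{len}(a)+\mathrm{len}(b)=m$, a contradiction, so every term vanishes. This is exactly where the fat-identity convention of Remark~\ref{remark:fatidentity} is used.

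\textbf{Case $n=m$.} Now $x_cx_d=x_{c+d}$, where $c+d$ is the pointwise sum, so
$$\Delta(x_{c+d})=\sum_{p=0}^n x_{(c+d)|_{[1,p]}}\otimes x_{(c+d)|_{[p+1,n]}}.$$
In the product $\Delta(x_c)\Delta(x_d)$, the only surviving terms are those in which $c$ and $d$ are cut at the same position $p$, by the vanishing argument from the first case. Each surviving term is
$$x_{c|_{[1,p]}}x_{d|_{[1,p]}}\otimes x_{c|_{[p+1,n]}}x_{d|_{[p+1,n]}}=x_{(c+d)|_{[1,p]}}\otimes x_{(c+d)|_{[p+1,n]}},$$
which matches the expansion of $\Delta(x_{c+d})$ term by term.

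Finally, $\coma$ has no unit, so the statement is about multiplicativity only; preservation of units does not need to be checked. The only real content is the bookkeeping of lengths in the cross terms, and I expect no significant obstacle.
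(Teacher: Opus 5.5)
Your proposal is correct and follows the paper's own proof. You expand $\Delta(x_c)\Delta(x_d)$ over pairs of cut points and note that a term survives only when the pieces have matching lengths, which forces $n=m$ and equal cut positions. You then match the surviving terms with $\Delta(x_{c+d})$, handling both cases by exactly the length bookkeeping the paper uses.
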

\begin{proof}
Let $a,b$ be weak compositions of lengths $m$ and $n$, and for a weak composition $c$ of length $r$ and $0\leq k\leq r$ write $c_{\leq k}$ and $c_{>k}$ for the first $k$ and the remaining $r-k$ entries of $c$. Then
$$\Delta(x_a)\Delta(x_b)=\sum_{j=0}^{m}\sum_{k=0}^{n}x_{a_{\leq j}}x_{b_{\leq k}}\otimes x_{a_{>j}}x_{b_{>k}}\text{.}$$
The $(j,k)$ term vanishes unless $j=k$ and $m-j=n-k$, that is, unless $m=n$ and $j=k$. Hence $\Delta(x_a)\Delta(x_b)=0=\Delta(x_ax_b)$ when $m\neq n$, while for $m=n$
$$\Delta(x_a)\Delta(x_b)=\sum_{k=0}^n x_{a_{\leq k}+b_{\leq k}}\otimes x_{a_{>k}+b_{>k}}=\sum_{k=0}^n x_{(a+b)_{\leq k}}\otimes x_{(a+b)_{>k}}=\Delta(x_{a+b})=\Delta(x_ax_b)\text{.}$$
\end{proof}

\begin{lemma}
	$\nabla:\coma\otimes \coma\to \coma$ is a homomorphism of coalgebras.
\end{lemma}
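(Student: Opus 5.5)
To prove that the multiplication $\nabla:\coma\otimes\coma\to\coma$ is a homomorphism of coalgebras, we check two identities on the monomial basis $x_a\otimes x_b$ of $\coma\otimes\coma$: $\Delta\circ\nabla=(\nabla\otimes\nabla)\circ\Delta_{\coma\otimes\coma}$ and $\varepsilon\circ\nabla=\varepsilon\otimes\varepsilon$. Here $\Delta_{\coma\otimes\coma}=(1\otimes\tau\otimes 1)\circ(\Delta\otimes\Delta)$, with $\tau$ the flip of the middle factors, is the standard coproduct on a tensor product of coalgebras. Because $\nabla\otimes\nabla$ applied to this coproduct equals $\Delta(x_a)\Delta(x_b)$ computed in $\coma\otimes\coma$, the first identity is just the previous lemma read in the other direction. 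We will still write it out directly, so that we use the same computation in the form the coalgebra statement needs.

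For the comultiplicativity identity, take $a$ of length $m$ and $b$ of length $n$. We have
$$(\nabla\otimes\nabla)(1\otimes\tau\otimes1)(\Delta x_a\otimes\Delta x_b)=\sum_{j,k}x_{a_{\le j}}x_{b_{\le k}}\otimes x_{a_{>j}}x_{b_{>k}}.$$
The products on the right vanish unless both factors have matching lengths, i.e. $j=k$ and $m-j=n-k$, which forces $m=n$. Consider the two cases:
\begin{itemize}
\item If $m\neq n$, then every term vanishes. On the other side $\nabla(x_a\otimes x_b)=0$, so $\Delta\nabla=0$ as well.
\item If $m=n$, the surviving sum is $\sum_k x_{(a+b)_{\le k}}\otimes x_{(a+b)_{>k}}=\Delta(x_{a+b})=\Delta\nabla(x_a\otimes x_b)$.
\end{itemize}

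For the counit identity, note that $\varepsilon(x_ax_b)$ is nonzero only when $x_ax_b=x_\emptyset$, which means $m=n=0$. In that case it equals $1=\varepsilon(x_a)\varepsilon(x_b)$. Otherwise both sides vanish: either $m\neq n$ and the product is $0$, or $m=n>0$ and neither factor is the empty monomial.

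The main subtlety is the bookkeeping of length components. One must check that the vanishing of cross-length products occurs simultaneously on both sides, and that the length-zero summand $\coma_0$ does not act as a global unit; Remark~\ref{remark:fatidentity} shows this is exactly what makes the identity hold. Beyond that bookkeeping, the argument is a direct verification.
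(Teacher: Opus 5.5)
Your proof is correct and follows the paper's route: a direct basis check of $\Delta\circ\nabla=(\nabla\otimes\nabla)\circ\Delta_{\coma\otimes\coma}$ on $x_a\otimes x_b$, where cross-length products kill every term except those with $j=k$ and $m=n$. Your version is in fact slightly more complete than the paper's, which writes the product as $x_{a+b}$ without treating the case $m\neq n$ separately and never checks the counit identity $\varepsilon\circ\nabla=\varepsilon\otimes\varepsilon$.
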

\begin{proof}
  Consider the basis element $x_a\otimes x_b$ of $\coma\otimes \coma$. The product is
  $$x_{a+b}$$
  Applying $\Delta$ results in
$$\sum_{a'b'=a+b} x_{a'}\otimes x_{b'}$$
Conversely, applying $\Delta\otimes \Delta$ yields
$$\sum_{a_1a_2=a,b_1b_2=b}x_{a_1}\otimes x_{a_2}\otimes x_{b_1}\otimes x_{b_2}$$
and applying $\nabla\otimes \nabla$ yields
$$\sum x_{a_1 + b_1}\otimes x_{a_2 + b_2}$$
The definition of $\nabla$ ensures that $a_1$ and $b_1$ are the same length for a nonzero term, and similarly for $a_2$ and $b_2$. Setting $a_1+b_1=a'$, $a_2+b_2=b'$ yields the result.
\end{proof}

\begin{corollary}
	$\coma$ is a bialgebra over $\mathbb{Z}$.
\end{corollary}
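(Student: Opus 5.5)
This follows by collecting the three lemmas just proved, together with the one remaining compatibility that has not yet been spelled out: the counit should also be a homomorphism of (non-unital) rings. The definition of a bialgebra here has to be read in the non-unital sense the paper has fixed. $\coma$ is an associative, commutative $\mathbb{Z}$-algebra without a global unit, so we require:
\begin{itemize}
\item a coassociative, counital coalgebra structure $(\Delta,\varepsilon)$;
\item $\Delta$ and $\varepsilon$ multiplicative;
\item equivalently, $\nabla$ a coalgebra map.
\end{itemize}
Associativity of $\nabla$ is immediate, since $\coma$ is a direct sum of the associative rings $\coma_n$ and products across different summands vanish.

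I would carry out the steps in this order.
\begin{enumerate}
\item Invoke the first lemma for coassociativity and counitality.
\item Invoke the second lemma: $\Delta$ is a ring homomorphism.
\item Invoke the third lemma: $\nabla$ is a coalgebra homomorphism. This is the same compatibility stated dually, so only one of these two lemmas is strictly needed.
\item Check that $\varepsilon$ is multiplicative on basis elements. Take $x_a\in\coma_m$ and $x_b\in\coma_n$.
  \begin{itemize}
  \item If $m\neq n$, then $x_ax_b=0$. At most one of $a,b$ is empty, so $\varepsilon(x_a)\varepsilon(x_b)=0$ as well.
  \item If $m=n>0$, then $x_ax_b=x_{a+b}$ has positive length, so both sides are $0$.
  \item If $m=n=0$, then $x_ax_b=1\in\coma_0$, and both sides equal $1$.
  \end{itemize}
\end{enumerate}
By $\mathbb{Z}$-bilinearity, these basis checks extend to all of $\coma$.

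The main subtlety is not computational but definitional. Since $\coma$ has no unit, the unit axioms (the unit being a coalgebra map) must simply be dropped, consistent with Remark~\ref{remark:fatidentity}. I would state explicitly that "bialgebra" means a non-unital bialgebra in this sense, and note that $1\in\coma_0$ is grouplike-like but not a global unit. With that convention, the corollary follows directly from the cited lemmas plus the short counit verification.
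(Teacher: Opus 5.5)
Your proposal is correct and follows the paper. The paper obtains the corollary directly from the three preceding lemmas: coassociativity and counitality, multiplicativity of $\Delta$, and $\nabla$ being a coalgebra map. Your extra check that $\varepsilon(x_ax_b)=\varepsilon(x_a)\varepsilon(x_b)$ covers a counit compatibility the paper leaves implicit, and your three-case verification of it is right.
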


$\coma$ is afforded a grading into finite-rank components by observing that each $\coma_n$ itself is a graded ring with each homogeneous component being a finitely generated free module. Considering the pair $(n, d)$, where $n$ is the number of variables and $d$ is the degree, as a $\mathbb{Z}^2$ grading, we may take the graded dual module $\dcoma$, which, by virtue of the grading, is isomorphic as a free module to $\coma$. We identify the dual basis element $x_a^*$ with the sequence of nonnegative integers $a$. That is,
$$\langle a, x_b\rangle = \delta_{ab}\text{.}$$
Inspection of the coproduct reveals that the product of $a$ and $b$ in $\dcoma$ is simply the concatenation $ab$. Hence $\dcoma$ is isomorphic to the free associative algebra on a countable set indexed by nonnegative integers.

$\dcoma$ also has a coproduct compatible with its product, namely
$$c\mapsto \sum_{a+b=c} a\otimes b$$
Thus $\coma$ and $\dcoma$ are dual bialgebras.

Calling $\coma$ the ``Schubert bialgebra'' may seem unnecessarily grandiose; however, the reason will become clear below.

\subsection{The Schubert basis}

In $\coma$, each $\coma_n$ has a basis of Schubert polynomials $\sch_u^n$, where the largest right descent of $u$ is at most $n$, ensuring that $\sch_u(x)$ has at most $n$ variables. Schubert polynomials limited to a specific number of variables have well-defined structure constants $c_{u,v}^w$, independent of $n$, given by
$$\sch_u^n\sch_v^n = \sum_{w}c_{u,v}^w\sch_w^n$$
These are known to be nonnegative integers by virtue of the fact that they count points of transverse intersections of generic translates of Schubert varieties; however, except in special cases no positive combinatorial formula is known.

\subsection{The elementary basis}\label{section:elementary}

An $n$-elementary weak composition is a finite sequence of nonnegative integers $\alpha=(\alpha_1,\alpha_2,\ldots,\alpha_N)$ such that $\alpha_i\leq i$ for all $i<n$ and $\alpha_i\leq n$  for all $i\geq n$, satisfying the additional restriction that $\alpha_i\geq \alpha_{i+1}$ if $i\geq n$.

An $n$-elementary monomial is an element of $\coma_n$ of the following form: 
$$e_{\alpha_1}^{1}\cdots e_{\alpha_n}^ne_{\alpha_{n+1}}^n\cdots e_{\alpha_m}^n$$
where $\alpha$ is an $n$-elementary weak composition. Define $\mathcal{E}_n$ to be the set of $n$-elementary monomials.

\begin{proposition}
	$\mathcal{E}_n$ forms a $\mathbb{Z}$-basis for $\coma_n$, and $\bigcup_{n}\mathcal{E}_n$ forms a $\mathbb{Z}$-basis for $\coma$.
\end{proposition}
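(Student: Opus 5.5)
The plan is to read $e^k_j$ as the elementary symmetric polynomial $e_{j,k}$ of degree $j$ in $x_1,\ldots,x_k$ (with $e^k_0=1$). An $n$-elementary monomial then factors as
$$\bigl(e^1_{\alpha_1}\cdots e^{n-1}_{\alpha_{n-1}}\bigr)\cdot e^n_{\lambda},\qquad e^n_\lambda:=e^n_{\lambda_1}e^n_{\lambda_2}\cdots,$$
where $\alpha_i\leq i$ for $i<n$, and $\lambda=(\alpha_n,\alpha_{n+1},\ldots)$ is a partition with parts at most $n$. Trailing zeros are irrelevant, since $e^n_0=1$.

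Write $\Lambda_n=\mathbb{Z}[x_1,\ldots,x_n]^{S_n}$. By the fundamental theorem of symmetric functions over $\mathbb{Z}$, $\Lambda_n=\mathbb{Z}[e^n_1,\ldots,e^n_n]$ is a polynomial ring. Hence the products $e^n_\lambda$, for $\lambda$ ranging over partitions with parts $\leq n$, form a $\mathbb{Z}$-basis of $\Lambda_n$. It therefore suffices to show that $\mathbb{Z}[x_1,\ldots,x_n]$ is a free $\Lambda_n$-module with basis $\{e^1_{a_1}\cdots e^{n-1}_{a_{n-1}} : 0\leq a_i\leq i\}$. The product of a basis of a free module with a basis of the ground ring is then a $\mathbb{Z}$-basis.

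For that, I will use the tower of rings $A_k=\mathbb{Z}[x_1,\ldots,x_n]^{S_k}$, where $S_k$ permutes $x_1,\ldots,x_k$. This gives $A_1=\mathbb{Z}[x_1,\ldots,x_n]$ and $A_n=\Lambda_n$, and in general $A_k=\Lambda_k[x_{k+1},\ldots,x_n]$. The key step is the following claim.

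\emph{Claim.} $A_{k-1}$ is free over $A_k$ with basis $e^{k-1}_0,e^{k-1}_1,\ldots,e^{k-1}_{k-1}$.

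To prove the claim, first note that $A_{k-1}=A_k[x_k]$ and that $x_k$ satisfies the monic relation $\prod_{i\leq k}(x_k-x_i)=\sum_j (-1)^j e^k_j x_k^{k-j}=0$. This shows that $1,x_k,\ldots,x_k^{k-1}$ span $A_{k-1}$ over $A_k$. Freeness follows from a rank count: compare Hilbert series, or use the standard fact that $\Lambda_{k-1}[x_k]\cong\Lambda_k[t]/(\prod_i(t-x_i))$, which is free of rank $k$. Next, the identity $e^k_j=e^{k-1}_j+x_ke^{k-1}_{j-1}$ inverts to
$$e^{k-1}_j=\sum_{i=0}^{j}(-x_k)^i e^k_{j-i}.$$
This expresses $e^{k-1}_j$ as $(-x_k)^j$ plus $A_k$-combinations of lower powers of $x_k$. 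The change of basis from $\{x_k^j\}_{j<k}$ to $\{e^{k-1}_j\}_{j<k}$ is therefore unitriangular over $A_k$, and $\{e^{k-1}_j\}_{j\le k-1}$ is also an $A_k$-basis.

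Transitivity of free bases along the tower $A_n\subset A_{n-1}\subset\cdots\subset A_1$ then gives the desired $\Lambda_n$-basis of $\mathbb{Z}[x_1,\ldots,x_n]$, with exponent constraint $a_{k-1}\leq k-1$, i.e.\ $a_i\leq i$. The main obstacle is the freeness of $A_{k-1}$ over $A_k$ of rank exactly $k$ (rather than just spanning) integrally. If the rank argument is awkward, I would instead match graded ranks: the Hilbert series of $\mathbb{Z}[x_1,\ldots,x_n]$ equals that of $\Lambda_n$ times $\prod_{k=1}^{n}[k]_t$, spanning is already shown, and everything is free of finite rank in each degree over $\mathbb{Z}$, so a spanning set of the correct graded size is a basis.

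The second assertion is immediate. $\coma=\bigoplus_n\coma_n$ as $\mathbb{Z}$-modules, with $\mathcal{E}_n\subset\coma_n$, so the disjoint union of bases of the summands is a basis of the direct sum.
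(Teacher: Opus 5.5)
Your proof is correct, but it takes a different route from the paper. The paper goes through Schubert polynomials. It cites Macdonald's theorem that $\mathbb{Z}[x_1,\ldots,x_n]$ is a free $\Lambda_n$-module with basis $\{\sch_u : u\in S_n\}$. It then asserts, without further argument, that these Schubert polynomials are uniquely expressed in the standard elementary monomials $e^1_{a_1}\cdots e^{n-1}_{a_{n-1}}$ with $a_i\leq i$. From this it concludes that the latter also form a $\Lambda_n$-basis, and multiplies by the basis $\{e^n_\lambda\}$ of $\Lambda_n$.

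You never use Schubert polynomials. Your tower $\Lambda_n=A_n\subset A_{n-1}\subset\cdots\subset A_1$ first produces the Artin-type basis $\{x_k^j\}_{j<k}$ at each step from the monic relation $\prod_{i\leq k}(x_k-x_i)=0$. You then convert it to $\{e^{k-1}_j\}_{j<k}$ via $e^{k-1}_j=\sum_i(-x_k)^ie^k_{j-i}$. This change of basis is triangular with diagonal entries $(-1)^j$, so ``unitriangular'' should read ``triangular with unit diagonal''; that suffices. Your graded-rank count $\frac{1}{(1-t)^n}=\prod_{k=1}^n[k]_t\cdot\prod_{k=1}^n\frac{1}{1-t^k}$, together with the fact that a surjection between free $\mathbb{Z}$-modules of the same finite rank is an isomorphism, settles the integral independence you flagged as the main obstacle.

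The trade-off is as follows. Your argument is elementary and self-contained. It never needs the integral change of basis between $\{\sch_u\}_{u\in S_n}$ and the standard elementary monomials, which is exactly the step the paper leaves as an unreferenced assertion. The paper's route leans on external facts, but it keeps $\mathcal{E}_n$ tied to the Schubert basis. That link is what the paper exploits immediately afterwards, in the Pieri-based expansion of $e^n_\alpha$ and in Theorem~\ref{theorem:elemtrans}.
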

\begin{proof}
	It is a theorem of Macdonald that the polynomial ring $\mathbb{Z}[x_1,\ldots,x_n]$ is a free module over $\Lambda_n$ with basis the Schubert polynomials $\sch_u(x)$ such that $u\in S_n$. These Schubert polynomials are uniquely expressed in terms of strict elementary symmetric monomials with fewer than $n$ variables. Since $\Lambda_n$ is a polynomial ring in the $e_{i,n}$ by the standard characterization of $\Lambda_n$ as a polynomial ring in $e_1,\ldots,e_n$, $\Lambda_n$ has a basis of monomials $e_{\lambda}^n$ as $\lambda$ ranges over all partitions with parts bounded by $n$. The multiplicative combination of these two bases is therefore a $\mathbb{Z}$-basis of $\coma_n$ by freeness of the module. This is exactly the description of the monomials $e_{\alpha}^n$ for $\alpha$ an $n$-elementary weak composition.
\end{proof}

We can ask for transition formulas between the Schubert and the elementary basis. From elementary to Schubert, we may use Sottile's Pieri formula iteratively \cite{sottile}, which provides a positive formula for multiplying a Schubert polynomial by an elementary symmetric polynomial. That is,

$$e_{\alpha}^n = \sum_{1\xrightarrow{1,2,\ldots,n}_{\alpha} u}\sch_u^n$$

For the reverse transition, we need a lemma on the skew divided difference operators $\delta_u^w$ of Definition~\ref{definition:divdiff}. Recall that $\delta^i=\frac{1}{y_i-y_{i+1}}(1-s_i)$ acts on the $y$ variables, that $\delta^w=\delta^{i_1}\cdots\delta^{i_k}$ for a reduced word $w=s_{i_1}\cdots s_{i_k}$, and that the skew operators are characterized by the Leibniz formula $\delta^w(pq)=\sum_{u\leq w}\delta^u(p)\,\delta_u^w(q)$. The lemma computes the application of a skew divided difference operator of degree $-n$ to a polynomial of degree $n$, which yields an integer for integer-coefficient polynomials. Specifically, it is a formula for application of such an operator with $\ell(u,w)=n$ to $y_p^n$ for some $p>0$.

\newcommand{\achain}[1]{\mathcal{C}^{(#1)}}
\begin{definition}
For $u,w\in S_\infty$ such that $u\leq w$, we define $\achain{p}(u,w)$ to be the set of saturated chains $C$ in the Bruhat interval $[u,w]$ of the form
$$u=u_0\prec u_1\prec \cdots \prec u_n=w$$  
such that $u_{i-1}(p)\neq u_i(p)$ for all $i$. We also define $\sigma(C)$ to be the number of indices $i$ for which the index $j\neq p$ such that $u_{i-1}(j)\neq u_i(j)$ satisfies $j<p$.
\end{definition}

\begin{lemma} \label{lemma:powerdiff}
Let $n\geq 0$ be an integer, let $p>0$ be an integer, and let $u, w\in S_\infty$ be permutations such that $\ell(u,w)=n$. Then we have that 
$$\delta_u^w(y_p^n) = \sum_{C\in \achain{p}(u,w)} (-1)^{\sigma(C)}.$$
\end{lemma}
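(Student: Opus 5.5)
Argue by induction on $n=\ell(u,w)$, using the iterated Leibniz rule from Definition~\ref{definition:divdiff} to peel off one factor of $y_p$ at a time. The case $n=0$ is trivial: $\delta_u^u$ is the identity on constants, so both sides equal $1$ (the only chain is the trivial one, with $\sigma=0$). For $n\geq 1$, write $y_p^n=y_p\cdot y_p^{n-1}$ and apply
$$\delta_u^w(y_p\cdot y_p^{n-1})=\sum_{u\leq v\leq w}\delta_u^v(y_p)\,\delta_v^w(y_p^{n-1}).$$
Since $\delta_u^v$ has degree $-\ell(u,v)$, a degree count kills every term except those with $\ell(u,v)=1$. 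In those terms $\delta_u^v(y_p)$ is a constant and $\delta_v^w(y_p^{n-1})$ is a constant of degree $0$, so the induction hypothesis applies to the second factor. The whole statement therefore reduces to the base computation of $\delta_u^{v}(y_p)$ for a Bruhat cover $u\prec v=ut_{ab}$, $a<b$.

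For that base computation I would prove
$$\delta_u^{ut_{ab}}(y_p)=\begin{cases}1&p=a,\\-1&p=b,\\0&p\notin\{a,b\}.\end{cases}$$
Equivalently, $\delta_u^{v}(y_p)=\pm1$ exactly when the cover moves position $p$, with sign $-1$ precisely when the other moved index $j$ satisfies $j<p$. This matches the definition of $\sigma(C)$ step by step.

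To compute $\delta_u^{v}(y_p)$, apply the Leibniz formula with $q=y_p$:
$$\delta^v(p'\,y_p)=\delta^v(p')\,y_p+\sum_{u\prec v}\delta^u(p')\,\delta_u^v(y_p),$$
so the coefficients $\delta_u^v(y_p)$ are determined by how $\delta^v$ commutes past multiplication by $y_p$. The cleanest route is the Chevalley/Monk-type formula for $\delta^v(y_p f)-y_p\delta^v(f)$. A one-step check gives $\delta^i(y_p f)=s_i(y_p)\delta^i f+\frac{y_p-s_i y_p}{y_i-y_{i+1}}f$, and the second coefficient is $+1$ if $p=i$, $-1$ if $p=i+1$, and $0$ otherwise. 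Iterating along a reduced word of $v$ and collecting the terms where the "linear" part is used exactly once, one obtains
$$\delta^v(y_p f)=v^{-1}\text{-twisted }y_p\cdot\delta^v f+\sum_{v\to u\text{ cover}}\bigl(\langle\text{root},e_p\rangle\bigr)\,\delta^u f.$$
The coefficient for the cover $u=vt_{ab}$ is the pairing of the coroot $e_a-e_b$ with $e_p$, which gives $+1$, $-1$ or $0$ as claimed. The argument mirrors the standard proof of Monk's rule via divided differences. The convention of \cite{samuelleibniz} fixes whether the cover acts on the left or the right, and hence whether positions or values are compared; I would align this with the definition of $\achain{p}$, which uses positions $u_i(p)$.

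Substituting the cover values into the recursion gives
$$\delta_u^w(y_p^n)=\sum_{\substack{u\prec u_1\\ u_1(p)\neq u(p)}}(\pm1)\,\delta_{u_1}^w(y_p^{n-1}),$$
and unrolling the induction produces exactly the sum over chains in $\achain{p}(u,w)$ of $(-1)^{\sigma(C)}$.

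The main obstacle is the base cover computation: getting the sign and the left/right convention exactly right for $\delta_u^{ut_{ab}}(y_p)$ under the paper's normalization. In particular, it must be checked that $\delta^i$ acting on the $y$-variables, and the ordering in the Leibniz formula, yield $-1$ precisely when the partner index is smaller than $p$, and not the opposite sign. I would verify this first in $S_2$ and $S_3$ by direct computation before running the general argument.
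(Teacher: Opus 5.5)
Your proposal is correct and follows the paper's proof. Both use induction on $n$, peeling off one factor $y_p$ with the Leibniz formula, a degree count that leaves only Bruhat covers, and the cover values $\delta_u^{ut_{ab}}(y_p)=1,-1,0$ according as $p=a$, $p=b$, or neither; the paper simply asserts these values as $\delta^{ij}(y_p)$, whereas your Chevalley-type iteration along a reduced word actually derives them. That derivation also settles your convention worry: deleting a letter from a reduced word of $v$ gives $vt_{\beta}$ (right multiplication, hence positions, as in the chains of the lemma) with coefficient $\langle e_p,e_a-e_b\rangle$. One small slip, which does not affect the cover coefficients: the top term of your first display should be $\delta^v(p')\,\delta_v^v(y_p)$, a permuted variable, rather than $\delta^v(p')\,y_p$.
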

\begin{proof}

Consider the case where $\ell(u,w)=1$. Then there is a transposition $t_{ij}$ such that $w = ut_{ij}$. Assume that $i<j$. If $p\notin\{i,j\}$, then $\delta_u^w(y_p)=0$. If $p=i$, then $\delta_u^w(y_p) = \delta^{ij}(y_p) = 1$. If instead $j=p$, then $\delta_u^w(y_p) = \delta^{ij}(y_p) = -1$. In all cases, the result holds.

Now, consider the case where $\ell(u,w)=n>1$. We have that
$$y_p^n = y_p\cdot y_p^{n-1}$$
Applying the Leibniz formula, we have
$$\delta_u^w(y_p^n) = \sum_{u\prec u'}\delta_u^{u'}(y_p)\delta_{u'}^w(y_p^{n-1})$$
By the above, $\delta_u^{u'}(y_p)$ is nonzero if and only if $u\prec u'$ is a cover in the Bruhat order such that $u(p)\neq u'(p)$. In this case, $\delta_u^{u'}(y_p)$ is either $1$ or $-1$ depending on the number of indices $j<p$ such that $u(j)\neq u'(j)$, which is either $0$ or $1$, respectively. By induction, 
$$\delta_{u'}^w(y_p^{n-1}) = \sum_{C'\in \achain{p}(u',w)} (-1)^{\sigma(C')}.$$
Hence, 
$$\delta_u^{u'}(y_p)\delta_{u'}^w(y_p^{n-1})=\begin{cases}
-\displaystyle\sum_{C'\in \achain{p}(u',w)} (-1)^{\sigma(C')}&\mbox{ if }u(j)\neq u'(j)\mbox{ for some }j<p\\
\displaystyle\sum_{C'\in \achain{p}(u',w)} (-1)^{\sigma(C')}&\mbox{ otherwise,}
\end{cases}$$
which, by definition, is
$$\sum_{C=C'\cup\{u\}, C'\in \achain{p}(u',w)} (-1)^{\sigma(C)}$$
Summing over all $u'$ such that $u\prec u'\leq w$, we obtain the result.
\end{proof}
Using a minor sleight of hand, this affords us an explicit formula for transition of a Schubert polynomial into the elementary basis. We first name the combinatorial data that will index the terms of the formula.

\begin{definition}
For $k,p>0$ let 
$$E_k(x;y_p)=\prod_{i=1}^k (x_i - y_p)\text{,}$$
which is a factorial elementary symmetric polynomial, a special case of a double Schubert polynomial.
\end{definition}

\begin{lemma} \label{lemma:doublediff}
  Let $u,w\in S_\infty$ satisfy $u\leq w$ and let $p>0,k>0$ be integers. Then we have
  $$\delta_u^w(E_k(x;-y_p))=\sum_{C\in\achain{p}(u,w)}(-1)^{\sigma(C)}e_{k-\ell(u,w)}^k+\mathcal{O}(y)$$
\end{lemma}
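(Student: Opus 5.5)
Expand $E_k(x;-y_p)=\prod_{i=1}^k(x_i+y_p)$ in powers of $y_p$. Collecting by degree in $y_p$ gives
$$E_k(x;-y_p)=\sum_{j=0}^{k} e_{k-j}^k(x)\,y_p^{j},$$
where $e_{k-j}^k$ is the elementary symmetric polynomial of degree $k-j$ in $x_1,\ldots,x_k$. The operators $\delta_u^w$ act only on the $y$ variables, so they are $\mathbb{Z}[x]$-linear. Hence
$$\delta_u^w(E_k(x;-y_p))=\sum_{j=0}^k e_{k-j}^k(x)\,\delta_u^w(y_p^j),$$
and it remains to understand each $\delta_u^w(y_p^j)$.

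Set $n=\ell(u,w)$. The operator $\delta_u^w$ is homogeneous of degree $-n$ in $y$. I would confirm this from the Leibniz characterization: $\delta^w$ has degree $-\ell(w)$, $\delta^u$ has degree $-\ell(u)$, and comparing homogeneous components in $\delta^w(pq)=\sum_u\delta^u(p)\delta_u^w(q)$ forces the degree of $\delta_u^w$. This splits the sum into three ranges of $j$.

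\begin{itemize}
\item For $j<n$, $\delta_u^w(y_p^j)$ would have negative degree, so it is $0$.
\item For $j=n$, Lemma~\ref{lemma:powerdiff} gives exactly $\sum_{C\in\achain{p}(u,w)}(-1)^{\sigma(C)}$.
\item For $j>n$, $\delta_u^w(y_p^j)$ is a homogeneous polynomial in $y$ of positive degree $j-n$. Multiplied by $e^k_{k-j}(x)$, these terms are absorbed into $\mathcal{O}(y)$.
\end{itemize}

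Combining the three ranges gives
$$\delta_u^w(E_k(x;-y_p))=\sum_{C\in\achain{p}(u,w)}(-1)^{\sigma(C)}e_{k-\ell(u,w)}^k+\mathcal{O}(y),$$
with the convention $e_m^k=0$ for $m<0$ or $m>k$. When $n>k$, no term with $j=n$ occurs and the statement reads $0+\mathcal{O}(y)$, which is still correct.

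The main obstacle is the homogeneity claim for $\delta_u^w$, since Definition~\ref{definition:divdiff} only characterizes these operators through the Leibniz formula. If homogeneity is not readily citable from \cite{samuelleibniz}, I would instead argue by induction on $\ell(u,w)$ using the iterated Leibniz formula $\delta_u^w(y_p\cdot y_p^{j-1})=\sum_{u\le v\le w}\delta_u^v(y_p)\,\delta_v^w(y_p^{j-1})$, exactly as in the proof of Lemma~\ref{lemma:powerdiff}. The base case is the direct computation $\delta_u^v(y_p)\in\{0,\pm1\}$ for a Bruhat cover $u\prec v$ together with $\delta_u^v(y_p)=0$ when $\ell(u,v)\ge2$, which holds because $\delta_u^v(y_p)$ is a polynomial of degree $1-\ell(u,v)<0$.
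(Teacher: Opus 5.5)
Your argument is correct and is essentially the paper's proof, which expands $E_k(x;-y_p)=\sum_{j=0}^k e_{k-j}^k y_p^j$ and applies Lemma~\ref{lemma:powerdiff} to the $j=\ell(u,w)$ term. Your homogeneity of $\delta_u^w$, which disposes of the other terms, is also used by the paper (for example, in the proof of Theorem~\ref{theorem:dualformula}). The one blemish is your fallback induction: its base case invokes the very homogeneity it was meant to replace, so it is circular. It is also unnecessary, since homogeneity of skew divided differences is standard.
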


\begin{proof}
This follows from the observation that
$$E_k(x;-y_p) = \sum_{i=0}^k e_{k-i}^k y_p^i$$
and Lemma \ref{lemma:powerdiff}.
\end{proof}

\begin{definition}\label{definition:elemcoeff}
Fix $w\in S_\infty$ with $\maxd(w)\leq n$. Let
$$\lambda = (n, n, \ldots, n, n - 1,\ldots, 1)$$
be the partition with parts $n-1,n-2,\ldots,1$ preceded by some number of parts equal to $n$, chosen large enough that $\ell(ww_\lambda^{-1})=|\lambda|-\ell(w)$, where $w_\lambda$ is the dominant permutation with $\code^*(w_\lambda)=\lambda$. Write $N=\ell(\lambda)$.

A \emph{path} for $w$ is a sequence $P=(C_1,\ldots,C_N)$ of chains $C_i\in\achain{i}(u_{i-1},u_i)$ for some sequence of permutations $1=u_0\leq u_1\leq\cdots\leq u_N=ww_\lambda^{-1}$; denote the set of such paths by $\mathrm{Path}_\lambda(w)$. For $P\in\mathrm{Path}_\lambda(w)$ set
$$\sigma(P) = \sum_{i=1}^{N} \sigma(C_i),\qquad \alpha(P) = \bigl(\lambda_{N + 1 - i}-|C_{N + 1 - i}|\bigr)_{i=1}^{N},$$
so that $\alpha(P)$ is an $n$-elementary weak composition. Finally, for each weak composition $\alpha$ define
$$E^{\alpha;n}_w = \sum_{\substack{P \in \mathrm{Path}_\lambda(w)\\ \alpha(P)=\alpha}}(-1)^{\sigma(P)}.$$
\end{definition}

\begin{theorem}[Transition to the elementary basis] \label{theorem:elemtrans}
	For each $w$ with $\maxd(w)\leq n$, we have
	$$\sch_w^n = \sum_{\alpha}E_w^{\alpha;n}e_{\alpha}^n.$$
In particular, since $\mathcal{E}_n$ is a basis of $\coma_n$, the integers $E_w^{\alpha;n}$ do not depend on the number of parts equal to $n$ in $\lambda$ (identifying $\alpha$ with its extensions by trailing zeros).
\end{theorem}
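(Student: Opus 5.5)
We plan to start from the double Schubert polynomial of the dominant permutation $w_\lambda$, which factors completely. Since $\code^*(w_\lambda)=\lambda$ with $\lambda=(n,\ldots,n,n-1,\ldots,1)$, the standard factorization of dominant double Schubert polynomials gives
$$\sch_{w_\lambda}(x;-y)=\prod_{j=1}^{N}E_{\lambda_j}(x;-y_j)=\prod_{j=1}^N\prod_{i=1}^{\lambda_j}(x_i+y_j),$$
with the $y_j$ indexing columns of the diagram. We then recover $\sch_w(x)$ as a specialization. The choice of $\lambda$ guarantees $\ell(ww_\lambda^{-1})=\ell(w_\lambda)-\ell(w)$. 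Write $v=ww_\lambda^{-1}$, so that $vw_\lambda^{-1}$ should equal $w^{-1}$ up to inversion conventions. By the sign-cancelled identity of Definition~\ref{definition:divdiff}, $\delta^{v}$ applied to $\sch_{w_\lambda}(x;-y)$ produces $\sch_{w}(x;-y)$ (we will check which of $v$ or $v^{-1}$ acts, adjusting the side convention if necessary). Setting $y=0$ then yields $\sch_w(x)$, which lies in $\coma_n$ since $\maxd(w)\le n$.

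The main computation applies the iterated Leibniz formula $\delta_u^w(pq)=\sum_{u\le v\le w}\delta_u^v(p)\delta_v^w(q)$ to the product $\prod_{j=1}^N E_{\lambda_j}(x;-y_j)$. This gives
$$\delta^{v}\prod_j E_{\lambda_j}(x;-y_j)=\sum_{1=u_0\le u_1\le\cdots\le u_N=v}\ \prod_{j=1}^N\delta_{u_{j-1}}^{u_j}\bigl(E_{\lambda_j}(x;-y_j)\bigr).$$
We then set $y=0$ and invoke Lemma~\ref{lemma:doublediff} factor by factor. Divided differences in $y$ treat the $x$ variables as scalars, so each factor contributes
$$\sum_{C_j\in\achain{j}(u_{j-1},u_j)}(-1)^{\sigma(C_j)}\,e^{\lambda_j}_{\lambda_j-|C_j|}$$
modulo terms involving $y$. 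The main subtlety is at $y=0$: a product of several $\mathcal{O}(y)$ error terms could in principle survive. We plan to avoid this by first taking the top-degree-in-$x$ part of each factor, using homogeneity. The total $y$-degree of each summand is fixed, since $\delta^v$ lowers total degree by $\ell(v)$ and $\sum_j\ell(u_{j-1},u_j)=\ell(v)$. Lemma~\ref{lemma:doublediff} is sharp precisely because $\ell(u_{j-1},u_j)$ equals the $y$-degree removed, so each factor's leading part is the stated integer multiple of an elementary symmetric polynomial. The remaining $y$-dependent parts have positive $y$-degree and vanish at $y=0$; we will confirm this holds for products by the same degree count.

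Collecting terms, each tuple of chains $(C_1,\ldots,C_N)$ is precisely a path $P\in\mathrm{Path}_\lambda(w)$. Its sign is $(-1)^{\sigma(P)}$, and its monomial is $\prod_j e^{\lambda_j}_{\lambda_j-|C_j|}$. Reordering from $j=N$ down to $1$ gives $e_{\alpha(P)}^n$ with the stated $\alpha(P)$, because $\lambda_{N+1-i}=i$ for $i<n$ and equals $n$ otherwise. We must also check that $\alpha(P)$ is $n$-elementary, in particular that it is weakly decreasing in the $n$-part. If this fails, the monomial is still a product of $e^n_k$'s, which commute, and can be reordered; the intended claim is presumably that the $n$-entries may be sorted. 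Grouping paths by $\alpha$ gives $\sum_\alpha E^{\alpha;n}_w e_\alpha^n$.

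Finally, independence from the number of parts equal to $n$ follows from uniqueness of expansions in the basis $\mathcal{E}_n$. Extra $e_0^n=1$ factors correspond to padding $\alpha$ by zeros.
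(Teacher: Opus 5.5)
Your proposal is correct and is essentially the paper's proof: factor $\sch_{w_\lambda}(x;-y)=\prod_j E_{\lambda_j}(x;-y_j)$, apply $\delta^{ww_\lambda^{-1}}$, expand by the iterated Leibniz rule, and specialize each factor at $y=0$ via Lemma~\ref{lemma:doublediff}. Two small remarks on that part: no convention adjustment is needed, since $v=ww_\lambda^{-1}$ satisfies $vw_\lambda=w$; and your worry about products of $\mathcal{O}(y)$ terms is unnecessary, because $y\mapsto 0$ is a ring homomorphism.

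Your sorting caveat is genuinely needed, and here you are more careful than the paper, which asserts that $\alpha(P)$ is $n$-elementary. Already for $n=1$, $\lambda=(1,1)$, $w=s_1$, the only path has $|C_1|=0$ and $|C_2|=1$, hence $\alpha(P)=(0,1)$. So the independence statement holds only after sorting $\alpha_n,\alpha_{n+1},\ldots$ into weakly decreasing order, and after discarding paths with a negative entry, whose terms vanish.
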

\begin{proof}
Let $\lambda$, $w_\lambda$, and $N$ be as in Definition~\ref{definition:elemcoeff}. By \cite[(6.14)]{notes}, the double Schubert polynomial of the dominant permutation $w_\lambda$ factors as
$$\sch_{w_{\lambda}}(x;-y)=\prod_{i=1}^N E_{\lambda_i}(x;-y_i).$$
Recall from Definition~\ref{definition:divdiff} that $\delta^v$ acts on the $y$ variables with $\delta^v\sch_w(x;y)=(-1)^{\ell(v)}\sch_{vw}(x;y)$ whenever $\ell(vw)=\ell(w)-\ell(v)$, and that the sign disappears on $\sch_w(x;-y)$ because each $\delta^i$ is odd under $y\mapsto -y$: $\delta^v[\sch_w(x;-y)]=\sch_{vw}(x;-y)$. By the choice of $\lambda$, this gives
$$\sch_w(x;-y) = \delta^{ww_\lambda^{-1}}\prod_{i=1}^N E_{\lambda_i}(x;-y_i),$$
with $\delta^{ww_\lambda^{-1}}$ acting on the variables $y_i$ appearing in the product.
Expand the right-hand side using the Leibniz formula iterated across the $N$ factors: the result is a sum over sequences $1=u_0\leq u_1\leq\cdots\leq u_N=ww_\lambda^{-1}$ of the products $\prod_i\delta_{u_{i-1}}^{u_i}E_{\lambda_i}(x;-y_i)$. Setting $y_i=0$ for all $i$ and invoking Lemma~\ref{lemma:doublediff} for each factor, this becomes
$$\sch_w(x)=\sum_{P=(C_1,\ldots,C_N)}(-1)^{\sigma(C_1)+\cdots+\sigma(C_N)}e_{\lambda_{1}-|C_1|}^{\lambda_1}e_{\lambda_2-|C_2|}^{\lambda_2}\cdots e_{\lambda_{N}-|C_{N}|}^{\lambda_{N}},$$
the sum running over $P\in\mathrm{Path}_\lambda(w)$. The monomial in the $P$-th term is $e_{\alpha(P)}^n$, and collecting terms by $\alpha(P)$ gives the stated expansion with coefficients $E_w^{\alpha;n}$.
\end{proof}
These numbers are stable for fixed $w$ as soon as the number of variables is at least as large as the value of $n$ such that $w\in S_n$, and are well-studied but poorly understood.

\begin{example}
	Let $n=5$ and let $w$ be the permutation such that $\code(w)=(0,2,0,2,3)$. Then
	\begin{align*}
		\sch_w^n &= e^5_{(0,0,0,0,5,1,1)} - e^5_{(0,0,0,0,4,2,1)} + e^5_{(0,0,0,0,4,3)}- e^5_{(0,1,0,0,4,1,1)} +e^5_{(0,1,0,0,3,2,1)}\\
		&+e^5_{(0,1,0,0,5,1)} - e^5_{(0,1,0,0,3,3)}
	\end{align*}
\end{example}

\begin{remark}
This transition formula from the Schubert basis to the elementary basis can be used to obtain an extremely efficient formula for multiplication of Schubert polynomials. If we want to compute $\sch_u^n\sch_v^n$, we can first express $\sch_v^n$ in the elementary basis using Theorem \ref{theorem:elemtrans}. Then we can multiply by $\sch_v^n$ by iterating the Pieri formula. This is the method used in the python package \verb|schubmult| written by the author \cite{schubmult}. In fact, there is also a version for double Schubert polynomials, quantum Schubert polynomials, and quantum double Schubert polynomials, which we will not go into here but are implemented in the same package.
\end{remark}

\section{The dual algebra}\label{section:dualalgebra}

We examine the graded dual algebra $\dcoma$ more closely now. This is a graded ring generated by countably many elements that we denote by $[i]$, where $i$ is a nonnegative integer. The product, as mentioned previously, is concatenation of sequences. Thus
$$[a_1\cdots a_p][b_1\cdots b_q]=[a_1\cdots a_pb_1\cdots b_q]$$
It is not hard to see that $\dcoma$ is a free algebra on these generators. Thus the set of sequences $[a_1\cdots a_n]$ forms a $\mathbb{Z}$-basis for $\dcoma$. We may realize this as the dual of $\coma$ by declaring that
$$\langle [a_1\cdots a_n],x_1^{b_1}\cdots x_n^{b_n}\rangle = \prod_i\delta_{a_i,b_i}$$

We have a $\mathbb{Z}\times \mathbb{Z}$ grading such that
$$\deg([a_1\cdots a_n])=(n,-a_1-a_2\cdots-a_n)$$
The set of elements such that $\deg(a) = (n,-)$ is $\dcoma_n$, dual as a graded module to $\coma_n$.

\subsection{The dual Schubert basis}
       
There is a basis $\dsch_u^{n}$ dual to the Schubert basis for $\dcoma_n$. Specifically, with the unique pairing $\langle -,-\rangle:\dcoma\times \coma\to\mathbb{Z}$ such that
$$\langle \alpha,x_\beta\rangle = \delta_{\alpha\beta}$$
we define $\dsch_u^n$ to be the unique basis of $\dcoma_n$ such that
$$\langle \dsch_u^n,\sch_v^n\rangle = \delta_{uv}$$
We characterize it with an explicit formula, in terms of the chains $\achain{p}(u,w)$ and the statistic $\sigma$ of \S\ref{section:elementary}.

\begin{definition}\label{definition:dualpath}
Let $u\in S_\infty$ with $\maxd(u)\leq n$. A \emph{dual path} for $(u,n)$ is a sequence $P=(C_1,\ldots,C_n)$ of chains $C_i\in\achain{i}(u_{i-1},u_i)$ for some sequence of permutations $1=u_0\leq u_1\leq\cdots\leq u_n=u$; write $\mathrm{Path}^n(u)$ for the set of dual paths. For $P\in\mathrm{Path}^n(u)$ set
$$\sigma(P)=\sum_{i=1}^n\sigma(C_i),\qquad |P|=(|C_1|,\ldots,|C_n|)\text{,}$$
so that $|P|$ is a weak composition of length $n$ with $\bigl||P|\bigr|=\ell(u)$.
\end{definition}

\begin{theorem}\label{theorem:dualformula}
	For each permutation $u$ and integer $n$ with $\maxd(u)\leq n$,
	$$\dsch_u^{n} = \sum_{P\in\mathrm{Path}^n(u)} (-1)^{\sigma(P)}\,[\,|P|\,]\text{.}$$
	That is, the coefficient of $[\alpha]$ in $\dsch_u^n$ is the signed number of dual paths $P$ for $(u,n)$ with $|C_i|=\alpha_i$ for all $i$.
\end{theorem}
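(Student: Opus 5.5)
The plan is to identify $\dsch_u^n$ with an explicit linear functional on $\coma_n$ and then compute that functional on monomials. Define $\Lambda_u:\coma_n\to\mathbb{Z}$ by $\Lambda_u(f)=(\partial^u f)\big|_{x=0}$. This is a functional on $\coma_n$, so it corresponds to an element of $\dcoma_n$, and its coefficient on $[\alpha]$ is $\langle[\alpha],\cdot\rangle$ evaluated against $\Lambda_u$, namely $\Lambda_u(x^\alpha)$.

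First I will show that $\Lambda_u=\dsch_u^n$. For $\maxd(v)\le n$ we have $\partial^u\sch_v=\sch_{vu^{-1}}$ when $\ell(vu^{-1})=\ell(v)-\ell(u)$, and $\partial^u\sch_v=0$ otherwise; this is the single-variable specialization of Definition~\ref{definition:divdiff}. The constant term of a Schubert polynomial $\sch_{vu^{-1}}$ is nonzero only if $vu^{-1}=1$, in which case it is $1$. Hence $\Lambda_u(\sch_v^n)=\delta_{u,v}$. Since the $\sch_v^n$ with $\maxd(v)\le n$ form a basis of $\coma_n$, this forces $\Lambda_u=\dsch_u^n$. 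It remains to prove
$$\Lambda_u(x^\alpha)=\sum_{P\in\mathrm{Path}^n(u),\ |P|=\alpha}(-1)^{\sigma(P)}.$$

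To compute $\partial^u(x_1^{\alpha_1}\cdots x_n^{\alpha_n})$, I will use the $x$-version of the Leibniz formula for skew divided differences, iterated across the $n$ factors:
$$\partial^u(f_1\cdots f_n)=\sum_{1=u_0\le u_1\le\cdots\le u_n=u}\ \prod_{i=1}^n\partial_{u_{i-1}}^{u_i}(f_i).$$
This comes from writing $\partial^u(f_1\cdot g)=\sum_{u_1}\partial^{u_1}(f_1)\,\partial_{u_1}^{u}(g)$, then applying the iterated form $\partial_{u_1}^{u}(f_2g')=\sum\partial_{u_1}^{u_2}(f_2)\partial_{u_2}^{u}(g')$, and so on. Take $f_i=x_i^{\alpha_i}$. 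The operator $\partial_{u_{i-1}}^{u_i}$ is homogeneous of degree $-\ell(u_{i-1},u_i)$, so after setting $x=0$ a term survives only when $\ell(u_{i-1},u_i)=\alpha_i$ for every $i$. In that case Lemma~\ref{lemma:powerdiff}, applied in the $x$ variables with $p=i$, gives
$$\partial_{u_{i-1}}^{u_i}(x_i^{\alpha_i})=\sum_{C_i\in\achain{i}(u_{i-1},u_i)}(-1)^{\sigma(C_i)}.$$
Multiplying these over $i$ and summing over the intermediate $u_i$ produces exactly one signed term for each dual path $P=(C_1,\ldots,C_n)$ with $|C_i|=\alpha_i$, with total sign $(-1)^{\sigma(P)}$. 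This is the claimed formula.

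The main obstacle is bookkeeping of conventions rather than any new idea. Three checks are needed, each against the definitions as stated in the paper:
\begin{itemize}
\item whether $\partial^u$ or $\partial^{u^{-1}}$ gives the functional dual to $\sch_u$;
\item that the iterated Leibniz formula puts the chain in the order $1=u_0\le\cdots\le u_n=u$, with the $i$-th factor getting $\partial_{u_{i-1}}^{u_i}$;
\item that the sign convention in Lemma~\ref{lemma:powerdiff} transfers verbatim from $y$ to $x$, which it does because the paper says the $x$ operators are defined identically.
\end{itemize}
If the first check comes out reversed, I would apply the same argument with $\sch_v(x)$ replaced by $\sch_{v}$ of the appropriate inverse, or equivalently run the computation with left rather than right multiplication.
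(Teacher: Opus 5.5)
Your proposal is correct and follows essentially the same route as the paper. Both arguments identify the coefficient of $[\alpha]$ with $\partial^u(x^\alpha)$, expand by the iterated Leibniz formula along $1=u_0\le\cdots\le u_n=u$, and apply Lemma~\ref{lemma:powerdiff} in the $x$ variables factor by factor. The only cosmetic difference is how the first step is done: you use the functional $f\mapsto(\partial^u f)|_{x=0}$ to handle all degrees at once, while the paper restricts to $|\alpha|=\ell(u)$.

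Your first convention check resolves as you computed: since $\partial^i\sch_w=\sch_{ws_i}$, one gets $\partial^u\sch_v=\sch_{vu^{-1}}$, so $\partial^u$ is the correct operator and the fallback is not needed.
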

\begin{proof}
	By definition, the coefficient of $[\alpha]$ in $\dsch_u^n$ is the coefficient $c_{\alpha,u}$ of $\sch_u^n$ in the Schubert expansion of $x_\alpha$. Since $\partial^u\sch_v=\delta_{u,v}$ for $\ell(v)=\ell(u)$ and $\partial^u$ kills polynomials of degree less than $\ell(u)$ \cite{notes}, we have $c_{\alpha,u}=\partial^u(x_\alpha)$ when $|\alpha|=\ell(u)$, and $c_{\alpha,u}=0$ otherwise. Writing $x_\alpha=x_1^{\alpha_1}\cdots x_n^{\alpha_n}$ and applying the Leibniz formula of Definition~\ref{definition:divdiff} (in its $x$-variable form) iteratively across the $n$ factors,
	$$\partial^u(x_\alpha)=\sum_{1=u_0\leq u_1\leq\cdots\leq u_n=u}\ \prod_{i=1}^n\partial_{u_{i-1}}^{u_i}\bigl(x_i^{\alpha_i}\bigr)\text{.}$$
	Each factor $\partial_{u_{i-1}}^{u_i}(x_i^{\alpha_i})$ is homogeneous of degree $\alpha_i-\ell(u_{i-1},u_i)$ (zero if this is negative), and the degrees sum to $|\alpha|-\ell(u)=0$, so a term is a nonzero constant only if $\ell(u_{i-1},u_i)=\alpha_i$ for every $i$. For such terms Lemma~\ref{lemma:powerdiff}, which holds verbatim for $\partial_u^w(x_p^k)$, gives $\partial_{u_{i-1}}^{u_i}(x_i^{\alpha_i})=\sum_{C_i\in\achain{i}(u_{i-1},u_i)}(-1)^{\sigma(C_i)}$. Multiplying out yields the sum over dual paths $P$ with $|P|=\alpha$ of $(-1)^{\sigma(P)}$.
\end{proof}

\begin{remark}
Theorem~\ref{theorem:dualformula} can also be derived from the Cauchy formula $\sch_\mu(x;-y)=\sum_{v}\sch_v(x)\sch_{v\mu^{-1}}(y)$ for a dominant permutation $\mu$ with $\code(\mu)$ strictly decreasing and $\code_n(\mu)\geq\ell(u)$, $\code_{n+1}(\mu)=0$, by expanding $\sch_{u\mu^{-1}}(y)$ in the products $\prod_i e^{\code_i(\mu)}_{\code_i(\mu)-\alpha_i}(y)$ exactly as in the proof of Theorem~\ref{theorem:elemtrans}; the coefficients so obtained are independent of $\mu$.
\end{remark}

This is not stable for fixed $u$ as $n$ increases, and this is expected.

\begin{example}
Suppose $w = [1,4,5,2,3]$. Then
$$\dsch_w^3 = [022] - [013]$$
If we choose $[1,3,5,7,2,4,6]$ instead, we have
$$\dsch_{1357246}^4 = [0015] - [0033] - [0114] + [0123]$$
For $w=[4,2,7,1,3,5,6]$,
$$\dsch_{4271356}^3 = -[026] + [035] + [125] - [134] + [206] - [215] - [305] + [314]$$
\end{example}

Thus the product structure of $\dcoma$ encodes splitting of the Schubert polynomial into two sets of variables. In particular,
\begin{lemma} \label{lemma:dpieri}
Let $a\geq 0$ be an integer and $w\in S_\infty$. Then we have
$$[a]\cdot \dsch_{w}^{n} = \sum_{\substack{w'\downvar{1} w\\ \ell(w,w')=a}} \dsch_{w'}^{n+1}$$
\end{lemma}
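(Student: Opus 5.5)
The plan is to prove the identity by pairing both sides against the Schubert basis $\sch_v^{n+1}$ of $\coma_{n+1}$, using that $\dcoma$ is the graded dual of $\coma$ with product dual to $\Delta$. Since $\{\dsch_{v}^{n+1}\}$ is dual to $\{\sch_v^{n+1}\}$, it suffices to show the following for every $v$ with $\maxd(v)\leq n+1$:
$$\langle [a]\cdot\dsch_w^n,\ \sch_v^{n+1}\rangle=\begin{cases}1&\text{if } v\downvar{1} w\text{ and } \ell(v)-\ell(w)=a,\\ 0&\text{otherwise.}\end{cases}$$
By definition of the dual product,
$$\langle [a]\cdot\dsch_w^n,\sch_v^{n+1}\rangle=\langle [a]\otimes\dsch_w^n,\Delta(\sch_v^{n+1})\rangle .$$
Only the $\coma_1\otimes\coma_n$ component of $\Delta(\sch_v^{n+1})$ contributes. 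By the definition of $\Delta$ on monomials, that component is obtained from $\sch_v(x_1,\ldots,x_{n+1})$ by writing each monomial as $x_1^{k}\otimes(\text{monomial in }x_2,\ldots,x_{n+1}\text{ reindexed to }x_1,\ldots,x_n)$.

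To compute this component I will apply Proposition~\ref{proposition:pullindex} with $i=1$ and then set $y=0$. Since $\sch_v(x;0)=\sch_v(x)$, this gives
$$\sch_v(x)=\sum_{v\downvar{1}v'}x_1^{|Q_1(v',v)|}\,\sch_{v'}(x_2,x_3,\ldots).$$
Each $\sch_{v'}(x_2,\ldots)$ is homogeneous of degree $\ell(v')$. Comparing degrees (and using that the Schubert polynomials appearing are linearly independent, so nothing cancels) gives $|Q_1(v',v)|=\ell(v)-\ell(v')$. Reindexing $x_{j+1}\mapsto x_j$, the $\coma_1\otimes\coma_n$ component of $\Delta(\sch_v^{n+1})$ becomes
$$\sum_{v\downvar{1}v'}x_1^{\ell(v)-\ell(v')}\otimes\sch_{v'}^n .$$
Here I need $\maxd(v')\leq n$, which holds because the left-hand side only involves $x_1,\ldots,x_{n+1}$ and the $\sch_{v'}$ are linearly independent, so no $\sch_{v'}$ involving $x_{n+2}$ or beyond can survive. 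Pairing with $[a]\otimes\dsch_w^n$ then extracts exactly the term $v'=w$ with $\ell(v)-\ell(w)=a$, giving the displayed dichotomy. The permutations $v'$ with $v\downvar1 v'$ are distinct terms of the sum, so the coefficient is $1$, not a multiplicity.

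The remaining point, and the one I expect to be the main obstacle, is the converse bookkeeping on the index set. Every $w'$ with $w'\downvar{1}w$, $\maxd(w)\leq n$ and $\ell(w,w')=a$ must satisfy $\maxd(w')\leq n+1$; otherwise the right-hand side of the lemma would contain terms $\dsch_{w'}^{n+1}$ that are undefined or missing from the pairing computation. My first attempt will go through the definition $w'\Tom{1}\varphi_{1,m}(w)$. The permutation $\varphi_{1,m}(w)$ places $m+1$ in position $1$ followed by $w$, so its last descent is at most $\maxd(w)+1\leq n+1$. The chain of transpositions $t_{1b_i}$ in the $\Tom{1}$ relation then only swaps position $1$ with later positions while increasing length, so I will argue that it cannot create a descent beyond the last descent of $\varphi_{1,m}(w)$. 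If this direct argument proves awkward, the fallback is the polynomial one above: in $\sch_{w'}(x)$ with $y=0$, the coefficient of $x_1^{a}$ contains $\sch_w(x_2,\ldots)$, and I will aim to bound the variables of $\sch_{w'}$ using the same expansion read in reverse.

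Once this is in place, summing $\dsch_{w'}^{n+1}$ over exactly these $w'$ matches the pairings computed above, which proves Lemma~\ref{lemma:dpieri}.
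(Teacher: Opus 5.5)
Your main computation is correct and is essentially the paper's proof. The paper proves the lemma in one line from Proposition~\ref{proposition:pullindex} with $i=1$, which at $y=0$ is exactly your expansion $\sch_v(x)=\sum_{v\downvar{1}v'}x_1^{\ell(v)-\ell(v')}\sch_{v'}(x_2,x_3,\ldots)$, combined with $\langle\beta\sqcupplus\gamma,f\rangle=\langle\beta\otimes\gamma,\Delta(f)\rangle$. Your argument that every $v'$ occurring has $\maxd(v')\leq n$ is also fine; in the paper this is Lemma~\ref{L:trim}.

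The problem is the step you single out as the main obstacle. The claim that $w'\downvar{1}w$ and $\maxd(w)\leq n$ force $\maxd(w')\leq n+1$ is false, so neither your direct argument nor the fallback can succeed.

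A counterexample: take $n=0$, $w=\mathrm{id}$, $w'=s_2=[1,3,2]$. Then $\varphi_{1,3}(\mathrm{id})=[4,1,2,3]$, and $[1,3,2,4]\,t_{12}\,t_{14}=[4,1,2,3]$ with lengths $1,2,3$ and distinct $b$'s. Hence $s_2\downvar{1}\mathrm{id}$ with $\ell(\mathrm{id},s_2)=1$, yet $\maxd(s_2)=2>1$. Equivalently, $s_2=s_2\cdot\shup\mathrm{id}$ in Lemma~\ref{lemma:d1desc}, and the same holds for every $s_k$ with $k\geq 2$.

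This also shows where your descent argument breaks. Undoing the chain, $[4,1,2,3]\to[3,1,2,4]\to[1,3,2,4]$, creates a descent at position $2$, beyond the last descent of $\varphi_{1,3}(\mathrm{id})$. Concretely, $[1]\cdot\dsch^0_{\mathrm{id}}=[1]=\dsch^1_{s_1}$, whereas the literal index set also contains $s_2,s_3,\ldots$, for which $\dsch^1_{s_k}$ is not defined.

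So the fix is not to prove more, but to read the sum as running only over $w'$ with $\maxd(w')\leq n+1$. These are the only $w'$ for which $\dsch_{w'}^{n+1}$ exists. In the RC-graph version, Lemma~\ref{lemma:drcpieri}, the other terms vanish automatically because $\mathcal{S}_{w'}(n+1)$ is an empty sum. Your pairing computation, run over all $v$ with $\maxd(v)\leq n+1$, proves exactly this form of the identity. Delete the converse step and state the restriction explicitly instead.
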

\begin{proof}
  This follows from Proposition \ref{proposition:pullindex} with $i=1$.
\end{proof}

More generally, the product of $\dcoma$ is dual to splitting the variables of $\coma$ into two consecutive blocks. For permutations $w,u,v$ with $\maxd(w)\leq p+q$, $\maxd(u)\leq p$, $\maxd(v)\leq q$, define the \emph{branching coefficients} $b_{u,v}^w(p,q)$ by
\begin{equation}\label{eq:branching}
\sch_w(x_1,\ldots,x_{p+q})=\sum_{u,v}b_{u,v}^w(p,q)\,\sch_u(x_1,\ldots,x_p)\,\sch_v(x_{p+1},\ldots,x_{p+q})\text{;}
\end{equation}
these are well defined since the products $\sch_u(x_1,\ldots,x_p)\sch_v(x_{p+1},\ldots,x_{p+q})$ form a basis of $\mathbb{Z}[x_1,\ldots,x_{p+q}]$.

\begin{lemma}\label{lemma:dualbranch}
With notation as above,
$$\dsch_u^p\sqcupplus\dsch_v^q=\sum_{w}b_{u,v}^w(p,q)\,\dsch_w^{p+q}\text{.}$$
\end{lemma}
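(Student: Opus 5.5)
The plan is to verify the identity by pairing both sides against the Schubert basis $\{\sch_w^{p+q}\}$ of $\coma_{p+q}$. Since the $\dsch_w^{p+q}$ form the dual basis, the coefficient of $\dsch_w^{p+q}$ in $\dsch_u^p\sqcupplus\dsch_v^q$ is $\langle \dsch_u^p\sqcupplus\dsch_v^q,\sch_w^{p+q}\rangle$. It therefore suffices to show that this pairing equals $b_{u,v}^w(p,q)$ for every $w$ with $\maxd(w)\leq p+q$.

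The first step is to record the duality between the product on $\dcoma$ and the coproduct $\Delta$ on $\coma$. For compositions $a\in\mathrm{Comp}_p$, $b\in\mathrm{Comp}_q$ and any $f\in\coma$, we have
$$\langle a\sqcupplus b, f\rangle=\langle a\otimes b,\Delta(f)\rangle,$$
where the pairing on tensors is factorwise. On monomials $f=x_c$ this is immediate from the definitions: $\Delta(x_c)=\sum_{c'c''=c}x_{c'}\otimes x_{c''}$, and only the term with $c'$ of length $p$ and $c''$ of length $q$ pairs nonzero with $a\otimes b$. That term gives $\delta_{a,c'}\delta_{b,c''}=\delta_{ab,c}=\langle ab,x_c\rangle$. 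Bilinearity then extends the identity to arbitrary elements of $\dcoma_p\otimes\dcoma_q$, in particular to $\dsch_u^p\otimes\dsch_v^q$.

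The second step is to compute the component of $\Delta(\sch_w^{p+q})$ lying in $\coma_p\otimes\coma_q$. Under the identification of $\coma_p\otimes\coma_q$ with $\mathbb{Z}[x_1,\ldots,x_{p+q}]$, sending $x_{c'}\otimes x_{c''}\mapsto x_{c'}\,x_{p+1}^{c''_1}\cdots x_{p+q}^{c''_q}$, this component is just $\sch_w(x_1,\ldots,x_{p+q})$ rewritten with its variables split into the two blocks. By \eqref{eq:branching}, it therefore equals
$$\sum_{u',v'}b_{u',v'}^w(p,q)\,\sch_{u'}^p\otimes\sch_{v'}^q,$$
with $\maxd(u')\leq p$ and $\maxd(v')\leq q$. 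The shift $x_{p+j}\mapsto x_j$ in the second factor is what makes $\sch_{v'}(x_{p+1},\ldots,x_{p+q})$ correspond to $\sch_{v'}^q$. Pairing with $\dsch_u^p\otimes\dsch_v^q$ and using $\langle\dsch_u^p,\sch_{u'}^p\rangle=\delta_{u,u'}$ and $\langle\dsch_v^q,\sch_{v'}^q\rangle=\delta_{v,v'}$ picks out $b_{u,v}^w(p,q)$, which finishes the argument.

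No step is a serious obstacle; the only point needing care is bookkeeping. One must check that the components of $\Delta$ in other bidegrees $\coma_j\otimes\coma_{p+q-j}$ with $j\neq p$ pair to zero against $\dcoma_p\otimes\dcoma_q$, which holds because $\dcoma_p$ pairs trivially with $\coma_j$ for $j\neq p$. One must also note that the products in \eqref{eq:branching} do form a basis, which follows because the tensor product of the Schubert bases of $\coma_p$ and $\coma_q$ is a basis of $\coma_p\otimes\coma_q$.
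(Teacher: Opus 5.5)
Your proposal is correct and matches the paper's proof: the paper also checks $\langle\alpha\sqcupplus\beta,f\rangle=\langle\alpha\otimes\beta,\Delta_{p,q}(f)\rangle$ on $[a]$, $[b]$, $x_c$, identifies $\Delta_{p,q}(\sch_w^{p+q})$ with the block-split Schubert polynomial expanded by \eqref{eq:branching}, and reads off $b_{u,v}^w(p,q)$ by pairing with $\dsch_u^p\otimes\dsch_v^q$. Your remark that the components of $\Delta$ in the other bidegrees pair to zero only spells out the paper's restriction to the component $\Delta_{p,q}$.
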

\begin{proof}
For $\alpha\in\dcoma_p$, $\beta\in\dcoma_q$ and $f\in\coma_{p+q}$ we have $\langle\alpha\sqcupplus\beta,f\rangle=\langle\alpha\otimes\beta,\Delta_{p,q}(f)\rangle$, where $\Delta_{p,q}:\coma_{p+q}\to\coma_p\otimes\coma_q$ is the component of $\Delta$; indeed both sides equal $\delta_{ab,c}$ on $\alpha=[a]$, $\beta=[b]$, $f=x_c$. Identifying $\coma_p\otimes\coma_q$ with polynomials in the two blocks of variables, $\Delta_{p,q}(f)=f(x_1,\ldots,x_{p+q})$ read as a polynomial in $(x_1,\ldots,x_p)$ and $(x_{p+1},\ldots,x_{p+q})$, so $\Delta_{p,q}(\sch_w^{p+q})=\sum_{u,v}b_{u,v}^w(p,q)\sch_u^p\otimes\sch_v^q$ by \eqref{eq:branching}. Hence the coefficient of $\dsch_w^{p+q}$ in $\dsch_u^p\sqcupplus\dsch_v^q$ is
\[\langle\dsch_u^p\sqcupplus\dsch_v^q,\sch_w^{p+q}\rangle=\langle\dsch_u^p\otimes\dsch_v^q,\Delta_{p,q}(\sch_w^{p+q})\rangle=b_{u,v}^w(p,q)\text{.}\qedhere\]
\end{proof}

\subsection{The coproduct}

Consider the coproduct dual to the product on $\coma$. We will denote it by
$$\Delta^*:\dcoma\to \dcoma\otimes\dcoma,$$
and we recall that it is characterized by the pairing identity
$$\langle \Delta^*(\alpha), x\otimes y\rangle = \langle \alpha, xy\rangle$$
for all $\alpha\in\dcoma$ and $x,y\in\coma$. Since the product on $\coma$ preserves the length grading (multiplication of polynomials within a fixed $\coma_n$, zero across $\coma_p\otimes\coma_q$ for $p\neq q$ with $p,q>0$), $\Delta^*$ sends $\dcoma_n$ into $\dcoma_n\otimes\dcoma_n$ for each $n$.

The Schubert structure constants now reappear as the coefficients of $\Delta^*$ in the dual Schubert basis.

\begin{theorem}[Schubert LR coefficients as dual Schubert coproduct]\label{theorem:dschcoproduct}
Let $w\in S_\infty$ with $\maxd(w)\leq n$. Then in $\dcoma_n\otimes\dcoma_n$,
$$\Delta^*(\dsch_w^n) \;=\; \sum_{u,v} c_{u,v}^w\, \dsch_u^n\otimes \dsch_v^n,$$
where $c_{u,v}^w$ are the Schubert structure constants $\sch_u^n\sch_v^n=\sum_w c_{u,v}^w\sch_w^n$, and the sum runs over pairs $u,v\in S_\infty$ with $\maxd(u),\maxd(v)\leq n$.
\end{theorem}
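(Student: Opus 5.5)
The plan is to argue by pure duality, in the same way as Lemma~\ref{lemma:dualbranch}. I would compute the coefficients of $\Delta^*(\dsch_w^n)$ by pairing against the Schubert basis. Since $\Delta^*$ maps $\dcoma_n$ into $\dcoma_n\otimes\dcoma_n$, and $\{\dsch_u^n\otimes\dsch_v^n\}$ is a $\mathbb{Z}$-basis of $\dcoma_n\otimes\dcoma_n$, we can write
$$\Delta^*(\dsch_w^n)=\sum_{u,v}a_{u,v}\,\dsch_u^n\otimes\dsch_v^n$$
for unique integers $a_{u,v}$. Here $u,v$ range over permutations with $\maxd(u),\maxd(v)\leq n$. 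These sums are finite in each bidegree because the grading makes every homogeneous component of finite rank. The basis $\{\sch_u^n\otimes\sch_v^n\}$ of $\coma_n\otimes\coma_n$ is dual to $\{\dsch_u^n\otimes\dsch_v^n\}$ under the induced tensor pairing, so $a_{u,v}=\langle\Delta^*(\dsch_w^n),\sch_u^n\otimes\sch_v^n\rangle$.

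Next I would apply the defining pairing identity $\langle\Delta^*(\alpha),x\otimes y\rangle=\langle\alpha,xy\rangle$ with $x=\sch_u^n$ and $y=\sch_v^n$. Both lie in $\coma_n$, so their product in $\coma$ is the ordinary polynomial product. Expanding it gives $\sch_u^n\sch_v^n=\sum_{w'}c_{u,v}^{w'}\sch_{w'}^n$, and therefore
$$a_{u,v}=\Big\langle\dsch_w^n,\sum_{w'}c_{u,v}^{w'}\sch_{w'}^n\Big\rangle=c_{u,v}^w$$
by the duality $\langle\dsch_w^n,\sch_{w'}^n\rangle=\delta_{w,w'}$.

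The only point that needs care is to justify the facts used above. First, the structure constants of $\sch_u^n$ in $\coma_n$ agree with the usual $c_{u,v}^w$. This holds because the product $\sch_u\sch_v$ in $\mathbb{Z}[x_1,x_2,\ldots]$ already lies in $\mathbb{Z}[x_1,\ldots,x_n]$, and its Schubert expansion involves only $w'$ with $\maxd(w')\leq n$, since such Schubert polynomials form a basis of $\mathbb{Z}[x_1,\ldots,x_n]$. Second, the tensor pairing is perfect on each finite-rank bigraded piece, which is what allows us to read off coefficients in the dual basis.

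I expect no real obstacle. The mildest subtlety is bookkeeping: checking that the pairing between $\dcoma_n$ and $\coma_m$ vanishes for $m\neq n$, so that no cross-length terms can contribute, and that $\Delta^*$ is well defined on the graded dual.
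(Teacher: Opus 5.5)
Your proof is correct and follows the paper's argument: expand $\Delta^*(\dsch_w^n)$ in the basis $\{\dsch_u^n\otimes\dsch_v^n\}$, pair with $\sch_u^n\otimes\sch_v^n$, apply the defining identity $\langle\Delta^*(\alpha),x\otimes y\rangle=\langle\alpha,xy\rangle$, and read off $c_{u,v}^w$ by duality. Your two added checks are routine justifications that the paper leaves implicit: that the $n$-variable structure constants coincide with the usual $c_{u,v}^w$, and that cross-length pairings vanish so that $\Delta^*$ maps $\dcoma_n$ into $\dcoma_n\otimes\dcoma_n$.
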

\begin{proof}
Expand $\Delta^*(\dsch_w^n) = \sum_{u,v}\lambda_{u,v}\,\dsch_u^n\otimes\dsch_v^n$ in the basis $\{\dsch_u^n\otimes\dsch_v^n\}$ of $\dcoma_n\otimes\dcoma_n$. Pairing with $\sch_u^n\otimes\sch_v^n$ and using the defining identity of $\Delta^*$,
$$\lambda_{u,v}\;=\;\langle \Delta^*(\dsch_w^n), \sch_u^n\otimes\sch_v^n\rangle\;=\;\langle \dsch_w^n,\sch_u^n\sch_v^n\rangle\;=\;\langle \dsch_w^n,\textstyle\sum_z c_{u,v}^z \sch_z^n\rangle\;=\;c_{u,v}^w.$$
\end{proof}

\begin{remark}
In particular, the Schubert Littlewood--Richardson coefficients are the structure constants of the coproduct $\Delta^*$ on $\dcoma$ in the dual Schubert basis. Thus the otherwise mysterious nonnegativity of $c_{u,v}^w$ has the bialgebra-theoretic content that $\Delta^*$ has nonnegative integer coefficients in the basis $\{\dsch_u^n\}$. The same argument applies verbatim to any basis of $\coma$ with nonnegative structure constants, such as the forest polynomials of \S\ref{section:forest}: the structure constants of $\Delta^*$ in the corresponding dual basis are the structure constants of the original basis.
\end{remark}

\begin{example}\label{example:dschcoproduct}
Take $w=24135\in S_5$ (so $\maxd(w)=2$) and $n=2$. Then in $\dcoma_2\otimes\dcoma_2$,
\begin{align*}
\Delta^*(\dsch_{24135}^2) \;=\;& 1\otimes \dsch_{2413}^2 + \dsch_{2413}^2\otimes 1\\
&{}+ \dsch_{132}^2\otimes \dsch_{1423}^2 + \dsch_{1423}^2\otimes \dsch_{132}^2\\
&{}+ \dsch_{132}^2\otimes \dsch_{231}^2 + \dsch_{231}^2\otimes \dsch_{132}^2\\
&{}+ \dsch_{21}^2\otimes \dsch_{1423}^2 + \dsch_{1423}^2\otimes \dsch_{21}^2.
\end{align*}
By Theorem~\ref{theorem:dschcoproduct}, the coefficient of $\dsch_u^2\otimes\dsch_v^2$ is the Schubert structure constant $c_{u,v}^{24135}$; for instance, $c_{132,\,1423}^{24135} = c_{21,\,1423}^{24135} = 1$, while $c_{u,v}^{24135}=0$ for every pair $(u,v)$ not appearing above.
\end{example}

\section{The ring of bounded RC graphs}\label{section:brc}

\subsection{The module of bounded RC graphs}

\begin{definition}\label{definition:rcgraph}
	Let $R\subseteq \mathbb{P}\times \mathbb{P}$ be a finite set. To each such set we associate an element $\wof{R}$ of $S_\infty$ as follows. Given a pair $(i, j)$ where $i,j>0$ are integers, define $s(i,j) = s_{i+j - 1}$. Totally order the grid such that $(i, j) < (a, b)$ if and only if $i < a$ or $i = a$ and $j > b$ (in other words, lexicographical order, except that the order on the second coordinate is reversed). By this ordering, index $R$ as $r_1, r_2,\ldots, r_m$ in increasing order. Then
	$$\wof{R} = s_{r_1}\cdots s_{r_m}$$
	If $\ell(\wof{R}) = m$, then we say that $R$ is an \emph{RC-graph}.
	
	A \emph{bounded RC-graph} is an RC-graph $R$ together with a specified number of rows $n\geq \max\{i: (i,j)\in R\text{ for some }j\}$, with the added condition that $\maxd(\wof{R})\leq n$. The definition is as an ordered pair $(R, n)$, but it will be far more convenient to abuse notation and consider the number of rows a property of $R$ itself. To denote the number of rows, we define the \emph{height} of $R$ to be $\hr(R)=n$. A bounded RC graph has an associated weight vector $\wtt(R)$ where $\wtt_i(R)$ is the number of elements in row $i$.

	To a bounded RC graph $R$ with $\hr(R)\geq 1$, we define the \emph{trim} operation $\trm(R)$ to be the bounded RC graph with height $\hr(R)-1$ and underlying set $\{(i-1, j): (i,j)\in R, i\geq 2\}$. We also define
	$$\shup^m R=\{(i+m, j)| (i,j)\in R\}$$
\end{definition}

Let $\BRC$ be the free abelian group spanned by all bounded RC graphs. If $\BRC_n$ is the subgroup spanned by all bounded RC graphs $R$ with $\hr(R)=n$, then we have a grading
$$\BRC = \bigoplus_{n=0}^\infty \BRC_n$$
There is an evident evaluation map $\mathrm{ev}: \BRC\to \coma$ defined on basis elements as
$$\mathrm{ev}(R) = x_{\wtt(R)}$$
which is a surjective homomorphism of graded modules. There is also a map $\alpha:\BRC\to\dcoma$ defined by
$$\alpha(R) = \dsch_{\wof{R}}^{\hr(R)}$$
which is also a surjective homomorphism of graded modules. In addition, there is the weight map $\omega:\BRC\to \dcoma$ defined by
$$\omega(R) = [\wtt(R)]$$
which is similarly a surjective homomorphism of graded modules.

We can define elements $\mathcal{S}_w(n)$ as
$$\mathcal{S}_w(n) = \sum_{\substack{\wof{R}=w\\\hr(R)=n}}R$$

For a generating element $[a]$ of $\dcoma$ and a bounded RC graph $R$, we define 
$$[a]\cdot R=\sum_{\substack{\trm(R') = R\\\wtt_1(R') = a}}R'$$
which is an element of $\BRC$. By virtue of the fact that $\dcoma$ is a free algebra generated by these elements, it is nearly a trivial observation that this is a left module action on $\BRC$. The consequences of this, however, are not at all trivial.

\begin{lemma}[{\cite[Corollary~3.11]{bbrc}}] \label{lemma:d1desc}
We have that the set $\{w'\mid w\downvar{1} w'\}$ is equal to the set of permutations $w'$ such that there exists a permutation $v=s_{a_1}\cdots s_{a_m}$ for some integers $a_1>a_2>\cdots>a_m\geq 1$ such that $w =v\shup{w'}$ and $\ell(w)=\ell(v)+\ell(w')$.
\end{lemma}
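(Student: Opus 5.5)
The plan is to unwind the definition of $\downvar{1}$ into an explicit statement about permutations and match it with the factorization $w=v\shup{w'}$.

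\textbf{Unwinding.} Fix $n$ with $w\in S_n$ and write $\gamma$ for the permutation acting on values by $1\mapsto n+1$, $k\mapsto k-1$ for $2\leq k\leq n+1$, fixing everything larger. A direct comparison with Definition~\ref{definition:polysequence} gives $\varphi_{1,n}(w')=\gamma\,\shup{w'}$. Since $\shup{w'}(1)=1$ and $\gamma$ only moves that first value to $n+1$, we get $\ell(\gamma\shup{w'})=n+\ell(w')$.

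\textbf{Characterizing $\Tom{1}$.} With $k=1$, every $a_i$ equals $1$. So $w\Tom{1}u$ means that $u=wt_{1b_1}\cdots t_{1b_m}$ with the $b_i$ distinct and each step a Bruhat cover. Follow the value in position $1$: it runs through $c_0=w(1)<c_1<\cdots<c_m$. Each cover condition says that no value strictly between $c_{i-1}$ and $c_i$ sits in the current positions $2,\ldots,b_i-1$. Tracking the values shows that $u=\zeta w$, where $\zeta$ is the value-cycle $c_0\mapsto c_m$, $c_i\mapsto c_{i-1}$. Distinctness of the $b_i$ means each $c_i$ ($i\geq1$) is displaced exactly once.

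\textbf{Forward direction.} Suppose $w\downvar{1}w'$, so $u=\gamma\shup{w'}$ and $c_m=u(1)=n+1$. Put $v=\zeta^{-1}\gamma$, so that $w=v\shup{w'}$. Lengths then give
\[
\ell(w)=\ell(u)-m=(n-m)+\ell(w').
\]
It remains to show that $v$ has a reduced word $s_{a_1}\cdots s_{a_{n-m}}$ with strictly decreasing indices, and that $\ell(v)=n-m$, which makes the factorization length-additive. Compute $v$ on values: it sends $1\mapsto c_0$, and it sends $k\mapsto c_{i+1}$ when $k-1=c_i$ (with $i<m$), and $k\mapsto k-1$ otherwise for $2\leq k\leq n+1$. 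The claim is that $v$ is a product of disjoint cycles $(p\;p{+}1\;\cdots\;q)$ on consecutive blocks, oriented so that each equals $s_{q-1}s_{q-2}\cdots s_p$. These are exactly the permutations admitting a strictly decreasing reduced word. Counting fixed points, the total length is $n-m$.

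\textbf{Converse.} Given $w=v\shup{w'}$ with $v=s_{a_1}\cdots s_{a_r}$ decreasing and lengths additive, reverse the computation. Read off the blocks of $v$, set $\zeta=\gamma v^{-1}$, and check that $\zeta$ is a single cycle $c_0\mapsto c_m\mapsto c_{m-1}\mapsto\cdots$ with $c_m=n+1$ and $c_0=w(1)$. Then exhibit the chain $w\to wt_{1b_1}\to\cdots\to\gamma\shup{w'}$, where $b_i$ is the position of $c_i$ in $w$.

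\textbf{Main obstacle.} The delicate point is matching the cover conditions (no intermediate values between $c_{i-1}$ and $c_i$ in positions before $b_i$) with reducedness of the product $v\cdot\shup{w'}$. My first attempt will be an inversion-set argument: $\ell(v\shup{w'})=\ell(v)+\ell(\shup{w'})$ iff no inversion of $v$ is undone by $\shup{w'}$. For a consecutive-block cycle $s_{q-1}\cdots s_p$ this says that the values $p,\ldots,q$ must appear in $\shup{w'}$ in increasing order of position, apart from the one moved value. I expect this to translate exactly, block by block, into the "no intermediate value" condition for the corresponding transpositions $t_{1b_i}$.
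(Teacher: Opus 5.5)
Your argument is correct. The paper gives no proof of this lemma; it imports it from \cite[Corollary~3.11]{bbrc}.

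The closest thing in the paper is the proof of Lemma~\ref{lemma:factor}. It performs your forward computation in the form $w\,t_{1b_1}\cdots t_{1b_m}=s_n\cdots s_1\shup{w'}$ (note $\gamma=s_n\cdots s_1$), and then asserts that the transpositions delete letters from $s_n\cdots s_1$.

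Your formulation $v=\zeta^{-1}\gamma$, with its explicit block decomposition, is cleaner and self-contained, and it gives a little more:
\begin{itemize}
\item The letters of $v$ are $\{1,\ldots,n\}\setminus\{c_0,\ldots,c_{m-1}\}$, which is exactly $Q_1(w',w)$. So Lemma~\ref{lemma:factor} falls out as a byproduct.
\item Your converse proves the inclusion that the paper only cites.
\item Your forward half uses no cover conditions, only value tracking and the count $\ell(w)=\ell(\gamma\shup{w'})-m$.
\end{itemize}

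The step you flag as the main obstacle in the converse is not one: the same length count closes it. At step $t$ you multiply $Y_{t-1}$ on the right by $t_{1b_t}$, where $Y_{t-1}(1)=c_{t-1}<c_t=Y_{t-1}(b_t)$. The length therefore increases by $1+2N_t$, where $N_t\geq 0$ counts the values strictly between $c_{t-1}$ and $c_t$ in positions $2,\ldots,b_t-1$ of $Y_{t-1}$. The total increase is
\[
\ell(\gamma\shup{w'})-\ell(w)=(n+\ell(w'))-(\ell(v)+\ell(w'))=n-\ell(v)=m,
\]
which equals the number of steps. Hence every $N_t=0$ and every step is a cover. This is exactly where the hypothesis $\ell(w)=\ell(v)+\ell(w')$ enters, and no inversion-set analysis is needed.

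If you do pursue the inversion-set route, your stated condition is off. The correct condition for $\ell(s_{q-1}\cdots s_p\,y)=(q-p)+\ell(y)$ is that the value $p$ precedes each of $p+1,\ldots,q$ in $y$, not that $p,\ldots,q$ appear in increasing order.

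One bookkeeping point. To read off the blocks of $v$ inside $[1,n+1]$, and to have $\varphi_{1,n}(w')$ defined, you need $a_1\leq n$ and $w'\in S_n$. Both hold because in a length-additive factorization of $w\in S_n$ both factors lie in $S_n$, since every reduced word of $w$ uses only $s_1,\ldots,s_{n-1}$.
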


\begin{lemma}\label{lemma:factor}
Let $v$ be a permutation. If $v\downvar{1} v'$, let $a_1,\ldots, a_k$ be the elements of $Q_1(v',v)$ in decreasing order. Then
$$v=s_{a_1}\cdots s_{a_k}\shup{v'}$$
\end{lemma}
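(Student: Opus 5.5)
The plan is to reduce the statement to Lemma~\ref{lemma:d1desc} and then identify the factor explicitly. Since $v\downvar{1} v'$, Lemma~\ref{lemma:d1desc} gives integers $b_1>b_2>\cdots>b_m\geq 1$ such that, with $c=s_{b_1}\cdots s_{b_m}$, we have $v=c\,\shup{v'}$ and $\ell(v)=\ell(c)+\ell(v')$. It then suffices to prove that $\{b_1,\ldots,b_m\}=Q_1(v',v)$. Once this holds, listing both sets in decreasing order gives $m=k$ and $b_i=a_i$ for all $i$, and hence $v=s_{a_1}\cdots s_{a_k}\shup{v'}$.

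The first step is to compute the action of a strictly decreasing product $c=s_{b_1}\cdots s_{b_m}$ on values. Left multiplication acts on values, so $v(j)=c(\shup{v'}(j))$, and $s_{b_m}$ is applied first. Write $B=\{b_1,\ldots,b_m\}$ and decompose $B$ into maximal runs of consecutive integers. I will check by a short induction on $m$, peeling off $s_{b_1}$, the following description of $c$:
\begin{itemize}
\item if $y-1\in B$, then $c(y)=y-1$;
\item if $y\in B$ but $y-1\notin B$, then $y$ is the bottom of a run $\{y,\ldots,r\}\subseteq B$, and $c(y)=r+1$;
\item otherwise $c(y)=y$.
\end{itemize}
For example, $s_3s_2$ sends $4\mapsto 3$, $3\mapsto 2$ and $2\mapsto 4$. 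In particular, $c(y)=y-1$ holds if and only if $y-1\in B$; in every other case $c(y)\geq y$.

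The second step computes $Q_1(v',v)$. For $j>1$ we have $\shup{v'}(j)=v'(j-1)+1$, so $v(j)=c(v'(j-1)+1)$. The defining condition $v'(j-1)=v(j)$ therefore reads $c(y)=y-1$ with $y=v'(j-1)+1$. By the first step this holds exactly when $y-1=v'(j-1)\in B$, and in that case $v(j)=v'(j-1)$. Hence
$$Q_1(v',v)=\{\,v'(j-1) : j>1,\ v'(j-1)\in B\,\}.$$
As $j$ ranges over all integers $j>1$, the value $v'(j-1)$ ranges over all positive integers, because $v'$ is a permutation of $\mathbb{P}$. Therefore $Q_1(v',v)=B$, which completes the argument.

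I expect the main obstacle to be the bookkeeping in the first step: the conventions for left multiplication must be handled carefully so that $c$ really sends $y$ to $y-1$ exactly when $y-1\in B$. A second point to verify is that Lemma~\ref{lemma:d1desc} applies to the given $v'$, and not only to some member of the set it describes. The lemma asserts an equality of sets, so the given $v'$ does lie in the set and therefore admits such a factorization. Moreover, the factor $c=v(\shup{v'})^{-1}$ is determined by $v$ and $v'$, so no ambiguity arises. The length additivity from Lemma~\ref{lemma:d1desc} is not needed beyond guaranteeing that the $b_i$ are strictly decreasing.
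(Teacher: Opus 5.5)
Your argument is correct, but it takes a different route from the paper's.

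\textbf{The paper's route.} The paper does not invoke Lemma~\ref{lemma:d1desc}. It works directly from the definition of $v\downvar{1}v'$ as a $\Tom{1}$-chain, $v\,t_{1,b_1}\cdots t_{1,b_p}=\varphi_{1,n}(v')=s_ns_{n-1}\cdots s_1\shup{v'}$. It then moves the transpositions to the other side as value transpositions $t_{1,v'(b_j-1)+1}$. Right multiplication of the cycle $s_n\cdots s_1$ by these transpositions deletes the letters $s_{v'(b_j-1)}$. Finally, the surviving letters are matched with $Q_1(v',v)$ by comparing the positions $j>1$ that the chain does not move.

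\textbf{Your route.} You take the existence of a strictly decreasing factorization $v=c\,\shup{v'}$ from the cited Bergeron--Billey result. You then recover its index set intrinsically from the run description of $c$, as $\{y: c(y+1)=y\}$, and show that this set is $Q_1(v',v)$.

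\textbf{What each buys.} Your version is fully rigorous at the step the paper only sketches, namely which letters of the long cycle are deleted and in what order. It also proves slightly more: any strictly decreasing product $c$ with $v=c\,\shup{v'}$ has index set exactly $Q_1(v',v)$. This uniqueness form is what the proof of Theorem~\ref{theorem:modactrc} actually uses, and the paper leaves it implicit. In exchange, the paper's argument is self-contained from the definition of $\downvar{1}$ and in effect reproves the relevant half of Lemma~\ref{lemma:d1desc}, whereas yours depends on that external result for existence.

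\textbf{A small correction.} The strict decrease $b_1>\cdots>b_m$ is part of the statement of Lemma~\ref{lemma:d1desc}; it is not a consequence of length additivity. The length condition plays no role in your argument at all.
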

\begin{proof}
	Suppose $v\in S_{n}$. We have by definition that there is a sequence of integers $b_1,\ldots,b_p$, all distinct and greater than $1$, such that
	$$vt_{1,b_1}\cdots t_{1, b_p}(1) = n + 1$$
	and
	$$vt_{1,b_1}\cdots t_{1, b_p}(i+1) = v'(i)$$
	for all $i<n-1$. This means that
	$$vt_{1,b_1}\cdots t_{1, b_p} = s_ns_{n-1}\cdots s_1\shup{v'}$$
	This is because if $v''=\shup{v'}$, then
	$$v''(1)=1$$
	and
	$$v''(i) = v(i-1)+1$$
	The cycle $s_n\cdots s_1$ sends $1\mapsto n+1$ and $i\mapsto i-1$ if $1<i\leq n+1$, hence
	$$vt_{1,b_1}\cdots t_{1, b_p} = s_ns_{n-1}\cdots s_1\shup{v'}$$
	In particular,
	$$v = s_ns_{n-1}\cdots s_1\shup{v'}t_{1,b_p}\cdots t_{1, b_1}$$
	This results in the factorization
	$$v = s_n\cdots s_1t_{1,v'(b_p-1)+1}\cdots t_{1,v'(b_1-1)+1}\shup{v'}$$
	The value $v'(b_j-1)$ necessarily decreases as $j$ decreases, since applying the corresponding reflection in the reverse order strictly increases the length with each application.  Consequently, multiplying $s_n\cdots s_1$ on the right by these reflections removes the simple reflections $s_{v'(b_p-1)},\ldots, s_{v'(b_1-1)}$. The indices removed are precisely the complement of the elements of $Q_1(v',v)$, hence the elements of $Q_1(v',v)$ in decreasing order are what remain, as desired.
\end{proof}

\begin{definition}
For a set of positive integers $A=\{a_1,\ldots, a_m\}$ with $a_1>a_2>\cdots > a_m\geq 1$ and a positive integer $i$, define
$$\mathrm{row}_i(A) = \{(i, a_j) \mid a_j\in A\}$$
\end{definition}

\begin{theorem} \label{theorem:modactrc}
Let $a\geq 0$ be an integer and let $R\in \BRC$. Then
$$[a]\cdot R = \sum_{\substack{w'\downvar{1}\wof{R}\\\ell(\wof{R},w')=a}} \left(\mathrm{row}_1(Q_1(\wof{R},w'))\cup \shup{R}\right)$$
where the right side denotes bounded RC graphs with $\hr(R)+1$ rows.
\end{theorem}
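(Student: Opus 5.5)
The plan is to identify the bounded RC graphs $R'$ on the left-hand side explicitly. Write $n=\hr(R)$. A bounded RC graph $R'$ with $\hr(R')=n+1$, $\trm(R')=R$ and $\wtt_1(R')=a$ is forced to have underlying set
$$R'=\mathrm{row}_1(A)\cup\shup{R}$$
for some set $A=\{a_1>\cdots>a_a\}$ of positive integers. So $[a]\cdot R$ is the sum of $\mathrm{row}_1(A)\cup\shup R$ over those $a$-element sets $A$ for which this set is a bounded RC graph of height $n+1$. It therefore suffices to show that these $A$ are exactly the sets $Q_1(\wof R,w')$ with $w'\downvar{1}\wof R$ and $\ell(\wof R,w')=a$, each occurring exactly once.

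First I would compute the reading word of $\mathrm{row}_1(A)\cup\shup R$ from Definition~\ref{definition:rcgraph}.
\begin{itemize}
\item Row $1$ comes first in the total order, with columns read in decreasing order. Since $s(1,j)=s_j$, row $1$ contributes $s_{a_1}s_{a_2}\cdots s_{a_a}$.
\item The cell $(i+1,j)$ of $\shup R$ contributes $s_{i+j}$, which is the generator of the cell $(i,j)$ of $R$ shifted by one. The relative order of the cells of $\shup R$ is unchanged, so they contribute a word for $\shup{\wof R}$, and it is reduced because $R$ is an RC graph.
\end{itemize}
Hence
$$\wof{R'}=s_{a_1}\cdots s_{a_a}\,\shup{\wof R},$$
and $R'$ is an RC graph if and only if $\ell(\wof{R'})=a+\ell(\wof R)$.

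By Lemma~\ref{lemma:d1desc}, applied with $w=\wof{R'}$ and $w'=\wof R$, the permutations that arise this way are precisely those $w''$ with $w''\downvar{1}\wof R$. For these, $\ell(\wof R,w'')=a$. Conversely, given such a $w''$, Lemma~\ref{lemma:factor} supplies the factorization
$$w''=s_{b_1}\cdots s_{b_k}\,\shup{\wof R}\qquad\text{with }\{b_1>\cdots>b_k\}=Q_1(\wof R,w''),$$
so $\mathrm{row}_1(Q_1(\wof R,w''))\cup\shup R$ is an RC graph. It remains to show the correspondence $A\leftrightarrow w''$ is a bijection. The set $A$ is determined by $w''$: the element $s_{a_1}\cdots s_{a_a}=w''(\shup{\wof R})^{-1}$ is a product of distinct simple reflections, and its support is $A$. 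Its support is an invariant of the element, for instance as the set of $i$ with $v(\{1,\ldots,i\})\neq\{1,\ldots,i\}$. So $A=Q_1(\wof R,w'')$, and each $w''$ contributes exactly one term. This matches the two sums.

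The step I expect to be the main obstacle is the boundedness condition $\maxd(\wof{R'})\leq n+1$. Nothing above controls right descents of $s_{a_1}\cdots s_{a_a}\shup{\wof R}$ when some $a_j$ is large. I would handle it through the definition of $\downvar{1}$. If $w''\downvar{1}\wof R$, then $w''\Tom{1}\varphi_{1,n}(\wof R)$, so $w''$ is obtained from $\varphi_{1,n}(\wof R)$ by undoing length-increasing transpositions $t_{1,b}$. One then checks that every entry in a position greater than $n+1$ is a fixed point, which gives $\maxd(w'')\leq n+1$. The same computation should also show that $a_j\leq n$ throughout, in the form of the inclusion $Q_1\subseteq\{1,\ldots,n\}$ implicit in the proof of Lemma~\ref{lemma:factor}. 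Conversely, when the boundedness condition holds on the left, the same inclusion shows that $\wof{R'}$ satisfies the hypotheses of the $\Tom{1}$-relation. If this direct route proves messy, a fallback is to compare both sides after applying $\alpha$: Lemma~\ref{lemma:dpieri} identifies $[a]\cdot\dsch_{\wof R}^n$ with the bounded sum, and a degree and variable count via $\mathrm{ev}$ then rules out unbounded terms.
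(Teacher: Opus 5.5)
Your core argument is correct and is the paper's proof, written out more fully. The reading word gives $\wof{R'}=s_{a_1}\cdots s_{a_m}\,\shup{\wof R}$. Lemma~\ref{lemma:d1desc} turns reducedness into $\wof{R'}\downvar{1}\wof R$, and Lemma~\ref{lemma:factor} identifies row $1$ with $Q_1(\wof R,\wof{R'})$. Your support argument also makes explicit the uniqueness of the first-row set, which the paper leaves tacit.

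The paragraph on boundedness, however, is wrong. You try to show that $\maxd(w'')\le\hr(R)+1$ holds automatically for every $w''\downvar{1}\wof R$, and that is false. Take $R=\emptyset$ with $\hr(R)=0$ and $a=1$. Since $s_2=s_2\cdot\shup{\mathrm{id}}$, Lemma~\ref{lemma:d1desc} gives $s_2\downvar{1}\mathrm{id}$, with $Q_1(\mathrm{id},s_2)=\{2\}$. But $\{(1,2)\}$ is not a bounded RC graph of height $1$: by Lemma~\ref{lemma:lastrow} the only one with a single crossing is $\{(1,1)\}$.

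Your route fails because the $n$ in $\varphi_{1,n}$ is the size of a symmetric group containing the larger permutation $w''$, not $\hr(R)$. With $n=\hr(R)$ the other claims break down too:
\begin{itemize}
\item $\varphi_{1,n}(\wof R)$ need not even be a permutation (e.g.\ $\wof R=s_2$ at height $2$).
\item Positions beyond $n+1$ of a bounded $w''$ are only increasing, not fixed.
\item The inclusion $Q_1\subseteq\{1,\ldots,n\}$ already fails for $\mathrm{row}(a)=[a]\cdot\emptyset$ at height $0$.
\end{itemize}
The fallback through $\alpha$ cannot work either. The unbounded terms genuinely occur in the unrestricted sum, so no degree or variable count can eliminate them. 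Moreover, comparing $[a]\cdot R$ with $[a]\cdot\alpha(R)$ presupposes that the action on $\BRC$ is compatible with $\alpha$, which in the paper is a consequence of this very theorem.

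The fix is to match boundedness on both sides rather than prove it. The clause ``the right side denotes bounded RC graphs with $\hr(R)+1$ rows'' restricts the sum to those $w'$ with $\maxd(w')\le\hr(R)+1$, just as $\dsch_{w'}^{n+1}$ implicitly does in Lemma~\ref{lemma:dpieri}. Your bijection $A\leftrightarrow w''$ satisfies $\wof{\mathrm{row}_1(A)\cup\shup R}=w''$. So a left-hand term is bounded at height $\hr(R)+1$ exactly when the corresponding right-hand term is. Replace your boundedness paragraph with this observation (and drop the unnecessary converse about the $\Tom{1}$ hypotheses, which Lemma~\ref{lemma:d1desc} already supplies), and the proof is complete. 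The paper's proof tacitly does the same.
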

\begin{proof}
	If $\trm(R') = R$, then by definition $\wof{R'} = s_{a_1}\cdots s_{a_m}\shup \wof{R}$. It follows by Lemma~\ref{lemma:d1desc} then $\wof{R'}\downvar{1}\wof{R}$. By Lemma~\ref{lemma:factor}, the integers $a_1,\ldots, a_m$ are precisely the elements of $Q_1(\wof{R},\wof{R'})$ in decreasing order. This establishes the result.
\end{proof}

\begin{lemma} \label{lemma:drcpieri}
	Let $a\geq 0$ be an integer and let $w\in S_\infty$. For any valid $n$, we have
	$$[a]\cdot \mathcal{S}_w(n) = \sum_{\substack{w\downvar{1} w'\\\ell(w,w')=a}} \mathcal{S}_{w'}(n+1)$$
\end{lemma}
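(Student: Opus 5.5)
The identity should follow from Theorem~\ref{theorem:modactrc} by expanding $\mathcal{S}_w(n)$, applying it term by term, and regrouping the result according to the permutation of the resulting $(n+1)$-row graph. By linearity of the action,
$$[a]\cdot\mathcal{S}_w(n)=\sum_{\substack{\wof{R}=w\\ \hr(R)=n}}[a]\cdot R=\sum_{\substack{\wof{R}=w\\ \hr(R)=n}}\ \sum_{\substack{\trm(R')=R\\ \wtt_1(R')=a}}R'\text{.}$$
Since $\trm(R')$ determines $R$, every bounded RC graph $R'$ of height $n+1$ appears at most once on the right. It appears exactly when $\wof{\trm(R')}=w$ and $\wtt_1(R')=a$. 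So the left side is the sum of all such $R'$ with coefficient $1$.

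Next I group these $R'$ by $w'=\wof{R'}$. By Theorem~\ref{theorem:modactrc}, $R'=\mathrm{row}_1(Q_1)\cup\shup{R}$, and by Lemma~\ref{lemma:factor} together with Lemma~\ref{lemma:d1desc},
$$w'=s_{a_1}\cdots s_{a_m}\,\shup{w},\qquad a_1>\cdots>a_m,\qquad \ell(w')=\ell(w)+m\text{.}$$
So $w$ and $w'$ are related by the one-variable relation $\downvar{1}$ in the sense used in the statement, and the length difference is $m=\wtt_1(R')=a$, which is the condition $\ell(w,w')=a$. Conversely, for each such pair $(w,w')$, Lemma~\ref{lemma:factor} produces a first row $\mathrm{row}_1(Q_1)$ with exactly $a$ entries. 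Hence every permutation indexing the right side of the lemma actually occurs on the left.

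It remains to show that for each indexing $w'$ the whole of $\mathcal{S}_{w'}(n+1)$ is recovered, each graph exactly once. That is, every $R'\in\RC_{n+1}(w')$ must satisfy $\wtt_1(R')=a$ and $\wof{\trm(R')}=w$. For this I would use the reading-word factorization: the first row of $R'$ contributes a strictly decreasing product $s_{b_1}\cdots s_{b_k}$, and $\wof{R'}=s_{b_1}\cdots s_{b_k}\,\shup{\wof{\trm(R')}}$ is length-additive. By Lemma~\ref{lemma:d1desc} this means $w'$ is related by $\downvar{1}$ to $\wof{\trm(R')}$. Lemma~\ref{lemma:factor} then says the first-row set is forced to be $Q_1$ of that pair.

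This last step is the main obstacle. One must rule out that some $R'\in\RC_{n+1}(w')$ has a trim whose permutation is different from $w$, and hence a different first-row length. For this I would compare the expansion after applying $\alpha$ with Lemma~\ref{lemma:dpieri}, which gives the same index set $\{w'\}$ with multiplicity one, and use the injectivity of $R'\mapsto(\trm(R'),\wtt_1(R'))$ established above to match the two sides term by term. If the length-additive factorization does not pin down the first row uniquely, this comparison is where the argument has to be repaired.
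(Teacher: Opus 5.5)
The step you single out as the main obstacle is not merely hard: it is false, and with it the lemma as stated.

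Take $w=s_1$, $n=1$, $a=0$. Then $\mathcal{S}_{s_1}(1)=\{(1,1)\}$, and $[0]\cdot\mathcal{S}_{s_1}(1)$ is the single height-$2$ graph $\{(2,1)\}$, of permutation $s_2$. By Lemma~\ref{lemma:d1desc}, the only $w'$ with $w'\downvar{1}s_1$ and $\ell(w')=\ell(s_1)$ is $w'=\shup s_1=s_2$. So the right-hand side is $\mathcal{S}_{s_2}(2)=\{(1,2)\}+\{(2,1)\}$.

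The extra graph $\{(1,2)\}\in\RC_2(s_2)$ has one crossing in row $1$ and a trim of permutation $\mathrm{id}$. It therefore occurs in $[1]\cdot\mathcal{S}_{\mathrm{id}}(1)$, not here. Applying $\mathrm{ev}$ shows the mismatch as $x_2$ versus $\sch_{s_2}(x_1,x_2)=x_1+x_2$.

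In general the RC graphs of a single $w'$ have first rows of different lengths and trims of different permutations. So $\mathcal{S}_{w'}(n+1)$ is spread over several products $[\ell(v,w')]\cdot\mathcal{S}_v(n)$. The relation in the statement must also be read as $w'\downvar{1}w$, as in Lemma~\ref{lemma:dpieri} and Theorem~\ref{theorem:modactrc}. Read literally as $w\downvar{1}w'$, the index set is empty in this example, so that reading fails as well.

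Your proposed repair through $\alpha$ cannot close the gap. Since $\ker\alpha=\Omega$ (Theorem~\ref{theorem:semidirect}), an identity in $\dcoma$ says nothing about which RC graphs of a given permutation occur. Here the two sides do not even agree after applying $\alpha$. On one side, $\alpha([a]\cdot\mathcal{S}_w(n))=|\RC_n(w)|\sum_{w'}\dsch_{w'}^{n+1}$, consistent with Lemma~\ref{lemma:dpieri}. On the other, $\alpha(\mathcal{S}_{w'}(n+1))=|\RC_{n+1}(w')|\,\dsch_{w'}^{n+1}$. In the example this is $\dsch_{s_2}^2$ against $2\dsch_{s_2}^2$.

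What your first two paragraphs do establish, correctly, is the restricted identity
$$[a]\cdot\mathcal{S}_w(n)=\sum_{\substack{w'\downvar{1}w\\ \ell(w,w')=a}}\ \sum_{\substack{R'\in\RC_{n+1}(w')\\ \wof{\trm(R')}=w}}R'\text{.}$$
Read the other way, it says that each $R'\in\RC_{n+1}(w')$ occurs in exactly one product $[\ell(v,w')]\cdot\mathcal{S}_v(n)$, namely for $v=\wof{\trm(R')}$.

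The paper's own proof is precisely your first two paragraphs: Theorem~\ref{theorem:modactrc} summed over $R\in\RC_n(w)$. It tacitly takes the flagged step for granted, so it too only yields this restricted form. The statement needs to be corrected, not proved.
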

\begin{proof}
Theorem \ref{theorem:modactrc} establishes an explicit bijection between $[a]\cdot R$ and the set of bounded RC graphs $R'$ such that $\wof{R'}\downvar{1}\wof{R}$ and $\ell(\wof{R},\wof{R'})=a$. The result follows by summing over all $R$ such that $\wof{R}=w$ and $\hr(R)=n$.
\end{proof}

Let $t$ be an indeterminate commuting with all elements of $\dcoma$. Write
$$\sch_\dcoma(t) = \sum_{a=0}^\infty [a]t^a$$
Define
$$\sch_\dcoma(x_1,x_2,\ldots,x_n) = \sch_\dcoma(x_1)\sch_\dcoma(x_2)\cdots \sch_\dcoma(x_n)$$

\begin{theorem}
	Let $\emptyset\in \BRC$ denote the empty bounded RC graph with $\hr(\emptyset)=0$. Then
  $$\sch_\dcoma(x_1,\ldots,x_n)\cdot \emptyset=\sum_{\substack{w\in S_\infty\\ \maxd{w}\leq n}}\sch_w(x_1,\ldots,x_n)\mathcal{S}_w(n)$$
\end{theorem}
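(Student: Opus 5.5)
The plan is to induct on $n$, peeling off the leftmost factor $\sch_\dcoma(x_1)$. At each step the module action of Lemma~\ref{lemma:drcpieri} is matched against the variable-extraction formula of Proposition~\ref{proposition:pullindex} at $i=1$ with $y=0$.

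\textbf{Base case.} For $n=0$ the left side is $\emptyset$. On the right, the only permutation with $\maxd(w)\leq 0$ is the identity, and $\mathcal{S}_{\mathrm{id}}(0)=\emptyset$ with $\sch_{\mathrm{id}}=1$. So the two sides agree.

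\textbf{Inductive step.} Since $\sch_\dcoma(x_1,\ldots,x_n)=\sch_\dcoma(x_1)\,\sch_\dcoma(x_2,\ldots,x_n)$, the module axiom gives
$$\sch_\dcoma(x_1,\ldots,x_n)\cdot\emptyset=\sum_{a\geq 0}x_1^a\,[a]\cdot\bigl(\sch_\dcoma(x_2,\ldots,x_n)\cdot\emptyset\bigr).$$
The identity to be proved is formal in the names of the variables, so the induction hypothesis applies with $x_1,\ldots,x_{n-1}$ renamed $x_2,\ldots,x_n$. This gives
$$\sch_\dcoma(x_2,\ldots,x_n)\cdot\emptyset=\sum_{\maxd(v)\leq n-1}\sch_v(x_2,\ldots,x_n)\,\mathcal{S}_v(n-1).$$
Next I apply Lemma~\ref{lemma:drcpieri} in the orientation established by Theorem~\ref{theorem:modactrc}. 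Prepending a first row to an RC graph of permutation $v$ produces RC graphs of permutations $w$ with $w\downvar{1}v$ and $\ell(v,w)=a$. Hence $[a]\cdot\mathcal{S}_v(n-1)=\sum_{w\downvar{1}v,\ \ell(v,w)=a}\mathcal{S}_w(n)$.

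Substituting and collecting the coefficient of each $\mathcal{S}_w(n)$ gives
$$\sum_{w}\Bigl(\sum_{w\downvar{1}v}x_1^{\ell(w)-\ell(v)}\,\sch_v(x_2,\ldots,x_n)\Bigr)\mathcal{S}_w(n).$$
Proposition~\ref{proposition:pullindex} with $i=1$ and $y=0$ reads $\sch_w(x)=\sum_{w\downvar{1}v}\sch_v(x^{(1)})\,x_1^{|Q_1(v,w)|}$. By Lemma~\ref{lemma:factor}, $w=s_{a_1}\cdots s_{a_k}\shup{v}$ is length-additive with $k=|Q_1(v,w)|$, so $|Q_1(v,w)|=\ell(w)-\ell(v)$. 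Therefore the inner sum is exactly $\sch_w(x_1,\ldots,x_n)$, which completes the induction.

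\textbf{Bookkeeping.} Two things need checking so that the index sets match on both sides.
\begin{enumerate}
\item The terms $\sch_v(x^{(1)})$ in the extraction formula have $v$ with $\maxd(v)\leq n-1$ whenever $\maxd(w)\leq n$. This holds because $\maxd(\shup v)=\maxd(v)+1$ and the left factor $s_{a_1}\cdots s_{a_k}$ has strictly decreasing indices, so it cannot lower the last descent of $w$.
\item Conversely, every $w$ produced from such a $v$ satisfies $\maxd(w)\leq n$. This is the boundedness condition built into $\BRC_n$, and it is automatic from Theorem~\ref{theorem:modactrc}.
\end{enumerate}
With both checks, the sums over $w$ and over $v$ range over the same sets as in the statement.

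\textbf{Main obstacle.} The delicate step is reconciling the orientation of $\downvar{1}$. Lemma~\ref{lemma:drcpieri} is written with $w\downvar{1}w'$, while Theorem~\ref{theorem:modactrc} and Proposition~\ref{proposition:pullindex} use the opposite orientation. I would re-derive the action directly from Theorem~\ref{theorem:modactrc} and Lemma~\ref{lemma:factor}, rather than quoting Lemma~\ref{lemma:drcpieri} verbatim. This also confirms that the exponent of $x_1$ equals the size of the prepended first row, namely $|Q_1|$.
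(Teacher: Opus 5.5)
Your skeleton is the paper's own: induct on $n$, peel off $\sch_\dcoma(x_1)$, and match the row-prepending action against Proposition~\ref{proposition:pullindex} at $i=1$, $y=0$. You are also right that Lemma~\ref{lemma:drcpieri} is printed with $\downvar{1}$ reversed.

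The genuine gap is the ``Hence'' that follows. The identity
\[
[a]\cdot\mathcal{S}_v(n-1)=\sum_{w\downvar{1}v,\ \ell(v,w)=a}\mathcal{S}_w(n)
\]
does not follow from Theorem~\ref{theorem:modactrc}, and it is false. Theorem~\ref{theorem:modactrc} says that for a fixed $R$ with $\wof{R}=v$ and a fixed admissible $w$, $[a]\cdot R$ contains exactly one graph of permutation $w$, namely $\mathrm{row}_1(Q_1(v,w))\cup\shup R$. Summing over $R\in\RC_{n-1}(v)$ therefore produces only those $R'\in\RC_n(w)$ with $\wof{\trm(R')}=v$. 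But $\mathcal{S}_w(n)$ contains every element of $\RC_n(w)$, and these have trims of various permutations.

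For instance, $[1]\cdot\mathcal{S}_{\mathrm{id}}(1)=\{(1,1)\}+\{(1,2)\}$ at height $2$. By contrast, $\mathcal{S}_{s_1}(2)+\mathcal{S}_{s_2}(2)$ also contains $\{(2,1)\}$, whose trim has permutation $s_1$. Done correctly, applying $\sch_\dcoma(x_1)$ to your induction hypothesis gives each $R'$ the coefficient $x_1^{\wtt_1(R')}\sch_{\wof{\trm(R')}}(x_2,\ldots,x_n)$, not $\sch_{\wof{R'}}(x_1,\ldots,x_n)$. So the induction does not close.

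The paper's proof makes the same move when it applies the Pieri lemma to $[a]\,\mathcal{S}_w(n-1)$, so this is not a defect of your route. In fact the identity as displayed is false. Unwinding the definitions (using Lemma~\ref{L:trim} for boundedness of trims) gives
\[
\sch_\dcoma(x_1,\ldots,x_n)\cdot\emptyset=\sum_{R\in\RC_n}x^{\wtt(R)}R.
\]
For $n=2$, the coefficient of $\{(2,1)\}$ is $x_2$ on the left but $\sch_{s_2}(x_1,x_2)=x_1+x_2$ on the right.

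What your ingredients do prove is the scalar version. The map $R'\mapsto\trm(R')$ is a bijection from $\RC_n(w)$ onto $\bigsqcup_{w\downvar{1}v}\RC_{n-1}(v)$, and the weight of $R'$ is $\wtt(\trm(R'))$ with first entry $\ell(w)-\ell(v)$ prepended. Induction together with Proposition~\ref{proposition:pullindex} then gives $\sum_{R\in\RC_n(w)}x^{\wtt(R)}=\sch_w(x_1,\ldots,x_n)$, which is Corollary~\ref{corollary:evschub}. Consequently the theorem holds after applying $\alpha$:
\[
\alpha\bigl(\sch_\dcoma(x_1,\ldots,x_n)\cdot\emptyset\bigr)=\sum_w\sch_w(x_1,\ldots,x_n)\,\dsch_w^n .
\]
Equivalently, it holds modulo $\Omega$ with $\mathcal{S}_w(n)$ replaced by a single element of $\RC_n(w)$. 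That corrected statement is what your argument actually establishes.
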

\begin{proof}
	The result for $n=1$ is Lemma~\ref{lemma:drcpieri} together with Theorem~\ref{theorem:modactrc}. The general result follows by induction on $n$. Consider the inductive hypothesis
	$$\sch_\dcoma(x_2,\ldots,x_n)\cdot \emptyset=\sum_{w\in S_\infty}\sch_w(x_2,\ldots,x_n)\mathcal{S}_w(n-1)$$
	Then applying $\sch_\dcoma(x_1)$ to both sides, we have
	$$\sch_\dcoma(x_1)\sch_\dcoma(x_2,\ldots,x_n)\cdot \emptyset=\sum_{w\in S_\infty}\sch_w(x_2,\ldots,x_n)\sch_\dcoma(x_1)\mathcal{S}_w(n-1)$$
	which is equal to
	$$\sum_{a=0}^\infty\sum_{w\in S_\infty}x_1^a\sch_w(x_2,\ldots,x_n)[a]\mathcal{S}_w(n-1)$$
	Applying Lemma~\ref{lemma:dpieri} to each term, we have
	$$\sum_{a=0}^\infty\sum_{w\in S_\infty}x_1^a\sch_w(x_2,\ldots,x_n)\sum_{\substack{w'\downvar{1} w\\\ell(w,w')=a}} \mathcal{S}_{w'}(n)$$
	Bringing the polynomial inside the inner sum, this is
	$$\sum_{w'\in S_\infty}\left(\sum_{w:w'\downvar{1}w} x_1^{\ell(w,w')}\sch_w(x_2,\ldots,x_n)\right)\mathcal{S}_{w'}(n)$$
	which simplifies to
	$$\sum_{w'\in S_\infty}\sch_{w'}(x_1,\ldots,x_n)\mathcal{S}_{w'}(n)$$
	as desired.
\end{proof}

\begin{corollary}\label{corollary:evschub}
	For each $w\in S_\infty$ and valid $n$, we have
	$$\mathrm{ev}(\mathcal{S}_w(n)) = \sch_w^n(x_1,\ldots,x_n)$$
\end{corollary}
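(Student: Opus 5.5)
The corollary follows from the preceding theorem by comparing coefficients, after applying the evaluation map. I will apply $\mathrm{ev}$ to the identity
$$\sch_\dcoma(x_1,\ldots,x_n)\cdot \emptyset=\sum_{\maxd(w)\leq n}\sch_w(x_1,\ldots,x_n)\mathcal{S}_w(n),$$
treating the $x_i$ there as commuting scalars. I will also treat $\mathrm{ev}$ as extended coefficientwise to $\BRC[x_1,\ldots,x_n]$, landing in $\coma_n[x_1,\ldots,x_n]$. To avoid confusing the scalar variables with the variables of $\coma_n$, I rename the scalars $t_1,\ldots,t_n$.

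\textbf{The left side.} Unwinding the module action, $\sch_\dcoma(t_1)\cdots\sch_\dcoma(t_n)\cdot\emptyset$ equals $\sum_a t^a\,[a_1]\cdots[a_n]\cdot\emptyset$. Iterating the definition of $[a]\cdot R$ shows that $[a_1]\cdots[a_n]\cdot\emptyset$ is the sum of all bounded RC graphs $R$ with $\hr(R)=n$ and $\wtt(R)=(a_1,\ldots,a_n)$. Here I need to check the bookkeeping: successive trims recover the rows in order, and the height and $\maxd$ conditions are automatic because the terms are produced inside $\BRC$. The multiplicity of each such $R$ is exactly one, since $R$ is determined by its successive trims. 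Hence the left side is $\sum_{\hr(R)=n} t^{\wtt(R)}R$.

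\textbf{The right side and the comparison.} The right side groups the same set of $R$ by $w=\wof{R}$, so the identity reads
$$\sum_{\wof{R}=w,\ \hr(R)=n} t^{\wtt(R)}=\sch_w(t_1,\ldots,t_n)$$
for each $w$. This is justified because the $R$ are linearly independent in $\BRC$, so we may compare the coefficient of each basis element $R$, and the $\mathcal{S}_w(n)$ have disjoint supports. Applying $\mathrm{ev}$ to $\mathcal{S}_w(n)$ gives $\sum_{\wof{R}=w,\ \hr(R)=n} x_{\wtt(R)}$. By the displayed identity with $t_i$ replaced by $x_i$, this is $\sch_w(x_1,\ldots,x_n)=\sch_w^n$, which proves the corollary.

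The only real obstacle is the multiplicity-one claim on the left side, namely that the iterated action builds each RC graph of the given weight exactly once. The definition of $[a]\cdot R$ as a sum over $R'$ with $\trm(R')=R$ makes this nearly immediate. An alternative is to bypass the left side entirely: take the coefficient of each $t^a$ and pair with $[a]$, then use Theorem~\ref{theorem:modactrc} and induction on $n$ as in the theorem's proof.
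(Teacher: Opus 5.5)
Your left-side computation is correct, and the multiplicity-one point you flagged is not where the difficulty lies: iterating the action does give $\sch_\dcoma(t_1,\ldots,t_n)\cdot\emptyset=\sum_{\hr(R)=n}t^{\wtt(R)}R$, with each graph appearing once. The gap is the comparison step.

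If you compare the coefficient of a single basis element $R$ on the two sides of the displayed identity, you get
\[
t^{\wtt(R)}=\sch_{\wof{R}}(t_1,\ldots,t_n),
\]
not $\sum_{R\in\RC_n(w)}t^{\wtt(R)}=\sch_w(t_1,\ldots,t_n)$. The $w$-part of the left side, $\sum_{R\in\RC_n(w)}t^{\wtt(R)}R$, is not a scalar multiple of $\mathcal{S}_w(n)$, so there is no ``coefficient of $\mathcal{S}_w(n)$'' to read off. Disjoint supports do not help either: summing the per-graph equalities over $\RC_n(w)$ puts $|\RC_n(w)|\,\sch_w(t)$ on the right.

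Worse, the per-graph equalities are false. Take $n=2$ and $w=132=s_2$. The graphs $\{(1,2)\}$ and $\{(2,1)\}$ of $\RC_2(132)$ have coefficients $t_1$ and $t_2$ on the left, but both have coefficient $t_1+t_2$ on the right. So the preceding theorem cannot be used as a literal identity in $\BRC[t_1,\ldots,t_n]$ with $\mathcal{S}_w(n)$ the full sum. The same overcount sits in Lemma~\ref{lemma:drcpieri}: $[a]\cdot\mathcal{S}_v(n-1)$ produces only those graphs of $\RC_n(w)$ whose trim has permutation $v$.

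The theorem does become correct modulo $\Omega$, with $\mathcal{S}_w(n)$ replaced by the class of a single graph of $\RC_n(w)$. In that form, however, it is equivalent to the corollary rather than a source for it. The paper also records the statement as an immediate corollary, so this point needs care.

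What actually proves the corollary is the induction inside that theorem's proof, run on generating functions of individual graphs.
\begin{itemize}
\item By Theorem~\ref{theorem:modactrc} with Lemmas~\ref{lemma:d1desc} and~\ref{lemma:factor}, the map $R'\mapsto\trm(R')$ is a bijection from $\RC_n(w)$ onto $\bigsqcup_{v:\,w\downvar{1}v}\RC_{n-1}(v)$. Its inverse is $R\mapsto\mathrm{row}_1(Q_1(v,w))\cup\shup R$.
\item Under this bijection, the first entry of $\wtt(R')$ is $\ell(w)-\ell(v)$ and the remaining entries are $\wtt(\trm R')$.
\item Lemma~\ref{L:trim} gives $\maxd(v)\leq n-1$, so the inductive hypothesis applies to every such $v$.
\end{itemize}
Hence
\[
\sum_{R'\in\RC_n(w)}x^{\wtt(R')}=\sum_{v:\,w\downvar{1}v}x_1^{\ell(w)-\ell(v)}\sch_v(x_2,\ldots,x_n)=\sch_w(x_1,\ldots,x_n)\text{,}
\]
where the last equality is Proposition~\ref{proposition:pullindex} with $i=1$ and $y=0$.

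Your closing ``alternative'' points in this direction, but it has to be carried out graph by graph, not on the sums $\mathcal{S}_w(n)$. Alternatively, you can cite the literature: every RC graph of $w$ has its crossings in rows $\leq\maxd(w)\leq n$, so $\RC_n(w)$ is the full set of RC graphs of $w$. The corollary is then exactly the Bergeron--Billey formula \cite{bbrc}.
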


\subsection{The pipe dream visualization and roots}

Given a pair of positive integers $i,j$ and an RC graph $R$, define 
$$R[i, j]=\{(a, b)\in R\mid (a, b) > (i, j)\}$$
Given an RC graph $R$ and a pair of positive integers $i, j$, we define an ordered pair
$$\rtt_R(i,j)=(\wof{R[i, j]}^{-1}(i + j - 1), \wof{R[i,j]}^{-1}(i+j))$$

There is a common visualization of RC graphs as pipe dreams. In this visualization, we draw an infinite grid in the first quadrant, and in each position $(i,j)$ we draw either a crossing (if $(i,j)\in R$) or an elbow (if $(i,j)\notin R$). Then we draw pipes entering from the left edge of the grid, with the pipe entering at row $i$ labeled $i$. The pipes travel through the grid, turning at elbows and crossing at crossings, and exit at the top of the grid, which is labeled with the same number.

A modification to this common visualization that we use has the following additional features:
\begin{itemize}
\item We write the index of the simple reflections corresponding to the crossings in the grid area itself. Thus, at position $(i,j)$ we write the number $i+j-1$ if there is a crossing at that position.
\item For a bounded RC graph $R$ with $\hr(R)=n$, we only draw the first $n$ pipes entering from the left side of the grid and clip features outside of the first $n$ rows. The pipes exiting at the top are still labeled since the width is not limited.
\item \emph{Note:} The labels at the top are not of the permutation $\wof{R}$, but rather of the permutation $\wof{R}^{-1}$.
\end{itemize}

See Figure~\ref{figure:rcexamppipe} for an example of this visualization.	

\begin{figure}[h]
\centering

\caption{The pipe dream visualization of the bounded RC graph $R$ with $\hr(R)=5$ where $R=\{(1,1),(1,2),(2,1),(3,1),(3,3)\}$}
\label{figure:rcexamppipe}
\begin{tikzpicture}[scale=1.0]
  \draw[lightgray, very thin, opacity=0.3] (0,1) -- (6,1);
  \draw[lightgray, very thin, opacity=0.3] (0,2) -- (6,2);
  \draw[lightgray, very thin, opacity=0.3] (0,3) -- (6,3);
  \draw[lightgray, very thin, opacity=0.3] (0,4) -- (6,4);
  \draw[lightgray, very thin, opacity=0.3] (0,5) -- (6,5);
  \draw[lightgray, very thin, opacity=0.3] (0,6) -- (6,6);
  \draw[lightgray, very thin, opacity=0.3] (0,1) -- (0,6);
  \draw[lightgray, very thin, opacity=0.3] (1,1) -- (1,6);
  \draw[lightgray, very thin, opacity=0.3] (2,1) -- (2,6);
  \draw[lightgray, very thin, opacity=0.3] (3,1) -- (3,6);
  \draw[lightgray, very thin, opacity=0.3] (4,1) -- (4,6);
  \draw[lightgray, very thin, opacity=0.3] (5,1) -- (5,6);
  \draw[lightgray, very thin, opacity=0.3] (6,1) -- (6,6);
  \draw[blue, line width=1.0pt, line cap=round] (0,5.5) -- (1,5.5);
  \draw[blue, line width=1.0pt, line cap=round] (0.5,5) -- (0.5,6);
  \draw[blue, line width=1.0pt, line cap=round] (1,5.5) -- (2,5.5);
  \draw[blue, line width=1.0pt, line cap=round] (1.5,5) -- (1.5,6);
  \draw[blue, line width=1.0pt] (2,5.5) .. controls (2.3,5.5) and (2.5,5.7) .. (2.5,6);
  \draw[blue, line width=1.0pt] (2.5,5) .. controls (2.5,5.3) and (2.7,5.5) .. (3,5.5);
  \draw[blue, line width=1.0pt] (3,5.5) .. controls (3.3,5.5) and (3.5,5.7) .. (3.5,6);
  \draw[blue, line width=1.0pt] (3.5,5) .. controls (3.5,5.3) and (3.7,5.5) .. (4,5.5);
  \draw[blue, line width=1.0pt] (4,5.5) .. controls (4.3,5.5) and (4.5,5.7) .. (4.5,6);
  \draw[blue, line width=1.0pt] (4.5,5) .. controls (4.5,5.3) and (4.7,5.5) .. (5,5.5);
  \draw[blue, line width=1.0pt] (5,5.5) .. controls (5.3,5.5) and (5.5,5.7) .. (5.5,6);
  \draw[blue, line width=1.0pt, line cap=round] (0,4.5) -- (1,4.5);
  \draw[blue, line width=1.0pt, line cap=round] (0.5,4) -- (0.5,5);
  \draw[blue, line width=1.0pt] (1,4.5) .. controls (1.3,4.5) and (1.5,4.7) .. (1.5,5);
  \draw[blue, line width=1.0pt] (1.5,4) .. controls (1.5,4.3) and (1.7,4.5) .. (2,4.5);
  \draw[blue, line width=1.0pt] (2,4.5) .. controls (2.3,4.5) and (2.5,4.7) .. (2.5,5);
  \draw[blue, line width=1.0pt] (2.5,4) .. controls (2.5,4.3) and (2.7,4.5) .. (3,4.5);
  \draw[blue, line width=1.0pt] (3,4.5) .. controls (3.3,4.5) and (3.5,4.7) .. (3.5,5);
  \draw[blue, line width=1.0pt] (3.5,4) .. controls (3.5,4.3) and (3.7,4.5) .. (4,4.5);
  \draw[blue, line width=1.0pt] (4,4.5) .. controls (4.3,4.5) and (4.5,4.7) .. (4.5,5);
  \draw[blue, line width=1.0pt, line cap=round] (0,3.5) -- (1,3.5);
  \draw[blue, line width=1.0pt, line cap=round] (0.5,3) -- (0.5,4);
  \draw[blue, line width=1.0pt] (1,3.5) .. controls (1.3,3.5) and (1.5,3.7) .. (1.5,4);
  \draw[blue, line width=1.0pt] (1.5,3) .. controls (1.5,3.3) and (1.7,3.5) .. (2,3.5);
  \draw[blue, line width=1.0pt, line cap=round] (2,3.5) -- (3,3.5);
  \draw[blue, line width=1.0pt, line cap=round] (2.5,3) -- (2.5,4);
  \draw[blue, line width=1.0pt] (3,3.5) .. controls (3.3,3.5) and (3.5,3.7) .. (3.5,4);
  \draw[blue, line width=1.0pt] (0,2.5) .. controls (0.3,2.5) and (0.5,2.7) .. (0.5,3);
  \draw[blue, line width=1.0pt] (0.5,2) .. controls (0.5,2.3) and (0.7,2.5) .. (1,2.5);
  \draw[blue, line width=1.0pt] (1,2.5) .. controls (1.3,2.5) and (1.5,2.7) .. (1.5,3);
  \draw[blue, line width=1.0pt] (1.5,2) .. controls (1.5,2.3) and (1.7,2.5) .. (2,2.5);
  \draw[blue, line width=1.0pt] (2,2.5) .. controls (2.3,2.5) and (2.5,2.7) .. (2.5,3);
  \draw[blue, line width=1.0pt] (0,1.5) .. controls (0.3,1.5) and (0.5,1.7) .. (0.5,2);
  \draw[blue, line width=1.0pt] (0.5,1) .. controls (0.5,1.3) and (0.7,1.5) .. (1,1.5);
  \draw[blue, line width=1.0pt] (1,1.5) .. controls (1.3,1.5) and (1.5,1.7) .. (1.5,2);
  \node[font=\Large\bfseries, text=green!60!black, fill=white, inner sep=0.5pt, circle, transform shape] at (1.5,5.5) {2};
  \node[font=\Large\bfseries, text=green!60!black, fill=white, inner sep=0.5pt, circle, transform shape] at (0.5,4.5) {2};
  \node[font=\Large\bfseries, text=green!60!black, fill=white, inner sep=0.5pt, circle, transform shape] at (0.5,3.5) {3};
  \node[font=\Large\bfseries, text=green!60!black, fill=white, inner sep=0.5pt, circle, transform shape] at (0.5,5.5) {1};
  \node[font=\Large\bfseries, text=green!60!black, fill=white, inner sep=0.5pt, circle, transform shape] at (2.5,3.5) {5};
  \node[font=\scriptsize, text=black, anchor=south] at (0.5,6.3) {4};
  \node[font=\scriptsize, text=black, anchor=south] at (1.5,6.3) {2};
  \node[font=\scriptsize, text=black, anchor=south] at (2.5,6.3) {1};
  \node[font=\scriptsize, text=black, anchor=south] at (3.5,6.3) {3};
  \node[font=\scriptsize, text=black, anchor=south] at (4.5,6.3) {6};
  \node[font=\scriptsize, text=black, anchor=south] at (5.5,6.3) {5};
  \node[font=\scriptsize, text=black, anchor=east] at (-0.3,5.5) {1};
  \node[font=\scriptsize, text=black, anchor=east] at (-0.3,4.5) {2};
  \node[font=\scriptsize, text=black, anchor=east] at (-0.3,3.5) {3};
  \node[font=\scriptsize, text=black, anchor=east] at (-0.3,2.5) {4};
  \node[font=\scriptsize, text=black, anchor=east] at (-0.3,1.5) {5};
  \draw[black, line width=1.5pt] (0,1) rectangle (6,6);
\end{tikzpicture}
\end{figure}

The main benefit of this is that visualizing $\rtt_R(i,j)$ is easy. The pipes that pass through position $(i,j)$ are labeled $s$ and $q$ for some $s,q>0$. For a positive root in an unoccupied square, we will have the following labeling, where $q<s$:

\begin{center}
\begin{tikzpicture}[scale=1.5]
  \draw[lightgray, very thin, opacity=0.3] (0,0) -- (1,0);
  \draw[lightgray, very thin, opacity=0.3] (0,1) -- (1,1);
  \draw[lightgray, very thin, opacity=0.3] (0,0) -- (0,1);
  \draw[lightgray, very thin, opacity=0.3] (1,0) -- (1,1);
  
  \draw[blue, line width=1.0pt] (0,0.5) .. controls (0.3,0.5) and (0.5,0.7) .. (0.5,1);
  
  \draw[blue, line width=1.0pt] (0.5,0) .. controls (0.5,0.3) and (0.7,0.5) .. (1,0.5);
  
  \node[font=\scriptsize, text=black] at (0.25,0.75) {$q$};
  \node[font=\scriptsize, text=black] at (0.75,0.25) {$s$};
  
  \draw[black, line width=1.2pt] (0,0) rectangle (1,1);
\end{tikzpicture}
\end{center}

We observe that in the above RC graph, the position $(1,5)$ does not have this configuration. Placing a crossing there would create a negative root, and the pipes would cross twice. \emph{Note that this results in a collection of ordered pairs that is not an RC graph, if such a crossing exists.} For a valid RC graphs, only positive roots occur as crossings, and this happens if and only if pipes cross at most once.

\begin{figure}[h]
\centering
\caption{An invalid set of crossings causing pipes to cross more than once, caused by inserting a crossing at a negative root}
\begin{tikzpicture}[scale=1.0]
  \draw[lightgray, very thin, opacity=0.3] (0,1) -- (6,1);
  \draw[lightgray, very thin, opacity=0.3] (0,2) -- (6,2);
  \draw[lightgray, very thin, opacity=0.3] (0,3) -- (6,3);
  \draw[lightgray, very thin, opacity=0.3] (0,4) -- (6,4);
  \draw[lightgray, very thin, opacity=0.3] (0,5) -- (6,5);
  \draw[lightgray, very thin, opacity=0.3] (0,6) -- (6,6);
  \draw[lightgray, very thin, opacity=0.3] (0,1) -- (0,6);
  \draw[lightgray, very thin, opacity=0.3] (1,1) -- (1,6);
  \draw[lightgray, very thin, opacity=0.3] (2,1) -- (2,6);
  \draw[lightgray, very thin, opacity=0.3] (3,1) -- (3,6);
  \draw[lightgray, very thin, opacity=0.3] (4,1) -- (4,6);
  \draw[lightgray, very thin, opacity=0.3] (5,1) -- (5,6);
  \draw[lightgray, very thin, opacity=0.3] (6,1) -- (6,6);
  \draw[blue, line width=1.0pt, line cap=round] (0,5.5) -- (1,5.5);
  \draw[blue, line width=1.0pt, line cap=round] (0.5,5) -- (0.5,6);
  \draw[blue, line width=1.0pt, line cap=round] (1,5.5) -- (2,5.5);
  \draw[blue, line width=1.0pt, line cap=round] (1.5,5) -- (1.5,6);
  \draw[blue, line width=1.0pt] (2,5.5) .. controls (2.3,5.5) and (2.5,5.7) .. (2.5,6);
  \draw[blue, line width=1.0pt] (2.5,5) .. controls (2.5,5.3) and (2.7,5.5) .. (3,5.5);
  \draw[blue, line width=1.0pt] (3,5.5) .. controls (3.3,5.5) and (3.5,5.7) .. (3.5,6);
  \draw[blue, line width=1.0pt] (3.5,5) .. controls (3.5,5.3) and (3.7,5.5) .. (4,5.5);
  \draw[blue, line width=1.0pt, line cap=round] (4,5.5) -- (5,5.5);
  \draw[blue, line width=1.0pt, line cap=round] (4.5,5) -- (4.5,6);
  \draw[blue, line width=1.0pt] (5,5.5) .. controls (5.3,5.5) and (5.5,5.7) .. (5.5,6);
  \draw[blue, line width=1.0pt, line cap=round] (0,4.5) -- (1,4.5);
  \draw[blue, line width=1.0pt, line cap=round] (0.5,4) -- (0.5,5);
  \draw[blue, line width=1.0pt] (1,4.5) .. controls (1.3,4.5) and (1.5,4.7) .. (1.5,5);
  \draw[blue, line width=1.0pt] (1.5,4) .. controls (1.5,4.3) and (1.7,4.5) .. (2,4.5);
  \draw[blue, line width=1.0pt] (2,4.5) .. controls (2.3,4.5) and (2.5,4.7) .. (2.5,5);
  \draw[blue, line width=1.0pt] (2.5,4) .. controls (2.5,4.3) and (2.7,4.5) .. (3,4.5);
  \draw[blue, line width=1.0pt] (3,4.5) .. controls (3.3,4.5) and (3.5,4.7) .. (3.5,5);
  \draw[blue, line width=1.0pt] (3.5,4) .. controls (3.5,4.3) and (3.7,4.5) .. (4,4.5);
  \draw[blue, line width=1.0pt] (4,4.5) .. controls (4.3,4.5) and (4.5,4.7) .. (4.5,5);
  \draw[blue, line width=1.0pt, line cap=round] (0,3.5) -- (1,3.5);
  \draw[blue, line width=1.0pt, line cap=round] (0.5,3) -- (0.5,4);
  \draw[blue, line width=1.0pt] (1,3.5) .. controls (1.3,3.5) and (1.5,3.7) .. (1.5,4);
  \draw[blue, line width=1.0pt] (1.5,3) .. controls (1.5,3.3) and (1.7,3.5) .. (2,3.5);
  \draw[blue, line width=1.0pt, line cap=round] (2,3.5) -- (3,3.5);
  \draw[blue, line width=1.0pt, line cap=round] (2.5,3) -- (2.5,4);
  \draw[blue, line width=1.0pt] (3,3.5) .. controls (3.3,3.5) and (3.5,3.7) .. (3.5,4);
  \draw[blue, line width=1.0pt] (0,2.5) .. controls (0.3,2.5) and (0.5,2.7) .. (0.5,3);
  \draw[blue, line width=1.0pt] (0.5,2) .. controls (0.5,2.3) and (0.7,2.5) .. (1,2.5);
  \draw[blue, line width=1.0pt] (1,2.5) .. controls (1.3,2.5) and (1.5,2.7) .. (1.5,3);
  \draw[blue, line width=1.0pt] (1.5,2) .. controls (1.5,2.3) and (1.7,2.5) .. (2,2.5);
  \draw[blue, line width=1.0pt] (2,2.5) .. controls (2.3,2.5) and (2.5,2.7) .. (2.5,3);
  \draw[blue, line width=1.0pt] (0,1.5) .. controls (0.3,1.5) and (0.5,1.7) .. (0.5,2);
  \draw[blue, line width=1.0pt] (0.5,1) .. controls (0.5,1.3) and (0.7,1.5) .. (1,1.5);
  \draw[blue, line width=1.0pt] (1,1.5) .. controls (1.3,1.5) and (1.5,1.7) .. (1.5,2);
  \node[font=\Large\bfseries, text=green!60!black, fill=white, inner sep=0.5pt, circle, transform shape] at (1.5,5.5) {2};
  \node[font=\Large\bfseries, text=green!60!black, fill=white, inner sep=0.5pt, circle, transform shape] at (0.5,4.5) {2};
  \node[font=\Large\bfseries, text=green!60!black, fill=white, inner sep=0.5pt, circle, transform shape] at (4.5,5.5) {5};
  \node[font=\Large\bfseries, text=green!60!black, fill=white, inner sep=0.5pt, circle, transform shape] at (0.5,3.5) {3};
  \node[font=\Large\bfseries, text=green!60!black, fill=white, inner sep=0.5pt, circle, transform shape] at (0.5,5.5) {1};
  \node[font=\Large\bfseries, text=green!60!black, fill=white, inner sep=0.5pt, circle, transform shape] at (2.5,3.5) {5};
  \node[font=\scriptsize, text=black, anchor=south] at (0.5,6.3) {4};
  \node[font=\scriptsize, text=black, anchor=south] at (1.5,6.3) {2};
  \node[font=\scriptsize, text=black, anchor=south] at (2.5,6.3) {1};
  \node[font=\scriptsize, text=black, anchor=south] at (3.5,6.3) {3};
  \node[font=\scriptsize, text=black, anchor=south] at (4.5,6.3) {5};
  \node[font=\scriptsize, text=black, anchor=south] at (5.5,6.3) {6};
  \node[font=\scriptsize, text=black, anchor=east] at (-0.3,5.5) {1};
  \node[font=\scriptsize, text=black, anchor=east] at (-0.3,4.5) {2};
  \node[font=\scriptsize, text=black, anchor=east] at (-0.3,3.5) {3};
  \node[font=\scriptsize, text=black, anchor=east] at (-0.3,2.5) {4};
  \node[font=\scriptsize, text=black, anchor=east] at (-0.3,1.5) {5};
  \draw[black, line width=1.5pt] (0,1) rectangle (6,6);
\end{tikzpicture}
\end{figure}

\subsection{Zeroing out the last row}\label{section:bpd}

We proceed now to define a product on $\BRC$ turning it into a ring. To do this, we need a map $\zeromap:\BRC\to\BRC$ trimming empty rows from the bottom instead of from the top, which is far more delicate than trimming from the top. We define $\zeromap$ by Little's bumping algorithm \cite{little2003combinatorial} in its pipe dream form \cite{billey2019bijective}, and we prove its properties by transporting it along the Gao--Huang bijection \cite{GaoHuang2023} to bumpless pipe dreams, where deleting a row is literal. In this subsection an RC graph is a finite set $R\subseteq\mathbb P\times\mathbb P$ with $\ell(\wof R)=|R|$ (a reduced pipe dream); the height enters only where stated. We write $R_{\leq p}=\{(i,j)\in R:i\leq p\}$.

\begin{figure}[htbp]
\centering

\begin{minipage}{0.45\textwidth}
\centering
\begin{tikzpicture}[scale=0.8]
  \draw[lightgray, very thin, opacity=0.3] (0,1) -- (4,1);
  \draw[lightgray, very thin, opacity=0.3] (0,2) -- (4,2);
  \draw[lightgray, very thin, opacity=0.3] (0,3) -- (4,3);
  \draw[lightgray, very thin, opacity=0.3] (0,4) -- (4,4);
  \draw[lightgray, very thin, opacity=0.3] (0,1) -- (0,4);
  \draw[lightgray, very thin, opacity=0.3] (1,1) -- (1,4);
  \draw[lightgray, very thin, opacity=0.3] (2,1) -- (2,4);
  \draw[lightgray, very thin, opacity=0.3] (3,1) -- (3,4);
  \draw[lightgray, very thin, opacity=0.3] (4,1) -- (4,4);
  \draw[blue, line width=1.0pt] (0,3.5) .. controls (0.3,3.5) and (0.5,3.7) .. (0.5,4);
  \draw[blue, line width=1.0pt] (0.5,3) .. controls (0.5,3.3) and (0.7,3.5) .. (1,3.5);
  \draw[blue, line width=1.0pt, line cap=round] (1,3.5) -- (2,3.5);
  \draw[blue, line width=1.0pt, line cap=round] (1.5,3) -- (1.5,4);
  \draw[blue, line width=1.0pt, line cap=round] (2,3.5) -- (3,3.5);
  \draw[blue, line width=1.0pt, line cap=round] (2.5,3) -- (2.5,4);
  \draw[blue, line width=1.0pt] (3,3.5) .. controls (3.3,3.5) and (3.5,3.7) .. (3.5,4);
  \draw[blue, line width=1.0pt] (0,2.5) .. controls (0.3,2.5) and (0.5,2.7) .. (0.5,3);
  \draw[blue, line width=1.0pt] (0.5,2) .. controls (0.5,2.3) and (0.7,2.5) .. (1,2.5);
  \draw[blue, line width=1.0pt, line cap=round] (1,2.5) -- (2,2.5);
  \draw[blue, line width=1.0pt, line cap=round] (1.5,2) -- (1.5,3);
  \draw[blue, line width=1.0pt] (2,2.5) .. controls (2.3,2.5) and (2.5,2.7) .. (2.5,3);
  \draw[blue, line width=1.0pt] (0,1.5) .. controls (0.3,1.5) and (0.5,1.7) .. (0.5,2);
  \draw[blue, line width=1.0pt] (0.5,1) .. controls (0.5,1.3) and (0.7,1.5) .. (1,1.5);
  \draw[blue, line width=1.0pt] (1,1.5) .. controls (1.3,1.5) and (1.5,1.7) .. (1.5,2);
  \node[font=\Large\bfseries, text=green!60!black, fill=white, inner sep=0.5pt, circle, transform shape] at (1.5,3.5) {2};
  \node[font=\Large\bfseries, text=green!60!black, fill=white, inner sep=0.5pt, circle, transform shape] at (2.5,3.5) {3};
  \node[font=\Large\bfseries, text=green!60!black, fill=white, inner sep=0.5pt, circle, transform shape] at (1.5,2.5) {3};
  \node[font=\scriptsize, text=black, anchor=south] at (0.5,4.3) {1};
  \node[font=\scriptsize, text=black, anchor=south] at (1.5,4.3) {4};
  \node[font=\scriptsize, text=black, anchor=south] at (2.5,4.3) {3};
  \node[font=\scriptsize, text=black, anchor=south] at (3.5,4.3) {2};
  \node[font=\scriptsize, text=black, anchor=east] at (-0.3,3.5) {1};
  \node[font=\scriptsize, text=black, anchor=east] at (-0.3,2.5) {2};
  \node[font=\scriptsize, text=black, anchor=east] at (-0.3,1.5) {3};
  \draw[black, line width=1.5pt] (0,1) rectangle (4,4);
\end{tikzpicture}
\caption*{(a) The RC graph $R$; the crossing $(2,2)$ has right root $(3,4)$}
\end{minipage}
\hfill
\begin{minipage}{0.45\textwidth}
\centering
\begin{tikzpicture}[scale=0.8]
  \draw[lightgray, very thin, opacity=0.3] (0,2) -- (4,2);
  \draw[lightgray, very thin, opacity=0.3] (0,3) -- (4,3);
  \draw[lightgray, very thin, opacity=0.3] (0,4) -- (4,4);
  \draw[lightgray, very thin, opacity=0.3] (0,2) -- (0,4);
  \draw[lightgray, very thin, opacity=0.3] (1,2) -- (1,4);
  \draw[lightgray, very thin, opacity=0.3] (2,2) -- (2,4);
  \draw[lightgray, very thin, opacity=0.3] (3,2) -- (3,4);
  \draw[lightgray, very thin, opacity=0.3] (4,2) -- (4,4);
  \draw[blue, line width=1.0pt, line cap=round] (0,3.5) -- (1,3.5);
  \draw[blue, line width=1.0pt, line cap=round] (0.5,3) -- (0.5,4);
  \draw[blue, line width=1.0pt] (1,3.5) .. controls (1.3,3.5) and (1.5,3.7) .. (1.5,4);
  \draw[blue, line width=1.0pt] (1.5,3) .. controls (1.5,3.3) and (1.7,3.5) .. (2,3.5);
  \draw[blue, line width=1.0pt, line cap=round] (2,3.5) -- (3,3.5);
  \draw[blue, line width=1.0pt, line cap=round] (2.5,3) -- (2.5,4);
  \draw[blue, line width=1.0pt] (3,3.5) .. controls (3.3,3.5) and (3.5,3.7) .. (3.5,4);
  \draw[blue, line width=1.0pt, line cap=round] (0,2.5) -- (1,2.5);
  \draw[blue, line width=1.0pt, line cap=round] (0.5,2) -- (0.5,3);
  \draw[blue, line width=1.0pt] (1,2.5) .. controls (1.3,2.5) and (1.5,2.7) .. (1.5,3);
  \draw[blue, line width=1.0pt] (1.5,2) .. controls (1.5,2.3) and (1.7,2.5) .. (2,2.5);
  \draw[blue, line width=1.0pt] (2,2.5) .. controls (2.3,2.5) and (2.5,2.7) .. (2.5,3);
  \node[font=\Large\bfseries, text=green!60!black, fill=white, inner sep=0.5pt, circle, transform shape] at (0.5,3.5) {1};
  \node[font=\Large\bfseries, text=green!60!black, fill=white, inner sep=0.5pt, circle, transform shape] at (2.5,3.5) {3};
  \node[font=\Large\bfseries, text=green!60!black, fill=white, inner sep=0.5pt, circle, transform shape] at (0.5,2.5) {2};
  \node[font=\scriptsize, text=black, anchor=south] at (0.5,4.3) {3};
  \node[font=\scriptsize, text=black, anchor=south] at (1.5,4.3) {1};
  \node[font=\scriptsize, text=black, anchor=south] at (2.5,4.3) {4};
  \node[font=\scriptsize, text=black, anchor=south] at (3.5,4.3) {2};
  \node[font=\scriptsize, text=black, anchor=east] at (-0.3,3.5) {1};
  \node[font=\scriptsize, text=black, anchor=east] at (-0.3,2.5) {2};
  \draw[black, line width=1.5pt] (0,2) rectangle (4,4);
\end{tikzpicture}
\caption*{(b) $\zeromap(R)$, of height $2$}
\end{minipage}

\caption{The map $\zeromap$ on $R=\{(1,2),(1,3),(2,2)\}$, of height $3$ and permutation $1432$. The crossing $(2,2)$ moves to $(2,1)$; the resulting word $3\,2\,2$ is not reduced, so $(1,2)$ moves to $(1,1)$, giving the reduced word $3\,1\,2$ of $1432\,t_{34}t_{13}=2413$. The empty row $3$ is then deleted.}
\label{fig:zeroing}
\end{figure}

\subsubsection*{Bumpless pipe dreams}
Fix $N$ larger than every row and column index that occurs below. A \emph{bumpless pipe dream} \cite{lam2021back} is a tiling of the $N\times N$ grid by six tiles: the blank tile, the crossing tile, the horizontal and vertical tiles, and two elbow tiles, the \emph{$r$-tile}, in which a pipe enters from the south and leaves to the east, and the \emph{$j$-tile}, in which a pipe enters from the west and leaves to the north, such that the resulting pipes enter the grid from the south edge, one in each column, travel north and east, leave through the east edge, one in each row, and no two pipes cross twice. The pipe entering at the bottom of column $j$ is \emph{pipe $j$}. The permutation $w$ of $B$ is defined by: the pipe leaving through row $i$ is pipe $w(i)$. We write $\mathrm{BPD}(w)$ for the set of bumpless pipe dreams of $w$, $\mathrm{blank}(B)$ for the set of blank tiles of $B$ and $B_{\leq p}$ for rows $1,\ldots,p$ of $B$. By \cite[Theorem~5.13]{lam2021back}, $\sch_w=\sum_{B\in\mathrm{BPD}(w)}\prod_{(i,j)\in\mathrm{blank}(B)}x_i$. Enlarging $N$ appends pipes that travel straight north and turn once, and appends horizontal tiles to the rows already present, so statements about $B_{\leq p}$ do not depend on $N$. Since a pipe travels north and east, the pipe leaving through row $i$ lies entirely in rows $\geq i$; we use this repeatedly.

Gao and Huang \cite{GaoHuang2023} construct, for every $w$, a bijection between $\mathrm{BPD}(w)$ and the RC graphs with permutation $w$ that sends $\mathrm{blank}(B)$ row by row to the crossings, i.e.\ row $i$ of $B$ has as many blank tiles as row $i$ of the RC graph has crossings \cite[Theorem~3.6]{GaoHuang2023}. We write $\Phi(R)\in\mathrm{BPD}(\wof R)$ for the bumpless pipe dream corresponding to the RC graph $R$; $\Phi$ is injective and depends only on the set $R$.

We recall Little's bumping operation on reduced words \cite{little2003combinatorial}.

\begin{definition}[Little bumps]\label{definition:littlebump}
  Given a reduced word $a_1\cdots a_m$ for a permutation $w$ and an index $1\leq p\leq m$, the \emph{Little bump} $\ltb_p(a_1\cdots a_m)$ is defined recursively as follows. If $a_p>1$, replace $a_p$ with $a_p-1$, and if $a_p=1$ then instead replace all $a_q$ with $q\neq p$ with $a_q+1$. Denote the new word by $a_1'\cdots a_m'$. If this resulting word is reduced, we are done. Otherwise, there is a unique index $j\neq p$ such that the word obtained by deleting $a_j'$ from the new word is reduced by the deletion property of Coxeter groups. We then define
  $$\ltb_p(a_1\cdots a_m) = \ltb_j(a_1'\cdots a_m')$$
\end{definition}

\subsubsection*{Monk moves}
Monk's rule in the form
\begin{equation}\label{eq:monk}
x_\alpha\sch_w+\sum_{\substack{k<\alpha\\ wt_{k\alpha}\gtrdot w}}\sch_{wt_{k\alpha}}=\sum_{\substack{l>\alpha\\ wt_{\alpha l}\gtrdot w}}\sch_{wt_{\alpha l}}
\end{equation}
is realized bijectively on RC graphs by Billey--Holroyd--Young \cite{billey2019bijective} and on bumpless pipe dreams by Huang \cite{huang2020monk}: there are maps
$$x_\alpha\!\rightsquigarrow:\RC(w)\to\bigsqcup_{l}\RC(wt_{\alpha l})\text{,}\qquad m_{k,\alpha}:\RC(wt_{k\alpha})\to\bigsqcup_{l}\RC(wt_{\alpha l})$$
($l$ ranging over the right side of \eqref{eq:monk}), which together form a bijection from the disjoint union of the sets on the left of \eqref{eq:monk} to the disjoint union of those on the right, with $x_\alpha\!\rightsquigarrow$ adding one crossing in row $\alpha$ and $m_{k,\alpha}$ preserving the number of crossings in every row, and maps with the same names and properties (with blank tiles in place of crossings) on bumpless pipe dreams \cite[Definitions~4.2 and~4.4]{GaoHuang2023}. Gao and Huang prove that $\Phi$ intertwines them \cite[Theorem~4.5]{GaoHuang2023}:
\begin{equation}\label{eq:GHintertwine}
\Phi(x_\alpha\!\rightsquigarrow R)=x_\alpha\!\rightsquigarrow\Phi(R)\text{,}\qquad \Phi(m_{k,\alpha}R)=m_{k,\alpha}\Phi(R)\text{.}
\end{equation}
We recall the two descriptions to the extent needed. On RC graphs \cite[Definition~4.2]{GaoHuang2023}, $x_\alpha\!\rightsquigarrow R$ is computed by placing a crossing at the leftmost elbow of row $\alpha$; whenever the crossing just placed makes two pipes cross twice, the other crossing of that pair of pipes is replaced by an elbow and a crossing is placed at the nearest elbow to its right in its row, until the result is reduced. For $D\in\RC(wt_{k\alpha})$, $m_{k,\alpha}D$ is computed in the same way after first replacing by an elbow the unique crossing $c\in D$ with $\rtt_D(c)=(k,\alpha)$, i.e.\ the crossing whose removal yields $w$, and placing a crossing at the nearest elbow to its right in its row. Consequently, the inverse of the Monk bijection on an element $X$ of the right side is computed by removing the crossing $c\in X$ whose removal yields $w$, placing a crossing at the nearest elbow to its left in the same row, and whenever this makes two pipes cross twice, removing the other crossing of that pair and placing a crossing at the nearest elbow to its left in its row; if a crossing to be moved has no elbow to its left, it is simply removed and the process stops, and the result lies in $\RC(w)$ (this is the $x_\alpha\!\rightsquigarrow$ case). Moving a crossing one column to the left decrements its letter by one, and moving it across a block of crossings to the nearest elbow decrements the letters of the block one after the other, the two deletable positions of the intermediate words being adjacent equal letters; hence, when no crossing is removed (the $m_{k,\alpha}$ case), the inverse Monk move is, in terms of the reading word, the Little bump $\ltb_c$ of Definition~\ref{definition:littlebump} at the position of $c$, with the row sequence unchanged. We call it the \emph{Little bump of $X$ at $c$} in either case. Here we use Little's lemma \cite[Lemma~4]{little2003combinatorial} in the following form: if a word $\mathbf c$ is not reduced but $\mathbf c\setminus k$ is, then exactly one pair of wires of $\mathbf c$ crosses twice, the positions whose deletion leaves a reduced word are exactly the two crossings of this pair, and since the right root of a position records the two wires crossing there, these two positions have the same right root up to order.

On bumpless pipe dreams \cite[Algorithm~1 and proof of Theorem~2]{huang2020monk}, $x_\alpha\!\rightsquigarrow B$ is computed by repeatedly modifying the pipe $p$ that leaves through row $\alpha$: $p$ is drooped from its current $r$-shaped turn into the nearest available tile to the southeast, and when the drooped pipe meets a pipe $q$ in a bump tile the bump is either replaced by a crossing (if $p$ and $q$ do not cross, in which case the algorithm stops) or exchanged with the existing crossing of $p$ and $q$, after which $p$ is drooped again. For $B\in\mathrm{BPD}(wt_{k\alpha})$, $m_{k,\alpha}B$ is computed by replacing the crossing of the two pipes that leave through rows $k$ and $\alpha$ by a bump tile, after which the pipe leaving through row $\alpha$ has the $r$-shaped turn in this tile and is the pipe $p$ to which the same algorithm is applied. In both cases the permutation of the diagram is $w$ throughout, except for the final step, so $p$ leaves through row $\alpha$ throughout.

\begin{lemma}[Row confinement]\label{lemma:bpdconfine}
Let $B\in\mathrm{BPD}(w)$ and $B'\in\mathrm{BPD}(wt_{k\alpha})$ with $k<\alpha$ and $wt_{k\alpha}\gtrdot w$. Then $x_\alpha\!\rightsquigarrow B$ agrees with $B$ in rows $1,\ldots,\alpha-1$, and $m_{k,\alpha}B'$ agrees with $B'$ in rows $1,\ldots,\alpha-1$.
\end{lemma}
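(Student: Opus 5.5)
The argument should follow Huang's drooping algorithm as recalled above, and rest on one geometric fact: pipes travel north and east, and the pipe leaving through row $\alpha$ lies entirely in rows $\geq \alpha$. Throughout the algorithm computing $x_\alpha\!\rightsquigarrow B$ (and likewise $m_{k,\alpha}B'$), the permutation of the intermediate diagram stays fixed except at the final step. So the pipe $p$ being modified always exits through row $\alpha$. I would first record this as an invariant: at every stage, $p$ is a pipe of a (possibly intermediate) diagram that leaves through the east edge in row $\alpha$, and hence all its tiles lie in rows $\geq\alpha$.

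Next I would examine what a single droop changes. Drooping $p$ from an $r$-turn at $(a,b)$ into a tile $(c,d)$ southeast of it alters only tiles in the rectangle with corners $(a,b)$ and $(c,d)$. Since $c\geq a$, this rectangle occupies rows $a,\dots,c$. The $r$-turn of $p$ lies on $p$, so $a\geq\alpha$ by the invariant, and every altered tile lies in rows $\geq\alpha$.

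The other two kinds of step are handled the same way. Replacing a bump by a crossing changes only a tile on $p$. Exchanging a bump with the existing crossing of $p$ and $q$ moves a crossing that also lies on $p$. Both therefore touch only rows $\geq\alpha$. For $m_{k,\alpha}$, the initial step replaces the crossing of the pipes leaving through rows $k$ and $\alpha$. That tile lies on the pipe leaving through row $\alpha$, so it too is in a row $\geq\alpha$, and afterwards the same algorithm runs on $p$.

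By induction on the number of steps, no tile in rows $1,\ldots,\alpha-1$ is ever changed, which gives both statements. The main obstacle is justifying the invariant on the intermediate, non-final diagrams, which need not be honest bumpless pipe dreams. I would appeal to the recalled assertion that $p$ leaves through row $\alpha$ throughout. I would also check directly that each droop and each exchange keeps $p$ monotone north-east, ending at the east edge in row $\alpha$. Monotonicity is all that the "rows $\geq i$" argument uses.
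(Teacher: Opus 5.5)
Your proposal is correct and is essentially the paper's argument. Every tile that Huang's algorithm changes either lies on the pipe leaving through row $\alpha$ (bump/crossing exchanges, the final crossing, and the initial crossing for $m_{k,\alpha}$) or lies in a droop rectangle whose northwest corner is a turn of that pipe, and that pipe lies entirely in rows $\geq\alpha$. Your explicit check that each droop and exchange keeps the pipe monotone north--east on the intermediate, non-reduced diagrams is a small refinement that the paper leaves implicit.
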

\begin{proof}
Every tile changed by Huang's algorithm lies either on the pipe $p$ that leaves through row $\alpha$ (the tiles of a bump and of a crossing that are exchanged, the final bump replaced by a crossing, and the initial crossing replaced by a bump, which lies on both pipes leaving through rows $k$ and $\alpha$) or in the rectangle spanned by a droop of $p$, whose northwest corner is a turn of $p$. Since $p$ travels north and east and leaves through row $\alpha$, all of $p$ lies in rows $\geq\alpha$, and so does every such rectangle.
\end{proof}

\begin{lemma}[Little bumps and top rows]\label{lemma:bumpconfine}
Let $X$ be an RC graph with $\wof X=\tau$ and let $c\in X$ be a crossing with $\rtt_X(c)=(s,\beta)$, $s<\beta$, such that $X\setminus\{c\}$ is reduced. Let $X'$ be the RC graph obtained from $X$ by the Little bump at $c$ (the inverse Monk move described above). Then $\Phi(X)$ and $\Phi(X')$ agree in rows $1,\ldots,s-1$.
\end{lemma}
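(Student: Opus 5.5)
The idea is to realize the Little bump at $c$ as an inverse Monk move and then transport the problem to bumpless pipe dreams via \eqref{eq:GHintertwine}, where Lemma~\ref{lemma:bpdconfine} confines all changes to rows $\geq s$.

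\textbf{Step 1: set up the Monk move.} Put $w=\tau t_{s\beta}$. Since $X\setminus\{c\}$ is reduced and $c$ has right root $(s,\beta)$, removing $c$ gives a word for $w$ with $\ell(w)=\ell(\tau)-1$. Hence $\tau=wt_{s\beta}\gtrdot w$, and $X$ is an element of the right-hand side of \eqref{eq:monk} for $w$ with $\alpha=s$ and $l=\beta>s$. By the description preceding the lemma, $X'$ is the image of $X$ under the inverse of the Monk bijection for the pair $(w,\alpha=s)$. Two cases arise:
\begin{itemize}
\item[(a)] $X'\in\RC(w)$ and $X=x_s\!\rightsquigarrow X'$;
\item[(b)] $X'\in\RC(wt_{ks})$ for some $k<s$ with $wt_{ks}\gtrdot w$, and $X=m_{k,s}X'$.
\end{itemize}

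\textbf{Step 2: transport and apply row confinement.} Applying $\Phi$ and \eqref{eq:GHintertwine}, we get $\Phi(X)=x_s\!\rightsquigarrow\Phi(X')$ in case (a) and $\Phi(X)=m_{k,s}\Phi(X')$ in case (b). Lemma~\ref{lemma:bpdconfine} with $\alpha=s$ says that $x_s\!\rightsquigarrow B$ agrees with $B$ in rows $1,\ldots,s-1$, and likewise $m_{k,s}B'$ agrees with $B'$ there (for $B'\in\mathrm{BPD}(wt_{ks})$, $k<s$). Applying this with $B=\Phi(X')$, respectively $B'=\Phi(X')$, gives the claim directly.

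\textbf{Step 3 (main obstacle): the identification in Step 1.} The real content is checking that the inverse Monk bijection applied at $X$ is exactly the Little bump at $c$, with index $\alpha=s$ the smaller entry of the root and not $\beta$. Concretely, two things must hold:
\begin{itemize}
\item The crossing "whose removal yields $w$" must be $c$ itself. This is where we use that $X\setminus\{c\}$ is reduced. By Little's lemma the only other candidate position has the same root, and the crossing with root $(s,\beta)$ in a reduced $X$ is unique, so the choice is unambiguous.
\item The right-hand side of \eqref{eq:monk} must involve transpositions $t_{\alpha l}$ with $l>\alpha$. This forces $\alpha=s$, $l=\beta$ under the convention $\rtt=(\text{smaller},\text{larger})$.
\end{itemize}
I would verify the orientation of $\rtt_X(c)$ against the definition of $\rtt$ before fixing $\alpha$. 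If the convention turned out reversed, the statement would read with $\beta$ in place of $s$, which is weaker and still consistent with $s<\beta$. Once $\alpha=s$ is pinned down, the rest is formal.
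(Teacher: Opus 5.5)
Your proof is correct and is essentially the paper's own argument: regard $X$ as a right-hand element of the Monk bijection for $(\tau t_{s\beta},s)$, so that $X=x_s\!\rightsquigarrow X'$ or $X=m_{k,s}X'$ with $k<s$, then transport along \eqref{eq:GHintertwine} and apply Lemma~\ref{lemma:bpdconfine}. The concerns in your Step~3 are already settled by the hypotheses. Since $s<\beta$ is assumed, $\alpha=s$ is forced, and the crossing whose removal yields $\tau t_{s\beta}$ is unique because in a reduced word each inversion is the right root of exactly one position. Little's lemma, which concerns non-reduced words, is not needed at that point.
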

\begin{proof}
Removing $c$ yields $\pi:=\tau t_{s\beta}$ with $\ell(\pi)=\ell(\tau)-1$, so $X\in\RC(\pi t_{s\beta})$ lies on the right side of the Monk bijection for $(\pi,s)$, and $X'$ is its preimage: $X=x_s\!\rightsquigarrow X'$ with $X'\in\RC(\pi)$, or $X=m_{k,s}X'$ with $X'\in\RC(\pi t_{ks})$ for some $k<s$. By \eqref{eq:GHintertwine} the same relation holds between $\Phi(X)$ and $\Phi(X')$, and Lemma~\ref{lemma:bpdconfine} gives the claim.
\end{proof}

\begin{lemma}[Bounded completion]\label{lemma:bpdcompletion}
Let $B\in\mathrm{BPD}(w)$ have no blank tiles in rows $>p$, and suppose $\maxd(w)\leq p$. Then rows $>p$ of $B$ contain no $j$-shaped tiles. Consequently, in rows $>p$ every pipe $c$ travels straight north along column $c$, until it reaches row $p$ or until it reaches its exit row $w^{-1}(c)>p$, where it turns east and travels straight to the east edge; in particular $B$ is determined by $B_{\leq p}$.
\end{lemma}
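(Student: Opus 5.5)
The plan is to argue tile by tile in rows $>p$ of $B$, using only the local geometry of bumpless pipe dreams. Every pipe travels north and east, and the pipe leaving through row $i$ lies entirely in rows $\geq i$. In rows $>p$ there are no blank tiles, so every tile there is a crossing, a horizontal or vertical tile, an $r$-tile, or a $j$-tile. First I will show there is no $j$-tile in rows $>p$. Then I will deduce the path description. Finally, uniqueness given $B_{\le p}$ follows because the description of rows $>p$ depends only on $w$ and $p$.

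For the first step I would argue by contradiction, taking a $j$-tile at $(i,j)$ with $i>p$ and choosing it so that $i$ is maximal (the lowest row containing one). Consider the pipe $q$ entering this $j$-tile from the west. Following $q$ backward (west, then south), it must at some point have turned from north to east at an $r$-tile in row $i$ to the west of column $j$, or else $q$ would enter from the west edge, which is impossible. Just before that $r$-tile, $q$ was travelling north from below, and since there are no $j$-tiles in rows $>i$, $q$ travelled straight north from the bottom in some column $c<j$. So $q$ is pipe $c$, it turns east in row $i$, and it turns north again in column $j$.

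The contradiction should come from counting pipes. The standard counting fact for BPDs is that the number of blank tiles in a given region is controlled by the inversions of $w$. The cleanest route uses the local identity that in each row the $r$-tiles and $j$-tiles alternate along any horizontal run, combined with the following counting argument. In rows $\geq i$, the pipes exiting through those rows are $w(i),w(i+1),\dots$, and the other pipes must cross row $i$ vertically. With no blanks and no $j$-tiles below row $i$, each column carries exactly one pipe upward through the bottom of row $i$. A $j$-tile in row $i$ means some pipe turns north in row $i$, so it exits through a row $<i$ while having moved east. Then in column $c$ above row $i$ there must be a blank tile or a later turn somewhere.

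The key step is to use $\maxd(w)\leq p$, so that $w(i)<w(i+1)<\cdots$ for $i>p$. I would argue that pipe $w(i)$ exits row $i$ after turning east at an $r$-tile in column $w(i)$. Pipes in columns $c$ with $w^{-1}(c)\ge i$ behave as predicted, and the pipe $q=c$ with $w^{-1}(c)<i$ that turns east in row $i$ must cross the vertical pipes $w(i'),\ i'>i$, that lie between columns $c$ and $j$. Because the exits are increasing, those pipes never cross $q$ again above, and this forces a blank tile in row $i$ to the east, at the square just west of the $j$-tile. That is where I expect the main obstacle: making precise that a north-then-east-then-north detour of pipe $c$ in a blank-free row creates a blank.

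A likely cleaner fallback is the standard fact that blank tiles sit exactly at positions $(a,b)$ where pipes from the row and the column both pass in the "Rothe" configuration. From it one derives that a row with no blanks has all its turns in the droop-free configuration, and hence consists only of $r$-tiles at the exit columns $w(i)$, together with straight tiles and crossings, when $w$ has no descent at or after $i$.

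Given the absence of $j$-tiles, each pipe in rows $>p$ enters from the south and can only go north or turn east at an $r$-tile, and it can never turn north again. So pipe $c$ goes straight north in column $c$ until either row $p$ or its exit row $w^{-1}(c)$, where it turns east and runs to the edge. This fixes every tile in rows $>p$ in terms of $w$ alone, which gives the determination by $B_{\le p}$.
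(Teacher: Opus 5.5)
Your setup is correct. With $(i,j)$ the lowest $j$-tile below row $p$, the pipe $q$ entering it from the west is pipe $c$ for some $c<j$, turning east at the $r$-tile $(i,c)$. Your last paragraph, deducing the path description once $j$-tiles are excluded, is also fine. But there is a genuine gap: you never obtain the contradiction, and the mechanism you sketch cannot produce one. The square just west of $(i,j)$ is traversed horizontally by $q$, so it is never blank. Several supporting claims are also false:
\begin{itemize}
\item Column $j$ carries no pipe into row $i$ from below (that is what a $j$-tile at $(i,j)$ means). So not every column carries a pipe through the bottom of row $i$.
\item The pipes $w(i')$ with $i'>i$ lie entirely in rows $\geq i'>i$, so $q$ does not cross them in row $i$.
\item Your fallback, that blank tiles sit at Rothe-diagram positions, holds for the Rothe bumpless pipe dream but not for a general one. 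It cannot be invoked for an arbitrary $B$.
\end{itemize}

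The missing idea is local and needs no counting. You already observed that the pipe exiting through row $i$ makes its last turn at an $r$-tile $(i,c_1)$. Since rows $>i$ contain no $j$-tiles, it came straight up column $c_1$, so $c_1=w(i)$. What you need is two further facts.
\begin{itemize}
\item $c_1>j$. The tiles $(i,c),\ldots,(i,j)$ are occupied by $q$, and an $r$-tile cannot be entered from the west. So the exiting pipe's last turn lies east of $(i,j)$. This is where your remark that $r$- and $j$-tiles alternate along a row actually pays off.
\item $j=w(i'')$ for some $i''>i$. The pipe entering at the bottom of column $j$ cannot enter $(i,j)$ from the south. So it turns east at some $(i'',j)$ with $i''>i$, and, meeting no $j$-tile in row $i''$, it runs straight to the east edge.
\end{itemize}
Then $w(i)>w(i'')$ with $p<i<i''$, contradicting $\maxd(w)\leq p$.

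One small addition to your final step. The lemma is used to compare diagrams whose permutations are not known in advance to agree (e.g.\ in Proposition~\ref{proposition:zerotrunc}), so you should say why $B_{\leq p}$ determines the relevant part of $w$. By the path description, the columns in which pipes cross from row $p+1$ into row $p$ are exactly $w(1),\ldots,w(p)$. This fixes the increasing tail $w(p+1)<w(p+2)<\cdots$, and hence all rows $>p$.
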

\begin{proof}
Suppose $(i,c)$ is a $j$-shaped tile with $i>p$, and take $i$ maximal. No pipe enters $(i,c)$ from the south, so pipe $c$ turns east at some tile $(i'',c)$ with $i''>i$; as rows $>i$ contain no $j$-shaped tiles and no blanks, pipe $c$ then travels east along row $i''$ to the edge, so $w(i'')=c$. The pipe $P$ entering $(i,c)$ from the west travels along row $i$ from an $r$-shaped turn $(i,c_0)$, $c_0<c$. The pipe leaving through row $i$ is not $P$; its last turn is an $r$-shaped tile $(i,c_1)$ from which it travels east along row $i$ to the edge. The tiles $(i,c_0),\ldots,(i,c)$ are occupied by $P$ and an $r$-shaped tile cannot be entered from the west, so $c_1>c$. Below row $i$ there are no $j$-shaped tiles, so this pipe traveled straight north along column $c_1$ and is pipe $c_1$: $w(i)=c_1>c=w(i'')$ with $p<i<i''$, contradicting $\maxd(w)\leq p$. The remaining assertions follow: with no blanks and no $j$-shaped tiles in rows $>p$, a pipe in these rows can only travel north in its own column and, after one $r$-shaped turn, east.
\end{proof}

\begin{proposition}[Top rows]\label{proposition:toprows}
Let $R$ be an RC graph and $p\geq 0$. Then $\Phi(R)$ and $\Phi(R_{\leq p})$ agree in rows $1,\ldots,p$.
\end{proposition}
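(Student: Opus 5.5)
The plan is to go from $R_{\leq p}$ to $R$ by adding crossings in rows $>p$ one at a time, and to show that each addition leaves rows $1,\ldots,p$ of the bumpless pipe dream unchanged. I will use the Monk bijection $x_\alpha\!\rightsquigarrow$ with $\alpha>p$, which inserts exactly one crossing in row $\alpha$. Lemma~\ref{lemma:bpdconfine} together with \eqref{eq:GHintertwine} says that $\Phi$ of the result agrees with $\Phi$ of the input in rows $1,\ldots,\alpha-1\supseteq\{1,\ldots,p\}$.

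The complication is that $x_\alpha\!\rightsquigarrow$ does not add a prescribed crossing. It adds one at the leftmost elbow of row $\alpha$ and may then cascade, moving other crossings. So I will run the argument in reverse and remove crossings instead. Take the lowest nonempty row $\alpha>p$ of $R$ and let $c$ be the leftmost crossing in that row. I apply the inverse Monk move (the Little bump of $R$ at $c$) with respect to the root of $c$.

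First I check that this move stays below row $p$. All rows below $\alpha$ are empty, and the root of $c$ is $(s,\beta)$ with $s\geq\alpha$; I expect this because $c$ is the lowest crossing in its column region, so the pipe labels it crosses satisfy $s\geq\alpha$, though this needs verification. The bump moves crossings leftward in their rows. By Lemma~\ref{lemma:bumpconfine}, the $\Phi$-images before and after agree in rows $1,\ldots,s-1\supseteq\{1,\ldots,p\}$.

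Iterating the Little bump strictly decreases a suitable statistic, such as the total column sum of the crossings in rows $>p$, or the number of crossings there once a crossing falls off the left edge (the $x_\alpha$ case). So the process terminates with an RC graph whose rows $>p$ are empty. The main obstacle is to show that each step changes only rows $>p$, so that the top rows remain exactly $R_{\leq p}$ and the terminal graph is $R_{\leq p}$ itself. A Little bump can cascade into other rows, and I need every crossing it moves to lie in a row $\geq s>p$. My first attempt will be to argue from the pipe picture: the doubled pair of wires always has both wires labelled $\geq\alpha$, since wires labelled $<\alpha$ never enter rows $\geq\alpha$ except through crossings in those rows. I would establish this invariant by induction along the cascade.

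Chaining the agreements along the finite sequence $R=X_0,X_1,\ldots,X_m=R_{\leq p}$ then gives the proposition.
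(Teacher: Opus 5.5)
There is a genuine gap, and it is at the step you yourself flag as the main obstacle: the invariant you plan to prove is false. A Little bump at the last crossing $c$ can cascade into rows $\leq p$. So the top rows are not preserved, and the iteration does not end at $R_{\leq p}$.

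The example of Figure~\ref{fig:zeroing} shows this already with $p=1$. Take $R=\{(1,2),(1,3),(2,2)\}$. The last nonempty row is $\alpha=2$, and $c=(2,2)$ has right root $(3,4)$. Moving $c$ to $(2,1)$ produces the non-reduced word $3\,2\,2$, and the cascade then moves $(1,2)$ to $(1,1)$ in row $1$. Your next step deletes $(2,1)$ (root $(2,3)$, no elbow to its left). The process therefore stops at $\{(1,1),(1,3)\}$, whose permutation $s_3s_1$ differs from the permutation $s_3s_2$ of $R_{\leq 1}=\{(1,2),(1,3)\}$.

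The pipe heuristic breaks for a simple reason. The doubled pair of wires are both labelled $\geq\alpha$, but on their way to the top edge they pass through every row above $\alpha$, so their second crossing can lie in any row. (Also, ``row $\geq s$'' fails for $c$ itself when $j\geq 2$, since $c$ lies in row $\alpha=s-j+1<s$.) As a result, your chain only shows that $\Phi(R)$ agrees in rows $1,\ldots,p$ with $\Phi(X_m)$ for some $X_m$ with empty rows $>p$. Nothing in the proposal connects $\Phi(X_m)$ to $\Phi(R_{\leq p})$.

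What is missing is a second comparison, on the prefix. The paper removes one crossing at a time. For the last crossing $x=(\alpha,j)$ and $R^-=R\setminus\{x\}$, it inducts on $j$. When $j\geq 2$:
\begin{itemize}
\item It bumps $R$ at $x$ to get $X'$, which agrees with $R$ under $\Phi$ in rows $<\alpha$ by Lemma~\ref{lemma:bumpconfine}.
\item It applies the induction hypothesis to $X'$ and its new last crossing $x'=(\alpha,j')$, where $j'<j$.
\item It follows the bump letter by letter, using the prefix/suffix restriction property of exchange defects. This shows that $X'\setminus\{x'\}$ is obtained from $R^-$ by successive Little bumps at crossings with right roots $\{a-i-1,a-i\}$, where $a=\alpha+j-1$ and $a-i-1\geq\alpha$.
\item Each of these bumps again preserves rows $<\alpha$ of $\Phi$ by Lemma~\ref{lemma:bumpconfine}.
\end{itemize}
In the example above, $\{(1,1),(1,3)\}$ is exactly the Little bump of $R_{\leq 1}$ at $(1,2)$, with root $(2,3)$. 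So the top rows of the RC graph do change, but only through bumps whose roots have smaller entry $>p$, which rows $1,\ldots,p$ of $\Phi$ cannot detect. This is what you need to prove, not row confinement of the cascade.
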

\begin{proof}
It suffices to prove: if $\alpha$ is the last nonempty row of $R$ and $x$ is the leftmost crossing in row $\alpha$, then $\Phi(R)$ and $\Phi(R\setminus\{x\})$ agree in rows $1,\ldots,\alpha-1$. Indeed, removing the crossings of rows $>p$ one at a time in this manner never changes rows $1,\ldots,p$ of $\Phi$ and ends at $R_{\leq p}$.

Let $x=(\alpha,j)$, $a=\alpha+j-1$ its letter, $R^-=R\setminus\{x\}$ and $\pi=\wof{R^-}$. As $x$ is the last letter of the reading word, $R^-$ is reduced and $\wof R=\pi s_a$, so $\rtt_R(x)=(a,a+1)$. We induct on $j$. If $j=1$ then $a=\alpha$, the leftmost elbow of $R^-$ in row $\alpha$ is $(\alpha,1)$, and $R=x_\alpha\!\rightsquigarrow R^-$ because $R$ is reduced; by \eqref{eq:GHintertwine} and Lemma~\ref{lemma:bpdconfine}, $\Phi(R)$ and $\Phi(R^-)$ agree in rows $<\alpha$.

Let $j\geq 2$ and let $X'$ be the Little bump of $R$ at $x$. Since $a>\alpha$, row $a$ of $R$ is empty, so $R$ is not in the image of $x_a\!\rightsquigarrow$; hence $R=m_{k,a}X'$ for some $k<a$, and $X'$ has as many crossings as $R$ in every row. By Lemma~\ref{lemma:bumpconfine}, $\Phi(R)$ and $\Phi(X')$ agree in rows $<a$, hence in rows $<\alpha$. During the bump, a crossing moves only within its row and only to the nearest elbow on its left, so the crossings of row $\alpha$ other than $x$ remain to the right of $x$: the last nonempty row of $X'$ is $\alpha$, its leftmost crossing is $x'=(\alpha,j')$ with $j'<j$, and $X'\setminus\{x'\}$ is reduced. By induction, $\Phi(X')$ and $\Phi(X'\setminus\{x'\})$ agree in rows $<\alpha$. It remains to show that $\Phi(X'\setminus\{x'\})$ and $\Phi(R^-)$ agree in rows $<\alpha$.

Write the reading word of $R$ as $\mathbf a\,a$ with $\mathbf a$ the reading word of $R^-$, and follow the Little bump letter by letter. We use the following restriction property, in which $\mathbf a\mathbf b$ is a word that is not reduced but becomes reduced when a position $k$ of $\mathbf a$ is deleted, and $\mathbf a^-$ denotes $\mathbf a$ with the letter at $k$ decremented: (i) if $\mathbf a^-$ is not reduced, then its defect (the unique position $j\neq k$ with $\mathbf a^-\setminus j$ reduced) is also the defect of $\mathbf a^-\mathbf b$; (ii) if $\mathbf a^-$ is reduced, then $\mathbf a^-\mathbf b$ is reduced or its defect lies in $\mathbf b$. For (i), the exchange property applied to the reduced word $\mathbf a^-\setminus k=\mathbf a\setminus k$ gives $\wof{\mathbf a^-\setminus j}=\wof{\mathbf a\setminus k}$, so $(\mathbf a^-\setminus j)\mathbf b$ has the permutation and the length of the reduced word $(\mathbf a\setminus k)\mathbf b$ and is reduced, and $j$ is the defect of $\mathbf a^-\mathbf b$ by uniqueness. For (ii), a defect $j$ of $\mathbf a^-\mathbf b$ in $\mathbf a$ would give $\wof{\mathbf a^-\setminus j}=\wof{\mathbf a\setminus k}=\wof{\mathbf a^-\setminus k}$, so the distinct positions $j,k$ of the reduced word $\mathbf a^-$ would have the same right root. The same holds with the roles of prefix and suffix exchanged.

The bump begins by decrementing the last letter, $a\mapsto a-1$. If the word is reduced we are done with $X'\setminus\{x'\}=R^-$. Otherwise the defect $k$ lies in $\mathbf a$ by (ii) for the suffix, and by Little's lemma the positions $k$ and the last position have the same right root up to order; the last letter $a-1$ has right root $(a-1,a)$, so $\rtt_{\mathbf a}(k)=\{a-1,a\}$ as a set, and $\mathbf a\setminus k$ is reduced, being a prefix of the reduced word $(\mathbf a\,(a-1))\setminus k$. By (i) and (ii) the bump now performs the complete Little bump of $\mathbf a$ at $k$, producing a reduced word $\mathbf a^{(1)}$ (the RC graph $A^{(1)}$, with the rows of $R^-$), and then either stops or continues by decrementing the last letter again, to $a-2$, after which the same analysis applies to $\mathbf a^{(1)}(a-2)$, and so on. Thus $X'\setminus\{x'\}=A^{(m)}$ where $A^{(0)}=R^-$ and $A^{(i+1)}$ is the Little bump of $A^{(i)}$ at a crossing with right root $\{a-i-1,a-i\}$ whose removal leaves a reduced word. The last letter is decremented to $a-i-1$ only if $x$ moves to column $j-i-1\geq 1$, so $a-i-1\geq\alpha$. By Lemma~\ref{lemma:bumpconfine}, $\Phi(A^{(i)})$ and $\Phi(A^{(i+1)})$ agree in rows $<a-i-1$, hence in rows $<\alpha$, for every $i$, and therefore so do $\Phi(R^-)$ and $\Phi(X'\setminus\{x'\})$.
\end{proof}

\subsubsection*{Transition steps}
\begin{definition}
For a permutation $w$ with $\maxd(w)\le n$ define
\[
P_1(w)=\{v : w\downvar{1}v\},\qquad
T_n(w)=\{w' : w\downvar{n}w',\ \ell(w')=\ell(w)\}.
\]
\end{definition}

By Proposition~\ref{proposition:pullindex} with $i=n$ (this is the identity used in the proof of Theorem~\ref{theorem:zero_bijection}),
\begin{equation}\label{E:Tn}
\sch_w(x_1,\dots,x_{n-1},0)=\sum_{w'\in T_n(w)}\sch_{w'}(x_1,\dots,x_{n-1}),
\end{equation}
and by Theorem~\ref{theorem:modactrc} together with Corollary~\ref{corollary:evschub} (or Proposition~\ref{proposition:pullindex} with $i=1$, $y=0$),
\begin{equation}\label{E:P1}
\sch_w(x_1,\dots,x_n)=\sum_{v\in P_1(w)}x_1^{\ell(w)-\ell(v)}\,\sch_v(x_2,\dots,x_n).
\end{equation}

Let $u$ be a permutation with $\maxd(u)=n$. Let $s=\max\{j>n : u(j)<u(n)\}$. A \emph{transition step} at $n$ replaces $u$ by
\[
u^{+}=u\,t_{ns}\,t_{in}\qquad\text{for some } i<n \text{ with } \ell(u\,t_{ns}t_{in})=\ell(u\,t_{ns})+1 .
\]
The Lascoux--Sch\"utzenberger transition formula (Macdonald, \emph{Notes on Schubert polynomials}, (4.16)) reads
$\sch_u=x_n\sch_{u t_{ns}}+\sum_i \sch_{u t_{ns}t_{in}}$, the sum over all such $i$.

\begin{lemma}\label{L:trim}
Let $w=c\cdot{\uparrow}v$ with $\ell(w)=\ell(c)+\ell(v)$ and $\maxd(w)\le n$. Then $\maxd(v)\le n-1$.
In particular, if $R\in\RC(w,n)^0$ with $n\ge2$, then $\trm(R)\in\RC(v,n-1)^0$ with $v=\wof{\trm R}\in P_1(w)$;
and every $v\in P_1(w)$ has $\maxd(v)\le n-1$.
\end{lemma}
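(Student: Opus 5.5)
The whole statement rests on one observation: in a length-additive factorization, every right descent of the right factor is a right descent of the product. Once $\maxd(v)\leq n-1$ is established, the two ``in particular'' clauses follow from the factorization results already available (Lemma~\ref{lemma:d1desc}, Lemma~\ref{lemma:factor}, and the proof of Theorem~\ref{theorem:modactrc}).

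\textbf{Step 1: shifting descents.} From the definition $\shup v(i+1)=v(i)+1$ and $\shup v(1)=1$, the permutation $v$ has a right descent at $d$ if and only if $\shup v$ has a right descent at $d+1$. Moreover $\ell(\shup v)=\ell(v)$.

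\textbf{Step 2: descents of the right factor are descents of $w$.} Suppose $\shup v$ has a right descent at $k$, so $\ell(\shup v\, s_k)=\ell(\shup v)-1$. Then
$$\ell(w s_k)=\ell(c\cdot \shup v\, s_k)\leq \ell(c)+\ell(\shup v)-1=\ell(w)-1,$$
using subadditivity of length and the hypothesis $\ell(w)=\ell(c)+\ell(v)$. Hence $k$ is a right descent of $w$.

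\textbf{Step 3: the bound.} Combining the two steps, if $d$ is a descent of $v$ then $d+1$ is a descent of $w$. So $d+1\leq \maxd(w)\leq n$, which gives $\maxd(v)\leq n-1$.

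\textbf{The consequences for RC graphs.} Let $R\in\RC(w,n)^0$ with $n\geq 2$.
\begin{itemize}
\item The reading word of $R$ is row $1$ followed by the remaining rows. As in the proof of Theorem~\ref{theorem:modactrc}, this gives $w=s_{a_1}\cdots s_{a_m}\,\shup\wof{\trm R}$ with $a_1>\cdots>a_m$.
\item The factorization is length additive, because the reading word of $R$ is reduced.
\item Lemma~\ref{lemma:d1desc} then gives $v:=\wof{\trm R}\in P_1(w)$.
\item Steps 1--3 give $\maxd(v)\leq n-1$, so $\trm R$ is a valid bounded RC graph of height $n-1$.
\item Its last row is row $n$ of $R$, which is empty, so $\trm R\in\RC(v,n-1)^0$.
\end{itemize}

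\textbf{The final clause.} For an arbitrary $v\in P_1(w)$, Lemma~\ref{lemma:d1desc} (equivalently Lemma~\ref{lemma:factor}) supplies a length-additive factorization $w=c\,\shup v$. Steps 1--3 then give $\maxd(v)\leq n-1$.

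The main point needing care is the length additivity in the RC-graph case. It follows from reducedness of $R$: a prefix and a suffix of a reduced word multiply length-additively.
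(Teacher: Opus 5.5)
Your proposal is correct and follows the paper's proof essentially step for step. The paper likewise shows that every right descent of ${\uparrow}v$ is a right descent of $w$, by writing ${\uparrow}v=u''s_j$ and comparing with $\ell(w)=\ell(c)+\ell(v)$ via subadditivity of length. It then obtains the RC-graph consequences from reducedness of the reading word, the proof of Theorem~\ref{theorem:modactrc}, and Lemmas~\ref{lemma:d1desc} and~\ref{lemma:factor}.
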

\begin{proof}
Let $u={\uparrow}v$ and suppose $u(j)>u(j+1)$. Write $u=u''s_j$ with $\ell(u'')=\ell(u)-1$. Then
$w=(cu'')s_j$ and $\ell(cu'')\le \ell(c)+\ell(u)-1<\ell(w)$, so $w(j)>w(j+1)$. Hence every right descent of
${\uparrow}v$ is a right descent of $w$, so $\maxd({\uparrow}v)\le n$, i.e.\ $\maxd(v)\le n-1$.

For the second statement, the reading word of $R$ is (word of row $1$)$\cdot$(word of rows $\ge2$), which is reduced,
so $w=c\cdot{\uparrow}\wof{\trm R}$ with lengths adding, where $c$ is the permutation of row $1$; and $w\downvar{1}\wof{\trm R}$
by the proof of Theorem~\ref{theorem:modactrc}. Row $n-1$ of $\trm R$ is row $n$ of $R$, which is empty.
The last statement follows from Lemmas~\ref{lemma:d1desc} and~\ref{lemma:factor}, which give $w=c\cdot{\uparrow}v$ with lengths adding for every $v\in P_1(w)$.
\end{proof}

\begin{lemma}\label{L:leaves}
Let $\maxd(w)\le n$. If $\maxd(w)<n$ then $T_n(w)=\{w\}$. Otherwise $T_n(w)$ is the set of permutations $w'$ with
$\maxd(w')<n$ that can be reached from $w$ by a sequence of transition steps at $n$.
Moreover, for a single step $u\mapsto u^+=u\,t_{ns}t_{in}$, writing $\zeta=u(n)$ and $\eta=u(s)$:
\begin{enumerate}
\item[(O1)] $u^+(n)=u(i)<\eta<\zeta$; in particular the value in position $n$ strictly decreases at every step;
\item[(O2)] the only position $l<n$ with $u^+(l)\ne u(l)$ is $l=i$, and $u(i)<u^+(i)=\eta<\zeta$.
\end{enumerate}
\end{lemma}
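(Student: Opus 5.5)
We would prove the two parts of Lemma~\ref{L:leaves} separately, starting with the local observations (O1) and (O2), which only need elementary permutation manipulations, and then using them to characterize $T_n(w)$.

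\textbf{The observations (O1) and (O2).} Write $u'=u\,t_{ns}$. Because $s$ is the largest index $j>n$ with $u(j)<u(n)$, the transposition $t_{ns}$ is a length-decreasing cover, so $u=u'\,t_{ns}\gtrdot u'$. Concretely, $u'(n)=\eta$ and $u'(s)=\zeta$, and $u'$ agrees with $u$ elsewhere.

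Next, $u^+=u'\,t_{in}$ covers $u'$ with $i<n$. The cover criterion gives $u'(i)<u'(n)=\eta$, and no position strictly between $i$ and $n$ has a value in the open interval $(u'(i),\eta)$. Reading off $u^+$:
\begin{itemize}
\item $u^+(n)=u'(i)=u(i)$, which is less than $\eta$, which in turn is less than $\zeta$. This is (O1).
\item $u^+(i)=\eta$, and since $u(i)<\eta$ we get $u(i)<u^+(i)$.
\item All other positions $l<n$ are untouched, so $u^+(l)=u(l)$. This is (O2).
\end{itemize}

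\textbf{Characterizing $T_n(w)$.} We use \eqref{E:Tn} together with the transition formula at $n$. Setting $x_n=0$ in $\sch_u=x_n\sch_{u t_{ns}}+\sum_i\sch_{u^+}$ gives
\[
\sch_u(x_1,\dots,x_{n-1},0)=\sum_i\sch_{u^+}(x_1,\dots,x_{n-1},0).
\]
Each $u^+$ has $\maxd(u^+)\le n$. This follows from the Bruhat description: the transpositions touch positions $i<n$, $n$ and $s$, and we must show position $s$ does not create a descent past $n$. Choosing $s$ maximal guarantees that $u(s+1),u(s+2),\dots$ all exceed $\zeta$. Together with the original condition $\maxd(u)=n$, this gives the claim.

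\textbf{Termination.} By (O1), the value in position $n$ strictly decreases at every step, so the recursion terminates. Any branch can be stopped once $\maxd(u^+)<n$. At that point $\sch_{u^+}$ does not involve $x_n$, so setting $x_n=0$ leaves it unchanged.

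\textbf{Matching with $T_n(w)$.} Unrolling the recursion expresses $\sch_w(x_1,\dots,x_{n-1},0)$ as a sum of $\sch_{w'}$ over the leaves $w'$ of the transition tree. If $\maxd(w)<n$, the same reasoning gives $\sch_w(\ldots,0)=\sch_w$. Comparing with \eqref{E:Tn} and using linear independence of the Schubert polynomials in $x_1,\dots,x_{n-1}$ identifies $T_n(w)$ with the multiset of leaves.

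We also need that the set of leaves coincides with the set of all permutations reachable by steps that have $\maxd<n$. A reachable $w'$ with $\maxd(w')<n$ admits no further step, since steps are only defined when $\maxd=n$. Hence it is a leaf, and conversely every leaf is reachable.

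\textbf{Expected obstacle.} The main difficulty is that \eqref{E:Tn} defines $T_n(w)$ as a set with coefficient $1$, so we must rule out a leaf being reached along two different paths. Without this, coefficients of $2$ would appear, contradicting \eqref{E:Tn}; the argument would then be consistent only if we separately knew that leaves are distinct.

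To handle this, we would show that the path is recoverable from the leaf. By (O2), each step changes exactly one position $i<n$, raising its value to $\eta$. The sequence of values $\eta$ is determined by the current value at position $n$. An inductive injectivity argument on this data should show that distinct paths yield distinct leaves. Alternatively, one could bypass the issue by citing that \eqref{E:Tn} is multiplicity-free, which Proposition~\ref{proposition:pullindex} with $i=n$, $y=0$ already provides as a sum over distinct $w'$. Comparing coefficients then forces the transition leaves to be distinct. We would try this second route first.
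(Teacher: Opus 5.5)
Your proposal is correct and follows essentially the same route as the paper. (O1) and (O2) come from the cover criterion, $\maxd(u^+)\le n$ comes from the increasing tail and the maximality of $s$, and you then iterate the transition formula at $x_n=0$ and compare with the multiplicity-free expansion \eqref{E:Tn} using linear independence; your ``second route'' for ruling out repeated leaves is exactly how the paper handles multiplicities. The only difference is cosmetic: for termination you use the strictly decreasing value in position $n$, whereas the paper counts positions $j>n$ with $u(j)<u(n)$, and either measure works.
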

\begin{proof}
(O1) and (O2) are read off from the definition ($\eta<\zeta$ by choice of $s$, and $u(i)<\eta$ by the cover criterion).
Since $\maxd(u)=n$, the values $u(n+1)<u(n+2)<\cdots$ increase, those in positions $n+1,\dots,s$ are $<\zeta$, and those in positions $>s$ are $>\zeta$.
So after replacing $u(s)$ by $\zeta$ the values in positions $>n$ still increase; thus $\maxd(u^+)\le n$.
The number of positions $j>n$ with $u^+(j)<u^+(n)$ is at most $s-1-n$, which is less than $s-n=\#\{j>n:u(j)<u(n)\}$,
so every sequence of steps terminates. Setting $x_n=0$ in the transition formula and iterating gives
$\sch_w(x_1,\dots,x_{n-1},0)=\sum\sch_{w'}$ over the end points $w'$ of all step sequences (a priori with multiplicity).
Comparing with \eqref{E:Tn} and using that Schubert polynomials are linearly independent gives the claim.
\end{proof}

\subsubsection*{The map $\zeromap$}

\begin{definition}\label{definition:zeromap}
Let $R$ be a bounded RC graph of height $n\geq 1$ whose row $n$ is empty, and let $w=\wof R$. If $\maxd(w)<n$, let $\zeromap(R)$ be $R$ with its last row deleted, a bounded RC graph of height $n-1$. If $\maxd(w)=n$, let $s=\max\{j>n: w(j)<w(n)\}$ and let $c\in R$ be the unique crossing with $\rtt_R(c)=(n,s)$. Let $R^{(1)}$ be the RC graph obtained from $R$ by the Little bump at $c$, i.e.\ the inverse Monk move, and define $\zeromap(R)=\zeromap(R^{(1)})$, where $R^{(1)}$ is regarded as a bounded RC graph of height $n$.
\end{definition}

\begin{lemma}\label{lemma:zeroweight}
Definition~\ref{definition:zeromap} is well posed: for $R$ as in the definition with $\maxd(w)=n$, the crossing $c$ exists and is unique, $R\setminus\{c\}$ is reduced, $R^{(1)}$ is a bounded RC graph of height $n$ with row $n$ empty, $\wtt(R^{(1)})=\wtt(R)$, and $\wof{R^{(1)}}=w\,t_{ns}t_{in}$ is a transition step at $n$ for some $i<n$. The recursion terminates, and $\zeromap(R)$ is a bounded RC graph of height $n-1$ with $\wtt(\zeromap R)=\wtt(R)$.
\end{lemma}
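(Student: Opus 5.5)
We prove the claims in order for a single step, and then deduce termination from Lemma~\ref{L:leaves}.

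\emph{Existence and uniqueness of $c$.} Since $\maxd(w)=n$, the value $w(n)$ exceeds some $w(j)$ with $j>n$, so $s$ is defined. Moreover $w(n)>w(s)$ and no position $n<j<s$ has $w(s)<w(j)<w(n)$: for $n<j<s$ we have $w(j)<w(s)$ because positions after $n$ are increasing. Hence $t_{ns}$ is a cover, and $\ell(w t_{ns})=\ell(w)-1$. In a reduced word, each inversion of $w$ is the right root of exactly one letter. Hence there is a unique crossing $c$ with $\rtt_R(c)=(n,s)$, and deleting it gives the reduced word of $w t_{ns}$. So $R\setminus\{c\}$ is reduced.

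\emph{Nature of the bump.} With $X=R$, $\tau=w$ and $(s,\beta)$ replaced by $(n,s)$, we are on the right side of \eqref{eq:monk} for $\pi=wt_{ns}$ and $\alpha=n$. The preimage of $R$ under the Monk bijection either lies in $\RC(\pi)$ (the $x_n\!\rightsquigarrow$ case) or equals $m_{k,n}^{-1}R\in\RC(\pi t_{kn})$ for some $k<n$. We rule out the first case. If $R=x_n\!\rightsquigarrow R'$, then $R$ would have one more crossing in row $n$ than $R'$. But row $n$ of $R$ is empty, which is impossible.

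So we are in the $m_{k,n}$ case. Then $R^{(1)}$ has the same number of crossings as $R$ in every row, and the same rows. This gives $\wtt(R^{(1)})=\wtt(R)$ and row $n$ of $R^{(1)}$ still empty. It also gives $\wof{R^{(1)}}=wt_{ns}t_{in}$ with $i=k<n$. The cover condition $\pi t_{in}\gtrdot\pi$ is exactly the length condition in the definition of a transition step.

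\emph{Boundedness of $R^{(1)}$.} Lemma~\ref{L:leaves} shows that $\maxd(u^+)\le n$ for any transition step. Hence $R^{(1)}$ is a bounded RC graph of height $n$. Its underlying set is a genuine reduced pipe dream because the Little bump output is reduced, as described in the Monk-move discussion.

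\emph{Termination.} By (O1) of Lemma~\ref{L:leaves}, the quantity $\#\{j>n: u(j)<u(n)\}$ strictly decreases along transition steps. Since it is a nonnegative integer, after finitely many iterations we reach a permutation with $\maxd<n$. At that point the first clause of Definition~\ref{definition:zeromap} deletes the empty last row. This yields a bounded RC graph of height $n-1$: the permutation has $\maxd\le n-1$, and every crossing lies in rows $\le n-1$ since row $n$ is empty. Deleting an empty row does not change the weights of the other rows, so the weight is preserved throughout. Here we regard $\wtt$ as a vector of length $n-1$ after deletion, the dropped entry being $0$.

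The main obstacle is the exclusion of the $x_n\!\rightsquigarrow$ case, together with checking that $\rtt_R(c)=(n,s)$ pins down the Monk data $(\pi,\alpha)=(wt_{ns},n)$ with $\alpha=n$ rather than $s$. The orientation convention of $\rtt$ (pipe labels $q<s$ for positive roots) must match the indexing in \eqref{eq:monk}. If it did not, the bump would realize a move $m_{k,s}$ instead. I would first confirm this on Figure~\ref{fig:zeroing}: there the crossing $(2,2)$ has root $(3,4)$, and the result is $1432\,t_{34}t_{13}$, consistent with $\alpha=n=3$.
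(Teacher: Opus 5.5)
Your proof is correct and follows the paper's argument essentially step for step. The steps are the same: the cover $wt_{ns}\lessdot w$ from the increasing tail past position $n$, uniqueness of $c$ because each inversion occurs exactly once as a right root, exclusion of the $x_n\!\rightsquigarrow$ case because row $n$ is empty, and Lemma~\ref{L:leaves} for boundedness of $R^{(1)}$ and for termination.

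Two small remarks. First, the strict decrease of $\#\{j>n:u(j)<u(n)\}$ is established in the proof of Lemma~\ref{L:leaves}, using (O1) together with the increasing tail; it is not (O1) itself. Second, your orientation worry is moot: $R\in\RC(\pi t_{ns})$ with $n<s$ lies on the right side of \eqref{eq:monk} only for $\alpha=n$, so the inverse Monk move there is unambiguous.
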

\begin{proof}
Since $\maxd(w)=n$ we have $w(n+1)<w(n+2)<\cdots$, so $(n,s)$ is an inversion of $w$ and $w(k)<w(s)$ for $n<k<s$; hence $wt_{ns}\lessdot w$. The right roots of the positions of a reduced word are its inversions, each occurring once, which gives $c$, and $R\setminus\{c\}\in\RC(wt_{ns})$ is reduced. Put $\pi=wt_{ns}$. Then $R\in\RC(\pi t_{ns})$ lies on the right side of the Monk bijection \eqref{eq:monk} for $(\pi,n)$. As row $n$ of $R$ is empty, $R$ is not of the form $x_n\!\rightsquigarrow D$, so $R=m_{i,n}(R^{(1)})$ for a unique $i<n$ with $\pi t_{in}\gtrdot\pi$ and a unique $R^{(1)}\in\RC(\pi t_{in})$, namely the inverse Monk move, which has the same number of crossings as $R$ in every row; in particular no crossing is removed, so it is the Little bump at $c$. By Lemma~\ref{L:leaves}, $\maxd(wt_{ns}t_{in})\leq n$, so $R^{(1)}$ is a bounded RC graph of height $n$ with row $n$ empty, and the number of positions $j>n$ with $u(j)<u(n)$ strictly decreases along the recursion, which therefore terminates at a permutation with $\maxd<n$.
\end{proof}

\begin{lemma} \label{lemma:zerotransition}
Let $R$ be a bounded RC graph with $\hr(R)=n\geq 1$ such that row $n$ is empty. Then $\wof{R}\downvar{n}\wof{\zeromap R}$ and $\ell(\wof{\zeromap R})=\ell(\wof R)$, i.e.\ $\wof{\zeromap R}\in T_n(\wof R)$.
\end{lemma}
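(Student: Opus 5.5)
We prove Lemma~\ref{lemma:zerotransition} by induction on the number of recursive calls in Definition~\ref{definition:zeromap}, i.e.\ on $N(u)=\#\{j>n: u(j)<u(n)\}$ for $u=\wof R$. This quantity strictly decreases along transition steps (proof of Lemma~\ref{L:leaves}). The plan is to show that $\wof{\zeromap R}$ is the endpoint of a sequence of transition steps at $n$ starting from $\wof R$. Then Lemma~\ref{L:leaves} identifies that endpoint as an element of $T_n(\wof R)$, and by definition $T_n(w)$ consists of permutations $w'$ with $w\downvar{n}w'$ and $\ell(w')=\ell(w)$, which is exactly the claim.

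\textbf{Base case.} If $\maxd(w)<n$, then $\zeromap(R)$ is $R$ with its last (empty) row deleted. Deleting an empty row does not change the reading word, so $\wof{\zeromap R}=w$. Lemma~\ref{L:leaves} gives $T_n(w)=\{w\}$, so $w\in T_n(w)$, and the claim holds trivially.

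\textbf{Inductive step.} Suppose $\maxd(w)=n$. By Lemma~\ref{lemma:zeroweight}:
\begin{itemize}
\item $R^{(1)}$ is a bounded RC graph of height $n$ with row $n$ empty;
\item $\wof{R^{(1)}}=w\,t_{ns}t_{in}=w^+$ is a single transition step at $n$, so $|R^{(1)}|=|R|$ and $\ell(w^+)=\ell(w)$;
\item $\zeromap(R)=\zeromap(R^{(1)})$.
\end{itemize}
Since $N(w^+)<N(w)$, induction applies to $R^{(1)}$. Tracking the recursion, $\wof{\zeromap R}$ is reached from $w$ by the step $w\mapsto w^+$ followed by the chain of steps furnished for $R^{(1)}$. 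The chain terminates at a permutation $w'$ with $\maxd(w')<n$, because that is the stopping condition of Definition~\ref{definition:zeromap}. By Lemma~\ref{L:leaves}, the set of such reachable endpoints is exactly $T_n(w)$, hence $w'\in T_n(w)$.

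\textbf{Main subtlety.} Some care is needed because the inductive hypothesis gives $\wof{\zeromap R^{(1)}}\in T_n(w^+)$, not directly $\in T_n(w)$. I would therefore phrase the induction hypothesis as the stronger statement that $\wof{\zeromap R}$ is reachable from $\wof R$ by transition steps at $n$ and has $\maxd<n$. Concatenating the chains then makes the step immediate, and Lemma~\ref{L:leaves} is invoked only once at the end. Alternatively, $T_n(w^+)\subseteq T_n(w)$ follows directly from the characterization in Lemma~\ref{L:leaves}, since any endpoint reachable from $w^+$ is reachable from $w$ (with the case $\maxd(w^+)<n$, where $T_n(w^+)=\{w^+\}$, also covered).

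The only step I expect to require attention is confirming that equality of lengths is preserved. This holds because each transition step satisfies $\ell(u\,t_{ns}t_{in})=\ell(u\,t_{ns})+1=\ell(u)$, and also because the Little bump preserves the number of crossings while staying reduced.
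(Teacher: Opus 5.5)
Your proposal is correct and takes essentially the same approach as the paper. The paper's proof also uses Lemma~\ref{lemma:zeroweight} to see that $\wof{\zeromap R}$ is reached from $\wof R$ by a sequence of transition steps at $n$ ending at a permutation with $\maxd<n$, and then invokes Lemma~\ref{L:leaves} once; your explicit induction on $N(u)$, with the strengthened hypothesis, simply unrolls the recursion that Lemma~\ref{lemma:zeroweight} already handles.
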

\begin{proof}
By Lemma~\ref{lemma:zeroweight}, $\wof{\zeromap R}$ is reached from $\wof R$ by a sequence of transition steps at $n$ and has $\maxd<n$; by Lemma~\ref{L:leaves} it lies in $T_n(\wof R)$.
\end{proof}

\begin{proposition}[$\zeromap$ erases the last row]\label{proposition:zerotrunc}
Let $R$ be a bounded RC graph of height $n$ whose last row is empty. Then $\Phi(\zeromap R)$ and $\Phi(R)$ agree in rows $1,\ldots,n-1$. Consequently $\zeromap(R)$ is the unique bounded RC graph $S$ of height $n-1$ such that $\Phi(S)$ agrees with $\Phi(R)$ in rows $1,\ldots,n-1$.
\end{proposition}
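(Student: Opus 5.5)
The plan is to induct on the number of transition steps in the recursion of Definition~\ref{definition:zeromap}, which terminates by Lemma~\ref{lemma:zeroweight}.

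\emph{Base case.} Here $\maxd(\wof R)<n$, so $\zeromap(R)$ is $R$ with the empty last row deleted. As a set of crossings it equals $R=R_{\leq n-1}$, and $\Phi$ depends only on the set. Hence $\Phi(\zeromap R)=\Phi(R)$, and in particular the two agree in rows $1,\ldots,n-1$.

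\emph{Inductive step.} Now $\maxd(w)=n$, and $R^{(1)}$ is the Little bump of $R$ at the crossing $c$ with $\rtt_R(c)=(n,s)$, where $s>n$. By the proof of Lemma~\ref{lemma:zeroweight}, $R\setminus\{c\}$ is reduced, so Lemma~\ref{lemma:bumpconfine} applies with root $(n,s)$. It gives that $\Phi(R)$ and $\Phi(R^{(1)})$ agree in rows $1,\ldots,n-1$. By Lemma~\ref{lemma:zeroweight}, $R^{(1)}$ is again a bounded RC graph of height $n$ with empty last row, and it requires one fewer step. The induction hypothesis then gives that $\Phi(\zeromap R^{(1)})$ agrees with $\Phi(R^{(1)})$ in rows $<n$. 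Since $\zeromap R=\zeromap R^{(1)}$, the first claim follows by transitivity.

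\emph{Uniqueness.} Suppose $S,S'$ have height $n-1$ and $\Phi(S),\Phi(S')$ agree in rows $1,\ldots,n-1$. Each $\Phi(S)$ is a BPD for $\wof S$, and $\maxd(\wof S)\leq n-1$. By the Gao--Huang weight property it has no blank tiles in rows $\geq n$, because $S$ has no crossings there. Lemma~\ref{lemma:bpdcompletion} with $p=n-1$ then says $\Phi(S)$ is determined by $\Phi(S)_{\leq n-1}$. Therefore $\Phi(S)=\Phi(S')$, and injectivity of $\Phi$ gives $S=S'$ as sets; both have height $n-1$. Existence of such an $S$ is supplied by $\zeromap R$.

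\emph{Main obstacle.} One point needs care: Lemma~\ref{lemma:bpdcompletion} must be applied in a fixed large $N\times N$ grid with the same $N$ for both diagrams. This is harmless by the remark that enlarging $N$ does not affect top rows. Beyond that, the real work sits in Lemma~\ref{lemma:bumpconfine}, so the induction itself should be routine. The only delicate check is that its hypothesis uses the first coordinate $n$ of the root, giving exactly rows $<n$.
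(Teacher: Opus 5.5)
Your proposal is correct and follows the paper's proof: each step $R\mapsto R^{(1)}$ leaves rows $1,\ldots,n-1$ of $\Phi$ unchanged by Lemma~\ref{lemma:bumpconfine}, and deleting the empty last row does not change $\Phi$. Uniqueness then follows from Lemma~\ref{lemma:bpdcompletion} together with the injectivity of $\Phi$. Your induction on the number of transition steps is just a more explicit way of writing the paper's ``each step'' argument.
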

\begin{proof}
Each step $R\mapsto R^{(1)}$ of Definition~\ref{definition:zeromap} is a Little bump at a crossing with right root $(n,s)$ whose removal leaves a reduced word, so by Lemma~\ref{lemma:bumpconfine} it does not change rows $1,\ldots,n-1$ of $\Phi$; deleting the empty last row does not change $\Phi$ at all. For the second statement, if $S$ is a bounded RC graph of height $n-1$ then $\Phi(S)$ has no blank tiles in rows $>n-1$ and $\maxd(\wof S)\leq n-1$, so $\Phi(S)$ is determined by its rows $1,\ldots,n-1$ by Lemma~\ref{lemma:bpdcompletion}, and $S$ is determined by $\Phi(S)$.
\end{proof}

\begin{figure}[htbp]
\centering
\begin{minipage}{0.45\textwidth}
\centering
\begin{tikzpicture}[scale=0.7]
  \draw[lightgray, very thin] (0,0) -- (4,0);
  \draw[lightgray, very thin] (0,0) -- (0,4);
  \draw[lightgray, very thin] (0,1) -- (4,1);
  \draw[lightgray, very thin] (1,0) -- (1,4);
  \draw[lightgray, very thin] (0,2) -- (4,2);
  \draw[lightgray, very thin] (2,0) -- (2,4);
  \draw[lightgray, very thin] (0,3) -- (4,3);
  \draw[lightgray, very thin] (3,0) -- (3,4);
  \draw[lightgray, very thin] (0,4) -- (4,4);
  \draw[lightgray, very thin] (4,0) -- (4,4);
  \fill[gray!20] (0.1,3.1) rectangle (0.9,3.9);
  \fill[gray!20] (1.1,3.1) rectangle (1.9,3.9);
  \draw[blue, line width=1.0pt] (2.5,3) .. controls (2.5,3.3) and (2.7,3.5) .. (3,3.5);
  \draw[blue, line width=1.0pt] (3,3.5) -- (4,3.5);
  \fill[gray!20] (0.1,2.1) rectangle (0.9,2.9);
  \draw[blue, line width=1.0pt] (1.5,2) .. controls (1.5,2.3) and (1.7,2.5) .. (2,2.5);
  \draw[blue, line width=1.0pt] (2,2.5) .. controls (2.3,2.5) and (2.5,2.7) .. (2.5,3);
  \draw[blue, line width=1.0pt] (3.5,2) .. controls (3.5,2.3) and (3.7,2.5) .. (4,2.5);
  \draw[blue, line width=1.0pt] (0.5,1) .. controls (0.5,1.3) and (0.7,1.5) .. (1,1.5);
  \draw[blue, line width=1.0pt] (1,1.5) .. controls (1.3,1.5) and (1.5,1.7) .. (1.5,2);
  \draw[blue, line width=1.0pt] (2.5,1) .. controls (2.5,1.3) and (2.7,1.5) .. (3,1.5);
  \draw[blue, line width=1.0pt] (3,1.5) -- (4,1.5);
  \draw[blue, line width=1.0pt] (3.5,1) -- (3.5,2);
  \draw[blue, line width=1.0pt] (0.5,0) -- (0.5,1);
  \draw[blue, line width=1.0pt] (1.5,0) .. controls (1.5,0.3) and (1.7,0.5) .. (2,0.5);
  \draw[blue, line width=1.0pt] (2,0.5) -- (3,0.5);
  \draw[blue, line width=1.0pt] (2.5,0) -- (2.5,1);
  \draw[blue, line width=1.0pt] (3,0.5) -- (4,0.5);
  \draw[blue, line width=1.0pt] (3.5,0) -- (3.5,1);
  \node[font=\scriptsize] at (0.5,-0.35) {1};
  \node[font=\scriptsize] at (1.5,-0.35) {2};
  \node[font=\scriptsize] at (2.5,-0.35) {3};
  \node[font=\scriptsize] at (3.5,-0.35) {4};
  \node[font=\scriptsize, anchor=west] at (4.15,3.5) {1};
  \node[font=\scriptsize, anchor=west] at (4.15,2.5) {4};
  \node[font=\scriptsize, anchor=west] at (4.15,1.5) {3};
  \node[font=\scriptsize, anchor=west] at (4.15,0.5) {2};
  \draw[black, line width=1pt] (0,0) rectangle (4,4);
\end{tikzpicture}
\caption*{(a) $\Phi(R)$, $\wof R=1432$}
\end{minipage}
\hfill
\begin{minipage}{0.45\textwidth}
\centering
\begin{tikzpicture}[scale=0.7]
  \draw[lightgray, very thin] (0,0) -- (4,0);
  \draw[lightgray, very thin] (0,0) -- (0,4);
  \draw[lightgray, very thin] (0,1) -- (4,1);
  \draw[lightgray, very thin] (1,0) -- (1,4);
  \draw[lightgray, very thin] (0,2) -- (4,2);
  \draw[lightgray, very thin] (2,0) -- (2,4);
  \draw[lightgray, very thin] (0,3) -- (4,3);
  \draw[lightgray, very thin] (3,0) -- (3,4);
  \draw[lightgray, very thin] (0,4) -- (4,4);
  \draw[lightgray, very thin] (4,0) -- (4,4);
  \fill[gray!20] (0.1,3.1) rectangle (0.9,3.9);
  \fill[gray!20] (1.1,3.1) rectangle (1.9,3.9);
  \draw[blue, line width=1.0pt] (2.5,3) .. controls (2.5,3.3) and (2.7,3.5) .. (3,3.5);
  \draw[blue, line width=1.0pt] (3,3.5) -- (4,3.5);
  \fill[gray!20] (0.1,2.1) rectangle (0.9,2.9);
  \draw[blue, line width=1.0pt] (1.5,2) .. controls (1.5,2.3) and (1.7,2.5) .. (2,2.5);
  \draw[blue, line width=1.0pt] (2,2.5) .. controls (2.3,2.5) and (2.5,2.7) .. (2.5,3);
  \draw[blue, line width=1.0pt] (3.5,2) .. controls (3.5,2.3) and (3.7,2.5) .. (4,2.5);
  \draw[blue, line width=1.0pt] (0.5,1) .. controls (0.5,1.3) and (0.7,1.5) .. (1,1.5);
  \draw[blue, line width=1.0pt] (1,1.5) -- (2,1.5);
  \draw[blue, line width=1.0pt] (1.5,1) -- (1.5,2);
  \draw[blue, line width=1.0pt] (2,1.5) -- (3,1.5);
  \draw[blue, line width=1.0pt] (3,1.5) -- (4,1.5);
  \draw[blue, line width=1.0pt] (3.5,1) -- (3.5,2);
  \draw[blue, line width=1.0pt] (0.5,0) -- (0.5,1);
  \draw[blue, line width=1.0pt] (1.5,0) -- (1.5,1);
  \draw[blue, line width=1.0pt] (2.5,0) .. controls (2.5,0.3) and (2.7,0.5) .. (3,0.5);
  \draw[blue, line width=1.0pt] (3,0.5) -- (4,0.5);
  \draw[blue, line width=1.0pt] (3.5,0) -- (3.5,1);
  \node[font=\scriptsize] at (0.5,-0.35) {1};
  \node[font=\scriptsize] at (1.5,-0.35) {2};
  \node[font=\scriptsize] at (2.5,-0.35) {3};
  \node[font=\scriptsize] at (3.5,-0.35) {4};
  \node[font=\scriptsize, anchor=west] at (4.15,3.5) {2};
  \node[font=\scriptsize, anchor=west] at (4.15,2.5) {4};
  \node[font=\scriptsize, anchor=west] at (4.15,1.5) {1};
  \node[font=\scriptsize, anchor=west] at (4.15,0.5) {3};
  \draw[black, line width=1pt] (0,0) rectangle (4,4);
\end{tikzpicture}
\caption*{(b) $\Phi(\zeromap R)$, $\wof{\zeromap R}=2413$}
\end{minipage}
\caption{The bumpless pipe dreams of the RC graph $R=\{(1,2),(1,3),(2,2)\}$ of Figure~\ref{fig:zeroing} and of $\zeromap(R)$, drawn on a $4\times4$ grid; blank tiles are shaded and the pipe leaving through each row is labeled at the right. Row $i$ has as many blank tiles as row $i$ of the RC graph has crossings. The two diagrams agree in rows $1$ and $2$ (Proposition~\ref{proposition:zerotrunc}); rows $3$ and $4$ of $\Phi(\zeromap R)$ contain no blank tile and no $j$-tile, so every pipe there travels straight north and then straight east (Lemma~\ref{lemma:bpdcompletion}).}
\label{fig:bpdzero}
\end{figure}

We also record the behavior of $\Phi$ under deleting the first row. Gao and Huang define $\Phi^{-1}$ by iterating an operation $\nabla$ on bumpless pipe dreams \cite[Definitions~3.1 and~3.3]{GaoHuang2023}: if $B=\Phi(R)$ and $x$ is the first crossing of $R$ in reading order (the rightmost crossing of the first nonempty row $r$, with letter $a$), then $\nabla B=\Phi(R\setminus\{x\})$, and the pair $(a,r)$ is read off from the computation of $\nabla B$ as described below; $\Phi(R)$ is the unique diagram whose successive pairs are the reading word and row sequence of $R$. The operation $\nabla B$ is computed as follows \cite[Definition~3.1, Figures~2 and~3]{GaoHuang2023}. A mark is placed on the rightmost blank tile of the first row of $B$ containing a blank tile, and a sequence of \emph{column moves} follows, each of which modifies only a rectangle of width two whose northwest corner $(x,y)$ is the current position of the mark, moved first to the rightmost blank tile of its block of consecutive blank tiles. The tile $(x,y+1)$ carries a pipe $p$ traveling north. If $p$ entered column $y+1$ at a $j$-tile $(x',y+1)$, $x'>x$, the rectangle has southeast corner $(x',y+1)$, and afterwards the mark is at $(x',y+1)$; otherwise $p$ is pipe $y+1$, the rectangle has southeast corner $(x',y+1)$ where $(x',y)$ is the $r$-tile at which pipe $y$ turns east, this is the last column move, and $a=y$. In both cases, in each row $z$ with $x<z<x'$ the pair of tiles $(z,y),(z,y+1)$ is replaced by a pair depending only on the tile $(z,y)$: a blank, horizontal, $r$- or $j$-tile at $(z,y)$ is replaced by a vertical, crossing, vertical, or crossing tile, respectively, and $(z,y+1)$ by a blank, horizontal, $r$- or $j$-tile, respectively, while a vertical or crossing tile at $(z,y)$ leaves the pair unchanged. The corner $(x,y)$ becomes an $r$-tile and $(x,y+1)$ becomes a horizontal or $j$-tile according as it was an $r$-tile or vertical. In the first case $(x',y+1)$ becomes blank and $(x',y)$ becomes vertical or a $j$-tile according as it was an $r$-tile or horizontal; in the second case $(x',y)$ becomes vertical and $(x',y+1)$ an $r$-tile, which is the same replacement as the row rule applied to the $r$-tile $(x',y)$.

\begin{lemma}[Locality of $\nabla$]\label{lemma:poplocal}
Let $B$ be a bumpless pipe dream with at least one blank tile and let $m\geq 1$. Then rows $1,\ldots,m$ of $\nabla B$ are determined by rows $1,\ldots,m$ of $B$. Moreover, if $B'$ is obtained from $B$ by adding a new first row and a new first column, with pipe $1$ traveling north along the new column and east along the new row and all other tiles shifted by $(1,1)$, then $\nabla B'$ is obtained from $\nabla B$ in the same way, and the pair $(a,r)$ read off from $B'$ is the pair read off from $B$ plus $(1,1)$.
\end{lemma}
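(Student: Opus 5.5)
The plan is to argue directly from the explicit description of $\nabla$ recalled just before the statement. The only data consulted or modified is: the first row containing a blank tile, the mark's path, and the width-two rectangles of the column moves. I will show by induction on the number of column moves that every tile of $\nabla B$ in rows $1,\ldots,m$ is a function of rows $1,\ldots,m$ of $B$. For the second statement I will check that the whole column-move computation on $B'$ is the computation on $B$ translated by $(1,1)$.

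\textbf{First claim.} The initial mark position is the rightmost blank of the first row of $B$ with a blank tile. If that row exceeds $m$, I will argue that no tile in rows $\leq m$ changes. Each column move has northwest corner at the mark, and the mark only moves south ($x'>x$), so every modified rectangle lies in rows $\geq$ the mark's row. In that case rows $1,\ldots,m$ of $\nabla B$ equal those of $B$.

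Otherwise, a single column move with corner $(x,y)$ and $x\leq m$ depends only on the following data, and I must show each piece is determined by rows $\leq m$ of $B$:
\begin{itemize}
\item[(a)] the tiles $(z,y)$ for $x<z<x'$, which feed the row rule tile by tile;
\item[(b)] the corner tiles;
\item[(c)] the row $x'$ at which the move stops.
\end{itemize}
Items (a) and (b) are local, so the new tiles in a row $z\leq m$ depend only on row $z$ of $B$ (or of the current diagram, by induction) plus the two corners. The bottom corner $x'$ matters only when $x'\leq m$. In that case $x'$ is found by following pipe $p$ southward in column $y+1$ from row $x$ until it meets a $j$-tile, or by finding where pipe $y$ turns east. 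Both searches stop at a row $\leq m$ and inspect only tiles in rows $\leq x'$. If no stopping tile occurs within rows $\leq m$, then $x'>m$, and rows $\leq m$ receive only the row-rule replacements together with the top corner.

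The main obstacle is exactly this case: I must verify that the decision ``terminal move or not'' affects no tile in rows $\leq m$ when $x'>m$. The bottom corner replacement lives in row $x'>m$. The row rule is identical in both cases, and the text notes that it coincides with the terminal replacement at $(x',y)$. The top corner replacement depends only on whether $(x,y+1)$ is an $r$-tile or vertical, which is row-$x$ data. Since all later moves start at row $x'>m$, the induction closes.

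\textbf{Second claim.} In $B'$, row $1$ and column $1$ contain no blanks: pipe $1$ occupies $(1,1)$ as an $r$-tile and runs horizontally along row $1$ and vertically up column $1$. The first row with a blank is therefore the shifted first blank row of $B$, and the mark starts at the shifted position. Each column move involves columns $y,y+1$ with $y\geq 2$, and the tiles inspected are the shifted ones. Pipe labels shift by one, so ``$p$ is pipe $y+1$'' in $B'$ corresponds to ``pipe $y$'' in $B$. The new row $1$ and column $1$ are never inside a rectangle: rectangles lie in rows $\geq 2$ and columns $\geq 2$. The moves therefore correspond, and so $\nabla B'$ is the shifted $\nabla B$. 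The terminal $a=y$ becomes $y+1$, and the starting row $r$ becomes $r+1$, which gives $(a,r)+(1,1)$.
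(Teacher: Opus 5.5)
Your treatment of the second statement (all column moves translate, so $(a,r)\mapsto(a+1,r+1)$) agrees with the paper's. Your handling of a non-terminal move whose $j$-tile lies in rows $\leq m$ also agrees. The gap is in the terminal column move.

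You split on whether $x'\leq m$, and you assert that when $x'\leq m$ the row $x'$ is found from rows $\leq m$, ``by finding where pipe $y$ turns east,'' while inspecting only rows $\leq x'$. This is false in the terminal case, for two reasons.
\begin{itemize}
\item Whether a move is terminal depends on whether the pipe $p$ through $(x,y+1)$ is pipe $y+1$. That is, it depends on whether column $y+1$ has a $j$-tile anywhere below row $x$, possibly in a row $>m$.
\item Recognizing an $r$-tile $(x_0,y)$ as the turn of pipe $y$ requires every tile of column $y$ below row $x_0$ to be vertical or a crossing. This again involves rows $>m$.
\end{itemize}
So suppose column $y+1$ has only vertical and crossing tiles in rows $x+1,\ldots,m$, and column $y$ has an $r$-tile in some row $x_0\leq m$. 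Then $B_{\leq m}$ cannot distinguish ``terminal with $x'=x_0\leq m$'' from ``$x'>m$'' (terminal or not). Your ``main obstacle'' paragraph only treats the ambiguity when $x'>m$, where the bottom corner is out of sight anyway. The case left open is exactly a terminal move whose bottom corner lands in rows $\leq m$. You quote the fact that the corner replacement coincides with the row rule, but you use it only for $x'>m$, where it is not needed.

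The missing step, which is how the paper closes the argument, is to split on what is visible in rows $\leq m$. Scan column $y+1$ downward from row $x+1$ through row $m$.
\begin{itemize}
\item If a $j$-tile appears, the move is non-terminal with that $x'$, and everything is determined by $B_{\leq m}$.
\item If no $j$-tile appears, show that in every possible scenario rows $x+1,\ldots,m$ are modified exactly by the row rule.
\end{itemize}
The second bullet is clear when $x'>m$. In the terminal case with $x'\leq m$ it holds for two reasons. First, the corner replacement at row $x'$ is the row rule applied to the $r$-tile $(x',y)$. Second, rows $x'+1,\ldots,m$ of column $y$ carry pipe $y$ vertically, as vertical or crossing tiles. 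The row rule leaves these unchanged, and the terminal move does not touch them either.

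In every scenario the mark then lies below row $m$, or the computation has stopped. With this replacement, your induction on column moves closes.
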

\begin{proof}
If rows $1,\ldots,m$ of $B$ contain no blank tile, the mark starts below row $m$ and every rectangle lies in rows $>m$. Otherwise the initial position of the mark is determined by $B_{\leq m}$. Consider a column move with northwest corner $(x,y)$, $x\leq m$. Scan column $y+1$ downward from row $x+1$ through rows $\leq m$. If a $j$-tile is met, in row $x'\leq m$, we are in the first case with this $x'$ (in the second case column $y+1$ carries pipe $y+1$ straight north), and the rectangle, its modification and the new position of the mark are determined by $B_{\leq m}$. If only vertical and crossing tiles are met, then in rows $x+1,\ldots,m$ the two columns are modified by the row rule: this is so in the first case, where $x'>m$; in the second case with $x'>m$; and in the second case with $x'\leq m$, because the corner replacement at row $x'$ agrees with the row rule and rows $x'+1,\ldots,m$ of column $y$ carry pipe $y$ vertically, so the row rule leaves them unchanged. In all these cases the mark is afterwards below row $m$ or the computation ends, so rows $1,\ldots,m$ are not changed again. For the second statement, the new row and column contain no blank tile and are never entered by a rectangle, since rectangles extend south and east from the mark, so every column move in $B'$ is the translate of the corresponding column move in $B$, and the last rectangle's left column index and the first blank row both increase by one.
\end{proof}

\begin{lemma} \label{lemma:ztrim}
Let $R$ be a bounded RC graph of height $n\geq 2$ such that row $n$ is empty. Then
$$\zeromap(\trm(R)) = \trm(\zeromap(R))\text{.}$$
\end{lemma}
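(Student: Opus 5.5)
Both sides are bounded RC graphs of height $n-2$, so I will compare them through the Gao--Huang map $\Phi$ and the uniqueness statement in Proposition~\ref{proposition:zerotrunc}. A bounded RC graph $S$ of height $n-2$ is determined by the first $n-2$ rows of $\Phi(S)$: this is Lemma~\ref{lemma:bpdcompletion} together with injectivity of $\Phi$. It therefore suffices to show that $\Phi(\zeromap(\trm R))$ and $\Phi(\trm(\zeromap R))$ agree in rows $1,\ldots,n-2$. First I check that both sides are defined. The graph $\trm(R)$ has height $n-1$ and its last row (row $n$ of $R$) is empty, so $\zeromap(\trm R)$ is defined. The graph $\zeromap(R)$ has height $n-1\geq 1$, so $\trm$ applies to it.

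The left side is easy. By Proposition~\ref{proposition:zerotrunc} applied to $\trm(R)$, the first $n-2$ rows of $\Phi(\zeromap(\trm R))$ coincide with those of $\Phi(\trm R)$.

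For the right side I need a description of how $\Phi$ interacts with $\trm$. The plan is to prove a lemma: for any RC graph $S$, rows $1,\ldots,m$ of $\Phi(\trm S)$ are determined by rows $1,\ldots,m+1$ of $\Phi(S)$, through one fixed procedure applied to those rows. I will get this from Lemma~\ref{lemma:poplocal}.

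\begin{itemize}
\item $\Phi^{-1}$ is computed by iterating $\nabla$. The first $|{\rm row}_1(S)|$ applications of $\nabla$ pop exactly the crossings of row $1$, ending at $\Phi(S\setminus \mathrm{row}_1)$.
\item By the locality statement of Lemma~\ref{lemma:poplocal}, rows $1,\ldots,m+1$ of each intermediate diagram depend only on rows $1,\ldots,m+1$ of $\Phi(S)$.
\item The number of pops is itself determined by these rows, since it equals the number of blank tiles in row $1$ of $\Phi(S)$.
\item $S\setminus\mathrm{row}_1$ equals $\shup\trm(S)$. I then need to relate $\Phi(\shup T)$ to $\Phi(T)$, which I expect is the padding described in the second part of Lemma~\ref{lemma:poplocal} (adding a new first row and column carrying pipe $1$). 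I would justify this by running the reconstruction through $\nabla$ on both sides: the popped pairs $(a,r)$ shift by $(1,1)$, which is exactly how the reading word and row sequence change under $\shup$. The characterization of $\Phi$ by its successive pairs then forces the padded diagram to be $\Phi(\shup T)$.
\item Consequently rows $2,\ldots,m+1$ of $\Phi(\shup\trm S)$, after stripping the padding, are the first $m$ rows of $\Phi(\trm S)$, so these are determined by rows $1,\ldots,m+1$ of $\Phi(S)$.
\end{itemize}

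With this lemma in hand, take $m=n-2$. By Proposition~\ref{proposition:zerotrunc}, $\Phi(\zeromap R)$ and $\Phi(R)$ agree in rows $1,\ldots,n-1$. The lemma then gives that $\Phi(\trm\zeromap R)$ and $\Phi(\trm R)$ agree in rows $1,\ldots,n-2$. Combined with the left-side observation, the two sides of the identity have $\Phi$-images agreeing in rows $1,\ldots,n-2$, so they are equal by the uniqueness statement.

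The main obstacle is the middle step. Making it work requires showing that the padded diagram is genuinely $\Phi(\shup T)$, and that the intermediate $\nabla$-iterates stay inside the locality hypothesis. One point needs care: $\nabla$ may move the mark below row $m+1$ even while it is still popping a row-$1$ crossing. Lemma~\ref{lemma:poplocal} says such moves leave rows $\leq m+1$ untouched, so the determination should still go through.
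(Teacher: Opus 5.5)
Your proposal is correct and follows essentially the same route as the paper. You reduce, via the uniqueness in Proposition~\ref{proposition:zerotrunc}, to showing that $\Phi(\trm\zeromap R)$ and $\Phi(\trm R)$ agree in rows $1,\ldots,n-2$. You get this from both parts of Lemma~\ref{lemma:poplocal} ($\nabla^k$ pops row $1$, and the padding identifies $\Phi(\shup\trm X)$) together with the agreement of $\Phi(\zeromap R)$ and $\Phi(R)$ in rows $1,\ldots,n-1$. The only differences are cosmetic: you read off the number of pops from the blank tiles of row $1$ where the paper uses $\wtt_1(\zeromap R)=\wtt_1(R)$, and you should cite Lemma~\ref{L:trim} for the boundedness of $\trm R$ and $\trm\zeromap R$.
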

\begin{proof}
Put $S=\zeromap(R)$, of height $n-1$, and $T=\trm R$, of height $n-1$ with empty last row (Lemma~\ref{L:trim}). Both $\trm S$ and $\zeromap T$ are bounded RC graphs of height $n-2$ (Lemma~\ref{L:trim} and Lemma~\ref{lemma:zeroweight}), so by Proposition~\ref{proposition:zerotrunc} it suffices to show that $\Phi(\trm S)$ and $\Phi(T)$ agree in rows $1,\ldots,n-2$.

Let $X$ be a bounded RC graph of height $n$ with $k$ crossings in row $1$. The first $k$ crossings of $X$ in reading order are those of row $1$, so $\nabla^k\Phi(X)=\Phi(X\setminus\text{row }1)=\Phi({\uparrow}\trm X)$. The reading word and row sequence of ${\uparrow}\trm X$ are those of $\trm X$ with every entry increased by one, so by the second part of Lemma~\ref{lemma:poplocal}, iterated, $\Phi({\uparrow}\trm X)$ is obtained from $\Phi(\trm X)$ by adding a first row and a first column. Hence rows $1,\ldots,n-2$ of $\Phi(\trm X)$ are determined by rows $2,\ldots,n-1$ of $\nabla^k\Phi(X)$, and these are determined by rows $1,\ldots,n-1$ of $\Phi(X)$ by the first part of Lemma~\ref{lemma:poplocal}, applied $k$ times.

Apply this to $X=R$ and to $X=S$, regarded as a bounded RC graph of height $n$ with empty last row; both have $k=\wtt_1(R)$ crossings in row $1$ by Lemma~\ref{lemma:zeroweight}, and $\Phi(S)$ agrees with $\Phi(R)$ in rows $1,\ldots,n-1$ by Proposition~\ref{proposition:zerotrunc}. Therefore $\Phi(\trm S)$ and $\Phi(T)$ agree in rows $1,\ldots,n-2$.
\end{proof}

\begin{theorem} \label{theorem:zero_bijection}
Let $w$ be a permutation with last descent at most $n$. Let $\mathcal{RC}(w,n)^0$ be the set of bounded RC graphs $R$ with $\hr(R)=n$ such that $\wof{R}=w$ and row $n$ is empty. Then
$$\zeromap:\mathcal{RC}(w, n)^{0}\to \bigcup_{\substack{w\downvar{n} w'\\\ell(w')=\ell(w)}}\mathcal{RC}(w', n-1)$$
is a weight-preserving bijection.
\end{theorem}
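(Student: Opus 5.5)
We need to show that $\zeromap$ is well defined into the target, preserves weight, and is a bijection onto the target.

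\emph{Well-definedness and weight.} These are already essentially in hand. By Lemma~\ref{lemma:zeroweight}, $\zeromap(R)$ is a bounded RC graph of height $n-1$ with $\wtt(\zeromap R)=\wtt(R)$. By Lemma~\ref{lemma:zerotransition}, $\wof{\zeromap R}\in T_n(w)$, which is exactly the index set $\{w' : w\downvar{n}w',\ \ell(w')=\ell(w)\}$ of the union. So $\zeromap$ maps $\mathcal{RC}(w,n)^0$ into the stated target and preserves weight.

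\emph{Injectivity.} I would pass to bumpless pipe dreams. If $\zeromap(R)=\zeromap(R')$ with $R,R'\in\mathcal{RC}(w,n)^0$, then Proposition~\ref{proposition:zerotrunc} shows that $\Phi(R)$ and $\Phi(R')$ agree in rows $1,\ldots,n-1$. Row $n$ of each of $R,R'$ is empty, so $\Phi(R)$ and $\Phi(R')$ have no blank tiles in rows $\geq n$. Since $\maxd(w)\leq n$, Lemma~\ref{lemma:bpdcompletion} with $p=n$ says both are determined by rows $1,\ldots,n$. Thus I still need row $n$ to agree. For this I would apply Lemma~\ref{lemma:bpdcompletion} with $p=n-1$ in a modified form: row $n$ has no blanks, and the pipes entering row $n$ from below and their exit rows are fixed by $w$, so the tiles of row $n$ should be forced by rows $\le n-1$ together with $w$. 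Concretely, pipes in rows $>n$ go straight north then east. In row $n$, each column either continues a pipe north or has the pipe leaving through row $n$ turn, and since row $n$ has no blanks, the tile configuration is determined by which columns must exit through row $n-1$ above (read off from row $n-1$) versus turn east at row $n$.

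A cleaner alternative, which I would try first, is to avoid row $n$ entirely and use counting. Note that $\Phi$ is injective and $w$ is common to $R$ and $R'$. I would argue that $R$ is recovered from $\zeromap(R)$ by reversing the finitely many Monk moves. That is, the sequence of Little bumps is invertible given the target permutation $w$, because each step $R^{(k)}=m_{i,n}^{-1}(\cdot)$ is the inverse of a bijection and the transition chain from $w$ to $w'$ is determined. The issue is that the chain of transition steps might not be determined by its endpoint alone, and this is why the BPD route is safer.

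\emph{Surjectivity via cardinality.} Evaluating \eqref{E:Tn} through Corollary~\ref{corollary:evschub} gives
\[
\sum_{R\in\mathcal{RC}(w,n)^0}x_{\wtt(R)}=\sch_w(x_1,\ldots,x_{n-1},0)=\sum_{w'\in T_n(w)}\sum_{S\in\mathcal{RC}(w',n-1)}x_{\wtt(S)}.
\]
Here the left side is justified because the RC graphs with nonempty row $n$ are exactly the monomials killed by $x_n=0$. Setting all variables to $1$ shows the domain and the target have equal finite cardinality. Combined with injectivity, this gives bijectivity. Since the sets $\mathcal{RC}(w',n-1)$ for distinct $w'$ are disjoint, the union is disjoint and the count is correct.

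\emph{Main obstacle.} The main obstacle is injectivity, specifically showing that row $n$ of $\Phi(R)$ is determined by rows $1,\ldots,n-1$ together with $w$ when row $n$ has no blanks. I expect a short argument in the style of the proof of Lemma~\ref{lemma:bpdcompletion}. In row $n$, the absence of blank tiles means every column carries a pipe. The pipes coming from below are the straight vertical ones, with the pipe $w(n)$ making its $r$-turn. The south boundary of row $n-1$ fixes which pipes pass upward, and this pins down each tile of row $n$ by local tile compatibility.
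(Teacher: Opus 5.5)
Your proposal is correct. The well-definedness and weight step, and the cardinality argument for surjectivity, coincide with the paper's. The injectivity argument is genuinely different.

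The paper proves injectivity (Lemma~\ref{lemma:zero_injective}) by induction on $n$, working through the \emph{top} row:
\begin{itemize}
\item Lemma~\ref{lemma:ztrim} gives $\zeromap(\trm R)=\zeromap(\trm R')$.
\item Lemma~\ref{L:unique} then forces $\wof{\trm R}=\wof{\trm R'}$. It rests on the permutation-level disjointness Lemma~\ref{L:disjoint} and a two-way expansion of $\sch_w(x_1,\ldots,x_{n-1},0)$.
\item Induction gives $\trm R=\trm R'$, and row $1$ is recovered by Theorem~\ref{theorem:modactrc}.
\end{itemize}
You instead read off agreement of $\Phi(R)$ and $\Phi(R')$ in rows $1,\ldots,n-1$ directly from Proposition~\ref{proposition:zerotrunc}. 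You then show that a BPD of $w$ with no blank tiles in rows $\geq n$ is determined by those rows. This is shorter, uses only results established before the theorem, and makes Lemmas~\ref{L:disjoint} and~\ref{L:unique} superfluous. The paper's route confines the BPD input to Lemma~\ref{lemma:ztrim}, which it needs anyway for Lemma~\ref{lemma:clptrm}, and otherwise works with permutations and Schubert identities.

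Your description of row $n$ needs one correction. Row $n$ can contain $j$-tiles and $r$-turns of pipes other than pipe $w(n)$. For example, in Figure~\ref{fig:bpdzero}(a), where $n=3$, row $3$ consists of an $r$-tile, a $j$-tile, an $r$-tile and a crossing. So it is not true that each column either carries a pipe straight north or holds the turn of pipe $w(n)$.

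The determination nevertheless holds, for the reason you give at the end. Each of the six tiles is determined by which of its west, south and north edges carry a pipe. The west edge of column $1$ is empty. The south edge of $(n,c)$ carries a pipe iff $w^{-1}(c)\leq n$, by Lemma~\ref{lemma:bpdcompletion} with $p=n$. The north edges are the south edges of the common row $n-1$. Scanning left to right therefore fixes row $n$.

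The Monk-reversal route you set aside also works. The transition path from $w$ to a given $w'\in T_n(w)$ is unique: the iterated transition formula contributes one term per path, while \eqref{E:Tn} is multiplicity-free (as in Lemma~\ref{L:leaves}). Writing $R=R^{(0)},R^{(1)},\ldots$ for the intermediate graphs, each step satisfies $R^{(k)}=m_{i,n}(R^{(k+1)})$, with $i$ determined by the two consecutive permutations on that path. Hence $R$ is recovered from $\zeromap(R)$ and $w$.
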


The proof occupies the rest of this subsection.

\begin{lemma}\label{L:disjoint}
Let $\maxd(w)\le n$ and let $w'\neq w''$ be two elements of $T_n(w)$. Then $P_1(w')\cap P_1(w'')=\varnothing$.
\end{lemma}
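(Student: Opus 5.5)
The plan is to obtain disjointness from a coefficient comparison in two expansions of the same polynomial, and then to prove a uniqueness statement on the other side. Fix $w$ with $\maxd(w)\le n$ and a permutation $v$. It suffices to show that at most one $w'\in T_n(w)$ has $v\in P_1(w')$.

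\emph{Step 1: two expansions of $\sch_w(x_1,\dots,x_{n-1},0)$.}
\begin{itemize}
\item First apply \eqref{E:Tn} to write $\sch_w(x_1,\dots,x_{n-1},0)=\sum_{w'\in T_n(w)}\sch_{w'}(x_1,\dots,x_{n-1})$. Then expand each $\sch_{w'}$ by \eqref{E:P1} in $n-1$ variables; this is legitimate because $\maxd(w')<n$ by Lemma~\ref{L:leaves}.
\item Second, apply \eqref{E:P1} to $\sch_w(x_1,\dots,x_n)$ first. Then set $x_n=0$ in each $\sch_u(x_2,\dots,x_n)$ using \eqref{E:Tn} at level $n-1$ in the shifted variables; this uses $\maxd(u)\le n-1$ from Lemma~\ref{L:trim}.
\end{itemize}
The products $x_1^a\sch_v(x_2,\dots,x_{n-1})$ are linearly independent. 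Comparing the coefficient of $x_1^a\sch_v(x_2,\dots,x_{n-1})$ in the two expansions gives
$$\#\{w'\in T_n(w): v\in P_1(w'),\ \ell(w')-\ell(v)=a\}=\#\{u\in P_1(w): v\in T_{n-1}(u),\ \ell(w)-\ell(u)=a\}.$$
All $w'\in T_n(w)$ have length $\ell(w)$, and every $u$ on the right satisfies $\ell(u)=\ell(v)$. So only $a=\ell(w)-\ell(v)$ contributes, and the left side is exactly the number of $w'$ we must bound by $1$.

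\emph{Step 2: uniqueness of $u$.} It therefore suffices to show there is at most one $u\in P_1(w)$ with $\ell(u)=\ell(v)$ and $v\in T_{n-1}(u)$. By Lemma~\ref{lemma:factor}, $w=s_{a_1}\cdots s_{a_k}\,{\uparrow}u$ with $\{a_1>\dots>a_k\}=Q_1(u,w)$ and $k=\ell(w)-\ell(v)$ fixed. Hence $u$ is determined by $w$ together with the $k$-element set $Q=Q_1(u,w)$. I would show that $Q$ is determined by $w$ and $v$, using the explicit shape of transition steps.

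\begin{itemize}
\item By Lemma~\ref{L:leaves}, $v$ is reached from $u$ by transition steps at $n-1$. By (O1) and (O2), during these steps positions $l<n-1$ change only by increasing single entries, each to a value smaller than the current entry in position $n-1$.
\item Consequently the multiset of values of $u$ and $v$ in positions $<n-1$ is related by controlled raisings, while $\maxd(v)<n-1$ pins $v$ down by $v(1),\dots,v(n-2)$.
\item Meanwhile ${\uparrow}u=(s_{a_1}\cdots s_{a_k})^{-1}w$ moves the values of $w$ in positions $2,\dots,n$ by a cycle on value-runs.
\end{itemize}

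I would compare two candidate sets $Q\ne Q'$ at the largest element where they differ. The aim is to show that the two resulting $u$ differ in a position $<n-1$ in a way no sequence of transition steps at $n-1$ can reconcile. In particular, the smaller of the two differing values can never be raised past the value sitting in position $n-1$, which contradicts both $u$ and $u'$ transitioning to the same $v$.

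\emph{Main obstacle.} The hard part is Step 2, the combinatorial uniqueness. If the direct value-tracking becomes unwieldy, I would fall back to RC graphs. Pick $U\in\RC(v,n-2)$ and use Theorem~\ref{theorem:modactrc}: each $u$ on the right side yields distinct height-$n$ RC graphs with row $n$ empty and prescribed trimmed data. I would then invoke Proposition~\ref{proposition:zerotrunc} and Lemma~\ref{lemma:ztrim} to see that these graphs determine a single bumpless pipe dream in rows $1,\dots,n-1$, hence a single $\zeromap$-image. This would bound the count by $1$ without using the bijectivity of $\zeromap$ that is still being proved.
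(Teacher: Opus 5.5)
Your Step~1 is correct, but it does not make the problem smaller. It is exactly the coefficient comparison the paper uses to prove Lemma~\ref{L:unique}. Because the identity
\[\#\{w'\in T_n(w): v\in P_1(w')\}=\#\{u\in P_1(w): v\in T_{n-1}(u)\}\]
holds for every $v$, it only shows that the present lemma is \emph{equivalent} to your Step~2 claim, which is Lemma~\ref{L:unique}. In the paper the implication runs the other way: Lemma~\ref{L:unique} is deduced from the present lemma by your Step~1.

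So the whole content of the lemma is in Step~2, and Step~2 is a statement of intent rather than an argument. You never say which position distinguishes the permutations $u\neq u'$ coming from sets $Q\neq Q'$. Nor do you say why that difference survives every sequence of transition steps at $n-1$.

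The fallback is circular.
\begin{itemize}
\item Attaching to each candidate $u$ a graph $T_u\in\RC(u,n-1)^0$ with $\zeromap(T_u)=U$, which is what Lemma~\ref{lemma:ztrim} needs, uses surjectivity of $\zeromap$ (Theorem~\ref{theorem:zero_bijection}). That theorem is proved via Lemma~\ref{lemma:zero_injective}, hence via Lemma~\ref{L:unique} and the present lemma.
\item Even granting the lifts, say by induction on $n$, the graphs $R_u=\mathrm{row}_1(Q_1(u,w))\cup\shup T_u$ only satisfy $\trm(\zeromap R_u)=U$ and $\wtt_1(\zeromap R_u)=k$. So $\zeromap R_u$ is determined by its permutation $w'_u\in T_n(w)$, which has $v\in P_1(w'_u)$. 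Showing these coincide for different $u$ is the present lemma again.
\item Deducing $u_1=u_2$ from $\zeromap R_{u_1}=\zeromap R_{u_2}$ would further need injectivity of $\zeromap$ at height $n$, which is also downstream.
\item Proposition~\ref{proposition:zerotrunc} and Lemma~\ref{lemma:poplocal} carry information from $\Phi(X)$ to $\Phi(\zeromap X)$ and $\Phi(\trm X)$, not back. They cannot force different $R_u$ to share rows $1,\ldots,n-1$ of their bumpless pipe dreams.
\end{itemize}

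The missing ingredient is a direct combinatorial argument, and the paper finds one on the $T_n$ side. Two transition sequences from $w$ agree up to some $u$ and then use indices $i<i'<n$. Put $\eta=(ut_{ns})(n)$, $z=u(i)$, $z'=u(i')$. The cover criterion forces $z'<z<\eta$, and (O1)--(O2) then give $w'(i)=\eta$, $w'(i')<z$, $w''(i)=z$, $w''(i')=\eta$.

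On the other side, a $\Tom{1}$-chain from $y$ to $\varphi_{1,M}(u')$ has two properties:
\begin{itemize}
\item it moves each position $l\geq 2$ at most once, downward to a value at least $y(1)$;
\item it never moves a position across the value held by an intermediate position $1<l_0<l$.
\end{itemize}
For a common $u'$, the chain for $w'$ gives $\varphi_{1,M}(u')(i')<z$. So the chain for $w''$ must move position $i'$ from $\eta$ to below $z$. That crosses the value $z$ at position $i$, or, when $i=1$, contradicts that a moved position ends at a value at least $w''(1)=z$. Your Step~2 would need an argument of comparable precision on the $P_1(w)$ side, and none is given.
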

\begin{proof}
\emph{Step 1: what $w'$ and $w''$ look like.}
By Lemma~\ref{L:leaves}, $w'$ and $w''$ are end points of two sequences of transition steps starting at $w$.
The choice of $s$ in a step is forced, so the two sequences agree up to some permutation $u$ and then make different
choices $i\neq i'$; say $i<i'<n$. Put $\tilde v=u\,t_{ns}$, $\eta=\tilde v(n)$, $z=\tilde v(i)=u(i)$, $z'=\tilde v(i')=u(i')$.
Both $t_{in}$ and $t_{i'n}$ are covers of $\tilde v$, so $z<\eta$ and $z'<\eta$; and since $i<i'<n$, the cover criterion for $t_{in}$
forbids $z<z'<\eta$. Hence
\[
z'<z<\eta .
\]
Along the first sequence, right after the branching step position $i$ holds $\eta$, position $i'$ holds $z'$, and position $n$ holds $z$.
By (O1) and (O2), a position $l<n$ is changed at a later step only if its current value is smaller than the current value in position $n$
(which is $\le z$ from now on), and its new value is then also smaller than $z$. Therefore
\begin{equation}\label{E:first}
w'(i)=\eta,\qquad w'(i')<z .
\end{equation}
Along the second sequence, right after the branching step position $i'$ holds $\eta$, position $i$ holds $z$, and position $n$ holds $z'<z$.
By the same reasoning neither position is ever changed again:
\begin{equation}\label{E:second}
w''(i)=z,\qquad w''(i')=\eta .
\end{equation}

\emph{Step 2: what a $1$-chain looks like.}
Fix $M$ with $w',w''\in S_M$ (the relation $\downvar{1}$ does not depend on this choice).
Suppose $y\in S_M$ and $y\downvar{1}u'$, and put $Y=\varphi_{1,M}(u')$. By Definitions~\ref{definition:pieri} and~\ref{definition:polysequence} there are
$y=Y_0,Y_1,\dots,Y_q=Y$ with $Y_t=Y_{t-1}t_{1b_t}$ and $\ell(Y_t)=\ell(Y_{t-1})+1$. Put $x_t=Y_t(1)$, so $x_0<x_1<\dots<x_q$.
A position that has been used holds a value smaller than the current value in position $1$, so it is never used again. Consequently:
\begin{enumerate}
\item[(F1)] for every $l\ge2$: either $Y(l)=y(l)$, or $l=b_t$ for a unique $t\ge1$, and then $y(l)=x_t$ and $Y(l)=x_{t-1}\ge x_0=y(1)$. In particular $Y(l)\le y(l)$.
\item[(F2)] if $l=b_t$, there is no $l_0$ with $1<l_0<l$ and $x_{t-1}<y(l_0)<x_t$.
(The cover criterion says this for $Y_{t-1}$ in place of $y$; and $Y_{t-1}(l_0)\neq y(l_0)$ only if $l_0=b_{t'}$ with $t'<t$,
in which case $y(l_0)=x_{t'}\le x_{t-1}$.)
\end{enumerate}

\emph{Step 3: the contradiction.}
Suppose $u'\in P_1(w')\cap P_1(w'')$ and let $Y=\varphi_{1,M}(u')$. Note $i'\ge2$.
Apply (F1) with $y=w'$: by \eqref{E:first}, $Y(i')\le w'(i')<z$.
Now apply (F1) with $y=w''$: by \eqref{E:second}, $w''(i')=\eta>z>Y(i')$, so $i'=b_t$ is a chain position for $w''$, with
$x_t=\eta$ and $x_{t-1}=Y(i')<z$.

If $i=1$, then $x_0=w''(1)=z$ by \eqref{E:second}, contradicting $x_{t-1}\ge x_0$.

If $i\ge2$, then $l_0=i$ satisfies $1<l_0<i'$ and $x_{t-1}<w''(l_0)=z<\eta=x_t$, contradicting (F2).
\end{proof}

\begin{lemma}\label{L:unique}
Let $n\ge 3$, $\maxd(w)\le n$, and let $u'$ be a permutation. Then there is at most one $v\in P_1(w)$ with $u'\in T_{n-1}(v)$.
\end{lemma}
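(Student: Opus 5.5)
The plan is to compare two ways of expanding the same specialization $\sch_w(x_1,\ldots,x_{n-1},0)$, and then use linear independence together with Lemma~\ref{L:disjoint}. Both expansions will be written in the family
$$x_1^{a}\,\sch_{u'}(x_2,\ldots,x_{n-1}),\qquad a\geq 0,\ \maxd(u')\leq n-2 .$$
This family is linearly independent. Indeed, Schubert polynomials $\sch_{u'}$ with $\maxd(u')\leq n-2$ form a basis of $\mathbb{Z}[y_1,\ldots,y_{n-2}]$, and we substitute $y_j=x_{j+1}$. The hypothesis $n\geq 3$ guarantees that the block $x_2,\ldots,x_{n-1}$ is nonempty; even the degenerate case would be harmless.

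\emph{First expansion (pull $x_1$, then zero $x_n$).} Apply \eqref{E:P1} to $w$, then set $x_n=0$ in each factor $\sch_v(x_2,\ldots,x_n)$. By Lemma~\ref{L:trim}, every $v\in P_1(w)$ has $\maxd(v)\leq n-1$. So \eqref{E:Tn}, applied to $v$ in the shifted variables $x_2,\ldots,x_n$ with $n-1$ in place of $n$, gives
$$\sch_w(x_1,\ldots,x_{n-1},0)=\sum_{v\in P_1(w)}\ \sum_{u'\in T_{n-1}(v)} x_1^{\ell(w)-\ell(v)}\,\sch_{u'}(x_2,\ldots,x_{n-1}).$$
Each $u'\in T_{n-1}(v)$ satisfies $\ell(u')=\ell(v)$ and $\maxd(u')\leq n-2$, by Lemma~\ref{L:leaves} (or directly from Proposition~\ref{proposition:pullindex}). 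Hence for fixed $u'$ the exponent equals $\ell(w)-\ell(u')$, independent of $v$. The coefficient of $x_1^{\ell(w)-\ell(u')}\sch_{u'}(x_2,\ldots,x_{n-1})$ is therefore exactly
$$\#\{v\in P_1(w): u'\in T_{n-1}(v)\}.$$

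\emph{Second expansion (zero $x_n$, then pull $x_1$).} First apply \eqref{E:Tn} to $w$:
$$\sch_w(x_1,\ldots,x_{n-1},0)=\sum_{w'\in T_n(w)}\sch_{w'}(x_1,\ldots,x_{n-1}).$$
Each $w'$ has $\maxd(w')\leq n-1$ by Lemma~\ref{L:leaves}. Now apply \eqref{E:P1} with $n-1$ variables. The elements of $P_1(w')$ have $\maxd\leq n-2$ by Lemma~\ref{L:trim}, so all terms lie in the same independent family. The coefficient of $x_1^{a}\sch_{u'}(x_2,\ldots,x_{n-1})$ is then
$$\#\{w'\in T_n(w): u'\in P_1(w'),\ \ell(w')-\ell(u')=a\}.$$
By Lemma~\ref{L:disjoint}, the sets $P_1(w')$ for distinct $w'\in T_n(w)$ are pairwise disjoint, so this count is at most $1$.

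\emph{Conclusion.} Both expansions use the same independent family, so the coefficients agree. Hence the number of $v\in P_1(w)$ with $u'\in T_{n-1}(v)$ is at most $1$.

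\emph{Main point to check.} The delicate step is confirming that both sides are genuinely expanded in the same linearly independent family. This requires the bounds $\maxd(u')\leq n-2$ in both expansions, and the fact that the exponent of $x_1$ is determined by $u'$. Both facts come from the length-preservation built into $T_{n-1}$ and from Lemma~\ref{L:trim}. Once these are verified, the statement reduces to Lemma~\ref{L:disjoint}.
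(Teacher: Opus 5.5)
Your proposal is correct and follows the paper's proof essentially verbatim. Like the paper, you expand $\sch_w(x_1,\ldots,x_{n-1},0)$ in both orders via \eqref{E:P1} and \eqref{E:Tn}, compare coefficients in the linearly independent family $x_1^{a}\sch_{u'}(x_2,\ldots,x_{n-1})$, and bound the resulting count by Lemma~\ref{L:disjoint}; the only cosmetic difference is that you fix the exponent of $x_1$ by length preservation, where the paper instead sums over $a$.
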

\begin{proof}
By Lemma~\ref{L:trim} every $v\in P_1(w)$ has $\maxd(v)\le n-1$, so $T_{n-1}(v)$ makes sense.
Set $x_n=0$ in \eqref{E:P1} and expand each $\sch_v(x_2,\dots,x_{n-1},0)$ by \eqref{E:Tn}:
\[
\sch_w(x_1,\dots,x_{n-1},0)=\sum_{v\in P_1(w)}\ \sum_{u'\in T_{n-1}(v)}x_1^{\ell(w)-\ell(v)}\,\sch_{u'}(x_2,\dots,x_{n-1}).
\]
Alternatively, expand by \eqref{E:Tn} first and then apply \eqref{E:P1} to each $w'$ (which has $\maxd(w')\le n-1$):
\[
\sch_w(x_1,\dots,x_{n-1},0)=\sum_{w'\in T_n(w)}\ \sum_{u'\in P_1(w')}x_1^{\ell(w')-\ell(u')}\,\sch_{u'}(x_2,\dots,x_{n-1}).
\]
The polynomials $x_1^{a}\sch_{u'}(x_2,\dots,x_{n-1})$ ($a\ge0$, $\maxd(u')\le n-2$) are linearly independent. Comparing coefficients and summing over $a$,
\[
\#\{v\in P_1(w): u'\in T_{n-1}(v)\}=\#\{w'\in T_n(w): u'\in P_1(w')\},
\]
and the right side is at most $1$ by Lemma~\ref{L:disjoint}.
\end{proof}

\begin{lemma} \label{lemma:zero_injective}
Let $w$ be a permutation with last descent at most $n$. Let $\mathcal{RC}(w,n)^0$ be the set of bounded RC graphs $R$ with $\hr(R) = n$ such that $\wof{R}=w$ and row $n$ is empty. Then
$$\zeromap:\mathcal{RC}(w, n)^{0}\to \mathcal{RC}_{n-1}$$
is injective.
\end{lemma}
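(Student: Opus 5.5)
We prove injectivity by induction on $n$, with the base case deduced from the bumpless pipe dream picture. Suppose $R_1,R_2\in\mathcal{RC}(w,n)^0$ satisfy $\zeromap(R_1)=\zeromap(R_2)=S$.

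\emph{Agreement of $\Phi$.} By Proposition~\ref{proposition:zerotrunc}, $\Phi(R_1)$ and $\Phi(R_2)$ both agree with $\Phi(S)$ in rows $1,\ldots,n-1$, hence with each other there. Row $n$ of each $R_k$ is empty, so by the Gao--Huang weight property $\Phi(R_k)$ has no blank tiles in rows $\geq n$. Both diagrams are bumpless pipe dreams of the same permutation $w$ with $\maxd(w)\leq n$.

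\emph{Conclusion via bounded completion.} Apply Lemma~\ref{lemma:bpdcompletion} with $p=n$: $\Phi(R_k)$ is determined by its rows $1,\ldots,n$. We already know rows $1,\ldots,n-1$ coincide, so it remains to show row $n$ is forced.

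\emph{Row $n$.} This is the main obstacle. My first approach is to use that rows $>n$ are forced by Lemma~\ref{lemma:bpdcompletion}: pipe $c$ travels north in column $c$ until row $n$, or turns at its exit row $w^{-1}(c)>n$. Row $n$ has no blanks. Its south boundary is therefore determined by $w$, since it is exactly the set of columns $c$ with $w^{-1}(c)\leq n$. Its north boundary is determined by rows $\leq n-1$, namely the pipes entering row $n-1$ from below. Its east exit is pipe $w(n)$. A blank-free row whose pipe entries and exits on all four sides are fixed is determined: scanning left to right, each tile is forced by whether a pipe comes from the west and whether one comes from the south or must leave north. The two genuinely ambiguous pairs are crossing versus $r$/$j$, and $r$ versus $j$ when pipes both enter and leave. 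The first is ruled out by pipe labels, which are fixed since $w$ is fixed. The second cannot arise in a blank-free horizontal scan. Hence $\Phi(R_1)=\Phi(R_2)$, and injectivity of $\Phi$ gives $R_1=R_2$.

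\emph{Fallback.} If the tile-by-tile argument has gaps, I would instead use a counting route. By Lemma~\ref{lemma:zerotransition}, $\wof{\zeromap R}\in T_n(w)$, and the weight is preserved. Setting $x_n=0$ in Corollary~\ref{corollary:evschub} together with \eqref{E:Tn} gives equality of the weighted cardinalities of $\mathcal{RC}(w,n)^0$ and $\bigsqcup_{w'\in T_n(w)}\mathcal{RC}(w',n-1)$. That by itself does not give injectivity, so I would combine it with an induction on $n$. Using Lemma~\ref{lemma:ztrim}, $\trm R_1$ and $\trm R_2$ have the same image under $\zeromap$, so they are equal by the induction hypothesis. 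Then $R_1$ and $R_2$ differ only in row $1$. By Theorem~\ref{theorem:modactrc}, row $1$ is determined by $\wof{\trm R}$ together with $w$, so $R_1=R_2$. The case $n=1$, and the case $n=2$ where $\trm$ yields height $1$, are checked directly, since row $n$ is empty.

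The fallback is actually the cleaner argument. It reduces everything to Lemma~\ref{lemma:ztrim} and Theorem~\ref{theorem:modactrc}, and I would present it as the main proof.
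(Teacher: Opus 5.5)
\textbf{Gap in the argument you designate as the main proof.} Your ``fallback'' breaks at the step ``$\trm R_1$ and $\trm R_2$ have the same image under $\zeromap$, so they are equal by the induction hypothesis.'' The inductive statement only gives injectivity of $\zeromap$ on $\RC(v,n-1)^0$ for a \emph{fixed} permutation $v$. Here $\trm R_1\in\RC(v_1,n-1)^0$ and $\trm R_2\in\RC(v_2,n-1)^0$, and you do not yet know that $v_1,v_2\in P_1(w)$ coincide.

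You cannot fix this by inducting on injectivity over all of $\RC_{n-1}^0$, because that statement is false. Take $\{(1,2)\}$ and $\{(1,1)\}$, both of height $2$, with permutations $132$ and $213$. Both are sent to $\{(1,1)\}$ of height $1$: the first by the Little bump $2\mapsto 1$ at its crossing with root $(2,3)$, the second by deleting its empty row.

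Proving $v_1=v_2$ is exactly the nontrivial part of the paper's proof. Otherwise the paper's proof has your shape: Lemma~\ref{lemma:ztrim}, induction, and Theorem~\ref{theorem:modactrc} for row $1$. The missing step goes as follows. Lemma~\ref{lemma:zerotransition} and weight preservation put $\wof{\trm S}$ in $T_{n-1}(v_1)\cap T_{n-1}(v_2)$. Lemma~\ref{L:unique} then forces $v_1=v_2$; it is proved by expanding $\sch_w(x_1,\ldots,x_{n-1},0)$ in two orders and using the disjointness Lemma~\ref{L:disjoint}.

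\textbf{Your first argument is correct and should be the proof you present.} The ``ambiguities'' you worry about do not occur, because a bumpless pipe dream has no bump tile. Each of the six tiles is determined by which of its west, south and north edges carry a pipe:
\begin{itemize}
\item west and south together force a crossing;
\item west alone gives a $j$-tile or a horizontal tile, according to the north edge;
\item south alone gives a vertical tile or an $r$-tile, according to the north edge;
\item neither gives a blank.
\end{itemize}
No pipe enters from the left edge, so scanning left to right shows that any row is determined by its north and south boundaries. Pipe labels are never needed. For row $n$:
\begin{itemize}
\item the north boundary comes from row $n-1$, where $\Phi(R_1)$ and $\Phi(R_2)$ agree by Proposition~\ref{proposition:zerotrunc};
\item the south boundary is $\{c:w^{-1}(c)\le n\}$ by Lemma~\ref{lemma:bpdcompletion}, which also fixes rows $>n$.
\end{itemize}

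\textbf{Comparison with the paper.} Your route uses only Proposition~\ref{proposition:zerotrunc}, the Gao--Huang row-weight property, Lemma~\ref{lemma:bpdcompletion} and injectivity of $\Phi$. All of these are established before the lemma. It needs no induction, and it makes Lemmas~\ref{L:disjoint} and~\ref{L:unique} unnecessary; the paper uses them only here.

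Your argument is essentially the converse of the uniqueness clause in Proposition~\ref{proposition:zerotrunc}. It says that $\Phi(R)$ is the unique bumpless pipe dream of $w$ with no blank tiles in rows $\geq n$ that extends rows $1,\ldots,n-1$ of $\Phi(\zeromap R)$. The paper's route additionally yields the permutation-level uniqueness statement of Lemma~\ref{L:unique}, but nothing else in the paper depends on it.
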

\begin{proof}
Induction on $n$. For $n=2$: an element of $\RC(w,2)^0$ is a single row, and a single row is determined by its permutation
(Theorem~\ref{theorem:modactrc} applied to the empty graph), so $\RC(w,2)^0$ has at most one element.

Let $n\ge3$ and assume the statement for $n-1$ \emph{and every permutation}. Let $R,R'\in\RC(w,n)^0$ with $\zeromap(R)=\zeromap(R')=:S$.
Put $T=\trm R$, $T'=\trm R'$, $v=\wof{T}$, $v'=\wof{T'}$.
\begin{enumerate}
\item By Lemma~\ref{L:trim}, $T\in\RC(v,n-1)^0$ and $T'\in\RC(v',n-1)^0$ with $v,v'\in P_1(w)$. So $\zeromap(T)$, $\zeromap(T')$ are defined and
Lemma~\ref{lemma:ztrim} applies: $\zeromap(T)=\trm S=\zeromap(T')$.
\item Let $u'=\wof{\trm S}$. By Lemma~\ref{lemma:zerotransition}, $v\downvar{n-1}u'$ and $v'\downvar{n-1}u'$; by Lemma~\ref{lemma:zeroweight}, $\wtt(\zeromap(T))=\wtt(T)$ and $\wtt(\zeromap(T'))=\wtt(T')$,
so $\ell(u')=\ell(v)=\ell(v')$. Thus $u'\in T_{n-1}(v)\cap T_{n-1}(v')$.
\item By Lemma~\ref{L:unique}, $v=v'$.
\item Now $T,T'\in\RC(v,n-1)^0$ with $\zeromap(T)=\zeromap(T')$, so $T=T'$ by the inductive hypothesis.
\item By Theorem~\ref{theorem:modactrc}, $R=\mathrm{row}_1(Q_1(v,w))\cup{\uparrow}T=R'$.\qedhere
\end{enumerate}
\end{proof}

\begin{proof}[Proof of Theorem \ref{theorem:zero_bijection}]
By the previous lemma, it suffices to show that $\zeromap$ is surjective. We have by the transition formula that if $\maxd(w)\leq n$, then
$$\sch_w(x_1,\ldots,x_{n-1},0) = \sum_{\substack{w\downvar{n} w'\\\ell(w)=\ell(w')}}\sch_{w'}(x_1,\ldots,x_{n-1})$$
Note that this sum is without multiplicity, by Lemma~\ref{L:leaves}. The right hand side is equal to
$$\sum_{\substack{w\downvar{n} w'\\\ell(w)=\ell(w')}}\sum_{R'\in \RC_{n-1}(w')}x^{\wtt(R')}$$
and the left hand side is equal to
$$\sum_{R\in \mathcal{RC}(w, n)^0}x^{\wtt(R)}$$
since the only RC graphs that contribute to the left hand side are those with row $n$ empty. This implies that
$$|\RC(w,n)^0| = \sum_{\substack{w\downvar{n} w'\\\ell(w)=\ell(w')}}|\RC_{n-1}(w')|$$
Now, the set 
$$\{\zeromap(R) \mid R\in \mathcal{RC}(w,n)^0\}$$
has the same cardinality as $\mathcal{RC}(w,n)^0$ by injectivity (Lemma \ref{lemma:zero_injective}), and is a subset of $\bigcup_{w\downvar{n} w',\ \ell(w')=\ell(w)}\mathcal{RC}_{n-1}(w')$ by Lemmas~\ref{lemma:zerotransition} and~\ref{lemma:zeroweight}. Since the right hand side has the same cardinality as $\mathcal{RC}(w,n)^0$, we conclude that $\zeromap$ is surjective, as desired.
\end{proof}

We may extend $\zeromap$ to an endomorphism $\BRC\to\BRC$ by defining $\zeromap(R) = 0$ if the last row of $R$ is not empty. Then we have the following result.

\begin{corollary}[Combinatorial Lascoux-Sc\"utzenberger transition formula]
	Let $w$ be a permutation with last descent at most $n$. Then
	$$\zeromap(\mathcal{S}_w(n))=\sum_{\substack{w\downvar{n} w'\\\ell(w)=\ell(w')}}\mathcal{S}_{w'}(n-1)$$
\end{corollary}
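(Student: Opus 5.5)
The corollary is a rearrangement of Theorem~\ref{theorem:zero_bijection} after $\zeromap$ has been extended linearly. The plan is to expand both sides in the basis of bounded RC graphs and match terms one for one.

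First I expand the left side. By definition,
$$\mathcal{S}_w(n)=\sum_{\substack{\wof{R}=w\\ \hr(R)=n}}R .$$
I split this sum by whether row $n$ of $R$ is empty. The extended $\zeromap$ kills every $R$ whose last row is nonempty, so
$$\zeromap(\mathcal{S}_w(n))=\sum_{R\in\RC(w,n)^0}\zeromap(R).$$

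Next I invoke Theorem~\ref{theorem:zero_bijection}. It says that $\zeromap$ restricted to $\RC(w,n)^0$ is a bijection onto $\bigcup_{w'}\RC(w',n-1)$, where $w'$ ranges over permutations with $w\downvar{n}w'$ and $\ell(w')=\ell(w)$. Reindexing the sum along this bijection gives
$$\sum_{w'}\ \sum_{R'\in\RC(w',n-1)}R' .$$

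Two small points remain before the inner sums can be read as $\mathcal{S}_{w'}(n-1)$.

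\begin{itemize}
\item The union must be disjoint, so that each $R'$ appears exactly once. This holds because an RC graph determines its permutation $\wof{R'}$, so the sets $\RC(w',n-1)$ for distinct $w'$ do not overlap. There is also no multiplicity among the $w'$ themselves: $T_n(w)$ is a set, as noted via Lemma~\ref{L:leaves}.
\item Each $\mathcal{S}_{w'}(n-1)$ must be defined, which requires $\maxd(w')\le n-1$. This follows from Lemma~\ref{L:leaves}, since every element of $T_n(w)$ has $\maxd<n$ when $\maxd(w)=n$. In the case $\maxd(w)<n$ we have $T_n(w)=\{w\}$, and the same bound holds trivially.
\end{itemize}

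With these checked, the inner sum over $\RC(w',n-1)$ is exactly $\mathcal{S}_{w'}(n-1)$, which proves the corollary. The argument involves no real difficulty; the disjointness and well-definedness checks above are the only places needing any care.
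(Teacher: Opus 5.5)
Your proof is correct and follows the paper's route: the paper states this corollary immediately after extending $\zeromap$ by zero on graphs with a nonempty last row, and it follows by reindexing $\sum_{R\in\RC(w,n)^0}\zeromap(R)$ along the bijection of Theorem~\ref{theorem:zero_bijection}. Your disjointness and well-definedness checks are the only bookkeeping needed, and both are right.
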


\subsection{Definition of the ring product}\label{section:ringproduct}

We first make the maps $\trm^p$ and $\clp^p$ of the introduction precise. Throughout, $\zeromap^k$ denotes the $k$-fold iterate of $\zeromap$, extended to $\BRC$ as above by declaring $\zeromap(R)=0$ when the last row of $R$ is nonempty. For $a\geq 0$ write $\mathtt{row}(a)=\{(1,1),\ldots,(1,a)\}$ for the bounded RC graph of height $1$ with reading word $s_a s_{a-1}\cdots s_1$.

\begin{lemma}[The last row is determined by its weight]\label{lemma:lastrow}
Let $R$ be a bounded RC graph with $\hr(R)=n\geq 1$ and let $a=\wtt_n(R)$. Then row $n$ of $R$ is $\{(n,1),\ldots,(n,a)\}$, i.e.\ its word is $s_{n+a-1}s_{n+a-2}\cdots s_n$. In particular $\mathtt{row}(a)$ is the unique bounded RC graph of height $1$ with $a$ crossings, and $R$ is determined by $R_{\leq n-1}$ together with $\wtt_n(R)$.
\end{lemma}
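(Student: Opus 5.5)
The plan is to exploit the fact that row $n$ is the last row a bounded RC graph of height $n$ is allowed to use, together with the condition $\maxd(\wof R)\leq n$. Write the reading word of $R$ as $\mathbf b\,\mathbf c$, where $\mathbf b$ is the word of rows $1,\ldots,n-1$ and $\mathbf c$ is the word of row $n$. The entries of $\mathbf c$ are the letters $n+j-1$ for the crossings $(n,j)$, read in decreasing order of $j$. Thus $\mathbf c=s_{n+j_a-1}\cdots s_{n+j_1-1}$ with $j_1<\cdots<j_a$, and every letter is at least $n$. Put $w=\wof R=\pi c$, where $\pi=\wof{\mathbf b}$ and $c=\wof{\mathbf c}$; lengths add because the word is reduced.

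The key step is to show $j_k=k$ for all $k$, i.e.\ that row $n$ has no gaps starting from column $1$. Suppose not, and take the smallest $k$ with $j_k>k$. Then the letters $n,\ldots,n+k-2$ appear in $\mathbf c$ as a final block $s_{n+k-2}\cdots s_n$. The letter $m:=n+j_k-1\geq n+k$ appears before that block. The letter $m-1\geq n+k-1$ does not appear immediately after $s_m$ in the reading order; only larger letters precede $s_m$, and letters below $n+k-1$ come after it.

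From this I want to show that $m$ is a right descent of $w$ with $m>n$, contradicting $\maxd(w)\leq n$. I would argue directly on $c$. Its suffix $s_m s_{n+k-2}\cdots s_n$ (together with any larger letters placed before $s_m$) has $s_m$ commuting past everything smaller, since $m\geq n+k$ exceeds $n+k-2$ by at least $2$. So $s_m$ can be moved to the far right of the word for $c$. The same holds for the other letters of $\mathbf c$ larger than $m$, provided they are not equal to $m\pm1$. More cleanly, I take the smallest letter $m'$ of $\mathbf c$ that exceeds $n+k-1$. The letter $m'-1$ is absent from $\mathbf c$, so $s_{m'}$ commutes with every later letter. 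Hence $c=c's_{m'}$ with lengths adding, and then $w=\pi c's_{m'}$ is reduced. This gives $\ell(ws_{m'})<\ell(w)$, so $m'$ is a right descent of $w$ with $m'>n$. That contradicts $\maxd(w)\le n$.

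The remaining statements then follow formally. For height $1$ the argument gives $\{(1,1),\ldots,(1,a)\}=\mathtt{row}(a)$, and this is indeed reduced with $\maxd(s_a\cdots s_1)\leq 1$. Given $R_{\leq n-1}$ and $\wtt_n(R)$, row $n$ is forced, so $R$ is determined.

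The main obstacle is the bookkeeping in the commutation step: I need to check that no letter after $s_{m'}$ in the reading word equals $m'\pm1$. Letters after $s_{m'}$ in $\mathbf c$ are smaller than $m'$, and by the choice of $m'$ they are all at most $n+k-2\le m'-2$. The rows above come before it and are irrelevant. So the step should go through.
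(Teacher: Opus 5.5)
Your argument is correct, but it takes a different route from the paper.

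\textbf{The paper's route.} The paper first reduces to $n=1$. Lemma~\ref{L:trim}, applied $n-1$ times, shows that $\trm^{n-1}(R)$ is a bounded RC graph of height $1$ whose crossings are those of row $n$ moved to row $1$. For height $1$, the condition $\maxd(w)\leq 1$ forces $w(2)<w(3)<\cdots$. Hence $\code(w)=(a,0,0,\ldots)$ and $w=s_a\cdots s_1$. This permutation has a unique reduced word, which pins down the columns.

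\textbf{Your route.} You stay in row $n$ and show directly that a gap produces a right descent beyond $n$. Take $k$ minimal with $j_k>k$ and put $m'=n+j_k-1$. The letters of row $n$ below $m'$ are then exactly $n,\ldots,n+k-2$, so $s_{m'}$ commutes to the end of the full reduced word. Thus $m'>n$ is a right descent of $\wof R$, a contradiction. One small point deserves a word: the letter $n+k-1$ is absent precisely because $j_{k-1}=k-1$ and $j_k>k$. That absence is what makes every later letter at most $m'-2$.

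\textbf{Comparison.} Your argument is more elementary and self-contained. It needs neither the trimming lemma, nor Lehmer codes, nor uniqueness of the reduced word of $s_a\cdots s_1$. You also check explicitly that $\mathtt{row}(a)$ is a bounded RC graph, which the paper leaves implicit. The paper's route is shorter given Lemma~\ref{L:trim}. It also makes visible that the shifted last row is the unique length-$a$ permutation with $\maxd\leq 1$. The underlying mechanism is the same in both: a right descent of a length-additive right factor is a right descent of the whole, which is exactly the argument inside Lemma~\ref{L:trim}.

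Your sentence about $m-1$ ``not appearing immediately after $s_m$'' is muddled and can simply be dropped in favor of the cleaner $m'$ argument.
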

\begin{proof}
By Lemma~\ref{L:trim} applied $n-1$ times, $\trm^{n-1}(R)$ is a bounded RC graph of height $1$; its underlying set is $\{(1,j):(n,j)\in R\}$, so it suffices to treat $n=1$. Let $R$ have height $1$ with crossings in columns $j_1>j_2>\cdots>j_a$. Its reading word $s_{j_1}s_{j_2}\cdots s_{j_a}$ is reduced, and $w=\wof{R}$ satisfies $\maxd(w)\leq 1$, so $w(2)<w(3)<\cdots$ and $w$ has code $(a,0,0,\ldots)$, i.e.\ $w=s_a s_{a-1}\cdots s_1$. This permutation is a cycle with a unique reduced word, so $j_k=a+1-k$ for all $k$.
\end{proof}

\begin{definition}\label{definition:cliptrim}
Let $R$ be a bounded RC graph with $\hr(R)=n$ and let $0\leq p\leq n$. For $0\leq k\leq n$ write $R_{\leq k}=\{(i,j)\in R: i\leq k\}$.
\begin{enumerate}
\item $\trm^p(R)=\trm(\trm(\cdots\trm(R)))$ ($p$ times) is the bounded RC graph of height $n-p$ with underlying set $\{(i-p,j):(i,j)\in R,\ i>p\}$. (Each single trim is again a bounded RC graph by Lemma~\ref{L:trim}.)
\item Let $h=\max\bigl(p,\maxd(\wof{R_{\leq p}})\bigr)$ and regard $R_{\leq p}$ as a bounded RC graph of height $h$; its rows $p+1,\ldots,h$ are empty. Then
$$\clp^p(R)=\zeromap^{\,h-p}(R_{\leq p})\in\BRC_p\text{.}$$
\end{enumerate}
\end{definition}

For $\clp^p(R)$ to be a single bounded RC graph (rather than $0$) the last row must be empty at every intermediate step. This holds because $\zeromap$ preserves weight vectors (Lemma~\ref{lemma:zeroweight}): if $R'$ has height $h'$ and rows $p+1,\ldots,h'$ empty, then $\zeromap(R')$ has height $h'-1$ and the same weight, so its rows $p+1,\ldots,h'-1$ are empty. Consequently $\clp^p(R)$ is a bounded RC graph of height $p$ with $\wtt(\clp^p(R))=(\wtt_1(R),\ldots,\wtt_p(R))$; it depends only on $R_{\leq p}$, and $\clp^p(R)=\zeromap^{\,N-p}(R_{\leq p})$ for every $N\geq h$, since $\zeromap$ deletes the empty last row whenever the height exceeds $\maxd$. In particular $\clp^p(R)=R_{\leq p}$ whenever $\maxd(\wof{R_{\leq p}})\leq p$; $\clp^0(R)=\emptyset$ (the empty RC graph of height $0$); and $\clp^1(R)=\mathtt{row}(\wtt_1(R))$ by Lemma~\ref{lemma:lastrow}. We use the same formula to define $\clp^p(X)$ for any bounded RC graph $X$ of height $\geq p$, so that $\clp^p(X)=\clp^p(X_{\leq k})$ for all $k\geq p$.

\begin{definition}\label{definition:ringproduct}
Suppose we have two bounded RC graphs $R_1$ with $\hr(R_1)=m$ and $R_2$ with $\hr(R_2)=n$. The product of these is defined by
$$R_1\sqcupplus R_2 = \sum_{\substack{\clp^m(R')=R_1\\\trm^m(R')=R_2}}R'$$  
where $R'$ ranges over bounded RC graphs with $\hr(R')=m+n$.
\end{definition}

Associativity of $\sqcupplus$ rests on two structural identities relating $\clp^p$ and $\trm^p$: a commutativity of $\clp$ with $\trm$, and a functoriality of $\clp$ under iteration.

\begin{lemma}[Commutativity of $\clp$ and $\trm$] \label{lemma:clptrm}
Let $R$ be a bounded RC graph with $\hr(R)=p+q+r$. Then
$$\trm^p(\clp^{p+q}(R))=\clp^q(\trm^p(R)).$$
\end{lemma}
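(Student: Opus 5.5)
Both sides are bounded RC graphs of height $q$; the plan is to reduce the claim to Lemma~\ref{lemma:ztrim}. First simplify. By Definition~\ref{definition:cliptrim}, $\clp^{p+q}(R)$ depends only on $R_{\leq p+q}$. Likewise $\clp^q(\trm^p R)$ depends only on $(\trm^p R)_{\leq q}=\trm^p(R_{\leq p+q})$. So we may replace $R$ by $R_{\leq p+q}$ and assume rows $>p+q$ of $R$ are empty.

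Next take $N$ large: at least $p+q+r$, at least $\maxd(\wof{R})$, and at least $p+\maxd(\wof{\trm^p R})$. Regard $R$ as a bounded RC graph $\tilde R$ of height $N$ with rows $p+q+1,\ldots,N$ empty. By the remark after Definition~\ref{definition:cliptrim}, $\clp^{p+q}(R)=\zeromap^{N-p-q}(\tilde R)$ for every such $N$. Similarly $\trm^p\tilde R$ is $\trm^p R$ padded with empty rows to height $N-p$, which is at least $\maxd(\wof{\trm^pR})$. Hence $\clp^q(\trm^pR)=\zeromap^{N-p-q}(\trm^p\tilde R)$.

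The claim therefore becomes
$$\trm^p\bigl(\zeromap^{k}(\tilde R)\bigr)=\zeromap^{k}\bigl(\trm^p\tilde R\bigr),\qquad k=N-p-q.$$
This follows by iterating Lemma~\ref{lemma:ztrim}, which gives $\zeromap\circ\trm=\trm\circ\zeromap$ on a graph of height $\geq2$ with empty last row. Induct on the number of $\zeromap$'s and the number of $\trm$'s. At each step, the graph to which $\zeromap$ is applied has empty last row. This holds because $\zeromap$ preserves weights (Lemma~\ref{lemma:zeroweight}), and the intermediate graphs have height $>p+q$ with empty rows beyond $p+q$ (shifted by $p$ after trimming). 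Each intermediate trim is again a bounded RC graph by Lemma~\ref{L:trim}.

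The height hypothesis $\geq2$ in Lemma~\ref{lemma:ztrim} needs care. When $q=0$ with $p$ trims, the last commutation may occur at height $1$. In that case both sides are the empty graph of height $0$, since $\clp^0=\emptyset$ and $\trm^p$ of a height-$p$ graph is empty.

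The main obstacle is the bookkeeping about heights. The key point is that $\clp$ is independent of the auxiliary height $N$, and that the heights chosen on the two sides are compatible: padding $R$ by empty rows and then trimming yields a legitimate bounded RC graph of sufficiently large height for computing $\clp^q(\trm^pR)$. Here one must check that $\maxd$ bounds hold for the padded graphs. I would secure this by choosing $N$ larger than every relevant $\maxd$ and invoking the stated independence of $\clp$ from $N\ge h$.
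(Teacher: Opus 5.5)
Your proof is correct and takes essentially the paper's route: both reduce the identity to iterating Lemma~\ref{lemma:ztrim} (weight preservation keeps the last row empty) together with the padding independence of $\clp$. The only difference is that the paper commutes one trim at a time at the minimal height $h=\max(k,\maxd(\wof{X_{\leq k}}))$, whereas you pad once to a common large height $N$. Your separate treatment of a possible height-$1$ commutation is also unnecessary: every application of Lemma~\ref{lemma:ztrim} is to some $\trm^i\zeromap^j\tilde R$ with $i\leq p-1$ and $j\leq k-1$, which has height at least $q+2$.
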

\begin{proof}
Since $\trm^p$ is a composite of single trims, it suffices to show $\trm(\clp^{k}(X))=\clp^{k-1}(\trm X)$ for $X$ of height $\geq k\geq 1$. Let $h=\max(k,\maxd(\wof{X_{\leq k}}))$. Iterating Lemma~\ref{lemma:ztrim} (the last row stays empty at each step by weight preservation),
$$\trm(\clp^{k}(X))=\trm\bigl(\zeromap^{h-k}(X_{\leq k})\bigr)=\zeromap^{h-k}\bigl(\trm(X_{\leq k})\bigr)\text{,}$$
and $\trm(X_{\leq k})=(\trm X)_{\leq k-1}$ regarded at height $h-1$, which is bounded by Lemma~\ref{L:trim}. Hence $h-1\geq\max(k-1,\maxd(\wof{(\trm X)_{\leq k-1}}))$ and the right side is $\clp^{k-1}(\trm X)$ by the padding independence noted after Definition~\ref{definition:cliptrim}.
\end{proof}

\begin{lemma} \label{lemma:clpfunc}
Let $R$ be a bounded RC graph with $\hr(R)=n$, and let $0\leq p\leq p+q\leq n$. Then
$$\clp^p(\clp^{p+q}(R))=\clp^p(R).$$
\end{lemma}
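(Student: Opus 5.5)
The plan is to reduce the statement to the bumpless pipe dream characterization of $\zeromap$ in Proposition~\ref{proposition:zerotrunc}, combined with the top-rows statement, Proposition~\ref{proposition:toprows}. Both sides of the identity are bounded RC graphs of height $p$. So by the uniqueness clause of Proposition~\ref{proposition:zerotrunc} (a bounded RC graph $S$ of height $p$ is determined by rows $1,\ldots,p$ of $\Phi(S)$, via Lemma~\ref{lemma:bpdcompletion}), it suffices to show that $\Phi(\clp^p(\clp^{p+q}(R)))$ and $\Phi(\clp^p(R))$ agree in rows $1,\ldots,p$.

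The first step is a general claim: for any bounded RC graph $X$ of height $\geq p$, rows $1,\ldots,p$ of $\Phi(\clp^p(X))$ coincide with rows $1,\ldots,p$ of $\Phi(X)$. To prove it, recall that $\clp^p(X)=\zeromap^{h-p}(X_{\leq p})$ with $X_{\leq p}$ regarded at height $h$. Each application of $\zeromap$ to a graph of current height $h'$ preserves rows $1,\ldots,h'-1\supseteq\{1,\ldots,p\}$ of $\Phi$ by Proposition~\ref{proposition:zerotrunc}. Iterating therefore shows that $\Phi(\clp^p(X))$ agrees with $\Phi(X_{\leq p})$ in rows $1,\ldots,p$. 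Then Proposition~\ref{proposition:toprows} says $\Phi(X_{\leq p})$ agrees with $\Phi(X)$ in rows $1,\ldots,p$. One point needs checking here: $\Phi$ depends only on the underlying set, and the bounded height plays no role in $\Phi$. The paper states this explicitly ("$\Phi$ is injective and depends only on the set $R$"), so nothing further is needed.

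The second step applies the claim twice. With $X=\clp^{p+q}(R)$, which has height $p+q\geq p$, the claim gives that $\Phi(\clp^p(\clp^{p+q}(R)))$ agrees with $\Phi(\clp^{p+q}(R))$ in rows $1,\ldots,p$. With the claim at level $p+q$ (rows $1,\ldots,p+q$, hence in particular rows $1,\ldots,p$), $\Phi(\clp^{p+q}(R))$ agrees with $\Phi(R)$ there. Finally, the claim applied directly to $R$ gives that $\Phi(\clp^p(R))$ agrees with $\Phi(R)$ in rows $1,\ldots,p$. Chaining these agreements and invoking the uniqueness clause finishes the argument.

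The main obstacle is bookkeeping rather than substance. First, each intermediate object must be a genuine bounded RC graph and not $0$; this holds because the last row remains empty at every step, by weight preservation (Lemma~\ref{lemma:zeroweight}), as already noted after Definition~\ref{definition:cliptrim}. Second, the uniqueness in Proposition~\ref{proposition:zerotrunc} must be valid for height-$p$ graphs with arbitrary content in row $p$, not only those produced by $\zeromap$. Its proof only uses Lemma~\ref{lemma:bpdcompletion} with $\maxd(\wof S)\leq p$ and the absence of blanks below row $p$, so it does apply in this generality. The degenerate case $p=0$ is trivial, since both sides are $\emptyset$.
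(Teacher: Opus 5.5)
Your proposal is correct and follows the paper's own proof essentially step for step. The paper proves the same general claim, that $\Phi(\clp^p X)$ agrees with $\Phi(X_{\leq p})$ and hence with $\Phi(X)$ in rows $1,\ldots,p$, from Propositions~\ref{proposition:zerotrunc} and~\ref{proposition:toprows}. It applies this claim to $X=\clp^{p+q}(R)$, to $X=R$ at level $p+q$, and to $X=R$, then concludes by Lemma~\ref{lemma:bpdcompletion} together with injectivity of $\Phi$.
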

\begin{proof}
For bounded RC graphs $X$ of height $\geq p$, Definition~\ref{definition:cliptrim} gives $\clp^p(X)=\zeromap^{\,h-p}(X_{\leq p})$ with $X_{\leq p}$ at height $h\geq p$, each intermediate RC graph having an empty last row. By Propositions~\ref{proposition:zerotrunc} and~\ref{proposition:toprows}, $\Phi(\clp^pX)$ agrees with $\Phi(X_{\leq p})$, and hence with $\Phi(X)$, in rows $1,\ldots,p$. Applying this to $X=\clp^{p+q}(R)$, then to $X=R$ with $p+q$ in place of $p$, and then to $X=R$, we find that $\Phi(\clp^p(\clp^{p+q}R))$ agrees with $\Phi(\clp^{p+q}R)$ in rows $1,\ldots,p$, that $\Phi(\clp^{p+q}R)$ agrees with $\Phi(R)$ in rows $1,\ldots,p+q$, and that $\Phi(R)$ agrees with $\Phi(\clp^pR)$ in rows $1,\ldots,p$. Hence $\Phi(\clp^p(\clp^{p+q}R))$ and $\Phi(\clp^pR)$ agree in rows $1,\ldots,p$. Both $\clp^p(\clp^{p+q}R)$ and $\clp^pR$ are bounded RC graphs of height $p$, so their images under $\Phi$ have no blank tiles in rows $>p$ and permutations with $\maxd\leq p$. By Lemma~\ref{lemma:bpdcompletion} these images are equal, and since $\Phi$ is injective, so are the two RC graphs.
\end{proof}

\begin{theorem}
	 The product $\sqcupplus$ is associative and $\BRC$ is a ring when equipped with $\sqcupplus$.
\end{theorem}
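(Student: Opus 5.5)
We will prove associativity by expanding both triple products as sums over bounded RC graphs of height $m+n+k$ and showing that the two index sets coincide. Fix bounded RC graphs $R_1,R_2,R_3$ of heights $m,n,k$. By Definition~\ref{definition:ringproduct},
$$(R_1\sqcupplus R_2)\sqcupplus R_3=\sum_{X}\ \sum_{\substack{R:\ \clp^{m+n}(R)=X\\ \trm^{m+n}(R)=R_3}} R,$$
where $X$ ranges over graphs of height $m+n$ with $\clp^m(X)=R_1$ and $\trm^m(X)=R_2$. Since $X$ is determined by $R$ as $X=\clp^{m+n}(R)$, this equals the sum over all $R$ of height $m+n+k$ satisfying
$$\clp^m(\clp^{m+n}R)=R_1,\qquad \trm^m(\clp^{m+n}R)=R_2,\qquad \trm^{m+n}(R)=R_3,$$
each with coefficient one. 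Similarly, $R_1\sqcupplus(R_2\sqcupplus R_3)$ is the sum over $R$ of height $m+n+k$ with
$$\clp^m(R)=R_1,\qquad \clp^n(\trm^m R)=R_2,\qquad \trm^n(\trm^m R)=R_3,$$
where the intermediate graph $Y=\trm^m(R)$ is again uniquely determined by $R$.

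It remains to check that these two sets of conditions agree, term by term. The first conditions match by Lemma~\ref{lemma:clpfunc} with $p=m$, $q=n$: $\clp^m(\clp^{m+n}R)=\clp^m(R)$. The second conditions match by Lemma~\ref{lemma:clptrm} with $(p,q,r)=(m,n,k)$: $\trm^m(\clp^{m+n}R)=\clp^n(\trm^m R)$. The third conditions match because $\trm^{m+n}=\trm^n\circ\trm^m$, which is immediate from the definition as iterated single trims.

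The only subtle point is the bookkeeping of the inner sums. Each $R$ must appear at most once, and the intermediate objects must be genuine bounded RC graphs of the correct heights. For the first form this follows from Definition~\ref{definition:cliptrim}, which produces $\clp^{m+n}(R)$ as an honest element of $\RC_{m+n}$ (never $0$, by weight preservation). For the second form, Lemma~\ref{L:trim} shows that $\trm^m(R)$ is a bounded RC graph of height $n+k$. With this in place, both triple products equal the same sum of basis elements. Extending $\sqcupplus$ bilinearly to $\BRC$ then gives an associative, bilinear product compatible with the grading $\BRC_m\sqcupplus\BRC_n\subseteq\BRC_{m+n}$.

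Since the paper works with non-unital rings (cf.\ $\coma$), the claim that $\BRC$ is a ring may be read as a non-unital associative ring. In fact the empty graph $\emptyset$ of height $0$ is a two-sided unit: $\clp^0(R)=\emptyset$ and $\trm^0(R)=R$ give $\emptyset\sqcupplus R=R$, and $\clp^n(R)=R$ (since $\maxd(\wof R)\leq n$) together with $\trm^n(R)=\emptyset$ gives $R\sqcupplus\emptyset=R$. The main obstacle, the commutation of $\clp$ with $\trm$ and the functoriality of $\clp$, has already been handled by Lemmas~\ref{lemma:clptrm} and~\ref{lemma:clpfunc} via the bumpless pipe dream picture, so what remains here is essentially formal.
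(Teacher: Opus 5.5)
Your proposal is correct and follows the paper's own proof essentially verbatim. Both reduce each bracketing to three conditions on $R$ of height $m+n+k$ and match them via Lemma~\ref{lemma:clpfunc}, Lemma~\ref{lemma:clptrm}, and $\trm^{m+n}=\trm^n\circ\trm^m$. Your multiplicity-one argument (the intermediate graph is forced to be $\clp^{m+n}R$, resp.\ $\trm^m R$) and your check that the height-$0$ empty graph is a two-sided unit fill in details that the paper asserts without proof.
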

\begin{proof}
Let $R_1,R_2,R_3$ be bounded RC graphs of heights $p,q,r$ respectively, and let $R$ be a bounded RC graph with $\hr(R)=p+q+r$. We claim that $R$ appears as a term in $(R_1\sqcupplus R_2)\sqcupplus R_3$ if and only if it appears as a term in $R_1\sqcupplus(R_2\sqcupplus R_3)$, and in both cases with multiplicity $1$. Unwinding the definition of $\sqcupplus$ in each bracketing yields the following conditions on $R$:
\begin{center}
\begin{tabular}{l|l|l}
& Left bracketing $(R_1\sqcupplus R_2)\sqcupplus R_3$ & Right bracketing $R_1\sqcupplus(R_2\sqcupplus R_3)$ \\
\hline
(i) & $\clp^p(\clp^{p+q}(R))=R_1$ & $\clp^p(R)=R_1$ \\
(ii) & $\trm^p(\clp^{p+q}(R))=R_2$ & $\clp^q(\trm^p(R))=R_2$ \\
(iii) & $\trm^{p+q}(R)=R_3$ & $\trm^q(\trm^p(R))=R_3$ \\
\end{tabular}
\end{center}
Row (i) is equivalent on both sides by Lemma \ref{lemma:clpfunc}. Row (ii) is equivalent on both sides by Lemma \ref{lemma:clptrm}. Row (iii) is equivalent on both sides because $\trm^{p+q}=\trm^q\circ\trm^p$ by definition of $\trm$. Thus the term $R$ contributes to the same bracketed product on both sides. Since the membership conditions are equalities (not weighted by any combinatorial multiplicity) and $R$ is determined to lie in either product by these three conditions, the multiplicity is $1$ in each case. We conclude that $(R_1\sqcupplus R_2)\sqcupplus R_3=R_1\sqcupplus(R_2\sqcupplus R_3)$.

To tie up the claim that $\BRC$ is a ring, distributivity is directly imposed by the definition of the product as extending from the definition bilinearly, and the identity element is the empty RC graph of height $0$.
\end{proof}

\begin{example}
We compute the product of two bounded RC graphs $(R_1, 2)\in\BRC(2413,2)$ and $(R_2, 2)\in\BRC(231,2)$:
\begin{align*}
  &
  \vcenter{\hbox{
}}
\end{align*}

\end{example}

\subsection{Proof of the Littlewood-Richardson rule for the dual Schubert basis}

We prove Theorem~\ref{theorem:LR} by comparing two expansions of $\sch_w(x_1,\ldots,x_{p+q})$: the monomial expansion over $\RC_{p+q}(w)$ (Corollary~\ref{corollary:evschub}) and the branching expansion \eqref{eq:branching}. The bridge is the following description of the fibers of $\clp^p$.

For a permutation $u$ and $p\geq 0$, let $h=\max(p,\maxd(u))$ and let $\RC_{\leq p}(u)\subseteq\RC_h(u)$ be the set of RC graphs of $u$ all of whose crossings lie in rows $1,\ldots,p$. For $k\geq 1$ and a permutation $z$ with $\maxd(z)\leq k$ recall $T_k(z)=\{z':z\downvar{k}z',\ \ell(z')=\ell(z)\}$; by \eqref{E:Tn} and linear independence of Schubert polynomials, $T_k(z)=\{z\}$ when $\maxd(z)<k$. For $u'$ with $\maxd(u')\leq p$ let $N_p(u,u')$ be the number of chains
$$u=u_h,\quad u_{h-1}\in T_h(u_h),\quad u_{h-2}\in T_{h-1}(u_{h-1}),\quad\ldots,\quad u_p\in T_{p+1}(u_{p+1}),\qquad u_p=u'\text{,}$$
so that $N_p(u,u')=\delta_{u,u'}$ when $h=p$.

\begin{lemma}\label{lemma:clipfibers}
Let $u$, $p$, $h$ be as above. For every $u'$ with $\maxd(u')\leq p$ and every $U\in\RC_p(u')$,
$$\#\{T\in\RC_{\leq p}(u):\zeromap^{h-p}(T)=U\}=N_p(u,u')\text{.}$$
Consequently
$$\sum_{T\in\RC_{\leq p}(u)}x^{\wtt(T)}=\sch_u(x_1,\ldots,x_p,0,\ldots,0)=\sum_{u'}N_p(u,u')\,\sch_{u'}(x_1,\ldots,x_p)\text{.}$$
\end{lemma}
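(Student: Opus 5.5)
The fiber count will come from iterating Theorem~\ref{theorem:zero_bijection} one row at a time. The consequence will then follow from Corollary~\ref{corollary:evschub} together with weight preservation of $\zeromap$.

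For $p\leq k\leq h$, let $\RC_{\leq p}^{(k)}(z)$ denote the set of bounded RC graphs of height $k$ with permutation $z$ whose rows $p+1,\ldots,k$ are empty. In particular $\RC_{\leq p}^{(h)}(u)=\RC_{\leq p}(u)$.

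\textbf{Step 1: one application of $\zeromap$.} Suppose $k>p$ and $\maxd(z)\leq k$, and let $X\in\RC^{(k)}_{\leq p}(z)$. Since row $k$ of $X$ is empty, Theorem~\ref{theorem:zero_bijection} shows that $\zeromap$ restricts to a bijection
$$\RC(z,k)^0\to\bigsqcup_{z'\in T_k(z)}\RC(z',k-1).$$
By Lemma~\ref{lemma:zeroweight}, $\zeromap$ preserves weight. Hence rows $p+1,\ldots,k$ are empty in $X$ if and only if rows $p+1,\ldots,k-1$ are empty in $\zeromap(X)$. Restricting the bijection gives a bijection
$$\RC^{(k)}_{\leq p}(z)\to\bigsqcup_{z'\in T_k(z)}\RC^{(k-1)}_{\leq p}(z').$$
The union is disjoint because the pieces have different permutations. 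It is also genuinely a union over the set $T_k(z)$, with no multiplicity, by Lemma~\ref{L:leaves}.

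\textbf{Step 2: iterate.} I will induct downward on $k$ from $h$ to $p$. The claim is that $\zeromap^{h-k}$ restricts to a map
$$\RC_{\leq p}(u)\to\bigsqcup_{z}\RC^{(k)}_{\leq p}(z)$$
whose fiber over any element of $\RC^{(k)}_{\leq p}(z)$ has size equal to the number of chains $u=u_h,u_{h-1},\ldots,u_k=z$ with $u_{j-1}\in T_j(u_j)$. Step 1 supplies the inductive step. Each element of the target has exactly one preimage in each component indexed by a $z$ with $z'\in T_k(z)$, so the fiber sizes add up to the chain counts.

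At $k=p$, the set $\RC^{(p)}_{\leq p}(u')$ is just $\RC_p(u')$. The fiber over $U\in\RC_p(u')$ therefore has size $N_p(u,u')$, which is the first claim. One point needs checking: every intermediate $u_j$ satisfies $\maxd(u_j)\leq j$, so that Theorem~\ref{theorem:zero_bijection} applies at the next stage. This is part of the statement of Theorem~\ref{theorem:zero_bijection}, since its target consists of bounded RC graphs of height $j-1$.

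\textbf{Step 3: the generating functions.} Consider the sum over $T\in\RC_{\leq p}(u)$ of $x^{\wtt(T)}$. By Corollary~\ref{corollary:evschub} at height $h$, the full sum over $\RC_h(u)$ is $\sch_u(x_1,\ldots,x_h)$. Setting $x_{p+1}=\cdots=x_h=0$ kills exactly the RC graphs having a crossing in a row beyond $p$, which gives the first equality. For the second equality, group the sum by fibers of $\zeromap^{h-p}$, which preserves weight. This gives
$$\sum_{u'}N_p(u,u')\sum_{U\in\RC_p(u')}x^{\wtt(U)},$$
and Corollary~\ref{corollary:evschub} again turns each inner sum into $\sch_{u'}(x_1,\ldots,x_p)$.

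\textbf{Main obstacle.} The delicate point is Step 1: showing that the restriction to graphs with rows $p+1,\ldots,k$ empty is still a bijection onto the whole disjoint union. This rests entirely on the weight preservation in Lemma~\ref{lemma:zeroweight}, applied in both directions. The direction that needs care is that any preimage of a graph with empty rows must itself have empty rows, and weight equality gives exactly this.
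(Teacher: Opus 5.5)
Your proposal is correct and follows essentially the same route as the paper: you restrict the bijection of Theorem~\ref{theorem:zero_bijection} to graphs whose rows beyond $p$ are empty, using weight preservation (Lemma~\ref{lemma:zeroweight}) in both directions, then iterate it and group the generating function by fibers of $\zeromap^{h-p}$. The only difference is bookkeeping. You track fibers at every intermediate height $k$, with the trivial steps $T_k(z)=\{z\}$ built into the chains. The paper instead peels off the first $\zeromap$ and re-indexes the remaining steps at height $h_1=\max(p,\maxd(u_1))$, a padding argument your version avoids.
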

\begin{proof}
Induct on $h-p$; if $h=p$ then $\zeromap^0$ is the identity and $\RC_{\leq p}(u)=\RC_p(u)$. Suppose $h>p$. Every $T\in\RC_{\leq p}(u)$ has empty last row, so by Theorem~\ref{theorem:zero_bijection} and Lemma~\ref{lemma:zeroweight}, $\zeromap$ restricts to a weight-preserving injection from $\RC_{\leq p}(u)$ into $\bigsqcup_{u_1\in T_h(u)}\RC_{h-1}(u_1)$ (the length condition $\ell(u_1)=\ell(u)$ is forced by weight preservation). By weight preservation the image consists of graphs whose rows $p+1,\ldots,h-1$ are empty, and conversely every such graph $T'\in\RC_{h-1}(u_1)$ is $\zeromap(T)$ for a unique $T\in\RC_h(u)^0$ by Theorem~\ref{theorem:zero_bijection}, and this $T$ has rows $p+1,\ldots,h$ empty by weight preservation, so $T\in\RC_{\leq p}(u)$. Thus $\zeromap$ is a bijection from $\RC_{\leq p}(u)$ onto the set of $T'\in\bigsqcup_{u_1\in T_h(u)}\RC_{h-1}(u_1)$ with rows $p+1,\ldots,h-1$ empty. For such $T'$ with $\wof{T'}=u_1$, put $h_1=\max(p,\maxd(u_1))\leq h-1$; the first $h-1-h_1$ applications of $\zeromap$ to $T'$ delete empty last rows (Definition~\ref{definition:zeromap}), so $\zeromap^{h-1-p}(T')=\zeromap^{h_1-p}(T')$ with $T'$ regarded as an element of $\RC_{\leq p}(u_1)$. Hence
$$\#\{T\in\RC_{\leq p}(u):\zeromap^{h-p}(T)=U\}=\sum_{u_1\in T_h(u)}\#\{T'\in\RC_{\leq p}(u_1):\zeromap^{h_1-p}(T')=U\}=\sum_{u_1\in T_h(u)}N_p(u_1,u')=N_p(u,u')$$
by induction. For the second statement, Corollary~\ref{corollary:evschub} gives $\sum_{T\in\RC_h(u)}x^{\wtt(T)}=\sch_u(x_1,\ldots,x_h)$, and setting $x_{p+1}=\cdots=x_h=0$ kills exactly the terms with a crossing in rows $p+1,\ldots,h$; this is the first equality. For the second, sum the fiber count over $U\in\RC_p(u')$ and all $u'$, using $\wtt(\zeromap^{h-p}(T))=\wtt(T)$ and Corollary~\ref{corollary:evschub} for $\RC_p(u')$.
\end{proof}

\begin{definition}
Given RC graphs $U\in \RC_p(u)$ and $V\in\RC_q(v)$ and a permutation $w$, define
$$d_{U, V}^w = \#\{R\in \RC_{p+q}(w): \clp^p(R)=U,\trm^p(R)=V\}$$
\end{definition}

\begin{definition}\label{definition:omega}
Let $\Omega$ be the $\mathbb{Z}$-submodule of $\BRC$ generated by the RC graphs $R_1-R_2$ such that $\wof{R_1}=\wof{R_2}$ and $\hr(R_1)=\hr(R_2)$, and let $\mathcal{R}$ be the subring of $\BRC$ generated by the single-row RC graphs $\mathtt{row}(a)$, $a\geq 0$.
\end{definition}

\begin{proof}[Proof of Theorem \ref{theorem:LR}]
Fix $u,v\in S_\infty$ with $\maxd(u)\leq p$, $\maxd(v)\leq q$, and fix $U\in\RC_p(u)$, $V\in\RC_q(v)$. By Lemma~\ref{lemma:dualbranch}, $d_{u,v}^w(p,q)=b_{u,v}^w(p,q)$, so we must show $d_{U,V}^w=b_{u,v}^w(p,q)$ for every $w$ with $\maxd(w)\leq p+q$.

Let $R\in\RC_{p+q}(w)$ and write $R=T\cup\shup^pV'$ with $T=R_{\leq p}$ and $V'=\trm^p(R)\in\RC_q$ (Lemma~\ref{L:trim}). Reading rows from top to bottom, the word of $R$ is the word of $T$ followed by the word of $\shup^pV'$, so
$$w=\wof{T}\cdot\shup^p\wof{V'}\qquad\text{with}\qquad\ell(w)=\ell(\wof T)+\ell(\wof{V'})\text{.}$$
Thus $\wof{T}=u_{v'}:=w\,(\shup^pv')^{-1}$ where $v'=\wof{V'}$, and $\ell(u_{v'})=\ell(w)-\ell(v')$. Conversely, for any $v'$ with $\maxd(v')\leq q$ and $\ell(w(\shup^pv')^{-1})=\ell(w)-\ell(v')$, any $V'\in\RC_q(v')$, and any $T\in\RC_{\leq p}(u_{v'})$, the union $T\cup\shup^pV'$ is a reduced RC graph of $w$ with $p+q$ rows, hence lies in $\RC_{p+q}(w)$. Finally, by Definition~\ref{definition:cliptrim}, $\clp^p(T\cup\shup^pV')=\zeromap^{h-p}(T)$ with $h=\max(p,\maxd(u_{v'}))$.

Applying this with $V'=V$ and Lemma~\ref{lemma:clipfibers},
$$d_{U,V}^w=\#\{T\in\RC_{\leq p}(u_v):\zeromap^{h-p}(T)=U\}=N_p(u_v,u)$$
if $\ell(u_v)=\ell(w)-\ell(v)$, and $d_{U,V}^w=0$ otherwise. On the other hand, by Corollary~\ref{corollary:evschub}, the decomposition $R=T\cup\shup^pV'$, and Lemma~\ref{lemma:clipfibers},
\begin{align*}
\sch_w(x_1,\ldots,x_{p+q})&=\sum_{R\in\RC_{p+q}(w)}x^{\wtt(R)}
=\sum_{\substack{v':\ \maxd(v')\leq q\\ \ell(u_{v'})=\ell(w)-\ell(v')}}\ \sum_{V'\in\RC_q(v')}x_{p+1}^{\wtt_1(V')}\cdots x_{p+q}^{\wtt_q(V')}\sum_{T\in\RC_{\leq p}(u_{v'})}x^{\wtt(T)}\\
&=\sum_{\substack{v':\ \maxd(v')\leq q\\ \ell(u_{v'})=\ell(w)-\ell(v')}}\ \sum_{u'}N_p(u_{v'},u')\,\sch_{u'}(x_1,\ldots,x_p)\,\sch_{v'}(x_{p+1},\ldots,x_{p+q})\text{.}
\end{align*}
Comparing with \eqref{eq:branching} and using the linear independence of the products $\sch_{u'}(x_1,\ldots,x_p)\sch_{v'}(x_{p+1},\ldots,x_{p+q})$ gives $b_{u,v}^w(p,q)=N_p(u_v,u)$ when $\ell(u_v)=\ell(w)-\ell(v)$ and $b_{u,v}^w(p,q)=0$ otherwise. Hence $d_{U,V}^w=b_{u,v}^w(p,q)=d_{u,v}^w(p,q)$, which is independent of the choice of $U$ and $V$ and nonnegative.
\end{proof}

We now record the ring-theoretic consequences.

\begin{definition}
Recall the map $\alpha:\BRC\to\dcoma$, $\alpha(R)=\dsch_{\wof{R}}^{\hr(R)}$. (The weight map $\omega(R)=[\wtt(R)]$ is additive but not multiplicative: every term of $R_1\sqcupplus R_2$ has weight $\wtt(R_1)\wtt(R_2)$, so $\omega(R_1\sqcupplus R_2)$ is a multiple of $\omega(R_1)\sqcupplus\omega(R_2)$ by the number of terms.) Let $a$ be a sequence of elements of a ring indexed by the nonnegative integers. For a permutation $w$ and an integer $n\geq \maxd(w)$, define a function $\dsch_w^n(a)$ by
$$\dsch_w^n(a) = \sum_{\alpha} d_{\alpha, w}^n a_{\alpha_1} \cdots a_{\alpha_n}$$
where the sum is over all weak compositions $\alpha$ of length $n$, and $d_{\alpha, w}^n$ is the coefficient of $\alpha$ in $\dsch_w^n\in\dcoma$.
\end{definition}

\begin{lemma} \label{lemma:schubertrow}
We have that the elements
$$\dsch_w^n(\mathtt{row})$$
for integers $n\geq 0$ and $w$ that ranges over all permutations in $S_\infty$ with $n\geq \maxd(w)$ form a $\mathbb{Z}$-basis of $\mathcal{R}$, and $\alpha:\mathcal{R}\to \dcoma$ is an isomorphism of graded rings.
\end{lemma}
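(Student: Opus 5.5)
The plan is to realize $\mathcal{R}$ as a copy of $\dcoma$ inside $\BRC$, using the elements $\mathcal{S}_w(n)$.

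\emph{Step 1: the products of single rows.} For a weak composition $a=(a_1,\ldots,a_n)$, consider the product $\mathtt{row}(a_1)\sqcupplus\cdots\sqcupplus\mathtt{row}(a_n)$, which is associative by the previous theorem. Unwinding Definition~\ref{definition:ringproduct} inductively, this product is the sum of all bounded RC graphs $R$ of height $n$ with $\wtt(R)=a$. The reason is that $\trm^1(R)$ is arbitrary, while $\clp^1(R)=\mathtt{row}(\wtt_1(R))$ by Lemma~\ref{lemma:lastrow}, so left multiplication by $\mathtt{row}(a_1)$ is exactly the module action $[a_1]\cdot(-)$ defined before Theorem~\ref{theorem:modactrc}. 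Denote this element by $M_a$. Then $\mathcal{R}$ is the $\mathbb{Z}$-span of the $M_a$, and these are linearly independent, being sums over disjoint sets of RC graphs (distinct weights, or distinct heights). Moreover, $a\mapsto M_a$ extends to a ring isomorphism $\iota:\dcoma\to\mathcal{R}$, since $M_aM_b=M_{ab}$ by associativity and $\dcoma$ is free on the $[i]$.

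\emph{Step 2: identifying the candidate basis.} By definition, $\dsch_w^n(\mathtt{row})=\iota(\dsch_w^n)$. Since the $\dsch_w^n$ with $\maxd(w)\le n$ form a $\mathbb{Z}$-basis of $\dcoma$ (each $\dcoma_n$ is dual to $\coma_n$, with the dual Schubert basis), their images form a $\mathbb{Z}$-basis of $\mathcal{R}$. This gives the basis claim.

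\emph{Step 3: the map $\alpha$.} It remains to check that $\alpha\circ\iota=\mathrm{id}_{\dcoma}$, i.e.\ that $\alpha(M_a)=[a]$. Write $M_a=\sum_w\sum_{R\in\RC_n(w),\,\wtt R=a}R$. Then
\[
\alpha(M_a)=\sum_w \#\{R\in\RC_n(w):\wtt R=a\}\,\dsch_w^n=\sum_w\langle[a],\sch_w^n\rangle\,\dsch_w^n=[a],
\]
where the middle equality uses Corollary~\ref{corollary:evschub}: the coefficient of $x^a$ in $\sch_w^n$ is that count. The last equality is the expansion of $[a]$ in the dual Schubert basis, obtained by pairing with $\sch_w^n$.

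Hence $\alpha|_{\mathcal{R}}=\iota^{-1}$, a bijective ring homomorphism. It is graded because $\iota$ preserves height and degree.

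I expect the main obstacle to be Step 1. There one must verify that the iterated product of single rows really yields every RC graph of weight $a$ exactly once. Concretely, this requires that the condition $\clp^1(R')=\mathtt{row}(a_1)$ is automatic given $\wtt_1(R')=a_1$, and that $\trm^1$ is a bijection onto the required set. Both points reduce to Lemma~\ref{lemma:lastrow} together with Theorem~\ref{theorem:modactrc}.
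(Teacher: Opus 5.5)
Your proof is correct, but it establishes $\alpha\circ\iota=\mathrm{id}$ by a different route from the paper.

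The paper first uses Theorem~\ref{theorem:LR} to show that $\alpha$ is multiplicative on all of $\BRC$. With that in hand, it only has to check $\alpha(\mathtt{row}(i))=[i]$ on the free generators. It then reads off injectivity of $\alpha|_{\mathcal R}$ from $\alpha|_{\mathcal R}\circ\iota=\mathrm{id}$, tacitly using that $\iota$ maps onto $\mathcal R$.

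You instead compute $\iota([a])=M_a=\sum_{\hr(R)=n,\ \wtt(R)=a}R$ explicitly; the paper only makes this computation later, in the proof of Lemma~\ref{lemma:principalsection}. You then verify $\alpha(M_a)=[a]$ on the entire additive basis, using only Corollary~\ref{corollary:evschub} and duality. This has three advantages:
\begin{itemize}
\item It bypasses Theorem~\ref{theorem:LR}, and with it Lemma~\ref{lemma:clipfibers} and Theorem~\ref{theorem:zero_bijection}.
\item You do not even need associativity. The identity $M_a\sqcupplus M_b=M_{ab}$ follows directly because $\clp^p(R)$ is a single bounded RC graph of weight $(\wtt_1(R),\ldots,\wtt_p(R))$ (Lemma~\ref{lemma:zeroweight}), while $\trm^p$ is literal row deletion.
\item It makes the identification $\mathcal R=\mathrm{span}\{M_a\}$ explicit.
\end{itemize}

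What the paper's route buys is multiplicativity of $\alpha$ on all of $\BRC$. This is the first sentence of the paper's proof and is reused in Theorem~\ref{theorem:semidirect}. Your argument only gives multiplicativity on $\mathcal R$, as $\alpha|_{\mathcal R}=\iota^{-1}$.

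One small point: disjoint supports give linear independence of the $M_a$ only if each $M_a\neq 0$. This follows from your Step~3, since $\alpha(M_a)=[a]\neq0$.
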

\begin{proof}
By Theorem~\ref{theorem:LR}, $\alpha$ is multiplicative: for $R_1\in\RC_p(u)$, $R_2\in\RC_q(v)$,
$$\alpha(R_1\sqcupplus R_2)=\sum_w d_{R_1,R_2}^w\,\dsch_w^{p+q}=\sum_w d_{u,v}^w(p,q)\,\dsch_w^{p+q}=\dsch_u^p\sqcupplus\dsch_v^q=\alpha(R_1)\sqcupplus\alpha(R_2)\text{.}$$
Restricting to $\mathcal{R}$ gives a graded ring map $\alpha|_{\mathcal{R}}:\mathcal{R}\to\dcoma$. Since $\alpha(\mathtt{row}(i))=\dsch^1_{s_i\cdots s_1}=[i]$, the image contains the free generators $[i]$ of $\dcoma$, so $\alpha|_{\mathcal{R}}$ is surjective; by freeness of $\dcoma$ there is a unique graded ring homomorphism $\iota:\dcoma\to\mathcal{R}$ with $\iota([i])=\mathtt{row}(i)$, and $(\alpha|_{\mathcal{R}})\circ\iota=\mathrm{id}_{\dcoma}$, so $\alpha|_{\mathcal{R}}$ is also injective. The family $\{\dsch_w^n\}_{n\ge 0,\, n\ge\maxd(w)}$ is a $\mathbb{Z}$-basis of $\dcoma$ by construction, and $\dsch_w^n(\mathtt{row})=\iota(\dsch_w^n)$ since $\iota$ is a ring homomorphism; applying $\iota$ to the basis gives the claim.
\end{proof}

The section $\iota$ has an explicit description in the RC graph basis.

\begin{lemma}\label{lemma:principalsection}
Let $\maxd(u)\leq n$ and let $P_u\in\RC_n(u)$ be the principal RC graph of $u$, the unique RC graph of $u$ with weight $\code(u)$. Then
$$\iota(\dsch_u^n)=\sum_{R\in\RC_n}c_{\wtt(R),u}\,R\text{,}$$
where $c_{\alpha,u}$ is the coefficient of $\sch_u$ in the Schubert expansion of $x_\alpha$, and the only RC graph of permutation $u$ occurring is $P_u$, with coefficient $1$. In particular $\iota(\dsch_u^n)-P_u\in\Omega$ has no term of permutation $u$.
\end{lemma}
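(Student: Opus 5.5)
The plan is to compute $\iota$ on the composition basis of $\dcoma$ first, and then expand $\dsch_u^n$ in that basis.

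\emph{Step 1: $\iota$ on compositions.} Fix a weak composition $a=(a_1,\ldots,a_n)$. We show that
$$\iota([a])=\mathtt{row}(a_1)\sqcupplus\cdots\sqcupplus\mathtt{row}(a_n)=\sum_{R\in\RC_n,\ \wtt(R)=a}R,$$
with every such $R$ appearing exactly once. The argument is by induction on $n$, using associativity to write the product as $\mathtt{row}(a_1)\sqcupplus X$ with $X=\mathtt{row}(a_2)\sqcupplus\cdots\sqcupplus\mathtt{row}(a_n)$.
\begin{itemize}
\item By Definition~\ref{definition:ringproduct}, the terms of $\mathtt{row}(a_1)\sqcupplus Y$, for an RC graph $Y$, are the $R'$ with $\clp^1(R')=\mathtt{row}(a_1)$ and $\trm(R')=Y$.
\item By Lemma~\ref{lemma:lastrow} and the remark after Definition~\ref{definition:cliptrim}, $\clp^1(R')=\mathtt{row}(\wtt_1(R'))$. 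So the first condition just says $\wtt_1(R')=a_1$.
\item Hence $\mathtt{row}(a_1)\sqcupplus Y=[a_1]\cdot Y$, the module action of Theorem~\ref{theorem:modactrc}.
\item By induction, $X$ is the multiplicity-free sum of all height-$(n-1)$ RC graphs of weight $(a_2,\ldots,a_n)$. Every height-$n$ RC graph $R$ of weight $a$ arises exactly once, namely from $Y=\trm(R)$.
\item The base case $n=1$ is Lemma~\ref{lemma:lastrow}.
\end{itemize}

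\emph{Step 2: the formula.} By the proof of Theorem~\ref{theorem:dualformula}, $\dsch_u^n=\sum_\alpha c_{\alpha,u}[\alpha]$, where $c_{\alpha,u}$ is the coefficient of $\sch_u$ in $x_\alpha$. Applying the additive map $\iota$ and Step~1 gives
$$\iota(\dsch_u^n)=\sum_{R\in\RC_n}c_{\wtt(R),u}\,R.$$

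\emph{Step 3: terms of permutation $u$.} Let $R\in\RC_n(u)$; we must show $c_{\wtt(R),u}=\delta_{R,P_u}$. Fix the reverse lexicographic order on monomials. I will use the classical triangularity of Schubert polynomials: $\sch_v=x^{\code(v)}+(\text{smaller monomials})$. This is available from the RC-graph expansion of Corollary~\ref{corollary:evschub}, since the principal RC graph $P_v$ is the unique RC graph of $v$ with weight $\code(v)$, and every other weight is revlex-smaller.

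Inverting this unitriangular system gives
$$x_\alpha=\sch_{v_\alpha}+\sum_{\code(v)<\alpha}(\ast)\,\sch_v,\qquad \code(v_\alpha)=\alpha.$$
So $c_{\alpha,u}\neq0$ forces $\code(u)\le\alpha$, and $c_{\code(u),u}=1$. On the other hand, $\wtt(R)$ is the weight of a monomial of $\sch_u$, so $\wtt(R)\le\code(u)$. Hence $c_{\wtt(R),u}\neq0$ only if $\wtt(R)=\code(u)$, that is, only if $R=P_u$, and then the coefficient is $1$.

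Finally, since $\alpha(\iota(\dsch_u^n))=\dsch_u^n=\alpha(P_u)$, the difference $\iota(\dsch_u^n)-P_u$ lies in $\ker\alpha$. Each graded piece of $\ker\alpha$ consists of combinations whose coefficients sum to zero within each fixed permutation and height, so $\ker\alpha=\Omega$. Thus $\iota(\dsch_u^n)-P_u\in\Omega$, and by the previous paragraph it has no term of permutation $u$.

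The main obstacle is Step~3, specifically justifying the triangularity rigorously: that every RC graph of $v$ other than $P_v$ has weight strictly revlex-smaller than $\code(v)$. I would cite this as the standard leading-term property of Schubert polynomials, and otherwise derive it from the bottom-row structure of Lemma~\ref{lemma:lastrow} by induction on rows from the bottom.
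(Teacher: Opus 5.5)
Your proposal is correct and follows the paper's proof essentially step for step. Both use the same induction identifying $\mathtt{row}(a_1)\sqcupplus Y$ with the module action $[a_1]\cdot Y$ via $\clp^1(R')=\mathtt{row}(\wtt_1(R'))$, the same expansion $\dsch_u^n=\sum_\alpha c_{\alpha,u}[\alpha]$, and the same reverse-lexicographic unitriangularity $\sch_v=x^{\code(v)}+\text{(lower terms)}$, which the paper simply cites from Macdonald's notes, as you propose to do. Your explicit check that $\iota(\dsch_u^n)-P_u\in\ker\alpha=\Omega$, using $\alpha\circ\iota=\mathrm{id}$, is a small detail the paper leaves implicit.
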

\begin{proof}
We have $\iota([\alpha])=\mathtt{row}(\alpha_1)\sqcupplus\cdots\sqcupplus\mathtt{row}(\alpha_n)=\sum_{\wtt(R)=\alpha}R$: by induction on $n$, the terms of $\mathtt{row}(\alpha_1)\sqcupplus\sum_{\wtt(R')=(\alpha_2,\ldots,\alpha_n)}R'$ are the $R$ with $\trm(R)$ of weight $(\alpha_2,\ldots,\alpha_n)$ and $\clp^1(R)=\mathtt{row}(\alpha_1)$, i.e.\ $\wtt_1(R)=\alpha_1$ (Lemma~\ref{lemma:lastrow}). Since $\dsch_u^n=\sum_\alpha c_{\alpha,u}[\alpha]$ by definition, the formula follows. The Schubert polynomials are unitriangular with respect to the monomials in reverse lexicographic order, $\sch_v=x^{\code(v)}+\text{(lower terms)}$ \cite{notes}, so $c_{\alpha,u}\neq0$ forces $\code(u)\leq\alpha$, while $\alpha=\wtt(R)$ for $R\in\RC_n(u)$ means that $x^\alpha$ occurs in $\sch_u$ (Corollary~\ref{corollary:evschub}), forcing $\alpha\leq\code(u)$. Hence $\alpha=\code(u)$, $R=P_u$, and $c_{\code(u),u}=1$.
\end{proof}

\begin{theorem} \label{theorem:semidirect}
  The map $\alpha:\BRC\to \dcoma$ is a surjective homomorphism of rings with kernel $\Omega$; in particular $\Omega$ is a two-sided ideal of $\BRC$. The ring homomorphism $\iota:\dcoma\to\BRC$ sending $[i]\mapsto \mathtt{row}(i)$ is a right inverse to $\alpha$, with image $\mathcal{R}$, and $\BRC=\mathcal{R}\oplus\Omega$. Consequently $\BRC\cong \dcoma\ltimes \Omega$ as rings.
\end{theorem}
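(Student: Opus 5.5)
We assemble the statement from the multiplicativity already established in the proof of Lemma~\ref{lemma:schubertrow} and the section $\iota$ of Lemma~\ref{lemma:principalsection}.

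\emph{Homomorphism and kernel.} The computation in the proof of Lemma~\ref{lemma:schubertrow} applies to arbitrary bounded RC graphs $R_1\in\RC_p(u)$, $R_2\in\RC_q(v)$, not only to elements of $\mathcal{R}$. By Theorem~\ref{theorem:LR} we get $\alpha(R_1\sqcupplus R_2)=\alpha(R_1)\sqcupplus\alpha(R_2)$, and by bilinearity $\alpha$ is a ring homomorphism on all of $\BRC$. It sends the empty graph of height $0$ to $\dsch_{\mathrm{id}}^0=[\,]$, the unit. Surjectivity holds because each $\dsch_w^n$ is $\alpha$ of any element of $\RC_n(w)$, which is nonempty since $\sch_w^n\neq0$ by Corollary~\ref{corollary:evschub}. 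For the kernel, group the basis of $\BRC$ into the fibers $\RC_n(w)$ of the map $R\mapsto(\wof R,\hr R)$. Since the $\dsch_w^n$ are linearly independent, an element $\sum c_R R$ lies in $\ker\alpha$ if and only if the coefficients sum to zero on each fiber. That condition is exactly membership in the span of the differences $R_1-R_2$ within fibers, which is $\Omega$. Being the kernel of a ring homomorphism, $\Omega$ is a two-sided ideal.

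\emph{The section and the splitting.} Lemma~\ref{lemma:schubertrow} gives that $\iota:\dcoma\to\BRC$ is a ring homomorphism with image $\mathcal{R}$ and $\alpha\circ\iota=\mathrm{id}$. The splitting $\BRC=\mathcal{R}\oplus\Omega$ then follows formally. For any $x$ we write $x=\iota\alpha(x)+(x-\iota\alpha(x))$, where the second summand lies in $\ker\alpha=\Omega$. If $x\in\mathcal{R}\cap\Omega$, write $x=\iota(y)$; then $0=\alpha(x)=y$, so $x=0$.

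\emph{Semidirect product.} Since $\Omega$ is a two-sided ideal and $\mathcal{R}\cong\dcoma$ is a subring complementary to it, the multiplication on $\BRC$ reads
\[
(r,\omega)(r',\omega')=(rr',\,r\omega'+\omega r'+\omega\omega').
\]
This is the semidirect product $\dcoma\ltimes\Omega$, with $\dcoma$ acting on $\Omega$ through $\iota$ by left and right multiplication.

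The only step needing care is the kernel identification. It relies on the nonemptiness of each fiber $\RC_n(w)$ and on linear independence of the dual Schubert basis, and both of these are already available.
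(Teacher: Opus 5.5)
Your proposal is correct and follows essentially the same route as the paper: multiplicativity of $\alpha$ via Theorem~\ref{theorem:LR} (as in Lemma~\ref{lemma:schubertrow}), surjectivity from the nonempty fibers $\RC_n(w)$, the kernel identified by grouping coefficients by $(\wof{R},\hr(R))$ and using linear independence of the $\dsch_w^n$, and the splitting $\BRC=\mathcal{R}\oplus\Omega$ from the section $\iota$. Your explicit checks of the unit and of $\mathcal{R}\cap\Omega=0$, and the written-out semidirect multiplication rule, spell out what the paper packages as a split exact sequence.
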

\begin{proof}
By Lemma~\ref{lemma:schubertrow} (whose first sentence is the multiplicativity of $\alpha$), $\alpha$ is a ring homomorphism; it is surjective since every $\dsch_w^n$ is $\alpha(R)$ for any $R\in\RC_n(w)$, and $\RC_n(w)\neq\emptyset$ when $\maxd(w)\leq n$.

The kernel of $\alpha$ is $\Omega$: clearly $\Omega\subseteq\ker\alpha$. Conversely, if $x=\sum_R c_RR\in\ker\alpha$, group the terms by $(\wof{R},\hr(R))$; then $\alpha(x)=\sum_{(w,n)}\bigl(\sum_{R\in\RC_n(w)}c_R\bigr)\dsch_w^n=0$ forces $\sum_{R\in\RC_n(w)}c_R=0$ for each $(w,n)$ by linear independence of the $\dsch_w^n$, and an element $\sum_{R\in\RC_n(w)}c_RR$ with $\sum c_R=0$ lies in $\Omega$. As the kernel of a ring homomorphism, $\Omega$ is a two-sided ideal.

By Lemma~\ref{lemma:schubertrow}, $\iota$ is a ring isomorphism $\dcoma\to\mathcal{R}$ with $\alpha\circ\iota=\mathrm{id}$. The split exact sequence $0\to\Omega\to\BRC\xrightarrow{\alpha}\dcoma\to0$ with section $\iota$ gives $\BRC=\mathcal{R}\oplus\Omega$ as abelian groups, with $\mathcal{R}$ a subring and $\Omega$ a two-sided ideal; this is precisely the statement $\BRC\cong\dcoma\ltimes\Omega$.
\end{proof}






\subsection{A general Littlewood--Richardson principle}\label{section:template}

Theorem~\ref{theorem:LR} and the dual key, dual forest, and dual slide rules of the following sections are instances of one mechanism, which we isolate here. For a bounded RC graph $R$ of height $p+q$ write
$$\Psi_p(R)=(\clp^pR,\trm^pR)\in\RC_p\times\RC_q\text{;}$$
by the remarks following Definition~\ref{definition:cliptrim}, $\wtt(R)$ is the concatenation of $\wtt(\clp^pR)$ and $\wtt(\trm^pR)$.

\begin{definition}\label{definition:admissible}
Let $\sim$ be an equivalence relation on the set $\bigsqcup_n\RC_n$ of bounded RC graphs; its equivalence classes are called \emph{classes}. For a class $X$ of height $p+q$ and $(U,V)\in\RC_p\times\RC_q$ put
$$m_X(U,V)=\#\{R\in X:\ \Psi_p(R)=(U,V)\}\text{.}$$
Let $\lambda$ be a function from the set of classes to a set $J$, with classes of different heights receiving different labels. We say that $\sim$ is \emph{admissible} with respect to $\lambda$ if:
\begin{itemize}
\item[(H1)] $R_1\sim R_2$ implies $\hr(R_1)=\hr(R_2)$ and $\wof{R_1}=\wof{R_2}$.
\item[(H2)] (Fiber constancy.) For all classes $X$, $D_1$, $D_2$ of heights $p+q$, $p$, $q$, the function $m_X$ is constant on $D_1\times D_2$; its value is denoted $m_X(D_1,D_2)$.
\item[(H3)] (Characters.) For every class $D$ of height $n$, the polynomial $\sum_{R\in D}x^{\wtt(R)}\in\coma_n$ depends only on $\lambda(D)$; writing it $B_{\lambda(D)}$, the family $\{B_\lambda\}$, with $\lambda$ ranging over the labels of the classes of height $n$, is a $\mathbb Z$-basis of $\coma_n$.
\end{itemize}
Given such data, let $\{B_\lambda^*\}$ be the dual basis of $\dcoma_n$, let $I_\sim=\mathrm{span}_{\mathbb Z}\{R_1-R_2:R_1\sim R_2\}\subseteq\BRC$, write $[D]\in\BRC/I_\sim$ for the common image of the elements of a class $D$, and put
$$E_\lambda=\sum_{D:\ \lambda(D)=\lambda}[D]\in\BRC/I_\sim\text{.}$$
The classes $[D]$ form a $\mathbb Z$-basis of $\BRC/I_\sim$.
\end{definition}

\begin{theorem}[General LR principle]\label{theorem:general_lr_template}
Let $\sim$ be admissible with respect to $\lambda$. Then:
\begin{enumerate}
\item $I_\sim$ is a two-sided ideal of $\BRC$ contained in $\Omega=\ker\alpha$, and $\alpha$ induces a surjective ring homomorphism $\overline\alpha:\BRC/I_\sim\to\dcoma$. (This part uses only (H1) and (H2).)
\item $\overline\alpha(E_\lambda)=B_\lambda^*$ for every label $\lambda$.
\item Let $\mu,\nu$ label classes of heights $p,q$ and define $b_{\mu,\nu}^\lambda$ by $B_\mu^*\sqcupplus B_\nu^*=\sum_\lambda b_{\mu,\nu}^\lambda B_\lambda^*$. For every class $X$ with $\lambda(X)=\lambda$,
$$b_{\mu,\nu}^\lambda=\sum_{\substack{D_1:\ \lambda(D_1)=\mu\\ D_2:\ \lambda(D_2)=\nu}}m_X(D_1,D_2)\text{.}$$
Equivalently, for any function $\mathrm{rep}$ that chooses one element in each class, $b_{\mu,\nu}^\lambda$ is the number of $R\in X$ such that $\clp^pR=\mathrm{rep}(\clp^pR)$, $\trm^pR=\mathrm{rep}(\trm^pR)$, $\lambda([\clp^pR])=\mu$ and $\lambda([\trm^pR])=\nu$. In particular $b_{\mu,\nu}^\lambda\in\mathbb Z_{\geq0}$.
\item $E_\mu\sqcupplus E_\nu=\sum_\lambda b_{\mu,\nu}^\lambda E_\lambda$ in $\BRC/I_\sim$; hence the $E_\lambda$ span a subring of $\BRC/I_\sim$ that $\overline\alpha$ maps isomorphically onto $\dcoma$.
\end{enumerate}
\end{theorem}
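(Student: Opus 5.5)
The plan has four steps, one for each part of Theorem~\ref{theorem:general_lr_template}: the ideal property, the identification $\overline\alpha(E_\lambda)=B_\lambda^*$, the coefficient formula, and the lift back to $\BRC/I_\sim$.

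\emph{Part (1).} By (H1), every generator $R_1-R_2$ of $I_\sim$ has $\alpha(R_1)=\alpha(R_2)$, so $I_\sim\subseteq\Omega$. To see that $I_\sim$ is a two-sided ideal, take a bounded RC graph $S$ of height $q$ and $R_1\sim R_2$ of height $p$, both lying in a class $D_1$. Then
$$R_i\sqcupplus S=\sum_X\sum_{R\in X,\ \Psi_p(R)=(R_i,S)}R .$$
Write $D_2$ for the class of $S$. By (H2), for each class $X$ the number of terms in $X$ is $m_X(D_1,D_2)$, whichever $i$ we take. Hence $(R_1-R_2)\sqcupplus S$ is a sum, over classes $X$, of differences of equally many elements of $X$. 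Each such difference lies in $I_\sim$, so the whole product does. Right multiplication is handled symmetrically, and bilinearity extends both statements to all of $\BRC$. Since $I_\sim\subseteq\ker\alpha$, the map $\alpha$ descends to $\overline\alpha$, and $\overline\alpha$ is surjective because $\alpha$ is (Theorem~\ref{theorem:semidirect}).

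\emph{Part (2).} Pair $\overline\alpha(E_\lambda)$ against the monomial basis, using Theorem~\ref{theorem:dualformula}'s characterization that the coefficient of $[\beta]$ in $\dsch_w^n$ is the coefficient $c_{\beta,w}$ of $\sch_w^n$ in $x_\beta$. This gives
$$\alpha\Bigl(\sum_{D:\lambda(D)=\lambda}R_D\Bigr)=\sum_D\dsch^n_{\wof{D}} ,$$
where $R_D$ is any representative of $D$. I would not compute this pairing directly. Instead I would pair with $B_\mu$ and use a double counting. Write the character of a class as
$$\sum_{R\in D}x^{\wtt(R)}=\sum_w a_{D,w}\,\sch_w^n .$$
By (H1) every element of $D$ has permutation $\wof D$, and Corollary~\ref{corollary:evschub} partitions $\RC_n(w)$ into classes. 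Summing characters over the classes inside $\RC_n(w)$ gives $\sch_w^n=\sum_{D\subseteq\RC_n(w)}B_{\lambda(D)}$. Let $\{B_\lambda\}$ be the basis of (H3), and let $M$ be the matrix with $M_{\lambda,w}$ equal to the number of classes $D\subseteq\RC_n(w)$ with $\lambda(D)=\lambda$, so that $\sch_w=\sum_\lambda M_{\lambda,w}B_\lambda$. Dualizing gives $B_\lambda^*=\sum_w M_{\lambda,w}\dsch_w^n$. The right-hand side is exactly $\sum_{D:\lambda(D)=\lambda}\dsch^n_{\wof D}=\overline\alpha(E_\lambda)$.

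\emph{Part (3).} Pick $X$ with $\lambda(X)=\lambda$ and representatives $U\in D_1$, $V\in D_2$. Take $\overline\alpha(E_\mu)\sqcupplus\overline\alpha(E_\nu)$, expand it as $\sum\overline\alpha(U\sqcupplus V)$ over all pairs of classes, and group the terms by the target class $X'$. This gives
$$B_\mu^*\sqcupplus B_\nu^*=\sum_{X'}\Bigl(\sum_{\lambda(D_1)=\mu,\ \lambda(D_2)=\nu}m_{X'}(D_1,D_2)\Bigr)\dsch^{p+q}_{\wof{X'}} .$$
The inner coefficient must be rewritten as a function of $\lambda(X')$ alone. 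This is the main obstacle, because (H2) gives constancy in $D_1,D_2$ but says nothing directly about different $X'$ sharing a label. My plan is to dualize once more. Pair the identity with $B_\kappa$ via $\Delta_{p,q}$ (Lemma~\ref{lemma:dualbranch}'s mechanism) and compute $\Delta_{p,q}(B_\kappa)$ as $\sum_{R\in X'}x^{\wtt(R)}$ split by $\Psi_p$, which is legitimate because weights concatenate. Grouping $R\in X'$ by the classes of $\clp^pR$ and $\trm^pR$, and using (H2), gives
$$\Delta_{p,q}(B_{\lambda(X')})=\sum_{D_1,D_2}m_{X'}(D_1,D_2)\,B_{\lambda(D_1)}\otimes B_{\lambda(D_2)} .$$
The left side depends only on $\lambda(X')$, and the $B$'s are linearly independent by (H3). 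It follows that $\sum_{\lambda(D_i)\text{ fixed}}m_{X'}(D_1,D_2)$ is a function of $\lambda(X')$, and it equals $\langle B_\mu^*\otimes B_\nu^*,\Delta B_\lambda\rangle=b_{\mu,\nu}^\lambda$. The representative-count reformulation then follows by picking $(\mathrm{rep}(D_1),\mathrm{rep}(D_2))$ in the fiber count, and nonnegativity is immediate.

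\emph{Part (4).} In $\BRC/I_\sim$, the same fiber computation gives
$$[D_1]\sqcupplus[D_2]=\sum_X m_X(D_1,D_2)[X] .$$
Summing over the classes with labels $\mu,\nu$ and regrouping by label via Part (3) yields $E_\mu\sqcupplus E_\nu=\sum_\lambda b_{\mu,\nu}^\lambda E_\lambda$. Finally, $\overline\alpha$ sends $E_\lambda$ to the basis $B_\lambda^*$ by Part (2). The $E_\lambda$ are therefore independent, and they span a subring that $\overline\alpha$ maps isomorphically onto $\dcoma$.
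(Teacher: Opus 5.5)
Your proposal is correct and follows essentially the same route as the paper. For (1) you use (H2) on the fiber decomposition $S_1\sqcupplus S_2\equiv\sum_X m_X(S_1,S_2)[X]$; for (2) you count classes inside $\RC_n(w)$ to identify $\overline\alpha(E_\lambda)$ with $B_\lambda^*$; for (3) you split the character of a class $X$ along $\Psi_p$ and compare against the basis $\{B_\mu\otimes B_\nu\}$. The preliminary detours (pairing against monomials in (2), and expanding $\overline\alpha(E_\mu)\sqcupplus\overline\alpha(E_\nu)$ by target class in (3)) are unnecessary, because that character comparison already gives the formula for every $X$ with $\lambda(X)=\lambda$.
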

\begin{proof}
(1) For bounded RC graphs $S_1,S_2$ of heights $p,q$, Definition~\ref{definition:ringproduct} and grouping the terms by class give, modulo $I_\sim$,
$$S_1\sqcupplus S_2\equiv\sum_{X}m_X(S_1,S_2)\,[X]\text{,}$$
the sum over the classes $X$ of height $p+q$. If $S_1\sim S_1'$ then $m_X(S_1,S_2)=m_X(S_1',S_2)$ for every $X$ by (H2), so $S_1\sqcupplus S_2-S_1'\sqcupplus S_2\in I_\sim$; the same holds in the second argument. Thus $I_\sim$ is a two-sided ideal, and by (H1) each generator $R_1-R_2$ lies in $\Omega$.

(2) By (H1) each class $D$ of height $n$ is contained in a single $\RC_n(w)$, and $\overline\alpha([D])=\dsch_w^n$. Hence $\langle\overline\alpha(E_\lambda),\sch_w^n\rangle$ is the number of classes $D\subseteq\RC_n(w)$ with $\lambda(D)=\lambda$. On the other hand, by Corollary~\ref{corollary:evschub} and (H3),
$$\sch_w^n=\sum_{R\in\RC_n(w)}x^{\wtt(R)}=\sum_{D\subseteq\RC_n(w)}B_{\lambda(D)}\text{,}$$
so $\langle B_\lambda^*,\sch_w^n\rangle$ is the same number. Since the $\sch_w^n$ form a basis of $\coma_n$, $\overline\alpha(E_\lambda)=B_\lambda^*$.

(3) As in the proof of Lemma~\ref{lemma:dualbranch}, $\langle\beta\sqcupplus\gamma,f\rangle=\langle\beta\otimes\gamma,\Delta_{p,q}(f)\rangle$, so $b_{\mu,\nu}^\lambda$ is the coefficient of the product $B_\mu(x_1,\ldots,x_p)\,B_\nu(x_{p+1},\ldots,x_{p+q})$ in the polynomial $B_\lambda(x_1,\ldots,x_{p+q})$; by (H3) these products form a basis of $\coma_{p+q}$. Let $X$ be a class with $\lambda(X)=\lambda$. Grouping the elements of $X$ according to $\Psi_p$, and then the pairs $(U,V)$ according to their classes,
\begin{align*}
B_\lambda=\sum_{R\in X}x^{\wtt(R)}&=\sum_{(U,V)}m_X(U,V)\,x^{\wtt(U)}\,x_{p+1}^{\wtt_1(V)}\cdots x_{p+q}^{\wtt_q(V)}\\
&=\sum_{D_1,D_2}m_X(D_1,D_2)\,B_{\lambda(D_1)}(x_1,\ldots,x_p)\,B_{\lambda(D_2)}(x_{p+1},\ldots,x_{p+q})
\end{align*}
by (H2) and (H3). Comparing coefficients gives the first formula. For the second, the set described is the disjoint union, over the pairs $(D_1,D_2)$ with $\lambda(D_1)=\mu$ and $\lambda(D_2)=\nu$, of the fibers $X\cap\Psi_p^{-1}(\mathrm{rep}\,D_1,\mathrm{rep}\,D_2)$, which have cardinalities $m_X(D_1,D_2)$.

(4) By the display in (1) and by (3), $E_\mu\sqcupplus E_\nu=\sum_{D_1,D_2}\sum_Xm_X(D_1,D_2)[X]=\sum_Xb_{\mu,\nu}^{\lambda(X)}[X]=\sum_\lambda b_{\mu,\nu}^\lambda E_\lambda$. The $E_\lambda$ have disjoint supports in the basis of classes, hence are linearly independent, and by (2) they map onto the basis $\{B_\lambda^*\}$ of $\dcoma$.
\end{proof}

\begin{remark}\label{remark:general_lr_instances}
Theorem~\ref{theorem:LR} is the instance in which $R_1\sim R_2$ means $\hr(R_1)=\hr(R_2)$ and $\wof{R_1}=\wof{R_2}$: the classes are the sets $\RC_n(w)$, labeled by $(w,n)$, with $B_{(w,n)}=\sch_w^n$ by Corollary~\ref{corollary:evschub} and $I_\sim=\Omega$; (H2) is the content of Lemma~\ref{lemma:clipfibers}, which shows that $m_{\RC_{p+q}(w)}(U,V)=N_p(u_v,u)$ for all $U\in\RC_p(u)$ and $V\in\RC_q(v)$ when $\ell(u_v)=\ell(w)-\ell(v)$, and $0$ otherwise. Note that the finer relation of equality on $\RC$ is not admissible: its classes are singletons, whose characters are monomials, and the sums $\mathcal S_w(n)$ of Corollary~\ref{corollary:evschub} do not span a subring of $\BRC$. The next three sections verify (H1)--(H3) for Demazure crystal equivalence, forest equivalence, and quasi-Yamanouchi equivalence.
\end{remark}

\section{The ring of RC graph crystals}\label{section:crystals}

The key polynomials $\kappa_a$ (also called \emph{Demazure characters} in type $A$) form a $\mathbb{Z}$-basis of $\mathbb{Z}[x_1,x_2,\ldots]$ indexed by weak compositions $a$. Their origin lies in the work of Demazure \cite{demazure1974desingularisation,demazure1974nouvelle}, who introduced isobaric divided difference operators $\pi_i$ in order to compute the characters of the $B$-modules $H^0(X_w,\mathcal{L}_\lambda)$ associated to Schubert varieties $X_w$ in $G/B$. In type $A$, the resulting characters become polynomials in the $x_i$, and one obtains $\kappa_a$ for any weak composition $a$ by setting $\kappa_\lambda = x^\lambda$ when $\lambda$ is a partition (a dominant weight) and applying the appropriate operator $\pi_i$ at each descent of $a$ to interpolate between rearrangements of $\lambda$.

A purely combinatorial development of key polynomials was carried out by Lascoux and Schützenberger \cite{lascoux1990keys}, who introduced the \emph{right key} $K_+(T)$ and \emph{left key} $K_-(T)$ of a semistandard Young tableau $T$ and proved that
$$\kappa_a = \sum_{T} x^{\wtt(T)},$$
where the sum runs over semistandard Young tableaux $T$ of shape $\mathrm{sort}(a)$ whose right key $K_+(T)$ is bounded above (entrywise) by the key tableau associated to $a$. They additionally proved that Schubert polynomials decompose as a positive sum of key polynomials, given a combinatorial proof in terms of compatible sequences and the right-key map by Reiner and Shimozono \cite{reiner1995key}. The intermediate basis of \emph{Demazure atoms} (the ``standard bases'' of \cite{lascoux1990keys}) was later given an explicit nonnegative monomial expansion by Mason \cite{mason2009explicit} via semi-skyline augmented fillings.

From a representation-theoretic perspective, Kashiwara \cite{kashiwara1993crystal} introduced \emph{Demazure crystals}: for a dominant integral weight $\lambda$ with crystal $B(\lambda)$ and a Weyl group element $w$ with reduced expression $w=s_{i_1}\cdots s_{i_\ell}$, the Demazure crystal $B_w(\lambda)\subseteq B(\lambda)$ is defined inductively from an extremal weight element by closure under the operators $\{e_{i_k}^n : n\geq 0\}$, and its character coincides with the Demazure character. Littelmann \cite{littelmann1995crystal} obtained parallel results in the path model. Assaf and Schilling \cite{assaf2018demazure}, building on the crystal structure on reduced factorizations of Morse and Schilling \cite{morse2016crystal}, exhibited an explicit Demazure crystal structure on the set of RC graphs of a permutation $w$, thereby giving a representation-theoretic proof of Lascoux--Schützenberger key-positivity. It is this Demazure crystal structure on RC graphs that we will repeatedly invoke below.

The nonnegativity of the dual key structure constants $k_{a,b}^c$ of Theorem~\ref{theorem:LRkey} is, in retrospect, a consequence of the theory of \emph{excellent filtrations} for $B$-modules introduced by Polo \cite{polo1989excellent} and van der Kallen \cite{vanderkallen1989longest}, with existence of such filtrations on tensor products of Demazure modules established in general by Mathieu \cite{mathieu1990filtrations}; see also Joseph \cite{joseph1985demazure}. Restricting a Demazure module $V_c(\lambda)$ for $GL_{p+q}$ to the Borel of the Levi $GL_p\times GL_q$ yields a $B$-module with an excellent filtration whose Demazure subquotients account for the $k_{a,b}^c$. This was pointed out by the anonymous user ``Henry V'' on MathOverflow \cite{henrymath}. While this abstract nonnegativity is classical, no explicit positive combinatorial rule for these coefficients appears in the literature to our knowledge.

\subsection{Little bumps and the crystal action on RC graphs}

\begin{definition}[Crystal action on RC graphs, highest weights, extremal weights] \label{definition:crystal}
Following Morse and Schilling \cite{morse2016crystal}, who defined an $A_{\ell-1}$-crystal structure on reduced factorizations of a permutation $w$, and the adaptation to RC graphs by Assaf and Schilling \cite{assaf2018demazure}, we equip $\RC(w)$ with Kashiwara raising and lowering operators $e_i, f_i$ for $i\geq 1$. The operator pair $(e_i, f_i)$ acts only on rows $i$ and $i+1$ of an RC graph $R$, viewed as adjacent factors in the corresponding reduced factorization.

The action is governed by a bracketing procedure on the column indices appearing in rows $i$ and $i+1$. Process the entries of row $i$ in decreasing order of column index: for each column $b$ in row $i$, pair it with the smallest column $a>b$ appearing in row $i+1$ that has not yet been paired in an earlier step; if no such $a$ exists, $b$ is left unpaired. Let $R_i(R)$ denote the set of unpaired column indices in row $i$, and $L_i(R)$ the set of unpaired column indices in row $i+1$.

If $R_i(R)$ is nonempty, $f_i(R)$ is obtained by removing the smallest unpaired entry $b\in R_i(R)$ from row $i$ and inserting an entry into row $i+1$ at the largest column $b' < b$ such that $b'$ does not already appear in row $i+1$; if $R_i(R) = \emptyset$ or no such $b'$ exists, $f_i(R) = \emptyset$. Dually, if $L_i(R)$ is nonempty, $e_i(R)$ is obtained by removing the largest unpaired entry $a\in L_i(R)$ from row $i+1$ and inserting an entry into row $i$ at the smallest column $a' > a$ such that $a'$ does not already appear in row $i$; if $L_i(R) = \emptyset$ or no such $a'$ exists, $e_i(R) = \emptyset$.

The \emph{weight} of $R$ as an element of the crystal is simply $\wtt(R)$, the row weight itself. A \emph{highest weight} element is an RC graph $R$ with $e_i(R) = \emptyset$ for all $i\geq 1$. Every $R\in \RC(w)$ corresponds to a unique highest weight element $\mathbb{Y}(R)$, which can always be obtained by greedily applying available $e_i$ until none apply.The \emph{Demazure crystal} $\mathrm{Dem}(R)$ generated by a highest weight $\mathbb{Y}(R)$ is the closure of $\{\mathbb{Y}(R)\}$ under the operators $\{f_i^k : i\geq 1, k\geq 0\}$ subject to the Demazure truncation rule of \cite{assaf2018demazure}. The \emph{extremal weight} $\mathrm{extwt}(R)$ of $R$ is the weak composition that is the lexicographically smallest weight of any element of $\mathrm{Dem}(R)$.
\end{definition}

The significance of this elaborate structure is the following.
\begin{theorem}[{\cite{lascoux1990keys,assaf2018demazure}}] \label{theorem:keykey}
If $A\in\BRC$ is such that $\mathrm{extwt}(A) = a$, then
$$\kappa_a = \sum_{R\in\mathrm{Dem}(A)} x^{\wtt(R)}$$
\end{theorem}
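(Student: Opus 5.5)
This statement is cited from \cite{lascoux1990keys,assaf2018demazure}, so the plan is to assemble it from those sources rather than reprove it. It has two ingredients. First, the Assaf--Schilling result: for each permutation $w$, the crystal on $\RC(w)$ (Definition~\ref{definition:crystal}) decomposes as a disjoint union of Demazure crystals $\mathrm{Dem}(A)$. Second, the general fact from Kashiwara and Littelmann that the character of a Demazure crystal $B_\sigma(\lambda)$ is the Demazure character $\kappa_{\sigma\lambda}$. Under this reading, a component $\mathrm{Dem}(A)$ with highest weight $\lambda=\wtt(\mathbb{Y}(A))$ is truncated by some $\sigma$. We must then check that the weak composition $\sigma\lambda$ equals $\mathrm{extwt}(A)$, defined in the paper as the lexicographically smallest weight in $\mathrm{Dem}(A)$.

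The concrete steps are as follows.

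(i) Use \cite{assaf2018demazure} to identify $\mathrm{Dem}(A)$, as a set with the operators $f_i$, with a Demazure crystal $B_\sigma(\lambda)$. The transport is along their weight-preserving crystal isomorphism from RC graphs of $w$ to reduced factorizations and then to a subset of $B(\lambda)$ in tableau form. Under this isomorphism, $\mathrm{Dem}(A)$ corresponds to the set of tableaux whose right key is bounded by the key of $\sigma\lambda$.

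(ii) Sum $x^{\wtt}$ over that set. By Lascoux--Sch\"utzenberger's right-key formula (recalled just before the statement), this sum is $\kappa_{\sigma\lambda}$.

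(iii) Show that $\sigma\lambda=\mathrm{extwt}(A)$. The weights occurring in $B_\sigma(\lambda)$ are exactly the $\tau\lambda$ with $\tau\le\sigma$ in Bruhat order, together with weights dominated by these. The extremal element $\sigma\lambda$ is the unique such weight minimal in the appropriate order.

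Step (iii) is the main obstacle. The paper's ``lexicographically smallest'' must match the convention that singles out $u_\sigma=f_{i_1}^{\max}\cdots f_{i_\ell}^{\max}(u_\lambda)$. I would verify this by noting two things. First, each lowering operator $f_i$ moves weight from row $i$ to row $i+1$, so it strictly decreases the weight lexicographically. Second, the extremal element is reached by applying maximal strings of $f_i$, and every weight of $B_\sigma(\lambda)$ lies in the convex hull of the $W$-orbit points $\tau\lambda$ with $\tau\le\sigma$. A lex-minimal weight of the set is therefore attained at one of these vertices, and among the $\tau\lambda$ with $\tau\le\sigma$ the lex-minimum is $\sigma\lambda$, since $\sigma$ is Bruhat-maximal. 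If the convention turns out to mismatch, I would instead take $\mathrm{extwt}$ to mean the weight of the extremal element and cite \cite{assaf2018demazure} directly for the character identity.
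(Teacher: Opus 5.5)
Your plan matches the paper, which gives no argument beyond the citation to Lascoux--Sch\"utzenberger and Assaf--Schilling. Your step (iii) correctly supplies the convention check that the paper leaves implicit: a Bruhat cover $\tau\lessdot t_{ij}\tau$ with $i<j$ forces $\tau^{-1}(i)<\tau^{-1}(j)$, so it swaps coordinates $i,j$ of $\tau\lambda$ in a way that weakly lowers it lexicographically, and a lexicographic minimum over $\mathrm{conv}\{\tau\lambda:\tau\le\sigma\}$ is attained at a vertex. The aside that the weights are the $\tau\lambda$ ``together with weights dominated by these'' is false as stated (e.g.\ $B_{\mathrm{id}}(\lambda)$ has only the weight $\lambda$), but you only use the convex-hull containment, which does hold.
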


We invoke the following theorem of Hamaker and Young.

\begin{theorem}[{\cite[Theorem~4.4]{hamaker2014relating}}] \label{theorem:littlecrystal}
Let $\mathrm{w}$ be a reduced word for a permutation $w\in S_\infty$. Then
$$Q(\mathrm{w}) = Q(\ltb_p(\mathrm{w}))$$
for any valid $p$, where $Q$ denotes the Edelman-Green recording tableau \cite{edelman1987balanced}.
\end{theorem}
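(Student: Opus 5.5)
The plan is to reduce the equality $Q(\mathrm{w})=Q(\ltb_p(\mathrm{w}))$ to a statement about shapes of prefixes. The Edelman--Green recording tableau $Q(\mathrm{w})$ of a word $\mathrm{w}=a_1\cdots a_m$ is determined by the chain of shapes $\mathrm{sh}\,P(a_1\cdots a_k)$, $k=1,\ldots,m$: the entry $k$ of $Q$ sits in the unique cell added at step $k$. It therefore suffices to prove that for every $k$ the insertion tableaux of the length-$k$ prefixes of $\mathrm{w}$ and of $\ltb_p(\mathrm{w})$ have the same shape.

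\emph{Step 1: restriction to prefixes.} I would use the restriction property already exploited in the proof of Proposition~\ref{proposition:toprows}. Follow the Little bump letter by letter and write $\mathrm{w}=\mathbf a\mathbf b$ with $\mathbf a$ of length $k$. Parts (i) and (ii) of that property show that the sequence of decrements performed inside $\mathbf a$ is exactly one of the following:
\begin{itemize}
\item the complete Little bump of $\mathbf a$ at some position;
\item the identity;
\item in the wrap-around case $a_p=1$, a global shift of all letters by $+1$.
\end{itemize}
This holds even though the full bump may wander between $\mathbf a$ and $\mathbf b$ several times, because each excursion into the prefix is a full bump of the current prefix. A shift by $+1$ clearly does not change the shape of the EG insertion tableau. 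So, by induction on the length of the word, the problem reduces to a single claim: a Little bump of a reduced word preserves the shape of its EG insertion tableau.

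\emph{Step 2: shape preservation via Coxeter--Knuth classes.} The shape of $P(\mathrm{w})$ is constant on Coxeter--Knuth classes, and these classes are exactly the fibers of $P$. I would prove that Little bumps commute with Coxeter--Knuth moves. Concretely, if $\mathrm{w}\to\mathrm{w}'$ is an elementary Coxeter--Knuth relation on three consecutive letters, and the bump positions correspond (same right root, hence the same pair of wires), then $\ltb_p(\mathrm{w})$ and $\ltb_{p'}(\mathrm{w}')$ differ by an elementary Coxeter--Knuth relation, or coincide. This is a local check. Away from the three letters the two bump paths are identical, since the defect position is determined by the wires, which are unchanged. When the path enters the window, one checks the finitely many configurations $acb\leftrightarrow cab$, $bac\leftrightarrow bca$ and $b(b{+}1)b\leftrightarrow(b{+}1)b(b{+}1)$, with the decremented letter in each slot.

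It follows that the bump maps each Coxeter--Knuth class (an EG class with fixed $P$) bijectively onto another class. It also preserves the descent set of each word, because a decrement followed by resolving the defect never swaps the relative order of adjacent letters; I would verify this from the observation that the two crossings of the doubled pair are adjacent equal letters at the moment of collision. Both classes are in bijection, via $Q$, with the standard tableaux of their shapes. A descent-preserving bijection commuting with the dual Coxeter--Knuth moves (the Coxeter--Knuth moves acting on positions) intertwines the dual equivalence graphs. Since a dual equivalence graph determines its shape, the two shapes agree. That completes Step 1's reduction.

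\emph{Main obstacle.} I expect the main difficulty to be the local commutation of Little bumps with Coxeter--Knuth moves, especially the braid-type relation $b(b{+}1)b$, where the defect created by decrementing can migrate across the window. My first attempt would be to track wires rather than letters. A Coxeter--Knuth move permutes which crossings realize a given pair of wires, but it does not change the set of inversions. The bump is determined by the sequence of wire pairs that become doubled, so it should be expressible intrinsically in terms of the wiring diagram, and the case analysis should collapse to checking that a doubled pair in $\mathrm{w}$ remains doubled in $\mathrm{w}'$.
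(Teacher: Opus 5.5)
The paper gives no proof of this statement; it imports it from \cite{hamaker2014relating}. Your route is a genuine reconstruction, and its architecture is sound.

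\emph{Step~1} correctly reduces $Q(\mathrm w)=Q(\ltb_p(\mathrm w))$, via the prefix restriction, to one claim: a Little bump at a valid position preserves $\mathrm{sh}\,P$. \emph{Step~2} correctly derives that claim from the commutation of Little bumps at a fixed pair of wires with Coxeter--Knuth moves. That commutation is \cite[Lemma~3.3]{hamaker2014relating}, which the paper invokes separately in the proof of Corollary~\ref{corollary:crystaliso}. So your argument shows that, of the two facts imported there, Theorem~\ref{theorem:littlecrystal} follows from the other together with standard properties of Edelman--Greene insertion.

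The prefix trick is what lets you work with shapes alone: you never need that the induced automorphism of a dual equivalence graph is the identity. You do not even need the graph structure for the shape step. Since $\mathrm{Des}(\mathrm v)=\mathrm{Des}(Q(\mathrm v))$ and $Q$ maps the class of $P$ bijectively onto $\mathrm{SYT}(\mathrm{sh}\,P)$, Gessel's expansion gives $\sum_{\mathrm v\in C}F_{\mathrm{Des}(\mathrm v)}=s_{\mathrm{sh}\,P}$. Hence any descent-preserving bijection between two classes forces equal shapes.

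Three points need repair.

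\emph{The commutation lemma.} It carries all of the content and is only asserted. Either cite it, or carry out the check, which your own Step~1 tool makes genuinely finite. Apply the restriction property to the three-letter window as a middle factor.
\begin{itemize}
\item Each visit of the bump path to the window is then a complete Little bump of the reduced window, at a position whose deletion leaves the window reduced.
\item Once the two bumped windows are known to represent the same permutation, the path leaves the window at the same outside position in both words.
\item So the invariant to maintain is: letters outside the window agree, and the windows differ by a Coxeter--Knuth move in the same positions.
\end{itemize}
It then suffices to check that complete bumps of related windows at the same root are again related in the same positions. For instance, $b\,(b{+}1)\,b$ and $(b{+}1)\,b\,(b{+}1)$ bumped at root $(b,b+1)$ give $b\,(b{+}1)\,(b{-}1)$ and $b\,(b{-}1)\,(b{+}1)$. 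The wrap-around case is handled by allowing letters in $\mathbb Z$ and renormalizing.

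\emph{Descent preservation.} Your justification is false: collisions need not involve adjacent letters. In the bump of $2132$ at position $4$, the first decrement gives $2131$, whose two defects are positions $2$ and $4$. The correct argument tracks $c_i-c_{i+1}$ for each adjacent pair.
\begin{itemize}
\item A decrement changes $c_i-c_{i+1}$ by at most $1$.
\item It reaches $0$ only when the decremented letter becomes equal to a neighbor.
\item The current word with the marked position deleted is reduced, and deleting either of two equal adjacent letters gives the same word. So that neighbor is the unique defect.
\item The neighbor is therefore decremented next, and the original strict inequality returns.
\end{itemize}

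\emph{The prefix list in Step~1.} The net effect on the prefix is in general a composition of several complete bumps interleaved with global shifts, not one of your three alternatives. Each bump is at a valid position, since the prefix is reduced on entry and the whole word minus the entry position is reduced. This is harmless, because shape preservation is stable under composition, and no induction on length is needed.
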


\begin{corollary}[$\zeromap$ intertwines with crystal operators] \label{corollary:crystaliso}
For any bounded RC graph $R$ and any positive integer $i$ satisfying $1\leq i\leq \hr(R) - 1$, we have 
$$\zeromap(e_i(R)) = e_i(\zeromap(R))$$
and
$$\zeromap(f_i(R)) = f_i(\zeromap(R))$$
whenever the left-hand sides are defined. Moreover, for $R\in\RC(w,n)^0$, $e_i(\zeromap R)$ is defined if and only if $e_i(R)$ is, and likewise for $f_i$; thus $\zeromap$ is an isomorphism of crystals from $\RC(w,n)^0$ onto $\bigcup_{w'\in T_n(w)}\RC(w',n-1)$.
\end{corollary}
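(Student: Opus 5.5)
The plan is to reduce the statement to Theorem~\ref{theorem:littlecrystal}, using the fact that the Morse--Schilling/Assaf--Schilling crystal operators on a reduced factorization depend only on the Edelman--Green recording data together with the factorization (row) structure. Concretely, I would use the standard description of the crystal on reduced factorizations through Edelman--Green insertion. A reduced factorization of $w$ with $n$ factors corresponds, via insertion of the reading word, to a pair $(P,Q)$. Here $P$ is the Edelman--Green insertion tableau, which depends only on the Coxeter--Knuth class. The recording object $Q$ is a semistandard tableau whose entries record which factor (row) each letter came from. The crystal operators $e_i,f_i$ fix $P$ and act on $Q$ by the usual tableau crystal operators in the entries $i,i+1$. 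This is the content of \cite{morse2016crystal} as transported to RC graphs in \cite{assaf2018demazure}. With this in hand, a map of RC graphs that preserves the row sequence and the recording tableau commutes with every $e_i,f_i$ that the rows permit.

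The key steps are as follows. First, each step $R\mapsto R^{(1)}$ of Definition~\ref{definition:zeromap} is a Little bump in the reading word with the row sequence unchanged (Lemma~\ref{lemma:zeroweight}). By Theorem~\ref{theorem:littlecrystal}, the standard Edelman--Green recording tableau is unchanged. Because the row sequence is also unchanged, the semistandard recording tableau (the standardized one with row labels) is unchanged as well. Deleting an empty last row changes neither quantity. Hence $Q(\zeromap R)=Q(R)$ as semistandard tableaux with entries in $\{1,\ldots,n-1\}$.

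Second, I need to check that a Little bump commutes with $e_i$ and $f_i$. Let $R'$ be a bump of $R$ at some position. Then $e_i(R)$ and $e_i(R')$ have the same $Q$ by the first step. The next point is that the bump position transforms compatibly, so that $e_i(R')$ is a bump of $e_i(R)$. I would avoid this directly by characterizing $\zeromap R$ intrinsically. By Theorem~\ref{theorem:zero_bijection}, $\zeromap$ is a bijection onto $\bigcup_{w'\in T_n(w)}\RC(w',n-1)$ and is weight preserving. Moreover, $\wof{\zeromap R}$ is determined by $P$-data, since the Little bump changes the insertion tableau in a way that does not depend on $Q$. I would try to show this: $\zeromap(R)$ is the unique RC graph with insertion tableau $\Pi(P(R))$ and recording tableau $Q(R)$, where $\Pi$ is some map on insertion tableaux that depends only on $P(R)$, with the bump sequence determined by $P$ alone. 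Granting this, $e_i$ alters only $Q$ and fixes $P$, so $\zeromap(e_iR)$ has insertion tableau $\Pi(P(R))$ and recording tableau $e_iQ(R)$. That is exactly $e_i(\zeromap R)$. The same argument applies to $f_i$, and the restriction $i\le n-1$ is automatic since rows $n$ and $n+1$ are not involved.

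Third, the equivalence of definedness follows because $e_i$ and $f_i$ are defined precisely when the tableau operator on $Q$ is defined. The recording tableaux agree, so definedness agrees. Combined with the bijection of Theorem~\ref{theorem:zero_bijection}, this gives the crystal isomorphism.

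I expect the main obstacle to be the claim in the second step that the Little-bump sequence, and hence the new insertion tableau, depends only on $P(R)$ and not on $Q(R)$. The bump is initiated at the crossing with root $(n,s)$, and the location of that crossing in the word depends on the factorization. If this claim proves awkward, I would first fall back on Hamaker--Young's stronger statement that Little bumps act on the $P$-tableau through a $Q$-independent operation. Failing that, I would verify commutation directly for a single bump against $e_i$, via the bracketing rule and the BPD picture, using Proposition~\ref{proposition:zerotrunc}: $\Phi(e_iR)$ and $\Phi(e_i\zeromap R)$ should agree in rows $<n$.
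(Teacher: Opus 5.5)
Your Steps~1 and~2 follow the paper's proof, and the obstacle you flag is resolved by your fallback. The bump is located not by a position in the word but by a pair of wires, namely those with right root $(n,s)$, and $(n,s)$ depends only on $\wof{R}$, hence only on $P(R)$. Little bumps at the crossing of a fixed pair of wires commute with Coxeter--Knuth moves \cite[Lemma~3.3]{hamaker2014relating}, so the bumped insertion tableau depends only on $P(R)$ \cite[Theorem~6.24]{edelman1987balanced}. The new permutation, and with it the next pair of wires, is then the same for $R$ and $e_iR$, and the argument iterates. This is exactly how the paper obtains $\zeromap(e_iR)=e_i\zeromap(R)$.

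The gap is in Step~3. It is false that $f_i$ on an RC graph is defined exactly when the tableau operator on $Q$ is defined. The crystal on RC graphs is a Demazure crystal: Definition~\ref{definition:crystal} sets $f_i(R)=\emptyset$ when row $i+1$ has no free column $b'<b$, even though $b$ is unpaired. For instance, take $R=\{(1,1)\}$ of height $2$. The tableau operator on $Q(R)$ is defined, but $f_1(R)=\emptyset$, since no RC graph of $s_1$ has a crossing in row $2$ (because $\sch_{21}=x_1$).

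Whether truncation occurs depends on the actual columns of the crossings, i.e.\ on $P$ as well as $Q$, and $\zeromap$ changes $P$. So $Q(\zeromap R)=Q(R)$ does not give the implication from definedness of $f_i(\zeromap R)$ to definedness of $f_i(R)$. Your argument is fine in the other cases. It works for $e_i$, which is never truncated because the column $a'$ always exists. It also gives the implication from $f_i(R)$ defined to $f_i(\zeromap R)$ defined, since $\zeromap(f_iR)$ is an RC graph realizing the required data.

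The paper closes the missing direction with an $i$-string counting argument based on the weight-preserving bijection of Theorem~\ref{theorem:zero_bijection}. A shorter repair along your lines goes as follows. Suppose $S'=f_i(\zeromap R)$ is defined, and write $S'=\zeromap R'$ by surjectivity. Then $e_iS'=\zeromap R$ is defined, hence so is $e_iR'$ (same $Q$, and $e_i$ is untruncated). Now $\zeromap(e_iR')=e_iS'=\zeromap R$, so $e_iR'=R$ by injectivity, i.e.\ $f_iR=R'$. Either way, what controls the lowering operators is the bijectivity of $\zeromap$, not the recording tableau alone.
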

\begin{proof}
Insert the letters of the reading word of an RC graph $R$ of height $m$ from right to left by Edelman--Greene insertion \cite{edelman1987balanced}, let $P(R)$ be the insertion tableau, and let $T^-(R)$ be the recording tableau with each entry replaced by the negative of the row index of the corresponding crossing (Figure~\ref{fig:egexample}). Then $T^-(R)$ is a semistandard tableau in English notation (rows weakly increasing to the right, columns strictly increasing downward) in the alphabet $-m<\cdots<-1$, the pair $(P(R),T^-(R))$ determines $R$, and the operators $e_i,f_i$ of Definition~\ref{definition:crystal} correspond to the tableau crystal operators $f_{-(i+1)}$ and $e_{-(i+1)}$ on $T^-(R)$, leaving $P(R)$ unchanged; this is the compatibility of the Morse--Schilling crystal on reduced factorizations with Edelman--Greene insertion \cite{morse2016crystal}, in which the factors are indexed from right to left, whence the reversal of the alphabet. Each step of $\zeromap$ (Definition~\ref{definition:zeromap}) replaces the reading word by its Little bump at the crossing of the two wires ending in positions $n$ and $s$, where $n$ and $s$ depend only on the permutation, and keeps the row sequence. By Theorem~\ref{theorem:littlecrystal} this leaves the recording tableau, hence $T^-(R)$, unchanged, and by \cite[Lemma~3.3]{hamaker2014relating} Little bumps at the crossing of a fixed pair of wires commute with Coxeter--Knuth moves, so the insertion tableau of the bumped word depends only on $P(R)$ \cite[Theorem~6.24]{edelman1987balanced}. Consequently, if $R$ corresponds to $(P,T)$ and $e_i(R)$ to $(P,f_{-(i+1)}T)$, then after each step the two computations of $\zeromap$ correspond to $(P',T)$ and $(P',f_{-(i+1)}T)$ for the same $P'$; in particular the permutations agree at every stage, the computations stop simultaneously, and $\zeromap(e_iR)=e_i\zeromap(R)$. The same argument applies to $f_i$.

For the last statement put $X=\RC(w,n)^0$ and $Y=\bigcup_{w'\in T_n(w)}\RC(w',n-1)$. By Theorem~\ref{theorem:zero_bijection}, $\zeromap$ is a weight-preserving bijection $X\to Y$, and by the first part it maps each $i$-string of $X$ (a maximal set of elements connected by $e_i,f_i$) onto a set of consecutive elements of an $i$-string of $Y$. Since the weights of $X$ and $Y$ agree as multisets, $X$ and $Y$ have the same number of $i$-strings of each length. An $i$-string of maximal length $L$ in $X$ is mapped into an $i$-string of $Y$ of length at least $L$, hence of length $L$, and onto it; distinct such strings of $X$ have distinct images. Removing these strings from $X$ and their images from $Y$ leaves sets with equal weight multisets, and iterating shows that $\zeromap$ maps every $i$-string of $X$ onto an $i$-string of $Y$, which is the claim.
\end{proof}

\begin{figure}[htbp]
\centering
\begin{minipage}{0.48\textwidth}
\centering
\raisebox{-0.5\height}{\begin{tikzpicture}[scale=0.6]
  \draw[lightgray, very thin, opacity=0.3] (0,1) -- (4,1);
  \draw[lightgray, very thin, opacity=0.3] (0,2) -- (4,2);
  \draw[lightgray, very thin, opacity=0.3] (0,3) -- (4,3);
  \draw[lightgray, very thin, opacity=0.3] (0,4) -- (4,4);
  \draw[lightgray, very thin, opacity=0.3] (0,1) -- (0,4);
  \draw[lightgray, very thin, opacity=0.3] (1,1) -- (1,4);
  \draw[lightgray, very thin, opacity=0.3] (2,1) -- (2,4);
  \draw[lightgray, very thin, opacity=0.3] (3,1) -- (3,4);
  \draw[lightgray, very thin, opacity=0.3] (4,1) -- (4,4);
  \draw[blue, line width=1.5pt] (0,3.5) .. controls (0.3,3.5) and (0.5,3.7) .. (0.5,4);
  \draw[blue, line width=1.5pt] (0.5,3) .. controls (0.5,3.3) and (0.7,3.5) .. (1,3.5);
  \draw[blue, line width=1.5pt, line cap=round] (1,3.5) -- (2,3.5);
  \draw[blue, line width=1.5pt, line cap=round] (1.5,3) -- (1.5,4);
  \draw[blue, line width=1.5pt, line cap=round] (2,3.5) -- (3,3.5);
  \draw[blue, line width=1.5pt, line cap=round] (2.5,3) -- (2.5,4);
  \draw[blue, line width=1.5pt] (3,3.5) .. controls (3.3,3.5) and (3.5,3.7) .. (3.5,4);
  \draw[blue, line width=1.5pt] (0,2.5) .. controls (0.3,2.5) and (0.5,2.7) .. (0.5,3);
  \draw[blue, line width=1.5pt] (0.5,2) .. controls (0.5,2.3) and (0.7,2.5) .. (1,2.5);
  \draw[blue, line width=1.5pt, line cap=round] (1,2.5) -- (2,2.5);
  \draw[blue, line width=1.5pt, line cap=round] (1.5,2) -- (1.5,3);
  \draw[blue, line width=1.5pt] (2,2.5) .. controls (2.3,2.5) and (2.5,2.7) .. (2.5,3);
  \draw[blue, line width=1.5pt] (0,1.5) .. controls (0.3,1.5) and (0.5,1.7) .. (0.5,2);
  \draw[blue, line width=1.5pt] (0.5,1) .. controls (0.5,1.3) and (0.7,1.5) .. (1,1.5);
  \draw[blue, line width=1.5pt] (1,1.5) .. controls (1.3,1.5) and (1.5,1.7) .. (1.5,2);
  \node[font=\Large\bfseries, text=green!60!black, fill=white, inner sep=0.5pt, circle, transform shape] at (1.5,3.5) {2};
  \node[font=\Large\bfseries, text=green!60!black, fill=white, inner sep=0.5pt, circle, transform shape] at (2.5,3.5) {3};
  \node[font=\Large\bfseries, text=green!60!black, fill=white, inner sep=0.5pt, circle, transform shape] at (1.5,2.5) {3};
  \node[font=\scriptsize, text=black, anchor=east] at (-0.3,3.5) {1};
  \node[font=\scriptsize, text=black, anchor=east] at (-0.3,2.5) {4};
  \node[font=\scriptsize, text=black, anchor=east] at (-0.3,1.5) {3};
  \node[font=\scriptsize, text=black, anchor=south] at (0.5,4.3) {1};
  \node[font=\scriptsize, text=black, anchor=south] at (1.5,4.3) {2};
  \node[font=\scriptsize, text=black, anchor=south] at (2.5,4.3) {3};
  \node[font=\scriptsize, text=black, anchor=south] at (3.5,4.3) {4};
  \draw[black, line width=1.7999999999999998pt] (0,1) rectangle (4,4);
\end{tikzpicture}}\quad
\raisebox{-0.5\height}{\begin{tikzpicture}[scale=1]
  \draw (0.00,0.00) rectangle (0.55,-0.55);
  \node[font=\small] at (0.28,-0.28) {$2$};
  \draw (0.55,0.00) rectangle (1.10,-0.55);
  \node[font=\small] at (0.83,-0.28) {$3$};
  \draw (0.00,-0.55) rectangle (0.55,-1.10);
  \node[font=\small] at (0.28,-0.83) {$3$};
\end{tikzpicture}}\quad
\raisebox{-0.5\height}{\begin{tikzpicture}[scale=1]
  \draw (0.00,0.00) rectangle (0.55,-0.55);
  \node[font=\small] at (0.28,-0.28) {$-2$};
  \draw (0.55,0.00) rectangle (1.10,-0.55);
  \node[font=\small] at (0.83,-0.28) {$-1$};
  \draw (0.00,-0.55) rectangle (0.55,-1.10);
  \node[font=\small] at (0.28,-0.83) {$-1$};
\end{tikzpicture}}
\caption*{(a) $R=\{(1,2),(1,3),(2,2)\}$, reading word $3\,2\,3$ with row sequence $1\,1\,2$; $P(R)$ and $T^-(R)$}
\end{minipage}
\hfill
\begin{minipage}{0.48\textwidth}
\centering
\raisebox{-0.5\height}{\begin{tikzpicture}[scale=0.6]
  \draw[lightgray, very thin, opacity=0.3] (0,1) -- (4,1);
  \draw[lightgray, very thin, opacity=0.3] (0,2) -- (4,2);
  \draw[lightgray, very thin, opacity=0.3] (0,3) -- (4,3);
  \draw[lightgray, very thin, opacity=0.3] (0,4) -- (4,4);
  \draw[lightgray, very thin, opacity=0.3] (0,1) -- (0,4);
  \draw[lightgray, very thin, opacity=0.3] (1,1) -- (1,4);
  \draw[lightgray, very thin, opacity=0.3] (2,1) -- (2,4);
  \draw[lightgray, very thin, opacity=0.3] (3,1) -- (3,4);
  \draw[lightgray, very thin, opacity=0.3] (4,1) -- (4,4);
  \draw[blue, line width=1.5pt] (0,3.5) .. controls (0.3,3.5) and (0.5,3.7) .. (0.5,4);
  \draw[blue, line width=1.5pt] (0.5,3) .. controls (0.5,3.3) and (0.7,3.5) .. (1,3.5);
  \draw[blue, line width=1.5pt, line cap=round] (1,3.5) -- (2,3.5);
  \draw[blue, line width=1.5pt, line cap=round] (1.5,3) -- (1.5,4);
  \draw[blue, line width=1.5pt] (2,3.5) .. controls (2.3,3.5) and (2.5,3.7) .. (2.5,4);
  \draw[blue, line width=1.5pt] (2.5,3) .. controls (2.5,3.3) and (2.7,3.5) .. (3,3.5);
  \draw[blue, line width=1.5pt] (3,3.5) .. controls (3.3,3.5) and (3.5,3.7) .. (3.5,4);
  \draw[blue, line width=1.5pt, line cap=round] (0,2.5) -- (1,2.5);
  \draw[blue, line width=1.5pt, line cap=round] (0.5,2) -- (0.5,3);
  \draw[blue, line width=1.5pt, line cap=round] (1,2.5) -- (2,2.5);
  \draw[blue, line width=1.5pt, line cap=round] (1.5,2) -- (1.5,3);
  \draw[blue, line width=1.5pt] (2,2.5) .. controls (2.3,2.5) and (2.5,2.7) .. (2.5,3);
  \draw[blue, line width=1.5pt] (0,1.5) .. controls (0.3,1.5) and (0.5,1.7) .. (0.5,2);
  \draw[blue, line width=1.5pt] (0.5,1) .. controls (0.5,1.3) and (0.7,1.5) .. (1,1.5);
  \draw[blue, line width=1.5pt] (1,1.5) .. controls (1.3,1.5) and (1.5,1.7) .. (1.5,2);
  \node[font=\Large\bfseries, text=green!60!black, fill=white, inner sep=0.5pt, circle, transform shape] at (1.5,3.5) {2};
  \node[font=\Large\bfseries, text=green!60!black, fill=white, inner sep=0.5pt, circle, transform shape] at (0.5,2.5) {2};
  \node[font=\Large\bfseries, text=green!60!black, fill=white, inner sep=0.5pt, circle, transform shape] at (1.5,2.5) {3};
  \node[font=\scriptsize, text=black, anchor=east] at (-0.3,3.5) {1};
  \node[font=\scriptsize, text=black, anchor=east] at (-0.3,2.5) {4};
  \node[font=\scriptsize, text=black, anchor=east] at (-0.3,1.5) {3};
  \node[font=\scriptsize, text=black, anchor=south] at (0.5,4.3) {1};
  \node[font=\scriptsize, text=black, anchor=south] at (1.5,4.3) {2};
  \node[font=\scriptsize, text=black, anchor=south] at (2.5,4.3) {3};
  \node[font=\scriptsize, text=black, anchor=south] at (3.5,4.3) {4};
  \draw[black, line width=1.7999999999999998pt] (0,1) rectangle (4,4);
\end{tikzpicture}}\quad
\raisebox{-0.5\height}{\begin{tikzpicture}[scale=1]
  \draw (0.00,0.00) rectangle (0.55,-0.55);
  \node[font=\small] at (0.28,-0.28) {$2$};
  \draw (0.55,0.00) rectangle (1.10,-0.55);
  \node[font=\small] at (0.83,-0.28) {$3$};
  \draw (0.00,-0.55) rectangle (0.55,-1.10);
  \node[font=\small] at (0.28,-0.83) {$3$};
\end{tikzpicture}}\quad
\raisebox{-0.5\height}{\begin{tikzpicture}[scale=1]
  \draw (0.00,0.00) rectangle (0.55,-0.55);
  \node[font=\small] at (0.28,-0.28) {$-2$};
  \draw (0.55,0.00) rectangle (1.10,-0.55);
  \node[font=\small] at (0.83,-0.28) {$-2$};
  \draw (0.00,-0.55) rectangle (0.55,-1.10);
  \node[font=\small] at (0.28,-0.83) {$-1$};
\end{tikzpicture}}
\caption*{(b) $f_1(R)=\{(1,1),(2,2),(2,3)\}$, reading word $2\,3\,2$ with row sequence $1\,2\,2$; $P(f_1R)=P(R)$ and $T^-(f_1R)=e_{-2}T^-(R)$}
\end{minipage}
\caption{Edelman--Greene data of an RC graph. The letters of the reading word are inserted from right to left; $P$ is the insertion tableau and $T^-$ is the recording tableau with each entry replaced by the negative of the row of the corresponding crossing. In (a), inserting $3$, then $2$ (which bumps $3$), then $3$ gives $P(R)$ with recording tableau $\begin{smallmatrix}1&3\\2\end{smallmatrix}$; the crossings inserted first, second and third lie in rows $2,1,1$, so $T^-(R)=\begin{smallmatrix}-2&-1\\-1\end{smallmatrix}$. The operator $f_1$ moves the crossing $(1,2)$ of $R$ to $(2,3)$ and, via the special bump of $2$ into the row $2\,3$, leaves $P$ unchanged while changing the entry $-1$ in the first row of $T^-$ to $-2$.}
\label{fig:egexample}
\end{figure}

\subsection{The dual key basis and the ring of RC graph crystals}

\begin{definition}[The dual key basis]
  For a weak composition $a$ of length $n$, define $\kappa_a^*\in \dcoma_n$ to be the unique element such that $\langle \kappa_a^*, \kappa_b\rangle = \delta_{a,b}$ for all weak compositions $b$.
\end{definition}




\begin{theorem} \label{theorem:keyformula}
  Let $c$ be a weak composition. Then the dual key polynomial $\kappa^*_c$ is equal to
$$\kappa^*_c = \sum_{\substack{R=\mathbb{Y}(R)\\ \mathrm{extwt}(R)=c}} \dsch_{\wof{R}}^{\hr(R)}$$
\end{theorem}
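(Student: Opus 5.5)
The plan is to pair both sides against the Schubert basis of $\coma_n$, where $n$ is the length of $c$. Since $\{\sch_w^n\}$ is a basis of $\coma_n$ and $\{\dsch_w^n\}$ is its dual basis, the coefficient of $\dsch_w^n$ in $\kappa_c^*$ equals $\langle\kappa_c^*,\sch_w^n\rangle$. This is the coefficient of $\kappa_c$ in the key expansion of $\sch_w^n$. So it suffices to show
$$\sch_w^n=\sum_{c}\#\{R\in\RC_n(w):R=\mathbb{Y}(R),\ \mathrm{extwt}(R)=c\}\,\kappa_c,$$
because the right-hand side of the theorem, read off at $\dsch_w^n$, is exactly the count of highest-weight $R\in\RC_n(w)$ with $\mathrm{extwt}(R)=c$.

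To obtain this expansion, I would start from Corollary~\ref{corollary:evschub}: $\sch_w^n=\sum_{R\in\RC_n(w)}x^{\wtt(R)}$. The crystal operators of Definition~\ref{definition:crystal} preserve $\wof{R}$, since they are Morse--Schilling moves on reduced factorizations. They act only on rows $i,i+1$ with $i\le n-1$ within the height-$n$ graphs, so the operators with $i\geq n$ never apply there.

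Next I would invoke the Assaf--Schilling theorem that $\RC_n(w)$ decomposes as a disjoint union of Demazure crystals $\mathrm{Dem}(A)$, one for each highest-weight element $A$. Summing Theorem~\ref{theorem:keykey} over these components gives $\sch_w^n=\sum_{A=\mathbb{Y}(A)}\kappa_{\mathrm{extwt}(A)}$. Finally, linear independence of the key polynomials in $\coma_n$ lets me compare coefficients and conclude.

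The main obstacle is this decomposition step. I must confirm that the Demazure subcrystals $\mathrm{Dem}(A)$ partition $\RC_n(w)$ (each $R$ lies in $\mathrm{Dem}(\mathbb{Y}(R))$, and the components are disjoint), rather than merely covering it with overlapping truncations. I must also check that restricting to height $n$, where $\maxd(w)\le n$ and crossings are confined to rows $\le n$, is compatible with the result of \cite{assaf2018demazure}.

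For the restriction, I would argue that an RC graph of $w$ with a crossing below row $n$ would contribute a monomial involving $x_{>n}$. But $\sch_w$ involves only $x_1,\ldots,x_n$ when $\maxd(w)\le n$, so every RC graph of $w$ already lies in $\RC_n(w)$. Hence the full crystal on $\RC(w)$ is the one used here, and its Demazure decomposition into components $\mathrm{Dem}(A)$ transfers verbatim. The $\mathrm{extwt}$ of each component is a weak composition of length $n$, so each $\kappa_{\mathrm{extwt}(A)}$ lies in $\coma_n$.
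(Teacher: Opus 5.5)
Your proposal is correct and is essentially the paper's argument. The paper's one-line proof notes that, after pairing with the Schubert basis $\{\sch_w^n\}$, the statement is equivalent to the Lascoux--Sch\"utzenberger expansion of $\sch_w$ into key polynomials as realized on RC graphs through Theorem~\ref{theorem:keykey}; that is exactly your chain of steps: the RC graph formula, the decomposition of $\RC_n(w)$ into Demazure crystals $\mathrm{Dem}(A)$ with unique highest weights, and linear independence of the keys. Your extra checks (that every RC graph of $w$ already lies in rows at most $\maxd(w)\le n$, and that the Demazure pieces partition $\RC_n(w)$) are details the paper leaves implicit in its citation of \cite{assaf2018demazure}.
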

\begin{proof}
  The statement of the theorem is equivalent to Lascoux and Sch\"utzenberger's formula for Schubert polynomials in terms of key polynomials through Theorem \ref{theorem:keykey}.
\end{proof}


\begin{definition}[The submodule $\Theta\subseteq \BRC$]\label{definition:theta}
  Let $\Theta$ be the $\mathbb{Z}$-submodule of $\BRC$ generated by the elements $R_1-R_2$ such that $\mathbb{Y}(R_1)=\mathbb{Y}(R_2)$.

\end{definition}

\begin{lemma}\label{lemma:thetaideal}
  $\Theta$ is a two-sided ideal of $\BRC$.
\end{lemma}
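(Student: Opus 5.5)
The plan is to use the proof of part (1) of Theorem~\ref{theorem:general_lr_template}, which needs only (H1) and (H2). Here the relation is $R_1\sim R_2$ iff $\mathbb{Y}(R_1)=\mathbb{Y}(R_2)$, and its classes are the Demazure crystal components $\mathrm{Dem}(R)$. Since $\Theta=I_\sim$, it suffices to verify two things: (H1), that crystal operators preserve height and permutation, and (H2), that $m_X(U,V)=\#\{R\in X:\Psi_p(R)=(U,V)\}$ is constant as $U$ and $V$ range over classes $D_1$, $D_2$. (H1) is immediate from Definition~\ref{definition:crystal}, since $e_i$ and $f_i$ act within $\RC(w)$ and do not change the number of rows.

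For (H2), I would show that $\Psi_p$ is a crystal morphism in the following sense. Fix a component $X$ of height $p+q$.

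\begin{itemize}
\item For $i\ne p$, the operators $e_i,f_i$ on $R$ correspond to operators on one of the two factors. If $i<p$, $e_i$ acts on rows $i,i+1\le p$. It commutes with taking $R_{\le p}$, and by Corollary~\ref{corollary:crystaliso}, iterated along $\zeromap^{h-p}$, it commutes with $\clp^p$, with the same definedness. It leaves rows $>p$, hence $\trm^p R$, untouched. If $i>p$, then $e_i$ acts only on rows $>p$, so $\trm^p(e_iR)=e_{i-p}\trm^pR$ and $\clp^pR$ is unchanged.
\item Consequently the restriction of $X$ to the Levi crystal of $\mathfrak{gl}_p\times\mathfrak{gl}_q$ (operators $i\ne p$) maps under $\Psi_p$ into $\RC_p\times\RC_q$ with its product crystal structure. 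This map is injective on fibers, because $R$ is determined by $R_{\le p}$ and $\trm^pR$, and $R_{\le p}$ is determined by $\clp^pR$ within the fiber over a fixed $\wof{R_{\le p}}$ by Theorem~\ref{theorem:zero_bijection}.
\end{itemize}

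Next, take $U\sim U'$ in $D_1$ with $U'=f_{i_1}\cdots U$ (or $e$'s), $i_j<p$. The operators $f_{i_j}$, applied to $R$ with $\Psi_p(R)=(U,V)$, give a bijection
\[
\{R\in X:\Psi_p R=(U,V)\}\to\{R\in X:\Psi_pR=(U',V)\},
\]
with inverse given by the $e$'s, provided definedness transfers. It does: definedness of $f_i$ on $R$ for $i<p$ is equivalent to definedness on $\clp^pR$ by Corollary~\ref{corollary:crystaliso}, and $e_i,f_i$ preserve $\mathbb{Y}$, so the image stays in $X$. The same argument applies in the second factor with $i>p$. Since $D_1$ and $D_2$ are connected under the $f$'s and $e$'s, $m_X$ is constant on $D_1\times D_2$.

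The main obstacle is the Demazure truncation. The classes are Demazure subcrystals rather than full connected components, so I must check that applying $f_i$ within a class never leaves the class, i.e.\ that $\mathrm{Dem}(R)$ is closed under every $f_i,e_i$ that is defined on RC graphs of fixed height. If the truncation rule could exclude some $f_iR\in\RC_n(w)$ from $\mathrm{Dem}(R)$, the bijection above could exit $X$. I would handle this by taking the claim that the $\mathbb{Y}$-classes of $\RC_n(w)$ are unions of the Demazure components, closed under the available operators, from \cite{assaf2018demazure}: the bounded-height condition itself implements the truncation, as $f_i$ is undefined exactly when the target violates the row bound or flag condition. If that closure fails, I would instead use the connectivity of $D_1$ via the $e_i$ to $\mathbb{Y}(U)$, which always stays in the class, and argue in both directions using only raising operators.
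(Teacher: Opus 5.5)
Your argument is correct, but it is organized differently from the paper's.

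\textbf{How the two routes differ.} The paper proves the two ideal properties directly, by pairing terms of the two products.
\begin{itemize}
\item For the left ideal, it swaps the whole bottom factor at once, $T\cup\shup^mR_1\mapsto T\cup\shup^mR_2$. This is legitimate because membership in $R\sqcupplus R_1$ depends on $R_1$ only through $\wof{R_1}$. It then checks that $\mathbb{Y}(T\cup\shup^mR_1)=\mathbb{Y}(T\cup\shup^mR_2)$ using raising operators on rows $>m$.
\item For the right ideal, it reduces to the case $R_2=e_i(R_1)$ and lets $e_i$ act on the terms of $R_1\sqcupplus R$ via Corollary~\ref{corollary:crystaliso}.
\end{itemize}
You instead verify (H1) and (H2) and quote Theorem~\ref{theorem:general_lr_template}(1). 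Your verification of (H2) is exactly Lemma~\ref{lemma:keyfibers}. It uses the same two ingredients as the paper: Corollary~\ref{corollary:crystaliso} iterated along $\zeromap^{h-p}$ for $i<p$, and locality of $e_i,f_i$ for $i>p$.

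Your route makes the lemma a formal consequence of a fact the paper needs anyway for Proposition~\ref{proposition:keyadmissible}. It also matches how the paper handles $\Gamma$ in Lemma~\ref{lemma:forestideal}. The paper's route is self-contained, and on the left side it needs no fiber bookkeeping at all.

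\textbf{A false claim you should delete.} The injectivity claim in your second bullet is wrong. Theorem~\ref{theorem:zero_bijection} makes a single $\zeromap$ injective for a fixed source permutation. However, different intermediate permutations can reach the same $U$, so on $\RC_{\leq p}(u)$ the map $\zeromap^{h-p}$ has fibers of size $N_p(u,u')$ (Lemma~\ref{lemma:clipfibers}). Concretely, Example~\ref{example:LRkey} exhibits two distinct graphs of the same permutation with the same $\clp^3$ and the same $\trm^3$. Nothing downstream uses this claim: your fiber bijections come from $e_if_iR=R$ applied to $R$ itself.

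\textbf{The truncation worry is not an obstacle.} The $\mathbb{Y}$-classes are exactly the connected components of the crystal graph on $\RC_n(w)$. Each component has a unique highest weight element, which is what makes $\mathbb{Y}$ well defined. Whenever $f_iR$ is defined, $e_if_iR=R$, so $f_iR$ lies in the class of $R$. Truncation only makes $f_i$ undefined, and Corollary~\ref{corollary:crystaliso} shows this happens for $R$ exactly when it happens for $\clp^pR$. Your fallback using raising operators alone would only give injections between fibers, not equalities of counts, so it is fortunate that it is not needed.
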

\begin{proof}
Let $R_1,R_2$ be bounded RC graphs of height $n$ with $\mathbb{Y}(R_1)=\mathbb{Y}(R_2)$, and let $R$ be a bounded RC graph of height $m$. Since crystal operators preserve the permutation, $\wof{R_1}=\wof{R_2}$.

\emph{Left ideal.} A bounded RC graph $R'$ of height $m+n$ is a term of $R\sqcupplus R_1$ if and only if $\trm^m(R')=R_1$ and $\clp^m(R')=R$. Writing $R'=T\cup\shup^mR_1$ with $T=R'_{\leq m}$, the second condition depends only on $T$ (Definition~\ref{definition:cliptrim}), and $T\cup\shup^mR_1$ is a bounded RC graph if and only if the word of $T$ followed by the word of $\shup^m R_1$ is reduced and the resulting permutation has last descent at most $m+n$; this depends on $R_1$ only through $\wof{R_1}=\wof{R_2}$. Hence $T\cup\shup^mR_1\mapsto T\cup\shup^mR_2$ is a bijection from the terms of $R\sqcupplus R_1$ to those of $R\sqcupplus R_2$. For $i>m$ the operators $e_i$ act on rows $i,i+1$ only, so $e_i(T\cup\shup^mR_1)=T\cup\shup^me_{i-m}(R_1)$, with either side defined if and only if the other is. Applying the raising operators that carry $R_1$ to $\mathbb{Y}(R_1)$ and using that the highest weight of a connected component is unique,
$$\mathbb{Y}(T\cup\shup^mR_1)=\mathbb{Y}(T\cup\shup^m\mathbb{Y}(R_1))=\mathbb{Y}(T\cup\shup^m\mathbb{Y}(R_2))=\mathbb{Y}(T\cup\shup^mR_2)\text{.}$$
Thus $R\sqcupplus R_1-R\sqcupplus R_2=\sum_T\bigl((T\cup\shup^mR_1)-(T\cup\shup^mR_2)\bigr)\in\Theta$.

\emph{Right ideal.} Since $R_1$ and $R_2$ lie in the same connected component, it suffices to treat $R_2=e_i(R_1)$ for some $1\leq i\leq n-1$ (the case $R_2=f_i(R_1)$ is symmetric). For $R'$ of height $n+m$ with $\trm^n(R')=R$, the operator $e_i$ ($i\leq n-1$) acts on rows $i,i+1\leq n$, so $\trm^n(e_iR')=R$ when $e_iR'$ is defined, and $\clp^n(e_iR')=e_i(\clp^nR')$ by Corollary~\ref{corollary:crystaliso} applied to each $\zeromap$ step in Definition~\ref{definition:cliptrim} (erasing rows of index $>n$ commutes with $e_i$, and $\zeromap$ is an isomorphism of crystals onto its image by Corollary~\ref{corollary:crystaliso}, so $e_i$ is defined on $\clp^nR'$ if and only if it is defined on $R'$). Hence $e_i$ restricts to a bijection from the terms of $R_1\sqcupplus R$ onto the terms of $e_i(R_1)\sqcupplus R$, and corresponding terms $R'$, $e_i(R')$ have the same highest weight. Thus $R_1\sqcupplus R-R_2\sqcupplus R\in\Theta$.
\end{proof}

\begin{lemma}
  The homomorphism $\alpha:\BRC\to\dcoma$ factors through the quotient $\BRC/\Theta$.
\end{lemma}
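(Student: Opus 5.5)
The plan is to show $\Theta\subseteq\Omega=\ker\alpha$, where $\Omega$ is as in Definition~\ref{definition:omega}; this is all that is needed. Once the inclusion holds, $\alpha$ vanishes on $\Theta$. By Lemma~\ref{lemma:thetaideal}, $\Theta$ is a two-sided ideal, so $\BRC/\Theta$ is a ring. By Theorem~\ref{theorem:semidirect}, $\alpha$ is a ring homomorphism with kernel $\Omega$. Hence $\alpha$ descends to a well-defined ring homomorphism $\overline\alpha:\BRC/\Theta\to\dcoma$ with $\alpha=\overline\alpha\circ\pi$, where $\pi$ is the quotient map. Surjectivity of $\overline\alpha$ is then inherited from surjectivity of $\alpha$.

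To prove the inclusion, I will check it on the generators $R_1-R_2$ of $\Theta$ with $\mathbb{Y}(R_1)=\mathbb{Y}(R_2)$. Two facts are needed.

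First, I need $\hr(R_1)=\hr(R_2)$. The generators are implicitly taken with both graphs in the same graded piece, since $\mathbb{Y}$ is computed within $\RC_n$ and crystal operators act on rows $i,i+1\leq n$ without changing the height. I will make this explicit, reading the definition of $\Theta$ as pairs in the same crystal component of $\RC_n$, as is done in the proof of Lemma~\ref{lemma:thetaideal}.

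Second, I need $\wof{R_1}=\wof{R_2}$. The Morse--Schilling/Assaf--Schilling operators $e_i,f_i$ of Definition~\ref{definition:crystal} preserve the permutation $\wof{R}$, because they act within $\RC(w)$. This is also visible in the proof of Corollary~\ref{corollary:crystaliso}: the Edelman--Greene insertion tableau $P(R)$, which determines $\wof R$, is unchanged by $e_i$ and $f_i$. Since $R_1$ and $R_2$ are both connected to the common element $\mathbb{Y}(R_1)=\mathbb{Y}(R_2)$ by raising operators, they have the same permutation. The proof of Lemma~\ref{lemma:thetaideal} already records this fact.

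With both facts in hand, each generator $R_1-R_2$ is a generator of $\Omega$, so $\alpha(R_1-R_2)=\dsch^n_{w}-\dsch^n_w=0$.

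An alternative route is to observe that Demazure-crystal equivalence satisfies (H1) and (H2) and then apply part (1) of Theorem~\ref{theorem:general_lr_template}. However, (H2) is heavier than needed, so I will give the direct argument.

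The only point requiring care is the height issue: making sure that $\mathbb{Y}$, and hence the relation defining $\Theta$, never identifies graphs of different heights. There is no real obstacle beyond this bookkeeping.
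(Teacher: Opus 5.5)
Your proposal is correct and follows the same route as the paper: the paper's proof is the one-line observation that every generator $R_1-R_2$ of $\Theta$ already lies in $\Omega=\ker\alpha$. Your checks that such generators share height and permutation (crystal operators preserve both) are exactly the content left implicit there.
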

\begin{proof}
This is because the generators of $\Theta$ are all in $\Omega$.
\end{proof}

\begin{definition}
For a composition $a$, define  an element $K_a\in\BRC/\Theta$ by
$$K_a =\sum_{\substack{R\in\mathrm{Dem}_a\\ R=\mathbb{Y}(R)}} R$$
\end{definition}

\begin{lemma}[Fiber constancy for crystal components]\label{lemma:keyfibers}
Let $X\subseteq\RC_{p+q}$, $D_1\subseteq\RC_p$, and $D_2\subseteq\RC_q$ be connected components for the operators of Definition~\ref{definition:crystal}. Then $m_X(U,V)=\#\{R\in X:\clp^pR=U,\ \trm^pR=V\}$ is constant for $(U,V)\in D_1\times D_2$.
\end{lemma}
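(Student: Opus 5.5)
Since $X$, $D_1$ and $D_2$ are connected components of crystals, it suffices to show that $m_X(U,V)=m_X(U',V)$ whenever $U'=e_i(U)$ for some $1\leq i\leq p-1$, and that $m_X(U,V)=m_X(U,V')$ whenever $V'=e_j(V)$ for some $1\leq j\leq q-1$. Indeed, any two elements of $D_1$ (resp.\ $D_2$) are joined by a chain of raising and lowering moves, and the $f$-case is the inverse of the $e$-case. In each case I will exhibit a bijection between the fibers $\{R\in X:\Psi_p(R)=(U,V)\}$ and $\{R\in X:\Psi_p(R)=(U',V)\}$ (resp.\ $(U,V')$), given by applying a single crystal operator to $R$. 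Because $X$ is a connected component, it is closed under $e_i,f_i$, so the bijection stays inside $X$.

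The second factor is the easier one. For $R$ of height $p+q$ and $1\leq j\leq q-1$, the operator $e_{p+j}$ acts only on rows $p+j$ and $p+j+1$. So it leaves $R_{\leq p}$ unchanged, hence also $\clp^pR$, since $\clp^p$ depends only on $R_{\leq p}$ by the remarks after Definition~\ref{definition:cliptrim}. Its bracketing is computed from the column indices in those two rows, exactly as for $e_j$ on $\trm^pR$, because trimming only reindexes rows and does not change columns. Thus $e_{p+j}R$ is defined iff $e_j(\trm^pR)$ is, and in that case $\trm^p(e_{p+j}R)=e_j(\trm^pR)$. This is the same observation used in the left-ideal half of Lemma~\ref{lemma:thetaideal}. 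Hence $R\mapsto e_{p+j}R$ maps the $(U,V)$ fiber into the $(U,e_jV)$ fiber, with inverse $f_{p+j}$.

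For the first factor, take $1\leq i\leq p-1$. The operator $e_i$ acts only on rows $i,i+1\leq p$, so $\trm^p(e_iR)=\trm^pR$. For the clip I will argue as in the right-ideal half of Lemma~\ref{lemma:thetaideal}. Restricting to rows $\leq p$ commutes with $e_i$, and $e_i$ is defined on $R$ iff it is defined on $R_{\leq p}$, because definedness depends only on rows $i,i+1$. Then Corollary~\ref{corollary:crystaliso} is applied to each of the $h-p$ applications of $\zeromap$ in $\clp^p(R)=\zeromap^{h-p}(R_{\leq p})$; it gives $\clp^p(e_iR)=e_i(\clp^pR)$, with $e_i$ defined on one side iff on the other. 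So $R\mapsto e_iR$ sends the $(U,V)$ fiber bijectively onto the $(e_iU,V)$ fiber, with inverse $f_i$.

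The main obstacle is the bookkeeping in the clip step. Along the iteration the height drops from $h$ to $p$, and one must check that $i\leq(\text{current height})-1$ holds at each $\zeromap$ step, which is true since $i\leq p-1$. More seriously, $h=\max(p,\maxd(\wof{R_{\leq p}}))$ might a priori differ between $R$ and $e_iR$. I will dispose of this using the padding independence noted after Definition~\ref{definition:cliptrim}: crystal operators preserve the permutation, so $h$ is the same for both; and in any case $\clp^p=\zeromap^{N-p}$ for any large $N$. I also need the "iff defined" clause of Corollary~\ref{corollary:crystaliso} at every stage, so that definedness of $e_i$ transfers all the way from $R_{\leq p}$ to $\clp^pR$. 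This is what makes the map a bijection rather than merely an injection.
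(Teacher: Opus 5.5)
Your proposal is correct and matches the paper's proof: you reduce by connectivity to single moves, use that $e_{p+j}$ touches only rows $>p$ (so $\clp^p$ is unchanged and $\trm^p(e_{p+j}R)=e_j(\trm^pR)$), and for $i\le p-1$ you use that $\trm^p$ is unchanged while Corollary~\ref{corollary:crystaliso}, applied at each $\zeromap$ step, gives $\clp^p(e_iR)=e_i(\clp^pR)$ with matching definedness. You also note that $h$ is the same for $R_{\leq p}$ and $e_i(R_{\leq p})$ because crystal operators preserve the permutation; the paper leaves this detail implicit.
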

\begin{proof}
Let $R\in X$ with $\clp^pR=U$ and $\trm^pR=V$. For $1\leq i\leq p-1$ the operators $e_i,f_i$ act on rows $i,i+1\leq p$, so $e_i(R)$ is defined if and only if $e_i(R_{\leq p})$ is, and in that case $e_i(R)\in X$ and $\trm^p(e_iR)=V$. Moreover $\clp^p(e_iR)=e_i(\clp^pR)$, and $e_i$ is defined on $\clp^pR$ if and only if it is defined on $R_{\leq p}$: this is Corollary~\ref{corollary:crystaliso} applied to each $\zeromap$ step of Definition~\ref{definition:cliptrim}, as in the proof of Lemma~\ref{lemma:thetaideal}. Hence $e_i$ maps $X\cap\Psi_p^{-1}(U,V)$ into $X\cap\Psi_p^{-1}(e_iU,V)$ and $f_i$ maps back, so $m_X(U,V)=m_X(e_iU,V)$ whenever $e_iU$ is defined, and likewise for $f_i$. For $i\geq p+1$ the operators act on rows $>p$: $e_i(R)$ is defined if and only if $e_{i-p}(V)$ is, and then $\Psi_p(e_iR)=(U,e_{i-p}V)$, so $m_X(U,V)=m_X(U,e_{i-p}V)$, and likewise for $f_i$. Since $D_1$ and $D_2$ are connected, $m_X$ is constant on $D_1\times D_2$.
\end{proof}

\begin{proposition}\label{proposition:keyadmissible}
The relation $R_1\sim R_2\iff\mathbb{Y}(R_1)=\mathbb{Y}(R_2)$ is admissible (Definition~\ref{definition:admissible}) with respect to $\lambda=\mathrm{extwt}$, with $B_a=\kappa_a$ and $E_a=K_a$. Consequently $\alpha(K_a)=\kappa_a^*$, and
$$K_a\sqcupplus K_b=\sum_ck_{a,b}^c\,K_c\quad\text{in }\BRC/\Theta\text{,}\qquad\text{where }\kappa_a^*\sqcupplus\kappa_b^*=\sum_ck_{a,b}^c\,\kappa_c^*\text{;}$$
the $K_a$ span a subring of $\BRC/\Theta$ mapped isomorphically onto $\dcoma$ by $\alpha$.
\end{proposition}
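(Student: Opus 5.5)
The plan is to verify the three hypotheses (H1)--(H3) of Definition~\ref{definition:admissible} for the relation $R_1\sim R_2\iff\mathbb{Y}(R_1)=\mathbb{Y}(R_2)$, with labels $\lambda(D)=\mathrm{extwt}$ of the class, and then read off every conclusion from Theorem~\ref{theorem:general_lr_template}. Before checking the hypotheses, I will identify the classes. A class is the set of bounded RC graphs of a given height sharing a highest weight element, i.e.\ a connected component of the crystal of Definition~\ref{definition:crystal} on $\RC_n(w)$. By \cite{assaf2018demazure}, each such component is the Demazure crystal $\mathrm{Dem}(R)$ of its highest weight element. Its extremal weight $c$ is a weak composition of length $n$, since weights of height-$n$ graphs have length $n$. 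Classes of different heights therefore get different labels, as the definition requires.

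(H1) holds because the crystal operators move crossings only between rows $i,i+1\le n-1$. They therefore preserve the height, and they preserve $\wof{R}$; the latter is already used in Lemma~\ref{lemma:thetaideal}. (H2) is exactly Lemma~\ref{lemma:keyfibers}, since the classes are the connected components. For (H3), Theorem~\ref{theorem:keykey} gives $\sum_{R\in D}x^{\wtt(R)}=\kappa_{\mathrm{extwt}(D)}$, so the character depends only on the label, and $B_a=\kappa_a$.

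It remains to show that $\{\kappa_a\}$, with $a$ ranging over the labels that occur at height $n$, is a $\mathbb{Z}$-basis of $\coma_n$. I regard this as the main point requiring care. I will argue in two steps.

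\emph{Independence.} The key polynomials $\kappa_a$ for weak compositions $a$ of length $n$ lie in $\mathbb{Z}[x_1,\ldots,x_n]$. They are unitriangular with respect to monomials, $\kappa_a=x^a+{}$lower terms, so they form a $\mathbb{Z}$-basis of $\coma_n$. In particular, any set of distinct labels gives linearly independent characters.

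\emph{Every $a$ occurs.} For surjectivity of the labelling, fix $a$ of length $n$. By Corollary~\ref{corollary:evschub}, each $\sch_w^n$ is a sum of characters of classes, i.e.\ a nonnegative sum of $\kappa_c$ with $c$ among the occurring labels. Since the $\sch_w^n$ span $\coma_n$, the occurring $\kappa_c$ span $\coma_n$. A spanning subset of a basis is the whole basis, so every $a$ occurs. Equivalently, one may take $w$ with $\code(w)=a$: the principal RC graph $P_w$ has weight $a$, and its component has extremal weight $a$.

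With (H1)--(H3) established, Theorem~\ref{theorem:general_lr_template}(1) identifies $I_\sim$ with $\Theta$; the two coincide by Definition~\ref{definition:theta}. Part (2) gives $\overline\alpha(E_a)=\kappa_a^*$, where $E_a=\sum_{\mathrm{extwt}(D)=a}[D]$. Each class $D$ has a unique highest weight representative $\mathbb{Y}$, and $[D]$ is its image in $\BRC/\Theta$. Hence $E_a$ is the sum of highest weight elements $R$ with $\mathrm{extwt}(R)=a$, which is $K_a$, consistent with Theorem~\ref{theorem:keyformula}. Part (4) then yields $K_a\sqcupplus K_b=\sum_c k_{a,b}^cK_c$, and shows that the $K_a$ span a subring of $\BRC/\Theta$ that $\alpha$ maps isomorphically onto $\dcoma$.
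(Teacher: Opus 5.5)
Your proposal is correct and follows the paper's proof. You identify the classes with crystal components, get (H1) from the operators preserving height and permutation, (H2) from Lemma~\ref{lemma:keyfibers}, and (H3) from Theorem~\ref{theorem:keykey} plus the key basis of $\coma_n$; you then note $E_a=K_a$ by uniqueness of highest weights and invoke Theorem~\ref{theorem:general_lr_template}.

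You also check that every $a\in\mathrm{Comp}_n$ actually occurs as a label at height $n$, so the family in (H3) is the full key basis. The paper leaves this implicit, and you handle it correctly. The only slip is cosmetic: the operators act on rows $i,i+1$ with $i\leq n-1$, not with $i+1\leq n-1$.
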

\begin{proof}
The classes are the connected components. (H1) holds because the crystal operators preserve height and permutation; (H2) is Lemma~\ref{lemma:keyfibers}; (H3) is Theorem~\ref{theorem:keykey} together with the fact that the key polynomials $\kappa_a$, $a\in\mathrm{Comp}_n$, form a $\mathbb{Z}$-basis of $\coma_n$. Each component contains exactly one highest weight element, so $E_a=K_a$. Now apply Theorem~\ref{theorem:general_lr_template}.
\end{proof}

\begin{theorem}[LR rule for dual keys]\label{theorem:LRkey}
Let $a,b$ be weak compositions of lengths $p$ and $q$, and let $c$ be a weak composition of length $p+q$. Define integers $k_{a,b}^c$ by
$$\kappa_a^* \sqcupplus \kappa_b^* = \sum_{c} k_{a,b}^c\,\kappa_c^*.$$
For any highest-weight RC graph $C$ with $\mathrm{extwt}(C)=c$, let $N(C)$ be the number of RC graphs $R$ with $\mathbb{Y}(R)=C$ such that $\clp^p(R)$ and $\trm^p(R)$ are both highest weights and $\mathrm{extwt}(\clp^p(R)) = a$, $\mathrm{extwt}(\trm^p(R)) = b$. Then $N(C)$ depends only on $(a,b,c)$, and
$$k_{a,b}^c \;=\; N(C);$$
in particular $k_{a,b}^c\in\mathbb{Z}_{\geq 0}$.
\end{theorem}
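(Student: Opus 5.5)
This theorem is the instance of Theorem~\ref{theorem:general_lr_template} for Demazure crystal equivalence, so the plan is to specialize part (3) of that theorem using the admissibility established in Proposition~\ref{proposition:keyadmissible}. The relation is $R_1\sim R_2\iff\mathbb{Y}(R_1)=\mathbb{Y}(R_2)$ and the labels are $\lambda=\mathrm{extwt}$. The classes are the connected crystal components, with $B_a=\kappa_a$ and dual basis $B_a^*=\kappa_a^*$. The constants $b_{a,b}^c$ of Theorem~\ref{theorem:general_lr_template}(3) are therefore exactly the $k_{a,b}^c$ of the present statement.

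First I will fix a highest-weight $C$ with $\mathrm{extwt}(C)=c$ and let $X$ be its component, namely $X=\{R:\mathbb{Y}(R)=C\}$. This is a class with $\lambda(X)=c$, because $\mathrm{extwt}$ is constant on a component: it is defined through $\mathrm{Dem}(R)$, which depends only on $\mathbb{Y}(R)$. The heights of $C$ and $X$ are $p+q$, since the crystal operators preserve height, and $c$ has length $p+q$.

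Next I will choose the representative function $\mathrm{rep}$ sending each class to its unique highest weight element. Existence and uniqueness of this element are recorded in Definition~\ref{definition:crystal} and in the proof of Proposition~\ref{proposition:keyadmissible}. With this choice, the condition $\clp^pR=\mathrm{rep}(\clp^pR)$ says exactly that $\clp^pR$ is highest weight, and similarly for $\trm^pR$. The conditions $\lambda([\clp^pR])=a$ and $\lambda([\trm^pR])=b$ become $\mathrm{extwt}(\clp^pR)=a$ and $\mathrm{extwt}(\trm^pR)=b$. The second formula in Theorem~\ref{theorem:general_lr_template}(3) then reads $k_{a,b}^c=N(C)$ verbatim.

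Since the left side of $k_{a,b}^c=N(C)$ does not depend on $C$, the count $N(C)$ depends only on $(a,b,c)$. Nonnegativity is immediate because $N(C)$ is a cardinality.

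The only point needing care is that $\clp^pR$ and $\trm^pR$ are genuine bounded RC graphs of heights $p$ and $q$, so that the classes $[\clp^pR]$ and $[\trm^pR]$ make sense. This is the remark following Definition~\ref{definition:cliptrim} together with Lemma~\ref{L:trim}. There is no real obstacle here: the substantive input, fiber constancy (H2), was already established in Lemma~\ref{lemma:keyfibers} via Corollary~\ref{corollary:crystaliso}.
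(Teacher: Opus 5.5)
Your proposal is correct and is essentially the paper's proof. It applies Theorem~\ref{theorem:general_lr_template}(3) to the admissible relation of Proposition~\ref{proposition:keyadmissible} with $\mathrm{rep}=\mathbb{Y}$, identifies the class of $C$ with $\{R:\mathbb{Y}(R)=C\}$, and translates ``$\clp^pR$ is the representative of a class labeled $a$'' into ``$\clp^pR$ is highest weight with $\mathrm{extwt}(\clp^pR)=a$'' (likewise for $\trm^pR$), exactly as the paper does.
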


\begin{proof}[Proof of Theorem \ref{theorem:LRkey}]
This is Theorem~\ref{theorem:general_lr_template}(3) for the admissible relation of Proposition~\ref{proposition:keyadmissible}, with $\mathrm{rep}=\mathbb{Y}$: the class of $C$ is $\{R:\mathbb{Y}(R)=C\}$, and $\clp^p(R)$ is the representative of its class with label $a$ exactly when it is a highest weight of extremal weight $a$, and likewise for $\trm^p(R)$.
\end{proof}

\begin{example}[Computing $\kappa_a^*\sqcupplus\kappa_b^*$ for $a=(0,1,2)$, $b=(0,1,0,2)$]\label{example:LRkey}
Let $a=(0,1,2)$ (a $3$-row composition) and $b=(0,1,0,2)$ (a $4$-row composition), so $p=3$, $q=4$, and $p+q=7$. We illustrate the coefficient $k_{a,b}^c$ for $c=(0,0,0,1,2,0,3)$.

The permutation $w_c$ whose principal RC graph has Demazure crystal highest weight $c$ is
$$w_c = (1,2,3,5,7,4,10,6,8,9).$$
The key element $K_c$ is the sum of all $7$-row RC graphs whose Demazure crystal highest weight equals $c$. Among these, exactly two satisfy $\clp^3(R)\in\mathrm{Dem}_a$ and $\trm^3(R)\in\mathrm{Dem}_b$:

\begin{figure}[h]
\centering
\begin{tabular}{cc}
$R_1$ & $R_2$ \\
\end{tabular}
\caption{The two $7$-row RC graphs $R_1$ (left) and $R_2$ (right) in $\mathrm{Dem}_c$ satisfying $\clp^3(R_i)\in\mathrm{Dem}_a$ and $\trm^3(R_i)\in\mathrm{Dem}_b$. The outlined box marks the top $3$ rows (the clip region).}
\label{fig:LRkey_example}
\end{figure}

Both graphs have permutation $w_c=(1,2,3,5,7,4,10,6,8,9)$. They differ only in their first two rows: $R_1$ has row data $[(6,4),(9,),()]$ in the top three rows, while $R_2$ has $[(9,4),(6,),()]$. Applying $\clp^3$ (retaining the top $3$ rows and applying $\zeromap$ until the height is $3$) to each yields the same $3$-row graph with extremal weight $a=(0,1,2)$:
$$\clp^3(R_1)=\clp^3(R_2)=
\begin{tikzpicture}[scale=0.45, baseline=(current bounding box.center)]
  \draw[lightgray, very thin, opacity=0.3] (5.3,0) -- (0.3,0);
  \draw[lightgray, very thin, opacity=0.3] (5.3,0) -- (5.3,5);
  \draw[lightgray, very thin, opacity=0.3] (5.3,1) -- (0.3,1);
  \draw[lightgray, very thin, opacity=0.3] (4.3,0) -- (4.3,5);
  \draw[lightgray, very thin, opacity=0.3] (5.3,2) -- (0.3,2);
  \draw[lightgray, very thin, opacity=0.3] (3.3,0) -- (3.3,5);
  \draw[lightgray, very thin, opacity=0.3] (5.3,3) -- (0.3,3);
  \draw[lightgray, very thin, opacity=0.3] (2.3,0) -- (2.3,5);
  \draw[lightgray, very thin, opacity=0.3] (5.3,4) -- (0.3,4);
  \draw[lightgray, very thin, opacity=0.3] (1.3,0) -- (1.3,5);
  \draw[lightgray, very thin, opacity=0.3] (5.3,5) -- (0.3,5);
  \draw[lightgray, very thin, opacity=0.3] (0.3,0) -- (0.3,5);
  \draw[blue, line width=1.125pt] (0.3,4.5) .. controls (0.6,4.5) and (0.8,4.7) .. (0.8,5);
  \draw[blue, line width=1.125pt] (0.8,4) .. controls (0.8,4.3) and (1,4.5) .. (1.3,4.5);
  \draw[blue, line width=1.125pt, line cap=round] (1.3,4.5) -- (2.3,4.5);
  \draw[blue, line width=1.125pt, line cap=round] (1.8,4) -- (1.8,5);
  \draw[blue, line width=1.125pt] (2.3,4.5) .. controls (2.6,4.5) and (2.8,4.7) .. (2.8,5);
  \draw[blue, line width=1.125pt] (2.8,4) .. controls (2.8,4.3) and (3,4.5) .. (3.3,4.5);
  \draw[blue, line width=1.125pt, line cap=round] (3.3,4.5) -- (4.3,4.5);
  \draw[blue, line width=1.125pt, line cap=round] (3.8,4) -- (3.8,5);
  \draw[blue, line width=1.125pt] (4.3,4.5) .. controls (4.6,4.5) and (4.8,4.7) .. (4.8,5);
  \draw[blue, line width=1.125pt] (0.3,3.5) .. controls (0.6,3.5) and (0.8,3.7) .. (0.8,4);
  \draw[blue, line width=1.125pt] (0.8,3) .. controls (0.8,3.3) and (1,3.5) .. (1.3,3.5);
  \draw[blue, line width=1.125pt, line cap=round] (1.3,3.5) -- (2.3,3.5);
  \draw[blue, line width=1.125pt, line cap=round] (1.8,3) -- (1.8,4);
  \draw[blue, line width=1.125pt] (2.3,3.5) .. controls (2.6,3.5) and (2.8,3.7) .. (2.8,4);
  \draw[blue, line width=1.125pt] (2.8,3) .. controls (2.8,3.3) and (3,3.5) .. (3.3,3.5);
  \draw[blue, line width=1.125pt] (3.3,3.5) .. controls (3.6,3.5) and (3.8,3.7) .. (3.8,4);
  \draw[blue, line width=1.125pt] (0.3,2.5) .. controls (0.6,2.5) and (0.8,2.7) .. (0.8,3);
  \draw[blue, line width=1.125pt] (0.8,2) .. controls (0.8,2.3) and (1,2.5) .. (1.3,2.5);
  \draw[blue, line width=1.125pt] (1.3,2.5) .. controls (1.6,2.5) and (1.8,2.7) .. (1.8,3);
  \draw[blue, line width=1.125pt] (1.8,2) .. controls (1.8,2.3) and (2,2.5) .. (2.3,2.5);
  \draw[blue, line width=1.125pt] (2.3,2.5) .. controls (2.6,2.5) and (2.8,2.7) .. (2.8,3);
  \draw[blue, line width=1.125pt] (0.3,1.5) .. controls (0.6,1.5) and (0.8,1.7) .. (0.8,2);
  \draw[blue, line width=1.125pt] (0.8,1) .. controls (0.8,1.3) and (1,1.5) .. (1.3,1.5);
  \draw[blue, line width=1.125pt] (1.3,1.5) .. controls (1.6,1.5) and (1.8,1.7) .. (1.8,2);
  \draw[blue, line width=1.125pt] (0.3,0.5) .. controls (0.6,0.5) and (0.8,0.7) .. (0.8,1);
  \node[font=\tiny\bfseries, text=blue, anchor=south] at (0.8,5.3) {1};
  \node[font=\tiny\bfseries, text=blue, anchor=south] at (1.8,5.3) {2};
  \node[font=\tiny\bfseries, text=blue, anchor=south] at (2.8,5.3) {3};
  \node[font=\tiny\bfseries, text=blue, anchor=south] at (3.8,5.3) {4};
  \node[font=\tiny\bfseries, text=blue, anchor=south] at (4.8,5.3) {5};
  \node[font=\tiny\bfseries, text=blue, anchor=east] at (0,4.5) {1};
  \node[font=\tiny\bfseries, text=blue, anchor=east] at (0,3.5) {3};
  \node[font=\tiny\bfseries, text=blue, anchor=east] at (0,2.5) {5};
  \node[font=\tiny\bfseries, text=blue, anchor=east] at (0,1.5) {2};
  \node[font=\tiny\bfseries, text=blue, anchor=east] at (0,0.5) {4};
  \node[font=\Large\bfseries, text=green!60!black, fill=white, inner sep=0.5pt, circle, transform shape] at (1.8,4.5) {2};
  \node[font=\Large\bfseries, text=green!60!black, fill=white, inner sep=0.5pt, circle, transform shape] at (3.8,4.5) {4};
  \node[font=\Large\bfseries, text=green!60!black, fill=white, inner sep=0.5pt, circle, transform shape] at (1.8,3.5) {3};
\end{tikzpicture}
$$
Applying $\trm^3$ (keeping the bottom $4$ rows) gives the same $4$-row graph with extremal weight $b=(0,1,0,2)$:
$$\trm^3(R_1)=\trm^3(R_2)=
\begin{tikzpicture}[scale=0.45, baseline=(current bounding box.center)]
  \draw[lightgray, very thin, opacity=0.3] (6.3,0) -- (0.3,0);
  \draw[lightgray, very thin, opacity=0.3] (6.3,0) -- (6.3,6);
  \draw[lightgray, very thin, opacity=0.3] (6.3,1) -- (0.3,1);
  \draw[lightgray, very thin, opacity=0.3] (5.3,0) -- (5.3,6);
  \draw[lightgray, very thin, opacity=0.3] (6.3,2) -- (0.3,2);
  \draw[lightgray, very thin, opacity=0.3] (4.3,0) -- (4.3,6);
  \draw[lightgray, very thin, opacity=0.3] (6.3,3) -- (0.3,3);
  \draw[lightgray, very thin, opacity=0.3] (3.3,0) -- (3.3,6);
  \draw[lightgray, very thin, opacity=0.3] (6.3,4) -- (0.3,4);
  \draw[lightgray, very thin, opacity=0.3] (2.3,0) -- (2.3,6);
  \draw[lightgray, very thin, opacity=0.3] (6.3,5) -- (0.3,5);
  \draw[lightgray, very thin, opacity=0.3] (1.3,0) -- (1.3,6);
  \draw[lightgray, very thin, opacity=0.3] (6.3,6) -- (0.3,6);
  \draw[lightgray, very thin, opacity=0.3] (0.3,0) -- (0.3,6);
  \draw[blue, line width=1.125pt] (0.3,5.5) .. controls (0.6,5.5) and (0.8,5.7) .. (0.8,6);
  \draw[blue, line width=1.125pt] (0.8,5) .. controls (0.8,5.3) and (1,5.5) .. (1.3,5.5);
  \draw[blue, line width=1.125pt, line cap=round] (1.3,5.5) -- (2.3,5.5);
  \draw[blue, line width=1.125pt, line cap=round] (1.8,5) -- (1.8,6);
  \draw[blue, line width=1.125pt] (2.3,5.5) .. controls (2.6,5.5) and (2.8,5.7) .. (2.8,6);
  \draw[blue, line width=1.125pt] (2.8,5) .. controls (2.8,5.3) and (3,5.5) .. (3.3,5.5);
  \draw[blue, line width=1.125pt] (3.3,5.5) .. controls (3.6,5.5) and (3.8,5.7) .. (3.8,6);
  \draw[blue, line width=1.125pt] (3.8,5) .. controls (3.8,5.3) and (4,5.5) .. (4.3,5.5);
  \draw[blue, line width=1.125pt, line cap=round] (4.3,5.5) -- (5.3,5.5);
  \draw[blue, line width=1.125pt, line cap=round] (4.8,5) -- (4.8,6);
  \draw[blue, line width=1.125pt] (5.3,5.5) .. controls (5.6,5.5) and (5.8,5.7) .. (5.8,6);
  \draw[blue, line width=1.125pt] (0.3,4.5) .. controls (0.6,4.5) and (0.8,4.7) .. (0.8,5);
  \draw[blue, line width=1.125pt] (0.8,4) .. controls (0.8,4.3) and (1,4.5) .. (1.3,4.5);
  \draw[blue, line width=1.125pt] (1.3,4.5) .. controls (1.6,4.5) and (1.8,4.7) .. (1.8,5);
  \draw[blue, line width=1.125pt] (1.8,4) .. controls (1.8,4.3) and (2,4.5) .. (2.3,4.5);
  \draw[blue, line width=1.125pt, line cap=round] (2.3,4.5) -- (3.3,4.5);
  \draw[blue, line width=1.125pt, line cap=round] (2.8,4) -- (2.8,5);
  \draw[blue, line width=1.125pt] (3.3,4.5) .. controls (3.6,4.5) and (3.8,4.7) .. (3.8,5);
  \draw[blue, line width=1.125pt] (3.8,4) .. controls (3.8,4.3) and (4,4.5) .. (4.3,4.5);
  \draw[blue, line width=1.125pt] (4.3,4.5) .. controls (4.6,4.5) and (4.8,4.7) .. (4.8,5);
  \draw[blue, line width=1.125pt] (0.3,3.5) .. controls (0.6,3.5) and (0.8,3.7) .. (0.8,4);
  \draw[blue, line width=1.125pt] (0.8,3) .. controls (0.8,3.3) and (1,3.5) .. (1.3,3.5);
  \draw[blue, line width=1.125pt] (1.3,3.5) .. controls (1.6,3.5) and (1.8,3.7) .. (1.8,4);
  \draw[blue, line width=1.125pt] (1.8,3) .. controls (1.8,3.3) and (2,3.5) .. (2.3,3.5);
  \draw[blue, line width=1.125pt] (2.3,3.5) .. controls (2.6,3.5) and (2.8,3.7) .. (2.8,4);
  \draw[blue, line width=1.125pt] (2.8,3) .. controls (2.8,3.3) and (3,3.5) .. (3.3,3.5);
  \draw[blue, line width=1.125pt] (3.3,3.5) .. controls (3.6,3.5) and (3.8,3.7) .. (3.8,4);
  \draw[blue, line width=1.125pt] (0.3,2.5) .. controls (0.6,2.5) and (0.8,2.7) .. (0.8,3);
  \draw[blue, line width=1.125pt] (0.8,2) .. controls (0.8,2.3) and (1,2.5) .. (1.3,2.5);
  \draw[blue, line width=1.125pt] (1.3,2.5) .. controls (1.6,2.5) and (1.8,2.7) .. (1.8,3);
  \draw[blue, line width=1.125pt] (1.8,2) .. controls (1.8,2.3) and (2,2.5) .. (2.3,2.5);
  \draw[blue, line width=1.125pt] (2.3,2.5) .. controls (2.6,2.5) and (2.8,2.7) .. (2.8,3);
  \draw[blue, line width=1.125pt] (0.3,1.5) .. controls (0.6,1.5) and (0.8,1.7) .. (0.8,2);
  \draw[blue, line width=1.125pt] (0.8,1) .. controls (0.8,1.3) and (1,1.5) .. (1.3,1.5);
  \draw[blue, line width=1.125pt] (1.3,1.5) .. controls (1.6,1.5) and (1.8,1.7) .. (1.8,2);
  \draw[blue, line width=1.125pt] (0.3,0.5) .. controls (0.6,0.5) and (0.8,0.7) .. (0.8,1);
  \node[font=\tiny\bfseries, text=blue, anchor=south] at (0.8,6.3) {1};
  \node[font=\tiny\bfseries, text=blue, anchor=south] at (1.8,6.3) {2};
  \node[font=\tiny\bfseries, text=blue, anchor=south] at (2.8,6.3) {3};
  \node[font=\tiny\bfseries, text=blue, anchor=south] at (3.8,6.3) {4};
  \node[font=\tiny\bfseries, text=blue, anchor=south] at (4.8,6.3) {5};
  \node[font=\tiny\bfseries, text=blue, anchor=south] at (5.8,6.3) {6};
  \node[font=\tiny\bfseries, text=blue, anchor=east] at (0,5.5) {1};
  \node[font=\tiny\bfseries, text=blue, anchor=east] at (0,4.5) {3};
  \node[font=\tiny\bfseries, text=blue, anchor=east] at (0,3.5) {2};
  \node[font=\tiny\bfseries, text=blue, anchor=east] at (0,2.5) {6};
  \node[font=\tiny\bfseries, text=blue, anchor=east] at (0,1.5) {4};
  \node[font=\tiny\bfseries, text=blue, anchor=east] at (0,0.5) {5};
  \node[font=\Large\bfseries, text=green!60!black, fill=white, inner sep=0.5pt, circle, transform shape] at (1.8,5.5) {2};
  \node[font=\Large\bfseries, text=green!60!black, fill=white, inner sep=0.5pt, circle, transform shape] at (4.8,5.5) {5};
  \node[font=\Large\bfseries, text=green!60!black, fill=white, inner sep=0.5pt, circle, transform shape] at (2.8,4.5) {4};
\end{tikzpicture}
$$

Since there are exactly $2$ such RC graphs, Theorem~\ref{theorem:LRkey} gives $k_{a,b}^c = 2$.
\end{example}

\section{The ring of forest classes of RC graphs}\label{section:forest}

The \emph{forest polynomials} $\forest_a$ were introduced by Nadeau and Tewari in \cite{nadeau2024forest} and are closely related to Schubert polynomials. They can be defined by reduced words and compatible sequences, but the invariant preserved by these words is distinct from the permutation. Given a reduced word/compatible sequence pair $(\mathbf{r},\mathbf{s})$, there is an insertion algorithm that produces a corresponding invariant $\finv(\mathbf{r},\mathbf{s})$ that represents an equivalence relation on the words, where the compatible sequence can vary arbitrarily. We therefore shorten the notation to $\finv(\mathbf{r})$. The equivalence classes consist of words that are all for the same permutation, so that if we define
$$R\equiv_{\forest} R'$$
to mean that $\finv(\mathrm{rev}(\mathrm{word}(R))) = \finv(\mathrm{rev}(\mathrm{word}(R')))$, where $\mathrm{rev}$ reverses a word (see Definition~\ref{definition:forestequiv} for the reason for the reversal), then the forest polynomials partition the set of RC graphs for the given permutation. The forest polynomials are indexed in the original work by ``indexed forests,'' hence the name. These correspond bijectively to weak compositions, so we can define a weak composition $\fcode(R)$ by associating an indexed forest to this equivalence class as in the article. Define $\mathcal{C}_\forest(a)$ for a composition $a$ to be the set of equivalence classes of RC graphs under this relation such that $\fcode(R)=a$.

\subsection{Indexed forests, forest invariant}

\begin{definition}[Indexed forests and codes]
  An \emph{indexed forest} $F$ is a pair $(S, T_\bullet)$ where $S$ is a set of positive integers, called the \emph{support} of $F$. Writing
  $$S=I_1\cup I_2\cup \cdots \cup I_k$$
  where $I_1,I_2,\ldots, I_k$ are the maximal intervals contained in $S$, we have $T_\bullet$ as a sequence of binary trees $T_1,T_2,\ldots, T_k$ where $T_j$ has $|I_j|$ internal nodes for each $j$. The internal nodes of $F$ are denoted by $\mathrm{IN}(F)$. For $v\in \mathrm{IN}(F)$, we denote by $\mathrm{left}(v)$ and $\mathrm{right}(v)$ the left and right children of $v$ respectively, which may not be in $\mathrm{IN}(F)$ if they are leaves, or may be empty.
  
  There is a natural labeling $\lambda_F:\mathrm{IN}(F)\to S$ sending each internal node, say $v\in T_j$, to the unique element of $I_j$ that is at the same index as $v\in T_j$ in the inorder traversal of $T_j$.
  
  The function $\rho_F:\mathrm{IN}(F)\to S$ is defined for an internal node $v\in T_j$ by 
  $$\rho_F(v) = \begin{cases}
  \lambda_F(v)&\mbox{ if }\mathrm{left}(v)\notin\mathrm{IN}(F)\\
  \lambda_F(\mathrm{left}(v))&\mbox{ if }\mathrm{left}(v)\in\mathrm{IN}(F).
  \end{cases}$$
  For each indexed forest $F$, we assign a weak composition $\code(F)$ by
  $$\code_j(F)=\#\{v\in \mathrm{IN}(F): \rho_F(v)=j\}$$
  Then the \emph{forest polynomial} $\forest_F(x)$ is defined by
  $$\forest_F(x) = \sum_{\tau}\prod_{v\in\mathrm{IN}(F)}x_{\tau(v)}$$
  where the sum is over all functions $\tau:\mathrm{IN}(F)\to \mathbb{Z}_{>0}$ such that $\tau(v)\leq \rho_F(v)$ for all $v\in \mathrm{IN}(F)$, $\tau(v)\leq\tau(\mathrm{left}(v))$ for all $v\in \mathrm{IN}(F)$ such that $\mathrm{left}(v)\in \mathrm{IN}(F)$, and $\tau(v)<\tau(\mathrm{right}(v))$ for all $v\in \mathrm{IN}(F)$ such that $\mathrm{right}(v)\in \mathrm{IN}(F)$.

  It is easiest for our purposes to index forest polynomials by weak compositions rather than indexed forests, so we define $\forest_a(x) = \forest_F(x)$ where $F$ is the unique indexed forest such that $\code(F) = a$.
\end{definition}

\begin{definition}[Injective words, forest invariant]
In \cite{nadeau2024ppartitions}, an alphabet $\overline{\mathbb{Z}}$ is defined as the set of pairs of integers $(i,j)$ in lexicographical order, written with the notation $i^{[j]}$. They define $\mathrm{val}(i^{[j]})=i$. An \emph{injective word} is a word $w=w_1w_2\cdots w_k$ such that $w_i\neq w_j$ for all $i\neq j$. An \emph{LBS (local binary search)  labeling} of an indexed forest $F$ is an injective function $\rho:\mathrm{IN}(F)\to \overline{\mathbb{Z}}$ such that
\begin{enumerate}
\item $\rho(\mathrm{left}(v))<\rho(v)$ for all $v\in \mathrm{IN}(F)$ such that $\mathrm{left}(v)\in \mathrm{IN}(F)$.
\item $\rho(v)<\rho(\mathrm{right}(v))$ for all $v\in \mathrm{IN}(F)$ such that $\mathrm{right}(v)\in \mathrm{IN}(F)$.
\item For all $v\in\mathrm{IN}(F)$, $\mathrm{val}(\rho(v)) = \lambda_F(u)$ for some $u\in\mathrm{IN}(F)$.
\end{enumerate}

Following \cite{nadeau2024forest}, given any injective word $w = w_1\cdots w_m$ in $\overline{\mathbb{Z}}$, we associate to it inductively an indexed forest $F(w)$, an LBS labeling $P(w)$ of $F(w)$, and an injective ``time-stamp'' labeling $Q(w):\mathrm{IN}(F(w))\to\{1,\ldots,m\}$ as follows. If $m=0$, then $F(w)$ is the empty forest. Otherwise write $w = w'a$ with $w' = w_1\cdots w_{m-1}$, and assume $F' := F(w')$, $P' := P(w')$, and $Q' := Q(w')$ have already been constructed; let $S'$ be the support of $F'$, and let $S' = I_1\cup\cdots\cup I_k$ be its decomposition into maximal intervals (listed left to right). For each $j$, denote by $u_j$ the root of the tree of $F'$ supported on $I_j$, and set $a_j := P'(u_j)$. We obtain $F(w)$ by adjoining a new internal node $u$ to $F'$, with attachments determined by $i := \mathrm{val}(a)$:
\begin{itemize}
\item If $i\notin S'$, then $u$ is the root of a new tree, with left child $u_j$ when $i-1\in I_j$ (otherwise no left child), and right child $u_j$ when $i+1\in I_j$ (otherwise no right child).
\item If $i\in I_j$, then we compare $a$ with $a_j$ in the order on $\overline{\mathbb{Z}}$:
\begin{itemize}
\item if $a>a_j$, then $u_j$ becomes the left child of $u$; additionally, if $\max(I_j)+2\in S'$ (necessarily $\max(I_j)+2 = \min(I_{j+1})$), then $u_{j+1}$ becomes the right child of $u$;
\item if $a<a_j$, then $u_j$ becomes the right child of $u$; additionally, if $\min(I_j)-2\in S'$ (necessarily $\min(I_j)-2=\max(I_{j-1})$), then $u_{j-1}$ becomes the left child of $u$.
\end{itemize}
\end{itemize}
The remaining roots of $F'$ become roots of $F(w)$ unchanged. We then extend $P'$ to $P(w)$ by setting $P(w)(u) := a$, and extend $Q'$ to $Q(w)$ by setting $Q(w)(u) := m$. The pair $(P(w), Q(w))$ is the \emph{$\Omega$-insertion} of $w$.
\end{definition}

The importance of this algorithm is

\begin{theorem}[{\cite[Theorem~5.1]{nadeau2024forest}}]
The correspondence $w\mapsto (P(w), Q(w))$ is a bijection between the following two sets:
\begin{enumerate}
\item Injective words $w$ with letters in $\overline{\mathbb{Z}}$, and
\item Pairs $(P, Q)$ such that $P \in \mathrm{LBS}(F)$ and $Q \in \mathrm{Dec}(F)$ for a common indexed forest $F$.
\end{enumerate}
\end{theorem}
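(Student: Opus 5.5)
Since this statement is \cite[Theorem~5.1]{nadeau2024forest}, the plan is to reconstruct the standard RSK-type argument: build an explicit inverse $(P,Q)\mapsto w$ and show that insertion and this inverse are mutually inverse.

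\emph{Well-definedness of insertion.} Induct on $m$ and check that each insertion step preserves four things:
\begin{itemize}
\item $F(w)$ is an indexed forest;
\item $P(w)$ is an LBS labeling;
\item $Q(w)$ is decreasing, so the time stamp of $u$ exceeds those of its children;
\item $P(w)$ is injective.
\end{itemize}
The support grows by exactly $i=\mathrm{val}(a)$ when $i\notin S'$. When $i\in I_j$ it grows by $\max(I_j)+1$ or $\min(I_j)-1$; these merge with $I_{j\pm1}$ exactly when the gap is one, and that is the case in which $u_{j\pm1}$ is attached. So the tree shapes stay consistent, and the inorder labeling $\lambda_F$ shifts so that the new node sits at the correct position. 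The LBS inequalities at $u$ come from the comparison of $a$ with $a_j$, since the root label bounds its subtree in the binary-search sense. Condition (3) holds because $i$ remains the $\lambda$-label of some node. Injectivity of $w$ gives injectivity of $P$. $Q$ is decreasing because the new node is a root carrying the largest stamp.

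\emph{Inverse map.} Given $(P,Q)$ on $F$, let $u$ be the node with the largest $Q$-value. Since $Q$ is decreasing, $u$ is a root. Delete $u$: its children become roots, and the support is recomputed by removing the appropriate element, namely the $\lambda$-value at which $u$ sits in the inorder traversal, with the indices renormalized to maximal intervals. The last letter is $a=P(u)$, and we recurse on the remaining pair.

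The key point is that this deletion lands in a valid pair $(P',Q')$ from which inserting $a$ reproduces exactly $F$ and the position of $u$. This is a case analysis on which children $u$ has. If $u$ has left child $u_j$ and $a>P(u_j)$, this is the first bullet, and the right child, if present, is forced to be $u_{j+1}$ with the gap of two. The right-child case is symmetric. If $u$ has no children, or children on both sides with $i\notin S'$, this is the new-root case. Distinguishing the ``new root with two children'' case from the ``$a>a_j$ with extra right child'' case requires knowing whether $\mathrm{val}(a)$ lies in the support after deletion, and the LBS condition (3) together with the inorder labeling should decide this.

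\emph{Main obstacle.} I expect this disambiguation to be the hardest step: showing that for every LBS pair the deleted root $u$ fits exactly one insertion case, so that insertion and deletion are mutually inverse. I would first verify that the $\lambda$-label of $u$ equals $\mathrm{val}(P(u))$, or differs from it in a controlled way. With that in hand, both compositions are the identity by induction on $m$, which gives the bijection.
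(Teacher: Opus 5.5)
The paper does not prove this statement; it cites Nadeau--Tewari. Your architecture is the right one: show insertion is well defined, invert it by deleting the root $u$ with the largest $Q$-value and removing $\lambda_F(u)$ from the support, then induct. The deletion step itself is correct.

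The gap is the step you defer, and it cannot be closed using condition (3) as the paper states it, namely ``$\mathrm{val}(\rho(v))=\lambda_F(x)$ for some node $x$ of $F$''. With that global condition the statement is false. Take $F$ to be two one-node trees with supports $\{2\}$ and $\{4\}$. Label the node at $2$ by $4^{[1]}$ and the node at $4$ by $2^{[1]}$, with time stamps $1$ and $2$. This pair satisfies (1)--(3), and $Q$ is decreasing. Yet no word produces it: the first inserted letter creates a node whose $\lambda$-label equals its value, and insertion never changes the $\lambda$-labels of existing nodes. Concretely, if you delete the stamp-$2$ root and reinsert $2^{[1]}$, you land in the case $a<a_j$ and build a different forest, a single tree on $\{1,2\}$. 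Your proposed test ``left child $u_j$ and $a>P(u_j)$'' also cannot decide the case. The inequality $a>P(\mathrm{left}(u))$ holds whenever $u$ has an internal left child, including when $u$ arose as a new root.

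What you need is the local form of (3): $\mathrm{val}(P(v))$ is the $\lambda$-label of a node in the subtree rooted at $v$. Prove this as an invariant of insertion. It holds for the new node in each of the three cases, and old nodes keep both their subtrees and their $\lambda$-labels.

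The same invariant supplies the LBS inequalities at the new node that are not comparisons with $a_j$. These are both children in the new-root case, and the bridged child $u_{j\pm1}$ in the other two cases. Each such child carries a label whose value lies in an interval strictly on one side of $i$. Your appeal to ``the root label bounds its subtree in the binary-search sense'' is not available, because LBS labelings are only local.

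With the invariant, the disambiguation is immediate. Let $\ell=\lambda_F(u)$, let $i=\mathrm{val}(P(u))$, and let $I$ be the support interval of the tree of $u$.
\begin{itemize}
\item If $i=\ell$, then $i\notin S'$. By maximality of the support intervals, the maximal intervals of $S'$ containing $\ell-1$ or $\ell+1$ are exactly the supports of the internal children of $u$. So the new-root case rebuilds $u$.
\item If $i<\ell$, then $i$ lies in $[\min I,\ell-1]$, the support of the left child, which is a maximal interval of $S'$. The inequality $P(u)>P(\mathrm{left}(u))$ puts you in the case $a>a_j$. The extra right child is attached if and only if $\ell+1\in S'$, that is, if and only if $u$ has an internal right child.
\item The case $i>\ell$ is symmetric.
\end{itemize}
So the three insertion cases correspond exactly to $i=\ell$, $i<\ell$ and $i>\ell$, and both compositions are the identity by induction on length.
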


\begin{definition}[Injectification, $\equiv_\forest$]\label{definition:forestequiv}
  For a word $\mathbf{r}$ in the alphabet $\mathbb{Z}$, we may canonically turn $\mathbf{r}$ into an injective word $\overline{\mathbf r}$ in the alphabet $\overline{\mathbb{Z}}$ by assigning superscripts consecutively starting at $1$ to break ties between equal letters.

For words $\mathbf{r}_1$ and $\mathbf{r}_2$ we say that $\mathbf{r}_1\equiv_\forest\mathbf{r}_2$ if $P(\overline{\mathbf{r}_1}) = P(\overline{\mathbf{r}_2})$. For RC graphs $R_1$ and $R_2$, we say that $R_1\equiv_\forest R_2$ if $\mathrm{rev}(\mathrm{word}(R_1))\equiv_\forest \mathrm{rev}(\mathrm{word}(R_2))$, where $\mathrm{rev}$ reverses a word: the reduced words of \cite{nadeau2024forest} are read in the order opposite to our reading order, and the reversal is forced by Theorem~\ref{theorem:forestformula}(I) below, which fails for the unreversed reading word already for $w=2413$. For a permutation $w$, we denote by $\mathcal{C}_\forest(w)$ the set of equivalence classes of reduced words for $w$ under $\equiv_\forest$.
\end{definition}

\begin{theorem}[{\cite[Theorem~1.4]{nadeau2024forest}}] \label{theorem:forestforest}
  Let $w\in S_\infty$. Then we have

  $$\sch_w(x) = \sum_{C\in \mathcal{C}_\forest(w)} \forest_{\code_\forest(C)}(x)$$
\end{theorem}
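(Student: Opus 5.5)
This is a cited theorem of Nadeau--Tewari, so the plan is to rederive it from the machinery they supply rather than from anything in this paper. The formula expresses $\sch_w$ as a sum over $\equiv_\forest$-classes of reduced words for $w$. I will prove it class by class: for each class $C$, the compatible-sequence generating function restricted to words in $C$ should equal $\forest_{\code_\forest(C)}$. Summing over classes then gives the theorem.

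First, I start from the Billey--Jockusch--Stanley formula, $\sch_w=\sum_{(\mathbf r,\mathbf s)}x_{\mathbf s}$. The sum runs over pairs consisting of a reduced word $\mathbf r$ of $w$ and a compatible sequence $\mathbf s$. Equivalently it runs over RC graphs, with $\mathbf s$ the row sequence, by Corollary~\ref{corollary:evschub}. I group the terms according to the class of $\mathbf r$ under $\equiv_\forest$, which makes sense since $\equiv_\forest$ depends only on the word.

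Second, within a class $C$ with common insertion tableau $P$ on the forest $F$, I use the bijection $w\mapsto(P(w),Q(w))$ of \cite[Theorem~5.1]{nadeau2024forest}. It identifies the reduced words in $C$ with the decreasing labelings $Q\in\mathrm{Dec}(F)$. A word together with a compatible sequence should then correspond to a decreasing labeling together with a weakly order-preserving map $\tau:\mathrm{IN}(F)\to\mathbb Z_{>0}$. The key claim is that the compatibility conditions on $\mathbf s$ translate into the following node conditions:
\begin{itemize}
\item $s_k\le r_k$ becomes $\tau(v)\le\rho_F(v)$;
\item $s_k$ weakly increasing, and strictly increasing at ascents of $\mathbf r$, becomes $\tau(v)\le\tau(\mathrm{left}(v))$ and $\tau(v)<\tau(\mathrm{right}(v))$.
\end{itemize}
These are exactly the conditions defining $\forest_F$. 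I would establish this translation by a standard $P$-partition argument. I would first verify that linear extensions of the binary-tree poset of $F$ correspond to decreasing labelings, that the descent set of the linear extension matches the ascents of $\mathbf r$, and that compatible sequences are precisely the $Q$-compatible fillings. Fundamental quasisymmetric expansions of $P$-partitions then glue these into the tree generating function.

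The main obstacle is the bound $s_k\le r_k$. In a single word this bound is attached to the letter $r_k$, but after summing it must become $\tau(v)\le\rho_F(v)$, which depends only on $F$ and not on $Q$. Here I would use the LBS property of $P$: the value $\mathrm{val}(P(v))$ sits at the leftmost leaf-descendant position. Combined with the weak chain condition down left children, this should show that the minimum over the left spine equals $\rho_F(v)$, so that the individual letter bounds are implied by the bound at the bottom of each left chain. The dependence on the reversal convention is handled by taking the words in the order used in \cite{nadeau2024forest}. Once the class-wise identity holds, summing over $C\in\mathcal C_\forest(w)$ gives the formula.
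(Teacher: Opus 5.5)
Your outer structure is the right one: Billey--Jockusch--Stanley, then grouping by $\Omega$-classes, then a class-wise identity. The paper itself gives no proof of this statement beyond the citation, but the class-wise identity you aim for is exactly what it invokes as \cite[Proposition~5.10]{nadeau2024forest} in the proof of Theorem~\ref{theorem:forestformula}. The $P$-partition step is also fine once oriented correctly. $Q$ decreases from roots to leaves, so a sequence weakly increasing in insertion time gives $\tau(\mathrm{left}(v))\le\tau(v)$, which is the wrong inequality. You need the reversed words with weakly decreasing sequences, strict at descents.

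The genuine gap is the step you yourself flag as the main obstacle.

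\emph{The flag bound is not a summation effect.} The word attached to $(P,Q)$ is $P$ read in the order of $Q$. So the letter at time $Q(v)$ is $P(v)$, and the flag condition is $\tau(v)\le\mathrm{val}(P(v))$ for every $Q$. What must be shown is that, given the tree inequalities, these bounds cut out the same set as $\tau(v)\le\rho_F(v)$.

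\emph{Your argument rests on a false claim.} You assume $\mathrm{val}(P(v))$ is the leftmost leaf position of $v$. Inserting $1\,2$ already refutes this: it produces a root at position $2$ with value $2$ above a left child at position $1$. More seriously, the failure occurs at bottoms of left spines, where no left-spine argument is available. Inserting $2^{[1]}3^{[1]}2^{[2]}$ produces a root $C$ at position $1$ with no left child and $P(C)=2^{[2]}$. Its right child $B$ (position $3$, $P(B)=3^{[1]}$) has left child $A$ (position $2$, $P(A)=2^{[1]}$). Here $\rho_F(C)=1$, while the letter bound is only $\tau(C)\le 2$. The bound $\tau(C)\le 1$ comes from $\tau(C)<\tau(B)\le\tau(A)\le 2$, i.e.\ from the strict right-child inequality, not from any left spine.

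\emph{What you actually need.}
(i) $\mathrm{val}(P(v))$ lies among the positions of the subtree of $v$. This holds because insertion places the new value inside the new node's subtree and never moves old positions. Hence $\mathrm{val}(P(v))$ is at least the leftmost position $m(v)$ of that subtree, which gives the easy direction. It also gives $\mathrm{val}(P(v))=\lambda_F(v)$ when $v$ has no internal children.
(ii) By induction on subtrees, the letter bounds force $\tau(v)\le m(v)$:
if $v$ has an internal left child $x$, use $\tau(v)\le\tau(x)\le m(x)=m(v)$;
if not, but it has an internal right child $u$, use $\tau(v)<\tau(u)\le m(u)=\lambda_F(v)+1$;
otherwise apply (i).

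The bounds $m(v)$ and $\rho_F(v)$ are equivalent under the left-child inequality, so this yields the class-wise identity for each $Q$ separately. Without it, your key step is unsupported.
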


\subsection{The dual forest basis and the ring of forest classes of RC graphs}

\begin{definition}[The dual forest basis]
  For a weak composition $a$ of length $n$, define $\forest_a^*\in \dcoma_n$ to be the unique element such that $\langle \forest_a^*, \forest_b\rangle = \delta_{a,b}$ for all weak compositions $b$.
\end{definition}

We record the result that the coefficients of $\forest^*_c$ in the monomial-dual basis are exactly the forest expansion coefficients of monomials computed by Nadeau, Spink, and Tewari, and that under the Borel-type isomorphism $H^\bullet(\mathrm{QFl}_n)\cong \mathbb{Z}[x_1,\ldots,x_n]/\mathrm{QSym}_n^+$ of \cite{bergeron2025qsymflag} the same integers compute the cap product against the geometric homology basis $\{[X(F)]\}$ of $H_\bullet(\mathrm{QFl}_n)$.

\begin{proposition}\label{proposition:forestdualcoefficients}
Let $a$ be a weak composition of length $n$, and let $F_a$ denote the corresponding indexed forest. Let $\{e_{\mathbf{b}}\}_{\mathbf{b}\in \mathbb{Z}_{\geq 0}^n}$ denote the basis of $\dcoma_n$ dual to the monomial basis $\{x^{\mathbf{b}}\}$ of $\coma_n$. Then
$$\forest_a^* \;=\; \sum_{\mathbf{b}\in \mathbb{Z}_{\geq 0}^n} \epsilon_{F_a}(\mathbf{b})\,e_{\mathbf{b}}\quad\in\;\dcoma_n,$$
where $\epsilon_{F_a}(\mathbf{b})\in\{-1,0,+1\}$ is the path sign of \cite[Proposition~10.15]{nadeau2024quasisymmetric}.

The same integers admit two further interpretations:
\begin{enumerate}
\item (Volume polynomial / Postnikov--Stanley-style dual.) The volume polynomial $V_{F_a}(\boldsymbol\lambda)\in\mathbb{Q}[\lambda_1,\ldots,\lambda_n]$ of \cite[Section~10.5]{nadeau2024quasisymmetric}, which is dual to $\forest_{F_a}$ under the divided-power $D$-pairing of \emph{loc.~cit.}, expands as
$$V_{F_a}(\boldsymbol\lambda) \;=\; \sum_{\mathbf{b}\in\mathbb{Z}_{\geq 0}^n} \epsilon_{F_a}(\mathbf{b})\,\frac{\boldsymbol\lambda^{\mathbf{b}}}{\mathbf{b}!}\qquad\bigl(\text{\cite[Proposition~10.14]{nadeau2024quasisymmetric}}\bigr).$$
Up to the divided-power normalization, $V_{F_a}(\boldsymbol\lambda)$ and $\forest_a^*$ are the same linear functional on $\coma_n$, in exact analogy with the relationship between the Postnikov--Stanley dual Schubert polynomials \cite{postnikov2009chains} and our $\dsch_u^n$ noted in the introduction.

\item (Geometric Kronecker dual on $\mathrm{QFl}_n$.) Under the Borel isomorphism $\pi_n:\coma_n\twoheadrightarrow \coma_n/\mathrm{QSym}_n^+\cdot\coma_n \cong H^\bullet(\mathrm{QFl}_n)$ of \cite[Theorem~A and Corollary~12.5]{bergeron2025qsymflag}, the dual functional $\forest_a^*$ kills the kernel ideal and descends to the Kronecker dual of $[\forest_{F_a}]\in H^\bullet(\mathrm{QFl}_n)$ paired against the homology basis $\{[X(F)]:F\in\mathsf{Forest}_n\}$ of \cite[Corollary~11.6]{bergeron2025qsymflag}. Concretely, for $F_a\in\mathsf{Forest}_n$,
$$\forest_a^*\;=\;\pi_n^*\bigl([X(F_a)]\bigr) \quad\text{in}\quad \dcoma_n,$$
and equivalently
$$\int_{X(F_a)}\!\overline{x^{\mathbf{b}}} \;=\; \epsilon_{F_a}(\mathbf{b})\quad \text{in } H^\bullet(\mathrm{QFl}_n)$$
for all $\mathbf{b}\in\mathbb{Z}_{\geq 0}^n$, where $\overline{x^{\mathbf{b}}}=\pi_n(x^{\mathbf{b}})$. For $a$ with $F_a\not\in\mathsf{Forest}_n$ both sides vanish: $\forest_{F_a}\in \mathrm{QSym}_n^+\cdot\coma_n$ by \cite[Theorem~9.7]{nadeau2024quasisymmetric}, so $\forest_a^*$ vanishes on this submodule.
\end{enumerate}
\end{proposition}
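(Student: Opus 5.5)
The main claim is the expansion of $\forest_a^*$ in the monomial-dual basis. By definition of the pairing, the coefficient of $e_{\mathbf b}$ in $\forest_a^*$ is $\langle\forest_a^*,x^{\mathbf b}\rangle$. This is exactly the coefficient of $\forest_a$ when $x^{\mathbf b}$ is expanded in the forest basis of $\coma_n$.

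So I would first note that the forest polynomials $\forest_c$, with $c\in\mathrm{Comp}_n$, form a $\mathbb Z$-basis of $\coma_n$. This is the statement of \cite{nadeau2024forest} for polynomials in $x_1,\ldots,x_n$: forest polynomials with support in $[n]$ are indexed by compositions of length $n$. Consequently the monomial $x^{\mathbf b}$ has a unique expansion $x^{\mathbf b}=\sum_c \gamma_{\mathbf b,c}\forest_c$, and $\langle\forest_a^*,x^{\mathbf b}\rangle=\gamma_{\mathbf b,a}$. I would then cite \cite[Proposition~10.15]{nadeau2024quasisymmetric}, which identifies $\gamma_{\mathbf b,a}$ with the path sign $\epsilon_{F_a}(\mathbf b)\in\{-1,0,1\}$. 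The main point to check is that their indexing conventions (forest versus code, and variables truncated at $n$) match ours, in particular that their expansion takes place inside $\mathbb Z[x_1,\ldots,x_n]$ rather than modulo something.

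For (1), the $D$-pairing of \cite{nadeau2024quasisymmetric} is $\langle \boldsymbol\lambda^{\mathbf b}/\mathbf b!,\,x^{\mathbf c}\rangle=\delta_{\mathbf b,\mathbf c}$. Under this pairing the functional represented by $V_{F_a}$ sends $x^{\mathbf b}$ to its coefficient on $\boldsymbol\lambda^{\mathbf b}/\mathbf b!$, which is $\epsilon_{F_a}(\mathbf b)$ by \cite[Proposition~10.14]{nadeau2024quasisymmetric}. This agrees with the functional $\forest_a^*$ coefficientwise. The same conclusion also follows from uniqueness of the dual basis, since $V_{F_a}$ is dual to $\forest_{F_a}$.

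For (2), I would use that $\mathrm{QSym}_n^+\coma_n$ is spanned by the $\forest_c$ with $F_c\notin\mathsf{Forest}_n$ \cite[Theorem~9.7]{nadeau2024quasisymmetric}, while the classes $\pi_n(\forest_c)$ with $F_c\in\mathsf{Forest}_n$ form a basis of $H^\bullet(\mathrm{QFl}_n)$ \cite{bergeron2025qsymflag}. Hence for $F_a\in\mathsf{Forest}_n$ the functional $\forest_a^*$ vanishes on the kernel and descends to the dual of $[\forest_{F_a}]$. The homology basis $[X(F)]$ is Kronecker dual to the forest classes by \cite[Corollary~11.6]{bergeron2025qsymflag}, so the descended functional is $[X(F_a)]$, and the integral formula is just the statement $\langle\forest_a^*,x^{\mathbf b}\rangle=\epsilon_{F_a}(\mathbf b)$ read geometrically.

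The remaining case is $F_a\notin\mathsf{Forest}_n$. Here I would interpret "both sides vanish" as meaning that $\forest_a^*$ annihilates the submodule spanned by the other basis elements lying in the ideal. I expect this clause, together with aligning conventions between the cited papers, to be the main obstacle. I would resolve it by stating precisely which pairings are meant rather than by any new mathematics.
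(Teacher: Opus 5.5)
Your treatment of the monomial expansion, of item (1), and of item (2) for $F_a\in\mathsf{Forest}_n$ is the paper's argument. You read off $\langle\forest_a^*,x^{\mathbf b}\rangle$ from the forest expansion of $x^{\mathbf b}$ in \cite[Proposition~10.15]{nadeau2024quasisymmetric} and compare with Proposition~10.14 there. For (2) you use that $\mathrm{QSym}_n^+\cdot\coma_n$ is spanned by the $\forest_c$ with $F_c\notin\mathsf{Forest}_n$, so $\forest_a^*$ kills the kernel of $\pi_n$ and descends to the Kronecker dual of $[\forest_{F_a}]$. Your caveat about whether that expansion holds in $\coma_n$ or only in the quotient is legitimate; the paper uses it in $\coma_n$ without comment.

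Your plan goes wrong on the final clause. It cannot be repaired by clarifying which pairing is meant, because as written it is false. Your reading of it (that $\forest_a^*$ kills the \emph{other} forest polynomials in the ideal) is just the defining duality, not the assertion. If $F_a\notin\mathsf{Forest}_n$, then $\forest_{F_a}$ itself lies in $\mathrm{QSym}_n^+\cdot\coma_n$, while $\langle\forest_a^*,\forest_{F_a}\rangle=1$. So $\forest_a^*$ does not vanish on that submodule and does not factor through $\pi_n$. Already for $n=2$ this fails: $\forest_{(0,1)}=x_1+x_2\in\mathrm{QSym}_2^+$, and from $x_1=\forest_{(1,0)}$ and $x_2=\forest_{(0,1)}-\forest_{(1,0)}$ one gets $\forest^*_{(0,1)}=e_{(0,1)}$, which takes the value $1$ on $x_1+x_2$. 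Likewise $\epsilon_{F_{(0,1)}}((0,1))=1\neq 0$, so the integral formula does not degenerate to $0=0$ either. The only true statement here is that $[\forest_{F_a}]=\pi_n(\forest_{F_a})=0$ in $H^\bullet(\mathrm{QFl}_n)$. The clause should be corrected to that rather than proved. The paper's own proof handles only $F_a\in\mathsf{Forest}_n$ and never addresses this sentence.
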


\begin{proof}
The first identity is immediate: applying $\forest_a^*$ to the forest expansion of monomials $x^{\mathbf{b}}=\sum_G \epsilon_G(\mathbf{b})\,\forest_G$ \cite[Proposition~10.15]{nadeau2024quasisymmetric} gives $\langle \forest_a^*, x^{\mathbf{b}}\rangle = \sum_G \epsilon_G(\mathbf{b})\langle \forest_a^*,\forest_G\rangle = \epsilon_{F_a}(\mathbf{b})$ by the defining duality $\langle \forest_a^*,\forest_b\rangle=\delta_{a,b}$.

Item~(1) is then a direct comparison of \cite[Proposition~10.14]{nadeau2024quasisymmetric} with the displayed expansion.

For item~(2), the splitting $\coma_n = \bigoplus_{F_a\in\mathsf{Forest}_n}\mathbb{Z}\forest_a \oplus \mathrm{QSym}_n^+\cdot\coma_n$ \cite[Theorem~9.7]{nadeau2024quasisymmetric} forces $\forest_a^*$ to annihilate the kernel of $\pi_n$ when $F_a\in\mathsf{Forest}_n$, hence to factor through $\pi_n$. The induced functional pairs with $[\forest_b]$ to $\delta_{a,b}$, which is precisely the defining property of $[X(F_a)]$ \cite[Theorem~12.12]{bergeron2025qsymflag}, so $\forest_a^*=\pi_n^*([X(F_a)])$. The second equality of item~(2) is then the previous display read modulo $\mathrm{QSym}_n^+\cdot\coma_n$.
\end{proof}

In particular, the algebraic dual basis $\{\forest_a^*\}\subset\dcoma_n$ and the geometric homology basis $\{[X(F)]\}\subset H_\bullet(\mathrm{QFl}_n)$ are not merely Kronecker-dual to the same basis: they are the same linear functional on $\coma_n$, viewed once before and once after passing to the Borel quotient. The bialgebra structure on $\dcoma$ and the geometric coproduct on $\bigoplus_n H_\bullet(\mathrm{QFl}_n)$ induced by the variable-splitting comorphism $\iota_{p,q}^*$ of Section~\ref{subsection:qflbranch} are correspondingly related: $\pi^* = \bigoplus_n \pi_n^*$ identifies the geometric coproduct with the restriction of the concatenation coproduct on $\dcoma$ to $\bigoplus_n \pi_n^*\!\bigl(H_\bullet(\mathrm{QFl}_n)\bigr)$.

\begin{theorem}\label{theorem:forestformula}
  We have the following.
  \begin{itemize}
\item[I] Let $R\in \BRC$ be a bounded RC graph. Then the forest polynomial $\forest_{\fcode(R)}$ is equal to
$$\forest_{\fcode(R)}(x)=\sum_{R'\equiv_{\forest} R} x^{\wtt(R')}$$

\item[II] Let $c$ be a weak composition. Then the dual forest polynomial $\forest^*_c$ is equal to
$$\forest^*_c = \sum_{[R]\in\mathcal{C}_\forest(c)} \dsch_{\wof{R}}^{\hr(R)}$$
  \end{itemize}
\end{theorem}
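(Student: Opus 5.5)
Part I will be reduced to the Nadeau--Tewari machinery by transporting the forest class of an RC graph to the compatible-sequence model of \cite{nadeau2024forest}. To an RC graph $R$ of permutation $w$ attach the reversed reading word $\mathbf r=\mathrm{rev}(\mathrm{word}(R))$ together with the reversed row sequence $\mathbf s$. Then $(\mathbf r,\mathbf s)$ is a reduced-word/compatible-sequence pair in the Billey--Jockusch--Stanley sense, and $R\mapsto(\mathbf r,\mathbf s)$ is a weight-preserving bijection from $\RC(w)$ onto such pairs. Under it, $\equiv_\forest$ on RC graphs becomes the relation $P(\overline{\mathbf r})=P(\overline{\mathbf r'})$ on words. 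So the class of $R$ is the set of all pairs $(\mathbf r',\mathbf s')$ with $\mathbf r'\equiv_\forest\mathbf r$ and $\mathbf s'$ compatible with $\mathbf r'$.

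The core of the argument for Part I is to show that, for a fixed LBS labelling $P$ of the forest $F=F(\overline{\mathbf r})$, the generating function $\sum x^{\mathbf s'}$ over all such pairs equals $\forest_F$. I would not reprove this from scratch. Instead I would invoke the refinement underlying \cite[Theorem~1.4]{nadeau2024forest}: the proof there proceeds class by class through $\Omega$-insertion and sends each class's compatible sequences bijectively onto the $\tau$-fillings in the definition of $\forest_F$. The reading-order convention of that paper is opposite to ours, which is exactly why the reversal appears. If that per-class statement is not available verbatim, the fallback is a dimension count. Theorem~\ref{theorem:forestforest} gives $\sch_w=\sum_C\forest_{\code_\forest(C)}$. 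Each class contributes a nonnegative sum of monomials $G_C$, and the sum of all $G_C$ over classes is $\sch_w$ by Corollary~\ref{corollary:evschub}. I would then show $G_C\le\forest_{\code_\forest(C)}$ coefficientwise, by mapping each compatible sequence of a word in $C$ injectively to a filling $\tau$ read off through $Q$. Equality of the totals then forces $G_C=\forest_{\code_\forest(C)}$.

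The obstacle I expect is this identification of compatible sequences with the $\tau$-fillings. The conditions $\tau(v)\le\rho_F(v)$, weak decrease along left edges and strict decrease along right edges must be matched with the compatibility conditions ($s_j\le r_j$, with strict increase at ascents of the reversed word). Checking this through the insertion steps is where I would concentrate. I would also verify the boundary behaviour, namely that $\hr(R)$ imposes no extra constraint, by using that rows beyond $\maxd$ carry no crossings.

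Part II is then formal, in the manner of Theorem~\ref{theorem:general_lr_template}(2) and Theorem~\ref{theorem:keyformula}. Pair the right-hand side with $\forest_b$ for a length-$n$ weak composition $b$, and expand $\forest_b$ in the Schubert basis. By Theorem~\ref{theorem:forestforest} and Corollary~\ref{corollary:evschub}, the coefficient of $\dsch_w^n$ in $\forest_c^*$ is $\langle\forest_c^*,\sch_w^n\rangle$, which is the number of classes $C\subseteq\RC_n(w)$ with $\code_\forest(C)=c$. Part I ensures these classes are subsets of $\RC_n(w)$ whose characters are the forest polynomials, so the counting is legitimate. The right-hand side of II has exactly this coefficient on $\dsch_w^n$. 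Since $\{\dsch_w^n\}$ is a basis of $\dcoma_n$, the two sides agree.
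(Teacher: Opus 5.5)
Your proposal follows the paper's proof. For Part I, the paper cites the per-class statement \cite[Proposition~5.10]{nadeau2024forest}: each $\Omega$-class of words contributes exactly its forest polynomial through its compatible sequences, once RC graphs are identified with reversed word/row-sequence pairs. Part II is the same formal dual-basis computation from Theorem~\ref{theorem:forestforest}, so your fallback counting argument is not needed.

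One small correction. After reversal the row sequence is weakly \emph{decreasing}, and strictly so at descents of the reversed word. So the pair is compatible in the convention of \cite{nadeau2024forest}, not in the Billey--Jockusch--Stanley sense, which describes the unreversed reading word.
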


\begin{proof}[Proof of Theorem \ref{theorem:forestformula}]
  Part I of this theorem follows from the fact that for any equivalence class of words $W$ that have the same $P$-LBS, $\forest_{\code_\forest(W)}(x)$ is the sum of the compatible sequences on the words in $W$ (\cite[Proposition~5.10]{nadeau2024forest}), which are simply the RC graphs in the corresponding equivalence class of RC graphs; the words of \cite{nadeau2024forest} are the reversals of our reading words, with the compatible sequences reversed accordingly. Part II follows from Theorem \ref{theorem:forestforest} and the definition of the dual Schubert basis.
\end{proof}

Guo and Woodruff \cite{guo2026forest} also published an alternative RC graph formula for forest polynomials.

\begin{proposition}
Let $c$ be a weak composition of length $p$. Let $\RC_\forest(c)$ be the set of RC graphs with $\wtt(R) = c$ such that $\mathrm{word}(\mathbb{Y}_\forest(R))$ is minimal in lexicographical order among all RC graphs $R'$ such that $\code_\forest(R)=\code_\forest(R')$. Then
$$[c] = \sum_{R\in\RC_\forest(c)} \forest_{\code_\forest(R)}^*$$
\end{proposition}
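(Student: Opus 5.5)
The plan is to expand $[c]$ in the dual forest basis of $\dcoma_p$ and to compute each coefficient with Theorem~\ref{theorem:forestformula}(I). Since $\{\forest_a\}_{a\in\mathrm{Comp}_p}$ is a $\mathbb{Z}$-basis of $\coma_p$ and $\{\forest_a^*\}$ is its dual basis, every element of $\dcoma_p$ expands by pairing. In particular
$$[c]=\sum_{a\in\mathrm{Comp}_p}\langle [c],\forest_a\rangle\,\forest_a^*\text{.}$$
By the definition of the pairing, $\langle [c],\forest_a\rangle$ is the coefficient of $x^c$ in $\forest_a(x_1,\ldots,x_p)$. So it suffices to show that, for each $a$, this coefficient equals $\#\{R\in\RC_\forest(c):\code_\forest(R)=a\}$.

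For this I will use Theorem~\ref{theorem:forestformula}(I). Take any forest class $X$ of bounded RC graphs of height $p$ with $\fcode(X)=a$. Then $\forest_a=\sum_{R'\in X}x^{\wtt(R')}$, so the coefficient of $x^c$ in $\forest_a$ is $\#\{R'\in X:\wtt(R')=c\}$. The point is that this count is the same for every class of code $a$, because each such class has the same generating polynomial $\forest_a$. Hence it is enough to fix one distinguished class $X_a$ for each code $a$. The condition defining $\RC_\forest(c)$ should be read as making this choice: $\mathbb{Y}_\forest(R)$ is the canonical representative of the class of $R$, and among all classes with forest code $a$ we keep the one whose canonical representative has lexicographically minimal reading word. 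Then
$$\RC_\forest(c)=\bigsqcup_a\{R\in X_a:\wtt(R)=c\}\text{,}$$
and summing over $a$ gives the identity.

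Three routine checks remain. First, the heights must match: RC graphs of weight $c$ have height $p$, and forest codes of classes of height $p$ range over $\mathrm{Comp}_p$. Second, classes with codes $a$ for which no class of height $p$ exists must contribute nothing. This holds because both sides vanish there: no $R$ of height $p$ has that code, and $\forest_a$ then still lies in $\coma_p$ and is paired correctly through the basis. Third, the lexicographic selection must single out exactly one class per code. The reading words of distinct RC graphs are distinct, so the lex minimum over the finitely many classes of code $a$ and weight-degree $|c|$ is unique.

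I expect this third point to be the main obstacle, namely confirming that "minimal among all $R'$ with the same forest code" selects a whole class rather than individual graphs. The concern is that the comparison is made on $\mathrm{word}(\mathbb{Y}_\forest(R))$, which depends only on the class of $R$. So the selected set is a union of classes, and uniqueness of the minimal word forces it to be a single class. If $\mathbb{Y}_\forest$ were not constant on classes, I would instead fix an arbitrary representative class per code and note that the proof above works for any such choice.
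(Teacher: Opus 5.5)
The paper states this proposition without proof. Your overall strategy is the natural one:
\begin{itemize}
\item expand $[c]=\sum_{a\in\mathrm{Comp}_p}\langle[c],\forest_a\rangle\,\forest_a^*$;
\item identify $\langle[c],\forest_a\rangle$ with the number of weight-$c$ elements of any one height-$p$ forest class of code $a$, via Theorem~\ref{theorem:forestformula}(I);
\item use that $\mathbb{Y}_\forest$ (the canonical representative of a forest class, per the notation list) is constant on classes, so the lexicographic condition selects whole classes.
\end{itemize}

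However, your second ``routine check'' is false as written, and it hides a step you genuinely need. Suppose some $a\in\mathrm{Comp}_p$ were not the forest code of any class of height $p$. Then the right-hand side would have coefficient $0$ on $\forest_a^*$. The left-hand coefficient $\langle[c],\forest_a\rangle$ is the coefficient of $x^c$ in $\forest_a$, and this is nonzero for some $c$ because $\forest_a\neq0$, regardless of whether $a$ is realized. So ``both sides vanish'' is wrong, and the identity would fail for that $c$.

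What you must show is that every $a\in\mathrm{Comp}_p$ is realized, and this is true. By Corollary~\ref{corollary:evschub} and Theorem~\ref{theorem:forestformula}(I), $\sch_w^p=\sum_{D\subseteq\RC_p(w)}\forest_{\fcode(D)}$ for $\maxd(w)\leq p$. Both $\{\sch_w^p\}$ and $\{\forest_a\}_{a\in\mathrm{Comp}_p}$ are $\mathbb{Z}$-bases of $\coma_p$. So in each degree the nonnegative transition matrix, whose $(w,a)$ entry counts the classes $D\subseteq\RC_p(w)$ of code $a$, is invertible and has no zero column. This is the same fact used silently for (H3) in Proposition~\ref{proposition:forestadmissible}.

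Your third check also rests on a false claim: distinct RC graphs can share a reading word. For example, $\{(1,2)\}$ and $\{(2,1)\}$ at height $2$ both have reading word $2$. The conclusion you want still holds, for a different reason. The relation $\equiv_\forest$ is defined through the reading word (Definition~\ref{definition:forestequiv}), so height-$p$ graphs with equal words lie in the same class. Hence the canonical representatives of distinct classes of code $a$ have distinct words, all of length $|a|$. Since there are finitely many such classes, the lexicographic minimum is attained by exactly one class. Your fallback of fixing an arbitrary class per code is then unnecessary, and it would in any case prove a statement about a set other than $\RC_\forest(c)$. With these two repairs the argument is complete.
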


\begin{remark}
The coefficients of the expansion of a monomial into forest polynomials, or equivalently the coefficients of the expansion of a dual forest polynomial into weak composition monomials, is given in \cite[Proposition~10.15]{nadeau2024forest}.
\end{remark}

\begin{definition}[The submodule $\Gamma\subseteq \BRC$]\label{definition:gamma}
  Let $\Gamma$ be the $\mathbb{Z}$-submodule of $\BRC$ generated by the elements $R_1-R_2$ such that $\hr(R_1)=\hr(R_2)$ and $R_1\equiv_\forest R_2$. A \emph{forest class} is a set of bounded RC graphs of a common height $n$ that is an equivalence class for $\equiv_\forest$; its \emph{forest code} $\fcode(D)$ is the common value of $\fcode(R)$, $R\in D$, regarded as a weak composition of length $n$.
\end{definition}

\begin{lemma}[$\equiv_\forest$ and concatenation]\label{lemma:forestconcat}
Let $\mathbf r_1\equiv_\forest\mathbf r_2$ be words.
\begin{enumerate}
\item[(a)] $\mathbf r_1\mathbf s\equiv_\forest\mathbf r_2\mathbf s$ for every word $\mathbf s$.
\item[(b)] $\mathbf r_1+m\equiv_\forest\mathbf r_2+m$ for every $m\geq0$, where $\mathbf r+m$ adds $m$ to every letter.
\end{enumerate}
\end{lemma}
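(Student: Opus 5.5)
Both parts follow by induction on the length of the suffix, once we know how $\Omega$-insertion treats the extra letters. The insertion tableau $P(\overline{\mathbf r\mathbf s})$ is obtained from $P(\overline{\mathbf r})$ by inserting the letters of $\overline{\mathbf s}$ one at a time. Each step of the insertion algorithm reads only the current forest $F'$ and the $P'$-labels of its roots; the time stamps $Q'$ are never consulted. So if $P(\overline{\mathbf r_1})=P(\overline{\mathbf r_2})$, the forests agree as well, since $P$ is a labeling of a determined forest. It then suffices to check that the letters being inserted are identical in the two cases.

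For (a), the one subtlety is injectification. The superscript attached to a letter of $\mathbf s$ depends on how many equal letters occur in $\mathbf r_i$. We therefore need that $\mathbf r_1$ and $\mathbf r_2$ have the same multiset of letters. This follows from $P(\overline{\mathbf r_1})=P(\overline{\mathbf r_2})$, because the set of labels of $P$ is exactly the set of letters of $\overline{\mathbf r_i}$, and that set records every letter together with its multiplicity. Consequently $\overline{\mathbf r_1\mathbf s}=\overline{\mathbf r_1}\,\mathbf t$ and $\overline{\mathbf r_2\mathbf s}=\overline{\mathbf r_2}\,\mathbf t$ for the same injective word $\mathbf t$. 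Inducting on the length of $\mathbf t$, a single insertion step maps equal pairs $(F',P')$ to equal pairs, and this gives (a).

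For (b), shifting every letter by $m$ commutes with injectification: $\overline{\mathbf r+m}=\overline{\mathbf r}+m$, since superscripts depend only on equalities among letters. Now check by induction on the length that $\Omega$-insertion is equivariant under this shift. Define the shifted forest by translating the support and $\lambda_F$ by $m$, keeping the same trees, and translating the $P$-labels by $m$ in value with superscripts unchanged. Each case of the insertion rule is expressed through membership $i\in S'$, the intervals $I_j$, the tests $i\pm1\in I_j$ and $\max(I_j)+2\in S'$, and comparisons in the lexicographic order on $\overline{\mathbb{Z}}$. All of these are invariant under simultaneous translation. Hence $P(\overline{\mathbf r}+m)$ is the translate of $P(\overline{\mathbf r})$, and equal $P$'s translate to equal $P$'s.

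Neither part is deep. The step most likely to need care is the multiset claim in (a), which keeps the superscripts of the appended letters consistent. If one prefers not to read it off from the labels of $P$, it also follows from the bijection of \cite[Theorem~5.1]{nadeau2024forest}, since the word is recovered from $(P,Q)$ and its letter multiset is the label set of $P$.
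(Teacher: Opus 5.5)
Your proof is correct. Part (a) is exactly the paper's argument. Equality of the $P$-labelings forces equal letter multiplicities, so the appended letters receive the same superscripts. Induction on the appended injective word then uses only that a single insertion step depends on $(F(w),P(w))$ and the new letter.

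Part (b) is where you genuinely differ. The paper gives no argument and simply cites \cite[Proposition~5.8]{nadeau2024forest}. You instead prove translation equivariance directly. Injectification commutes with $\mathbf r\mapsto\mathbf r+m$, because superscripts depend only on equalities among letters. Every clause of the $\Omega$-insertion rule is invariant under shifting all values by $m$ with superscripts fixed. These clauses are membership $i\in S'$, the maximal intervals $I_j$, the tests $i\pm1\in I_j$, $\max(I_j)+2\in S'$ and $\min(I_j)-2\in S'$, and comparisons in the lexicographic order on $\overline{\mathbb Z}$.

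Your route makes the lemma self-contained. It also shows that (b) needs nothing beyond the shape of the insertion rule. The citation is shorter but sends the reader to the source to confirm that its conventions match.

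One small point to state explicitly: the hypothesis $m\geq 0$ is what keeps the translated support inside the positive integers. That is needed for the translated pair to be an indexed forest with an LBS labeling in the paper's sense.
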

\begin{proof}
(a) The labeling $P(\overline{\mathbf r})$ records the letters of $\overline{\mathbf r}$ as a set, so $P(\overline{\mathbf r_1})=P(\overline{\mathbf r_2})$ implies that each value occurs equally often in $\mathbf r_1$ and $\mathbf r_2$; hence $\overline{\mathbf r_1\mathbf s}=\overline{\mathbf r_1}\,\mathbf s'$ and $\overline{\mathbf r_2\mathbf s}=\overline{\mathbf r_2}\,\mathbf s'$ with the same injective word $\mathbf s'$. Inserting a letter $a$ into $(F(w),P(w))$ depends only on $(F(w),P(w))$ and $a$, so $P(\overline{\mathbf r_1}\mathbf s')=P(\overline{\mathbf r_2}\mathbf s')$ by induction on the length of $\mathbf s'$.

(b) This is \cite[Proposition~5.8]{nadeau2024forest}.
\end{proof}

\begin{lemma}[Deleting the first crossing commutes with the zero map]\label{lemma:forestzeropop}
For a nonempty reduced RC graph $R$, let $\operatorname{pop}R$ delete its first crossing in reading order, without changing its declared height. If $R$ is bounded of height $n\geq2$ with empty last row, then
$$\zeromap(\operatorname{pop}R)=\operatorname{pop}(\zeromap R).$$
The removed crossings lie in the same row, although their letters need not be equal.
\end{lemma}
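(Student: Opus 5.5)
The plan is to transport everything to bumpless pipe dreams, where both operations become local, and then use the uniqueness part of Proposition~\ref{proposition:zerotrunc}.

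First I would check that both sides are well defined. Let $r$ be the first nonempty row of $R$. Deleting the rightmost crossing of row $r$ leaves a prefix-deleted reduced word, so $\operatorname{pop}R$ is reduced. Its permutation is $s_a\wof R$ with length one less. Left multiplication by $s_a$ moves values, not positions, so $\maxd(\wof{\operatorname{pop}R})\leq\maxd(\wof R)\leq n$. Thus $\operatorname{pop}R$ is a bounded RC graph of height $n$, its last row is still empty, and $\zeromap(\operatorname{pop}R)$ is defined. Likewise $\zeromap R$ has the same weight as $R$ by Lemma~\ref{lemma:zeroweight}, so it is nonempty with first nonempty row $r$, and $\operatorname{pop}(\zeromap R)$ is a bounded RC graph of height $n-1$. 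Its first crossing lies in row $r$. This already proves the last sentence of the lemma: both removed crossings lie in row $r$.

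Next I would reduce the identity to a statement about the first $n-1$ rows. Both $\zeromap(\operatorname{pop}R)$ and $\operatorname{pop}(\zeromap R)$ are bounded of height $n-1$. By Lemma~\ref{lemma:bpdcompletion} and the injectivity of $\Phi$, it suffices to show that their images under $\Phi$ agree in rows $1,\ldots,n-1$.

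Now I would compare the two sides using the Gao--Huang operation $\nabla$. From the description preceding Lemma~\ref{lemma:poplocal}, $\Phi(\operatorname{pop}X)=\nabla\Phi(X)$ for any nonempty RC graph $X$. For the left side, Proposition~\ref{proposition:zerotrunc} applied to $\operatorname{pop}R$ gives that $\Phi(\zeromap(\operatorname{pop}R))$ agrees with $\nabla\Phi(R)$ in rows $<n$. For the right side, $\Phi(\operatorname{pop}\zeromap R)=\nabla\Phi(\zeromap R)$. By Proposition~\ref{proposition:zerotrunc}, $\Phi(\zeromap R)$ agrees with $\Phi(R)$ in rows $1,\ldots,n-1$. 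Lemma~\ref{lemma:poplocal} (locality of $\nabla$, with $m=n-1$) then shows that $\nabla\Phi(\zeromap R)$ and $\nabla\Phi(R)$ agree in rows $1,\ldots,n-1$. Chaining these agreements proves the claim.

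The main obstacle is the well-definedness bookkeeping, and in particular the identification $\Phi(\operatorname{pop}X)=\nabla\Phi(X)$ for a graph regarded at a different declared height. This needs care because $\Phi$ depends only on the underlying set, which makes the identification harmless. The one point to confirm is that $\maxd$ does not increase under $\operatorname{pop}$, which is the left-multiplication argument in the first paragraph.

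As a remark, the letters of the two removed crossings can differ, because $\zeromap$ may have shifted crossings of row $r$ horizontally. The argument above never compares letters, only rows, so this does not affect the proof.
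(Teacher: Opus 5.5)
Your argument is correct and is essentially the paper's proof: you get boundedness of $\operatorname{pop}R$ and $\operatorname{pop}(\zeromap R)$ from left multiplication by $s_a$, and equality of the rows of the removed crossings from weight preservation; you then chain $\Phi(\operatorname{pop}X)=\nabla\Phi(X)$, Proposition~\ref{proposition:zerotrunc} for both $R$ and $\operatorname{pop}R$, and locality of $\nabla$ (Lemma~\ref{lemma:poplocal}), and finish with bounded completion and injectivity of $\Phi$. The one justification to tighten is why $\maxd$ cannot increase: it is because $a$ is a left descent of $\wof R$, so the only comparison that changes turns the inverted pair of values $a+1,a$ into a non-inversion, and not merely because left multiplication permutes values rather than positions.
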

\begin{proof}
Write the reading word of $R$ as $a\mathbf r$. Deleting its first crossing gives the reduced word $\mathbf r$ of $s_a\wof R$. Multiplication on the left by $s_a$ interchanges the consecutive values $a,a+1$, which occur in inverted order because $a$ is a left descent. Comparisons at adjacent positions are unchanged except when those two values occupy adjacent positions, in which case their descent is removed. Thus no right descent is created, and $\operatorname{pop}R$ remains bounded at height $n$, with empty last row.

Put $Y=\zeromap R$, $R^-=\operatorname{pop}R$, and $Y^-=\operatorname{pop}Y$. The same argument makes $Y^-$ bounded at height $n-1$. The row of the first crossing is the first nonzero entry of the weight vector, so it agrees for $R,Y$ by weight preservation. The Gao--Huang pop correspondence gives
$$\Phi(R^-)=\nabla\Phi(R),\qquad \Phi(Y^-)=\nabla\Phi(Y).$$
Proposition~\ref{proposition:zerotrunc} says that $\Phi(R)$ and $\Phi(Y)$ agree in rows $1,\ldots,n-1$. Lemma~\ref{lemma:poplocal} therefore gives the same agreement for $\Phi(R^-)$ and $\Phi(Y^-)$. Apply Proposition~\ref{proposition:zerotrunc} to $R^-$: $\Phi(\zeromap R^-)$ also agrees with these rows. Both $\zeromap R^-$ and $Y^-$ are bounded at height $n-1$, so bounded completion and injectivity of $\Phi$ identify them.
\end{proof}

\begin{lemma}[Leading-letter transport in a reversed transition]\label{lemma:forestzeroleadingstep}
Let $\maxd(u)=n$, $s=\max\{j>n:u(j)<u(n)\}$, $\pi=ut_{ns}$, and $v=\pi t_{in}\gtrdot\pi$ with $i<n$. Set $\eta=\pi(n)$. If a graph $D\in\RC(v,n)^0$ has reading word beginning with $b$, then the reading word of its forward Monk reconstruction $m_{i,n}D\in\RC(u,n)^0$ begins with
$$f_\eta(b)=
\begin{cases}
b+1,&b=\eta-1,\\
b,&b\neq\eta-1.
\end{cases}$$
In particular, the reconstructed leading letter is independent of the remaining reduced word and the row sequence.
\end{lemma}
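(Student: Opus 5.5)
The approach is to run the Little bump that inverts the Monk move backwards and trace only what happens to the first letter of the reading word. By the description before Lemma~\ref{lemma:bumpconfine}, $D$ is obtained from $X:=m_{i,n}D$ by the Little bump at the crossing $c$ with $\rtt_X(c)=(n,s)$. This bump repeatedly decrements one letter at a time, always with the row sequence fixed, and each time a non-reduced word appears it moves to the defect position. Equivalently, the forward reconstruction $X$ is recovered from $D$ by a sequence of single-letter increments (the inverse Little bump, i.e.\ Monk insertion). Therefore the first letter of $X$ differs from $b$ only if the chain of modified positions passes through position $1$. Each modified position changes its letter by exactly one, and a position is modified at most once, because the bump path visits positions with distinct right roots. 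So the first letter of $X$ is either $b$ or $b+1$.

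The key step is to decide when position $1$ is touched, and to show that this happens exactly when $b=\eta-1$. I would characterize the crossings moved by the inverse bump via right roots. In the Monk move $m_{i,n}$, every position that is incremented is a crossing between the pipe $p$ leaving through row $n$ and some other pipe. At the moment of the increment, the right root of that position contains the current value of the wire tracked by the bump; in the word model this value is $\eta=\pi(n)$, since $t_{in}$ in $v=\pi t_{in}$ moves $\eta$ to position $i$. The right root of the first letter of any reduced word $b\,\mathbf r$ is simply $(b,b+1)$, read from the left, since $\wof{}^{-1}$ of the empty prefix is the identity. Hence position $1$ can be incremented, turning $b$ into $b+1$ so that its root becomes $(b+1,b+2)$, only if the transposition being inserted involves the values $\{\eta-1,\eta\}$ in the appropriate configuration. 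That forces $b=\eta-1$.

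Conversely, if $b=\eta-1$, I would show that position $1$ must be incremented. In $v$, the values $\eta-1$ and $\eta$ are the first swap, so $s_{\eta-1}$ is a left descent of $v$. In $u=vt_{in}t_{ns}$, however, the value $\eta$ has moved to position $s>n$ while $\eta-1$ stays at a position $\le n$. One then checks that $s_{\eta-1}$ is no longer a left descent of $u$ but $s_\eta$ is. This requires showing that $\eta+1$ lies between these positions, using the choice of $s$ as maximal together with (O1) and (O2) of Lemma~\ref{L:leaves}. Since the leading letter of any reduced word is a left descent, the first letter of $X$ cannot remain $\eta-1$, so it must become $\eta$. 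The same left-descent comparison also gives the converse for $b\neq\eta-1$: the set of left descents changes only at $\eta-1$ and $\eta$, and a letter $b\notin\{\eta-1,\eta\}$ can only be incremented along the wire path described above.

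I expect the main obstacle to be the case $b=\eta$ and the precise left-descent bookkeeping. I would need to show that $s_\eta$ remains a left descent of $u$ whenever it is a left descent of $v$, or at least that the bump never reaches position $1$ in that case. I would handle this by comparing $u^{-1}(\eta)$ and $u^{-1}(\eta+1)$ with $v^{-1}$ of the same values, using the inequalities in Lemma~\ref{L:leaves}, and falling back on Lemma~\ref{lemma:forestzeropop}'s argument that left multiplication by $s_b$ affects only the adjacent values $b$ and $b+1$. Independence from the rest of the word and from the row sequence then follows immediately, because the final criterion depends only on $b$ and $\eta$.
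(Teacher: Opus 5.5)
Your left-descent step for $b=\eta-1$ is sound. Since $\maxd(u)=n$, every position after $s$ carries a value larger than $u(n)>\eta$. So $u^{-1}(\eta-1)<s=u^{-1}(\eta)$, hence $s_{\eta-1}$ is not a left descent of $u$, and the leading letter cannot remain $\eta-1$. (You do not need $s_\eta$ to be a left descent. Also, $\eta-1$ may lie strictly between positions $n$ and $s$ rather than at a position $\le n$; only $u^{-1}(\eta-1)<s$ is used.)

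The gap is the other half, which carries all the content: the first letter is pushed only if $b=\eta-1$, and then only once.
\begin{itemize}
\item Your claim that each position is modified at most once ``because the bump path visits positions with distinct right roots'' has no argument behind it. Right roots are recomputed from the current word after every push, and the paper itself has to check separately that the first letter is not pushed twice.
\item With the paper's definition the right root is computed from the \emph{suffix}. For a reduced word $b\mathbf r$ of $x$, the right root of the first letter is the pair of positions $(x^{-1}(b+1),x^{-1}(b))$, not $(b,b+1)$. What equals $\{b,b+1\}$ is the pair of \emph{values} exchanged there.
\item The assertion that every pushed crossing involves $\eta$ is borrowed from the bumpless description of $m_{i,n}$, where the drooped pipe is pipe $\pi(n)$. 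Nothing you cite transports that pipe to a wire of the reading word.
\item Even granting that assertion, you never decide whether $\eta$ is the smaller or the larger value at a push. That orientation is exactly what separates $b=\eta-1$ from $b=\eta$, which is why $b=\eta$ stays open in your plan.
\end{itemize}
Left-descent bookkeeping cannot close this. Knowing whether $b$ is a left descent of $u$ never shows the first letter was untouched, and for $b=\eta$ whether $s_{\eta+1}$ is a left descent of $u$ is not determined.

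The missing ingredient is an orientation invariant for the incrementing bump. Prove it on the wiring diagram of the current word, labeling each wire by its level at the left end, i.e.\ its value. At the first position the exchanged wires are then $c$ (upper) and $c+1$ (lower), where $c$ is the current first letter.
\begin{itemize}
\item At the initial mark (right root $(i,n)$) the exchanged values are $v(n)=\pi(i)<v(i)=\eta$. Since $\mathrm{word}(D)$ is reduced, these wires have not crossed earlier, so $\eta$ is on the lower level.
\item Suppose a wire $B$ is on the lower level $a_p+1$ just before a pushed position $p$. The increment makes $B$ cross the wire $E$ on level $a_p+2$. If the result is not reduced, the doubled pair is $\{B,E\}$ and it crosses exactly twice.
\item Let $q$ be the other crossing of $\{B,E\}$. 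Whether $q<p$ or $q>p$, the wire $E$ is above $B$ just before $q$, so $B$ is again on the lower level at the next push.
\end{itemize}
Hence $\eta$ occupies level $c+1$ just before every push at a letter $c$. At the first position that level carries the value $c+1$, so the first letter is pushed only while it equals $\eta-1$: never if $b\neq\eta-1$, and at most once if $b=\eta-1$.

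With this lemma your plan becomes a complete proof, shorter than the paper's. The paper instead splits on whether the initial mark is the first letter and runs the Monk move on the suffix with base $s_b\pi$. It identifies the suffix output ($s_bu$ if $b\neq\eta-1$, $\pi$ otherwise) using that a permutation with increasing tail past $n$ has a unique cover $\rho t_{nq}\gtrdot\rho$.
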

\begin{proof}
For any permutation $\rho$ with increasing tail in positions $>n$, there is exactly one right-hand Monk cover $\rho t_{nq}\gtrdot\rho$: $q$ is the first position $>n$ with $\rho(q)>\rho(n)$. Earlier tail values are smaller than $\rho(n)$; later choices are blocked by $\rho(q)$. For $\rho=\pi$ this unique cover is $u$, at $q=s$.

Write the input word as $b\mathbf r$. If its marked exchange occurrence is the first position, then deleting that position gives $\pi$, so $v=s_b\pi$. The root $(i,n)$ at that occurrence means $\pi(i)=b$ and $\pi(n)=b+1$. Thus this case occurs only when $b=\eta-1$.

Otherwise the marked occurrence lies in $\mathbf r$. Deleting it gives a reduced word for $\bar\pi=s_b\pi$, so $b$ is a left descent of $\pi$. No right descent is created by left multiplication at a left descent, as proved in Lemma~\ref{lemma:forestzeropop}; hence $\bar\pi$ also has increasing tail past $n$. Its suffix Monk input has permutation $s_bv=\bar\pi t_{in}$ and the same marked right root $(i,n)$.

Suppose $b\neq\eta-1$. Swapping the consecutive values $b,b+1$ leaves the position $s$ of the first tail value above the value in position $n$ unchanged. If $\eta=b$, the value in position $n$ increases by one, but the value $b+1$ was earlier than $n$ because $b$ is a left descent; thus no tail value crosses the threshold. If $\eta\notin\{b,b+1\}$, comparisons with the threshold do not change. Consequently the unique suffix Monk output is
$$q=\bar\pi t_{ns}=s_bu.$$
Also $b$ remains a left descent of $u$. The transition from $\pi$ to $u$ can affect the order of $b,b+1$ only if one is in position $n$ or $s$: the case $\eta=b$ moves $b$ later, and the case $\pi(s)=b$ moves $b$ to $n$ while $b+1$ is earlier than $n$; $\pi(s)=b+1$ is impossible for a left descent with increasing tail and $\eta\neq b$. All other cases leave that order unchanged. Thus prefixing $b$ to the reduced suffix output is reduced and gives $u$. No push reaches the first position, and the leading letter stays $b$.

Now suppose $b=\eta-1$ and the initial mark is in the suffix. The left-descent condition places $b$ at a position $r>n$. In $\bar\pi$, position $n$ contains $b$ and position $r$ contains $b+1$. Its first larger tail value is therefore at $r$, and the unique suffix Monk output is $\bar\pi t_{nr}=\pi$. Prefixing $b$ is not reduced, so the other exchange defect is the first position. The next push changes this letter to $b+1=\eta$. In the initial-mark-first case, this is already the first push, with suffix permutation $\pi$.

If $\eta$ is not a left descent of $\pi$, the word $\eta\mathbf r'$ with suffix permutation $\pi$ is reduced and terminates. Otherwise $\pi^{-1}(\eta+1)=i'<n$, since $\pi(n)=\eta$ and the tail is increasing. The other exchange defect is in the suffix, with right root $(i',n)$; the remaining computation is the suffix Monk move with base $s_\eta\pi$. This base has increasing tail, and its unique right cover is at the same $s$, because the value $\eta+1$ occurs before $n$, not in the tail. Its output is $s_\eta u$. The value $\eta$ is in position $s$ of $u$ while $\eta+1$ is before $n$, so $\eta$ is a left descent of $u$. Prefixing $\eta$ to this suffix output is therefore reduced and gives $u$; the first position is not pushed again. Both subcases yield the leading letter $b+1$.

These suffix computations describe the actual marked bump trajectory: until a suffix Monk move terminates, deleting its current marked occurrence leaves a reduced suffix, so the corresponding whole word cannot terminate or acquire its other exchange defect at the first position. At suffix termination, either prefixing the leading letter is reduced, or the first position is precisely the other defect. The exchange property therefore justifies each decomposition used above.
\end{proof}

\begin{corollary}[The lifted leading letter is unique]\label{corollary:forestzeroleadingunique}
Fix $\maxd(u)=n$ and $w'\in T_n(u)$. There is a function $F_{u,w'}$ on the possible terminal leading letters such that every $Y\in\RC_{n-1}(w')$ whose reading word starts with $b$ has lift $L_{u,w'}(Y)$ starting with $F_{u,w'}(b)$. For the unique transition path $u=u_0,\ldots,u_m=w'$, put $\eta_k=(u_{k-1}t_{n,s_k})(n)$. Then
$$F_{u,w'}=f_{\eta_1}\circ f_{\eta_2}\circ\cdots\circ f_{\eta_m}.$$
\end{corollary}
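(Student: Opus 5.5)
The plan is to read the lift $L_{u,w'}$ as the inverse of $\zeromap$ restricted to the fiber over $\RC_{n-1}(w')$, which exists by Theorem~\ref{theorem:zero_bijection}, and to factor it into single transition steps. First I will check that the transition path from $u$ to $w'$ is unique. Each step $u_{k-1}\mapsto u_k$ is a transition step at $n$ whose $s_k$ is forced. By Lemma~\ref{L:leaves}, every endpoint in $T_n(u)$ occurs without multiplicity. Moreover, Lemma~\ref{L:disjoint}, Step~1, shows that two paths which branch at some point end at different permutations. Hence a given $w'$ determines a unique path $u=u_0,\ldots,u_m=w'$, and the numbers $\eta_k=(u_{k-1}t_{n,s_k})(n)$ depend only on $(u,w')$.

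Next I will unwind Definition~\ref{definition:zeromap}. Take $R\in\RC(u,n)^0$ with $\zeromap R=Y$. Then $R=R_0\mapsto R_1\mapsto\cdots\mapsto R_m$, where $R_k$ is the Little bump of $R_{k-1}$ at the crossing with right root $(n,s_k)$, $\wof{R_k}=u_k$, and $Y$ is $R_m$ with its empty row $n$ deleted. By Lemma~\ref{lemma:zeroweight}, $R_{k-1}=m_{i_k,n}R_k$, the forward Monk reconstruction for $\pi_k=u_{k-1}t_{n s_k}$ and $v=\pi_k t_{i_k n}=u_k$. Here $\eta_k=\pi_k(n)$ is exactly the $\eta$ of Lemma~\ref{lemma:forestzeroleadingstep}.

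Deleting the empty last row does not change the reading word, so the reading word of $R_m$ starts with $b$. I will then apply Lemma~\ref{lemma:forestzeroleadingstep} descending in $k$: if $R_k$ starts with $b_k$, then $R_{k-1}$ starts with $f_{\eta_k}(b_k)$. The hypotheses of that lemma are $R_k\in\RC(u_k,n)^0$ and $\maxd(u_{k-1})=n$. The second holds because a transition step is applied at $u_{k-1}$, which needs $\maxd(u_{k-1})=n$. The first holds because the last row stays empty by weight preservation. Composing the steps gives that the reading word of $L_{u,w'}(Y)=R_0$ starts with $f_{\eta_1}(f_{\eta_2}(\cdots f_{\eta_m}(b)))$. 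This value depends only on $b$ and the $\eta_k$, so $F_{u,w'}$ is a well-defined function equal to the stated composite.

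The main obstacle is the identification in the second step. I need each backward step to be precisely the reconstruction $m_{i_k,n}$ to which Lemma~\ref{lemma:forestzeroleadingstep} applies, with $\pi=\pi_k$ and the correct $i_k$. The subtle point is that $m_{i_k,n}$ inverts the Little bump as an inverse Monk move and never falls into the $x_n\!\rightsquigarrow$ case. I will handle this by citing the uniqueness statement in Lemma~\ref{lemma:zeroweight}: since row $n$ is empty, that case is excluded, so $R_{k-1}=m_{i_k,n}(R_k)$ for a unique $i_k$. Once this is in place, the rest is bookkeeping of leading letters through the composition.
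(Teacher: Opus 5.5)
Your proposal is correct and follows the same route as the paper: it reverses the unique transition path, applies Lemma~\ref{lemma:forestzeroleadingstep} at each step, and uses that deleting or padding the empty last row does not change the reading word. You supply the details the paper leaves implicit, namely uniqueness of the path via Lemma~\ref{L:leaves} and the identification $R_{k-1}=m_{i_k,n}R_k$ via the uniqueness clause of Lemma~\ref{lemma:zeroweight}.
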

\begin{proof}
Reverse the unique transition path and apply Lemma~\ref{lemma:forestzeroleadingstep} at each step. Empty-row padding does not affect the word. This gives the formula and the uniqueness assertion.
\end{proof}

\begin{lemma}[A high-supported forest has a row-supported representative]\label{lemma:foresthighrows}
Let $D$ be a bounded RC graph of height $h$, and suppose
$$\operatorname{supp}P\!\left(\overline{\mathrm{rev}(\mathrm{word}(D))}\right)
  \subseteq[J+1,\infty)$$
for an integer $0\leq J<h$. There is a bounded height-$h$ graph $\widehat D\equiv_\forest D$ whose crossings all lie in rows $J+1,\ldots,h$.
\end{lemma}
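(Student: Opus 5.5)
The plan is to find the required representative through the monomial expansion of the forest polynomial of the class, using Theorem~\ref{theorem:forestformula}(I). Put $\mathbf r=\mathrm{rev}(\mathrm{word}(D))$, let $F=F(\overline{\mathbf r})$ be the indexed forest produced by $\Omega$-insertion, and let $a=\fcode(D)=\code(F)$. The support of $P(\overline{\mathbf r})$ is the support of $F$, because each insertion step adds a node labelled by a letter whose value becomes a new element of the support. So the hypothesis says that every element of $\operatorname{supp}F$ is at least $J+1$. Since $\rho_F$ takes values in $\operatorname{supp}F$, the code $a$ vanishes in positions $1,\ldots,J$.

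Next I would show that the monomial $x^{a}$ occurs in $\forest_a$. The natural candidate is the filling $\tau=\rho_F$. I would check the three conditions of the definition directly from the inorder labeling $\lambda_F$:
\begin{itemize}
\item $\tau(v)\leq\rho_F(v)$ holds with equality;
\item for a left child $u=\mathrm{left}(v)$, we have $\rho_F(u)\leq\lambda_F(u)=\rho_F(v)$;
\item for a right child $u=\mathrm{right}(v)$, every node of the subtree of $u$ has inorder label larger than $\lambda_F(v)\geq\rho_F(v)$. Hence $\rho_F(u)>\rho_F(v)$, because $\rho_F(u)$ is the label of $u$ or of its left child, both of which lie in that subtree.
\end{itemize}
This is also the standard fact from \cite{nadeau2024forest} that $x^{\code(F)}$ is the leading monomial of $\forest_F$, and I would cite it if the direct check becomes messy.

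By Theorem~\ref{theorem:forestformula}(I), the monomials of $\forest_a$ are exactly the weights $\wtt(R')$ of the RC graphs $R'\equiv_\forest D$. So there is an RC graph $\widehat D\equiv_\forest D$ with $\wtt(\widehat D)=a$, and therefore rows $1,\ldots,J$ of $\widehat D$ contain no crossings. By (H1)-type reasoning (the classes consist of reduced words of a single permutation), $\wof{\widehat D}=\wof D$, so $\maxd(\wof{\widehat D})\leq h$. It remains to check that $\widehat D$ is bounded of height $h$, that is, that $a_j=0$ for $j>h$. Once that holds, $\widehat D$ is the required graph.

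I expect this last point to be the main obstacle, because the letters of $\mathbf r$, and hence the support of $F$, can exceed $h$. I would attack it as follows. The sum in Theorem~\ref{theorem:forestformula}(I) is taken over height-$h$ bounded graphs, and $\sch_{\wof D}(x_1,\ldots,x_h)$ involves only the first $h$ variables when $\maxd(\wof D)\leq h$. Combining this with Theorem~\ref{theorem:forestforest}, which is a positive expansion with no cancellation, each $\forest_{\fcode(C)}$ appearing for $\wof D$ must lie in $\mathbb Z[x_1,\ldots,x_h]$. Hence every monomial of $\forest_a$, in particular $x^a$, involves only $x_1,\ldots,x_h$. If this degree argument turns out not to control the variables, the fallback is to prove directly that $\rho_F(v)\leq h$ by comparing $\rho_F$ with the right descents of $\wof D$ along the insertion.
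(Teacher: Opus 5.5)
Your proof is correct, and it takes a different route from the paper's. The paper first notes that every letter of $\mathrm{rev}(\mathrm{word}(D))$ lies in the forest support, hence exceeds $J$, so $\wof{D}=1^J\times q$ with $\maxd(q)\leq h-J$. It then translates the word and its labeled forest by $-J$. Using only that the translated forest polynomial is nonzero, together with \cite[Proposition~5.10]{nadeau2024forest}, it extracts \emph{some} RC graph $E$ of $q$. A separate descent argument shows that $E$ lives in rows $\leq\maxd(q)$. Finally it shifts $E$ down by $J$ rows and invokes translation equivariance (Lemma~\ref{lemma:forestconcat}(b)).

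You instead stay inside the original class and take the element of weight exactly $\fcode(D)=\code(F)$. Since $\rho_F$ takes values in $\operatorname{supp}F\subseteq[J+1,\infty)$, that element has rows $1,\ldots,J$ empty automatically. So you need no translation, no factorization $1^J\times q$, and no shift. The cost is that you need the unitriangularity fact that $x^{\code(F)}$ occurs in $\forest_F$, rather than mere nonvanishing. The paper uses exactly this fact in the proof of Theorem~\ref{theorem:LRforest}, where your $\widehat D$ is the representative $\mathrm{rep}$ of the class.

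Your positivity argument that $a_j=0$ for $j>h$ is valid, but it can be shortened. The pipe dream expansion of $\sch_{\wof{D}}$ is itself positive and lies in $\mathbb{Z}[x_1,\ldots,x_h]$. Hence every RC graph of $\wof{D}$ has its crossings in rows $\leq h$, which is the fact the paper proves for $q$.

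Do not use your direct verification of the left-child condition as written: the inequality runs the wrong way. The definition requires $\tau(v)\leq\tau(\mathrm{left}(v))$, that is, $\rho_F(v)\leq\rho_F(\mathrm{left}(v))$, but you prove $\rho_F(\mathrm{left}(v))\leq\rho_F(v)$.

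With $\rho_F$ as literally written (using $\lambda_F(\mathrm{left}(v))$ rather than recursing), the filling $\tau=\rho_F$ really does fail. For a chain of three left children on support $\{1,2,3\}$, $\rho_F$ takes the values $1,1,2$ from bottom to top, and the root violates the condition.

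With the intended definition, $\rho_F(v)$ is the leftmost leaf of the subtree at $v$, so $\rho_F(v)=\rho_F(\mathrm{left}(v))$ whenever the left child is internal. Then the left-child condition holds with equality, and your right-child argument goes through unchanged. For this step, rely on the citation to \cite{nadeau2024forest} or on the corrected definition.
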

\begin{proof}
Every node value belongs to its component support, so every reading letter of $D$ exceeds $J$. Its permutation has the form $1^J\times q$, where $\maxd(q)\leq h-J$. Translate its reversed word and its labeled forest by $-J$. The translated forest has positive support. Its forest polynomial is nonzero, and \cite[Proposition~5.10]{nadeau2024forest}, applied to this single labeled $\Omega$-class, expresses that polynomial as the sum of the compatible-sequence polynomials of the words in the class. Consequently some word in this class has a positive compatible sequence and hence gives an RC graph $E$. Forest swaps are Coxeter commutations, so $E$ has permutation $q$; they also preserve the order of equal-valued occurrences, so canonical injectification is retained.

Every RC graph for $q$ has its crossings in rows at most $\maxd(q)$. Indeed, if $r$ is its last nonempty row, its decreasing row word is a length-additive suffix of the full reduced word and has a right descent at least $r$; that descent is also a right descent of $q$. Shift $E$ down by $J$ rows, so that all its reading letters increase by $J$, and pad with empty rows to height $h$. Translation equivariance gives the required graph $\widehat D$.
\end{proof}

\begin{lemma}[Separated-support shuffles]\label{lemma:forestseparatedshuffle}
Let $\mathbf l,\mathbf u$ be integer words, and let $J$ be an integer. Here the support of $P$ means the support of its indexed forest, not the set of letters in the word. Suppose
$$\operatorname{supp}P(\overline{\mathbf l})\subseteq(-\infty,J-1],
\qquad
\operatorname{supp}P(\overline{\mathbf u})\subseteq[J+1,\infty).$$
Then every shuffle $\mathbf w$ of $\mathbf l$ and $\mathbf u$ satisfies
$$P(\overline{\mathbf w})=
P(\overline{\mathbf l})\sqcup P(\overline{\mathbf u}),$$
where the right side is the disjoint union of the two labeled forests. In particular, $\mathbf w\equiv_\forest\mathbf u\mathbf l$.
Moreover, if
$$\operatorname{supp}P(\overline{\mathbf w})
 \subseteq(-\infty,J-1]\cup[J+1,\infty)$$
and no component meets both sides, then restricting to either value side gives the $P$-symbol of the corresponding subword.
\end{lemma}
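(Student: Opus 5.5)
We argue by induction on the length of the shuffle $\mathbf w$, using the fact that $\Omega$-insertion of a single letter changes only the component of the current forest containing the letter's value and its immediate neighbours. The invariant we maintain is: after reading any prefix $\mathbf w'$ of $\mathbf w$ (a shuffle of prefixes $\mathbf l'$ of $\mathbf l$ and $\mathbf u'$ of $\mathbf u$), we have $P(\overline{\mathbf w'})=P(\overline{\mathbf l'})\sqcup P(\overline{\mathbf u'})$. We also need the supports of $P(\overline{\mathbf l'})$ and $P(\overline{\mathbf u'})$ to lie in $(-\infty,J-1]$ and $[J+1,\infty)$ respectively. The support of the forest of a prefix is contained in the support of the forest of the full word, since insertion only adjoins nodes and never removes support. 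So the prefix supports are automatically confined, and the value $J$ is never in the support of $F(\overline{\mathbf w'})$.

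First we handle injectification. Equal letters of $\mathbf w$ come either all from $\mathbf l$ or all from $\mathbf u$, because every letter value lies in the support of the forest: node values are the letters. Hence the superscripts assigned in $\overline{\mathbf w}$ restrict to those of $\overline{\mathbf l}$ and $\overline{\mathbf u}$, as in the proof of Lemma~\ref{lemma:forestconcat}(a), and the relative order in $\overline{\mathbb Z}$ is preserved.

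Now insert a letter $a$ of value $i$, say coming from $\mathbf u$, so that $i\ge J+1$. We inspect the insertion rule case by case. If $i\notin S'$, the new root attaches roots of components containing $i\pm1$; since $J\notin S'$, any component containing $i-1$ lies in the $\mathbf u$-part (as $i-1\ge J$ and $J\notin S'$ force $i-1\ge J+1$). If $i\in I_j$, then $I_j$ is a $\mathbf u$-component. The extra attachment uses $I_{j-1}$ only when $\min(I_j)-2=\max(I_{j-1})$. This would put $\max(I_{j-1})$ in the $\mathbf l$-part only if $\min I_j=J+1$ and $\max I_{j-1}=J-1$, which would merge across $J$ and put $J$ into the support of the new forest. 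That contradicts the hypothesis applied to the full prefix $\mathbf u'a$, because its support stays in $[J+1,\infty)$ by the same monotonicity.

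This is the main delicate point. We resolve it by comparing with the insertion of $a$ into $P(\overline{\mathbf u'})$ alone: the operations are literally identical because the comparisons involve only $\mathbf u$-roots. The symmetric argument handles letters from $\mathbf l$. The consequence $\mathbf w\equiv_\forest\mathbf u\mathbf l$ follows since $\mathbf u\mathbf l$ is itself a shuffle.

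For the final assertion, we run the same induction in reverse. Here no component of $P(\overline{\mathbf w})$ meets both sides, and the support avoids $J$. Each insertion step of a letter with value on one side then only touches roots on that side, because a merge across sides would create a component meeting both sides, and components only grow. Restricting each step to the letters of one value side therefore reproduces the insertion of the corresponding subword, and so yields its $P$-symbol.
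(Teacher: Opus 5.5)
Your proposal is correct and takes the same approach as the paper. You induct on shuffle prefixes with the disjoint-union invariant, use monotonicity of support, and exclude the only possible cross-$J$ bridge (the left bridge $\min(I_j)-2=\max(I_{j-1})$ for an upper letter) by noting that the standalone insertion into $P(\overline{\mathbf u'})$ would put the new node at inorder position $\min(I_j)-1=J$, contradicting the support bound. You then get the final assertion from the fact that components never separate, so no cross-side merge ever occurred. The one step you should state outright is that inorder-position observation: the bridge case is exactly where the combined and standalone insertions are \emph{not} ``literally identical'', so the contradiction has to come from the position of the new node, not from the merge itself.
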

\begin{proof}
Each inserted letter has its value in the support of its component. Thus the two word alphabets are disjoint, and canonical injectification commutes with taking the two subwords. Support only grows under $\Omega$-insertion, so the same support bounds hold for every prefix of $\mathbf l$ and $\mathbf u$.

We induct on the length of a shuffle prefix, maintaining that its inserted forest is the disjoint union of the forests obtained from the two projected prefixes separately. Insert a next letter $a<J$ from the lower word. If $a$ is absent from the current support, neither neighboring value $a-1,a+1$ belongs to an upper component, so the new node cannot attach to one. If $a$ lies in a lower support interval $I$, the only possible attachment to an upper component is the right-hand bridge in the $\Omega$-insertion rule: $a>P'(u_I)$ and $\max(I)+2$ belongs to the upper support. The support bounds force $\max(I)=J-1$ and the upper interval to start at $J+1$. Without that upper component, the same insertion makes the old tree the left child of the new node, whose inorder index is $\max(I)+1=J$. This contradicts the assumed support bound for the corresponding prefix of $\mathbf l$. Thus no such bridge occurs, and the lower insertion has exactly its standalone effect. The argument for a next letter $a>J$ is symmetric: a bridge to a lower component could occur only when $a<P'(u_I)$ and $\min(I)-2$ belongs to the lower support, forcing $\min(I)=J+1$; without that lower component the new node has inorder index $\min(I)-1=J$, contradicting the support bound for the corresponding prefix of $\mathbf u$. This proves the induction and the disjoint-union assertion.

For the final statement, components that have become attached can never separate under later insertions. Thus, if the final support avoids $J$ and no component meets both sides, no insertion attached a component from one side to the other. Deleting the letters on either side therefore leaves precisely the insertion history, and hence the $P$-symbol, of the other side.
\end{proof}

\begin{proposition}[Fixed-chain lifts with a compatible right factor]\label{proposition:forestcompatiblechain}
Let $T,T'$ be reduced graphs supported in rows $1,\ldots,p$. Choose $h\geq p$ so that both are bounded at height $h$, and put
$$T_k=\zeromap^{h-k}T,\qquad T'_k=\zeromap^{h-k}T'
\quad(p\leq k\leq h).$$
Suppose $\wof{T_k}=\wof{T'_k}=:u_k$ for every $k$, and $T_p\equiv_\forest T'_p$. Let $V$ be a bounded graph of height $q$ such that
$$R=T\cup\shup^pV,\qquad R'=T'\cup\shup^pV$$
are bounded reduced graphs of height $p+q$. Then $R\equiv_\forest R'$.
\end{proposition}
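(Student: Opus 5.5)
The plan is to reduce the statement to the single claim $T\equiv_\forest T'$, and then prove that claim by climbing the chain $T_p,T_{p+1},\ldots,T_h=T$ one $\zeromap$-step at a time.

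\emph{Reduction to $T\equiv_\forest T'$.} The reading word of $R$ is $\mathrm{word}(T)$ followed by $\mathrm{word}(\shup^pV)$. Hence
$$\mathrm{rev}(\mathrm{word}(R))=\mathrm{rev}(\mathrm{word}(\shup^pV))\,\mathrm{rev}(\mathrm{word}(T)),$$
and similarly for $R'$ with the same left factor. Lemma~\ref{lemma:forestconcat}(a) only handles a common \emph{suffix}, so I first need a prefix version. I will use that $\equiv_\forest$ on reduced words is generated by the local Nadeau--Tewari forest moves, which are Coxeter commutations subject to a condition on neighbouring letters. I will then check that such a move inside $\mathrm{rev}(\mathrm{word}(T))$ stays a valid move after any common prefix is placed in front. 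Where the move condition refers to the prefix, I will invoke Lemma~\ref{lemma:forestseparatedshuffle}. This applies because the letters of $\shup^pV$ all exceed $p$, while the permutation hypothesis $\wof{T}=u_h$ makes $R$ and $R'$ share the same permutation and the same length-additive factorization. This step should be routine bookkeeping with the $\Omega$-insertion rule.

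\emph{Lifting along the chain.} I prove by upward induction on $k$ that $T_k\equiv_\forest T'_k$ for $p\le k\le h$; the base case $k=p$ is the hypothesis. For the step from $k$ to $k+1$, Theorem~\ref{theorem:zero_bijection} shows that $T_{k+1}$ and $T'_{k+1}$ are the unique preimages $L_{u_{k+1},u_k}(T_k)$ and $L_{u_{k+1},u_k}(T'_k)$. Because the chain of permutations is fixed, both are computed by the same reversed sequence of transition steps, i.e.\ forward Monk moves $m_{i,n}$.

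I will analyse this lift by induction on the number of crossings, using $\operatorname{pop}$. Lemma~\ref{lemma:forestzeropop} says $\operatorname{pop}$ commutes with $\zeromap$. Corollary~\ref{corollary:forestzeroleadingunique} says the lifted leading letter is $F_{u_{k+1},u_k}(b)$, a function of the old leading letter $b$ alone. The first crossing in reading order is the \emph{last} inserted letter of the reversed word, so $\Omega$-insertion builds the $P$-symbol letter by letter exactly along the $\operatorname{pop}$ recursion. The inductive claim is therefore: the map $P(\overline{\mathrm{rev}\,\mathrm{word}(Y)})\mapsto P(\overline{\mathrm{rev}\,\mathrm{word}(L(Y))})$ is well defined. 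In other words, inserting the translated letter $F(b)$ into the lifted labeled forest depends only on the old labeled forest and on $b$.

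\emph{Reduction to elementary moves.} Since $T_k$ and $T'_k$ need not share a leading letter, I instead take a chain of elementary forest moves joining their reversed words. By the lifting bijection each move lifts to a pair of graphs with the same permutation chain, so it suffices to show that the lift of a single elementary move is again a forest move, or a composite of such moves. Because $F$ acts letterwise, this comes down to checking that the maps $f_\eta$ (which raise only $\eta-1$ to $\eta$) preserve the commutation-with-side-condition pattern. Where a move is far from $\eta$, I use the separated-support shuffle lemma with $J$ chosen near $\eta$ to decouple the two sides. Where the support is high, I use Lemma~\ref{lemma:foresthighrows} to move to row-supported representatives.

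\emph{Main obstacle.} The hard step is the single-move lifting in the case where an elementary forest move involves the letter $\eta-1$ or $\eta$ that a Monk push changes. My first attempt there is to use Lemma~\ref{lemma:forestzeroleadingstep} to show that the push trajectory is identical on both sides of the move, so that the two lifted words differ by the same local move, with letters shifted uniformly by $f_\eta$.
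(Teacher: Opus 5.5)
\textbf{Your reduction to $T\equiv_\forest T'$ is false.} Lemma~\ref{lemma:forestconcat}(a) lets you append a common \emph{suffix}. Here the common factor $\mathrm{rev}(\mathrm{word}(\shup^pV))$ is a \emph{prefix}, and $\equiv_\forest$ is not compatible with prefixing. Whether two adjacent letters may be exchanged within a $P$-class depends on the forest already built from the letters before them. Lemma~\ref{lemma:forestseparatedshuffle} cannot repair this: a crossing $(i,j)$ of $T$ has letter $i+j-1$, which is not bounded by $p$, so no cut $J$ separates the letters of $T$ from those of $\shup^pV$.

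Here is a concrete failure. Take $p=2$, $q=1$, $T=\{(1,2),(1,4)\}$, $T'=\{(1,2),(2,3)\}$ and $V=\{(1,1)\}$.
\begin{itemize}
\item The words $\mathrm{rev}(\mathrm{word}(T))=2\,4$ and $\mathrm{rev}(\mathrm{word}(T'))=4\,2$ both insert to two singleton trees. Hence $T\equiv_\forest T'$, with $\wof{T}=\wof{T'}=s_2s_4$.
\item The graphs $R$ and $R'$ have reading words $4\,2\,3$ and $2\,4\,3$, both reduced for $w=13524$. Since $\maxd(w)=3$, both are bounded at height $3$.
\item Nevertheless $\mathrm{rev}(\mathrm{word}(R))=3\,2\,4$ inserts to a single tree with root labelled $4$, while $3\,4\,2$ gives root labelled $2$. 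So $R\not\equiv_\forest R'$.
\end{itemize}
This pair does violate the chain hypothesis at $h=4$: the chains are $13254,14235,14235$ versus $13254,13425,23145$. That is consistent with the proposition, but it shows the chain hypothesis has to be used while $V$ is still attached, not merely to produce $T\equiv_\forest T'$. This is why the paper never splits $V$ off: its induction on the number of crossings of $T$ carries the right factor along as part of the statement.

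\textbf{The lifting step is also not carried out.} You leave the single-move case open, and the reduction leading to it rests on two problems.
\begin{itemize}
\item Corollary~\ref{corollary:forestzeroleadingunique} determines only the lifted \emph{leading} letter. A reversed transition step increments the marked letter with right root $(i,n)$ and then pushes a cascade through other letters, so the lift does not act letterwise by the maps $f_\eta$.
\item The intermediate words in a chain of elementary moves between $T_k$ and $T'_k$ need not be reading words of any RC graph of height $k$, since they may admit no compatible sequence. Such words cannot be lifted via Theorem~\ref{theorem:zero_bijection}.
\end{itemize}

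The missing idea is the paper's dichotomy on the common terminal forest $P$, with $V$ retained throughout.
\begin{itemize}
\item \emph{$P$ is a single tree.} Both terminal reading words begin with its root value, so $T_p$ and $T'_p$ \emph{do} share a leading letter. Corollary~\ref{corollary:forestzeroleadingunique} propagates a common leading letter up the fixed chain. Lemma~\ref{lemma:forestzeropop} shows the popped graphs again have a common chain. The induction then applies to $\operatorname{pop}T\cup\shup^pV$ and $\operatorname{pop}T'\cup\shup^pV$. The popped letter is the \emph{last} letter of the reversed word, so Lemma~\ref{lemma:forestconcat}(a) restores it.
\item \emph{$P$ has several trees.} Cut at the vacant value $J<p$ just before the rightmost component. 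Check that every reversed transition acts above $J$, so the lower projections are frozen. Apply the induction to the upper projections \emph{together with} $\shup^pV$. Transfer the support bound $[J+1,\infty)$ to these contextual words via Lemma~\ref{lemma:foresthighrows} and a second use of the induction. Finally glue the two sides with Lemma~\ref{lemma:forestseparatedshuffle}.
\end{itemize}
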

\begin{proof}
We induct strongly on the number $d$ of crossings of $T$, simultaneously for all $p,q,h$, chains, and compatible right factors. The case $d=0$ is immediate. Let $P$ be the common forest of $T_p,T'_p$ and put $\mathbf a=\mathrm{rev}(\mathrm{word}(V))+p$.

If $P$ has one nonempty tree, both terminal reading words begin with its root value $b$. Reverse the common permutation chain. At every nontrivial step, Corollary~\ref{corollary:forestzeroleadingunique} gives a common leading letter; empty-row padding leaves it unchanged. Thus $T_k,T'_k$ have common leading letters $a_k$ at every height. Lemma~\ref{lemma:forestzeropop}, iterated along the chain, gives
$$\zeromap(\operatorname{pop}T_k)=\operatorname{pop}T_{k-1},
\qquad
\zeromap(\operatorname{pop}T'_k)=\operatorname{pop}T'_{k-1}.$$
The popped chains have common permutations $s_{a_k}u_k$, and their terminal symbols agree after deletion of the common root of $P$. The full popped graphs $\operatorname{pop}T\cup\shup^pV$ and $\operatorname{pop}T'\cup\shup^pV$ are reduced and bounded at height $p+q$: the removed crossing is the first reading occurrence, and multiplication on the left at a left descent creates no right descent. Apply the induction hypothesis with $d-1$ crossings, then append the common letter $a_h$ to the reversed contextual words. Lemma~\ref{lemma:forestconcat}(a) proves $R\equiv_\forest R'$.

Suppose $P$ has several nonempty trees. Choose the vacant cut $J$ immediately before its rightmost component, so that its lower components have support below $J$ and its rightmost component has support above $J$. Every terminal letter avoids $J$. Moreover $J<p$: the terminal permutation preserves $\{1,\ldots,J\}$, and if $J\geq p$ its increasing tail would force the nonempty right factor to be the identity.

All reversed transitions in the chain act strictly above this cut. Indeed, at height $k>p>J$, write the child as $v=u_{k-1}$ and the parent as $u_k$. If $v$ preserves the cut and $v=\pi t_{ik}$ with $\pi t_{ik}\gtrdot\pi$, then
$$v(i)=\pi(k)>v(k)>J,$$
so $i>J$. Since $k>J$ and the parent transposition is $(k,s)$ with $s>k$, both transition transpositions act inside the upper parabolic block. The marked crossing has right root $(i,k)$ with both entries above $J$. At each Little-bump push the other exchange defect has the same right root, by the exchange property recalled above; hence it too belongs to the upper factor. Every incremented letter remains above $J$, and the row sequence is unchanged. Thus reconstruction leaves the lower graph unchanged and restricts on the upper projection to the same inverse zero-map step. Empty steps just delete an empty declared row. Consequently, writing the letter projections with subscripts $-$ and $+$,
$$T_-= (T_p)_-,\qquad T'_-= (T'_p)_-,$$
and the upper projections follow a common projected permutation chain: at each height $k$, the projected permutations are equal for $T_k,T'_k$, and $T_{k-1,+}=\zeromap(T_{k,+})$ (with empty-row padding when the step is empty). By the component-restriction assertion of Lemma~\ref{lemma:forestseparatedshuffle}, their terminal graphs $(T_p)_+,(T'_p)_+$ are forest equivalent; the lower terminal projections are forest equivalent as well. Both terminal factors are nonempty, so their crossing degrees are strictly smaller than $d$.

Every letter of the shifted right factor exceeds $p>J$. Consequently $T_+\cup\shup^pV$ and $T'_+\cup\shup^pV$ are the right parabolic projections of $R,R'$; they are reduced and bounded at height $p+q$. Apply the smaller-degree induction to obtain equality of their contextual upper forest symbols.

It remains to show that this common symbol has support above $J$. Lemma~\ref{lemma:foresthighrows} gives a terminal representative
$$\widehat Y_+\equiv_\forest(T_p)_+$$
at height $p$, with all crossings in rows greater than $J$. Lift it along the same projected permutation chain to $\widehat T_+$. Each inverse zero map exists by Theorem~\ref{theorem:zero_bijection}, and weight preservation keeps all crossings in rows $J+1,\ldots,p$. The source permutation of $\widehat T_+$ equals that of $T_+$. Hence $\widehat T_+\cup\shup^pV$ has the same permutation and crossing number as $T_+\cup\shup^pV$, and is reduced and bounded at height $p+q$. All its rows at most $J$ are empty. Trimming these rows gives a positive compatible RC graph, whose forest support is positive by the compatible-forest correspondence. Translating back shows that its forest support lies in $[J+1,\infty)$.

Apply the smaller-degree induction once more, this time to $T_+,\widehat T_+$ with the same right factor $V$. Their terminal symbols agree and their permutation chains agree, so their contextual source symbols agree. This transfers the support bound to $T_+\cup\shup^pV$ and $T'_+\cup\shup^pV$.

The lower projected words have forest symbols supported in $(-\infty,J-1]$, and the contextual upper words have equal forest symbols supported in $[J+1,\infty)$. The full reversed words are shuffles of these lower and upper words, with disjoint value alphabets. Lemma~\ref{lemma:forestseparatedshuffle} therefore identifies each full forest symbol with the disjoint union of its lower and contextual upper symbols. These unions agree, completing the induction.
\end{proof}

The auxiliary height $h$ in Proposition~\ref{proposition:forestcompatiblechain} need not be at most $p+q$.

\begin{lemma}[Fiber constancy for forest classes]\label{lemma:forestfibers}
Let $X$, $D_1$, $D_2$ be forest classes of heights $p+q$, $p$, $q$. Then $m_X(U,V)$ is constant for $(U,V)\in D_1\times D_2$.
\end{lemma}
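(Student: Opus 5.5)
To show $m_X(U,V)$ is constant on $D_1\times D_2$, I would vary one argument at a time, exhibiting for $U\equiv_\forest U'$ in $D_1$ a bijection $X\cap\Psi_p^{-1}(U,V)\to X\cap\Psi_p^{-1}(U',V)$, and likewise for $V\equiv_\forest V'$ in $D_2$. Throughout I use the decomposition from the proof of Theorem~\ref{theorem:LR}: every $R\in\RC_{p+q}$ is uniquely $R=T\cup\shup^pV$ with $T=R_{\leq p}$ and $\trm^pR=V$, and $\clp^pR=\zeromap^{h-p}(T)$.

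\emph{Varying $V$.} Given $V\equiv_\forest V'$, both in $\RC_q$, they share the same permutation, since forest-equivalent words are Coxeter-equivalent. Hence $T\cup\shup^pV$ is a bounded reduced RC graph if and only if $T\cup\shup^pV'$ is. This is the same argument as in the left-ideal part of Lemma~\ref{lemma:thetaideal}. So $T\cup\shup^pV\mapsto T\cup\shup^pV'$ is a bijection between the two fibers, and it fixes $\clp^p$. It remains to check that it preserves the forest class $X$. The reversed reading word of $R$ is $\mathrm{rev}(\mathrm{word}(V))+p$ followed by $\mathrm{rev}(\mathrm{word}(T))$. Lemma~\ref{lemma:forestconcat}(b) gives that the shifted prefixes are forest equivalent, and Lemma~\ref{lemma:forestconcat}(a) then lets us append the common suffix. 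So $R\equiv_\forest R'$.

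\emph{Varying $U$.} This direction is harder, because $\clp^p$ involves iterating $\zeromap$. Let $U\equiv_\forest U'$ in $D_1$. By Theorem~\ref{theorem:zero_bijection}, applied iteratively as in Lemma~\ref{lemma:clipfibers}, the graphs $T$ with $\zeromap^{h-p}(T)=U$ and fixed $\wof T$ correspond bijectively to chains $u=u_h,u_{h-1}\in T_h(u_h),\ldots,u_p=\wof U$. For each such chain there is a unique lift of $U$ and a unique lift of $U'$, since the permutations agree. I define the bijection by $T\cup\shup^pV\mapsto T'\cup\shup^pV$, where $T'$ is the lift of $U'$ along the same permutation chain as $T$. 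Three things then need checking:
\begin{itemize}
\item $T'\cup\shup^pV$ is bounded and reduced. This holds because $\wof{T'}=\wof T$ and $\hr$ is the same.
\item $\clp^p(T'\cup\shup^pV)=U'$. This holds by construction.
\item $T'\cup\shup^pV\equiv_\forest T\cup\shup^pV$. This is exactly Proposition~\ref{proposition:forestcompatiblechain}, applied with the chains $T_k=\zeromap^{h-k}T$ and $T'_k=\zeromap^{h-k}T'$. These have equal permutations at every height by construction, and $T_p=U\equiv_\forest U'=T'_p$.
\end{itemize}
The map is invertible by symmetry, so the two fibers have the same cardinality. One subtlety: the padding height $h$ depends on $\maxd(\wof T)$. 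Since this is a function of the permutation only, it is common to $T$ and $T'$, and any larger $h$ gives the same result by the padding independence noted after Definition~\ref{definition:cliptrim}.

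Combining the two directions, $m_X(U,V)=m_X(U',V)=m_X(U',V')$ for all $(U,V),(U',V')\in D_1\times D_2$. The main obstacle is the $U$-direction. All its difficulty, namely that forest equivalence survives reconstruction along $\zeromap$ in the presence of a right factor, has already been absorbed into Proposition~\ref{proposition:forestcompatiblechain}. What remains is the bookkeeping that the chain-indexed lifts give a genuine bijection of fibers, which follows from the bijectivity in Theorem~\ref{theorem:zero_bijection}.
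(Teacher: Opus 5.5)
Your proposal is correct and follows the paper's proof essentially step for step. For the $V$-direction you use the same map $T\cup\shup^pV\mapsto T\cup\shup^pV'$, justified by Lemma~\ref{lemma:forestconcat}(b) and then (a); for the $U$-direction you use the same chain-indexed bijection, built from Theorem~\ref{theorem:zero_bijection} as in Lemma~\ref{lemma:clipfibers}, with forest equivalence supplied by Proposition~\ref{proposition:forestcompatiblechain} (the paper additionally notes explicitly that every $T$ in the fiber has permutation $u_v=w(\shup^pv)^{-1}$, which your argument uses implicitly).
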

\begin{proof}
Let $w$, $u'$, $v$ be the permutations of the elements of $X$, $D_1$, $D_2$, fix $C\in X$, and write $R\in\RC_{p+q}(w)$ as $R=T\cup\shup^pV$ with $T=R_{\leq p}$, $V=\trm^pR$, as in the proof of Theorem~\ref{theorem:LR}. Then
$$\mathrm{rev}(\mathrm{word}(R))=\bigl(\mathrm{rev}(\mathrm{word}(V))+p\bigr)\,\mathrm{rev}(\mathrm{word}(T))\text{,}$$
and $R\in X$ if and only if $\mathrm{rev}(\mathrm{word}(R))\equiv_\forest\mathrm{rev}(\mathrm{word}(C))$.

\emph{Varying $V$.} Let $V,V'\in D_2$. For $R=T\cup\shup^pV$ with $\Psi_p(R)=(U,V)$, the graph $T\cup\shup^pV'$ is a bounded RC graph of height $p+q$ with $\Psi_p(T\cup\shup^pV')=(U,V')$: reducedness of $\mathrm{word}(T)(\mathrm{word}(V')+p)$ and the last descent of the resulting permutation depend on $V'$ only through $\wof{V'}=v$, and $\clp^p$ depends only on $T$. By Lemma~\ref{lemma:forestconcat}(b) and then (a), $\mathrm{rev}(\mathrm{word}(T\cup\shup^pV'))\equiv_\forest\mathrm{rev}(\mathrm{word}(T\cup\shup^pV))$, so $T\cup\shup^pV'\in X$. Hence $T\cup\shup^pV\mapsto T\cup\shup^pV'$ is a bijection $X\cap\Psi_p^{-1}(U,V)\to X\cap\Psi_p^{-1}(U,V')$, and $m_X(U,V)=m_X(U,V')$.

\emph{Varying $U$.} Fix $V\in D_2$, put $u_v=w(\shup^pv)^{-1}$ and $h=\max(p,\maxd(u_v))$, and let $\mathbf a=\mathrm{rev}(\mathrm{word}(V))+p$. If $\ell(u_v)\neq\ell(w)-\ell(v)$ then $m_X(\,\cdot\,,V)=0$. Otherwise, by the proof of Theorem~\ref{theorem:LR}, $T\cup\shup^pV$ is a bounded RC graph of height $p+q$ for every $T\in\RC_{\leq p}(u_v)$, and, writing $F(U)=\{T\in\RC_{\leq p}(u_v):\zeromap^{h-p}(T)=U\}$,
$$X\cap\Psi_p^{-1}(U,V)=\{T\cup\shup^pV:\ T\in F(U),\ \mathbf a\,\mathrm{rev}(\mathrm{word}(T))\equiv_\forest\mathrm{rev}(\mathrm{word}(C))\}\text{,}$$
and the last condition depends on $T$ only through the forest class of the concatenated word $\mathbf a\,\mathrm{rev}(\mathrm{word}(T))$. It therefore suffices to construct, for $U,U'\in D_1$, a bijection $\theta\colon F(U)\to F(U')$ such that
$$\mathbf a\,\mathrm{rev}(\mathrm{word}(\theta(T)))\equiv_\forest
\mathbf a\,\mathrm{rev}(\mathrm{word}(T)).$$

For $T\in F(U)$ consider the sequence $T=T_h,T_{h-1},\ldots,T_p=U$ with $T_{k-1}=\zeromap(T_k)$, and let $u_k=\wof{T_k}$. If $\maxd(u_k)<k$ then $\zeromap$ merely deletes the empty last row; otherwise $u_{k-1}\in T_k(u_k)$. By Theorem~\ref{theorem:zero_bijection} and Lemma~\ref{lemma:zeroweight}, as in the proof of Lemma~\ref{lemma:clipfibers}, $T\mapsto(u_h,u_{h-1},\ldots,u_p)$ is a bijection from $F(U)$ onto the set of chains counted by $N_p(u_v,u')$: the chain and $U$ determine $T$, and every chain arises by surjectivity. The same holds for $F(U')$; define $\theta(T)\in F(U')$ to be the element with the same chain. Both assembled graphs are bounded and reduced by the preceding length-additivity condition. Proposition~\ref{proposition:forestcompatiblechain}, applied to the common chain and $U\equiv_\forest U'$, gives
$$T\cup\shup^pV\equiv_\forest\theta(T)\cup\shup^pV.$$
Thus $\theta$ restricts to a bijection between the indicated fibers inside $X$, proving $m_X(U,V)=m_X(U',V)$.
\end{proof}

\begin{lemma} \label{lemma:forestideal}
  $\Gamma$ is a two-sided ideal of $\BRC$.
\end{lemma}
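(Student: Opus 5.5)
The plan is to deduce the statement from the general machinery rather than repeat a direct computation. The first step is to check that forest equivalence at a common height, $R_1\sim R_2\iff \hr(R_1)=\hr(R_2)$ and $R_1\equiv_\forest R_2$, satisfies (H1) and (H2) of Definition~\ref{definition:admissible}. The ideal $I_\sim$ for this relation is exactly $\Gamma$ of Definition~\ref{definition:gamma}. Part (1) of Theorem~\ref{theorem:general_lr_template} uses only (H1) and (H2), and it then gives that $\Gamma$ is a two-sided ideal.

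For (H1), equal height is built into the relation. Equal permutation follows because forest-equivalent words have the same $P$-symbol, and the $P$-symbol determines the permutation: the classes in Definition~\ref{definition:forestequiv} consist of reduced words for a single permutation. This is the content of Theorem~\ref{theorem:forestforest}, where the classes $\mathcal{C}_\forest(w)$ partition the reduced words of $w$. (H2) is exactly Lemma~\ref{lemma:forestfibers}.

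I will also spell out the short argument of Theorem~\ref{theorem:general_lr_template}(1) in this setting, so that the ideal property is visible directly. Fix bounded RC graphs $S_1\equiv_\forest S_1'$ of height $p$, and let $S_2$ have height $q$. Modulo $\Gamma$,
$$S_1\sqcupplus S_2\equiv\sum_X m_X(S_1,S_2)\,[X],$$
where the sum runs over forest classes $X$ of height $p+q$. By Lemma~\ref{lemma:forestfibers}, $m_X(S_1,S_2)=m_X(S_1',S_2)$ for every $X$. Hence $S_1\sqcupplus S_2-S_1'\sqcupplus S_2\in\Gamma$, which is the right-ideal property. The left-ideal property follows symmetrically by varying the second argument within its class.

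Since $\Gamma$ is spanned by such differences and $\sqcupplus$ is bilinear, this proves the lemma. The substantive work, namely fiber constancy when the clipped factor varies, is already carried out in Proposition~\ref{proposition:forestcompatiblechain} and Lemma~\ref{lemma:forestfibers}. What remains here is bookkeeping, and the only point needing care is the (H1) permutation claim above.
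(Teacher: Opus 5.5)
Your proposal is correct and is the paper's argument. The paper's proof invokes Theorem~\ref{theorem:general_lr_template}(1) for the relation ``same height and $\equiv_\forest$'', with (H1) coming from forest-equivalent words being reduced words for one permutation and (H2) being Lemma~\ref{lemma:forestfibers}; your explicit right- and left-ideal computation just specializes the proof of part (1) to this relation.
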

\begin{proof}
This is Theorem~\ref{theorem:general_lr_template}(1) for the relation ``same height and $\equiv_\forest$'': (H1) holds because forest-equivalent words are reduced words for the same permutation, and (H2) is Lemma~\ref{lemma:forestfibers}.
\end{proof}

\begin{lemma}
  The homomorphism $\alpha:\BRC\to\dcoma$ factors through the quotient $\BRC/\Gamma$.
\end{lemma}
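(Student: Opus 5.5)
The plan is to show that $\Gamma\subseteq\Omega=\ker\alpha$ and then use the universal property of quotients. By Lemma~\ref{lemma:forestideal}, $\Gamma$ is a two-sided ideal of $\BRC$, so $\BRC/\Gamma$ is a ring and the quotient map $\BRC\to\BRC/\Gamma$ is a surjective ring homomorphism. By Theorem~\ref{theorem:semidirect}, $\alpha$ is a ring homomorphism with kernel $\Omega$. Hence $\alpha$ descends to a ring homomorphism $\overline\alpha:\BRC/\Gamma\to\dcoma$ exactly when $\Gamma\subseteq\Omega$.

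Since $\Gamma$ is spanned over $\mathbb{Z}$ by differences $R_1-R_2$ with $\hr(R_1)=\hr(R_2)$ and $R_1\equiv_\forest R_2$, it suffices to show that each such generator lies in $\Omega$. By Definition~\ref{definition:omega}, this means checking $\wof{R_1}=\wof{R_2}$. Forest-equivalent reduced words are reduced words for the same permutation. This is the (H1) fact already used in the proof of Lemma~\ref{lemma:forestideal}, and it comes from \cite{nadeau2024forest}: the classes $\mathcal{C}_\forest(w)$ partition the reduced words of a fixed $w$, since the forest moves are Coxeter commutations. The reversal in Definition~\ref{definition:forestequiv} does not affect this, because $\mathrm{rev}(\mathrm{word}(R_1))$ and $\mathrm{rev}(\mathrm{word}(R_2))$ are reduced words for the same permutation $\wof{R_1}^{-1}=\wof{R_2}^{-1}$. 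Therefore $\alpha(R_1)=\dsch^{n}_{\wof{R_1}}=\alpha(R_2)$.

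Equivalently, the whole statement is Theorem~\ref{theorem:general_lr_template}(1) applied to the relation used in Lemma~\ref{lemma:forestideal}, which needs only (H1) and (H2). I expect no real obstacle here; the one point to state carefully is the permutation-invariance of $\equiv_\forest$.
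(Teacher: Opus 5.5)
Your proposal is correct and follows essentially the same route as the paper: the paper's proof is the one-line observation that each generator $R_1-R_2$ of $\Gamma$ lies in $\Omega=\ker\alpha$, since forest-equivalent graphs of equal height have the same permutation. Your added detail on the word reversal, and on invoking Lemma~\ref{lemma:forestideal} to make $\overline\alpha$ a ring map, is consistent with how the paper uses (H1) and Theorem~\ref{theorem:general_lr_template}(1).
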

\begin{proof}
This is because the generators of $\Gamma$ are all in $\Omega$.
\end{proof}

\begin{definition}
For a composition $a$, define  an element $F_a\in\BRC/\Gamma$ by
$$F_a =\sum_{[R]\in\mathcal{C}_\forest(a)} [R]$$
\end{definition}

\begin{proposition}\label{proposition:forestadmissible}
The relation ``$\hr(R_1)=\hr(R_2)$ and $R_1\equiv_\forest R_2$'' is admissible (Definition~\ref{definition:admissible}) with respect to $\lambda=\fcode$, with $B_a=\forest_a$ and $E_a=F_a$. Consequently $\alpha(F_a)=\forest_a^*$, and
$$F_a\sqcupplus F_b=\sum_cf_{a,b}^c\,F_c\quad\text{in }\BRC/\Gamma\text{,}\qquad\text{where }\forest_a^*\sqcupplus\forest_b^*=\sum_cf_{a,b}^c\,\forest_c^*\text{;}$$
the $F_a$ span a subring of $\BRC/\Gamma$ mapped isomorphically onto $\dcoma$ by $\alpha$.
\end{proposition}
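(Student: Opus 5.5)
The plan is to verify the three hypotheses (H1)--(H3) of Definition~\ref{definition:admissible} for the relation ``$\hr(R_1)=\hr(R_2)$ and $R_1\equiv_\forest R_2$'' with labeling $\lambda=\fcode$. Then Theorem~\ref{theorem:general_lr_template} supplies all the stated consequences. The classes are exactly the forest classes of Definition~\ref{definition:gamma}, and $I_\sim=\Gamma$ by definition. The label of a class $D$ of height $n$ is $\fcode(D)$, regarded as a weak composition of length $n$, so classes of different heights automatically receive different labels.

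\textbf{(H1).} Forest-equivalent reduced words are words for the same permutation, since forest swaps are Coxeter commutations; this was already used in Lemma~\ref{lemma:forestideal}. Heights agree by definition of the relation.

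\textbf{(H2).} This is exactly Lemma~\ref{lemma:forestfibers}, which is where the real work (Proposition~\ref{proposition:forestcompatiblechain}) has already been done.

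\textbf{(H3).} By Theorem~\ref{theorem:forestformula}(I), for any class $D$ we have $\sum_{R\in D}x^{\wtt(R)}=\forest_{\fcode(D)}$. This depends only on the label, so we may take $B_a=\forest_a$.

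The point needing some care is that, as $a$ ranges over the labels of classes of height $n$, the polynomials $\forest_a$ form a $\mathbb{Z}$-basis of $\coma_n$. I would argue as follows.
\begin{enumerate}
\item The label set is exactly $\mathrm{Comp}_n$. Every weak composition $a$ of length $n$ occurs as $\fcode$ of some class of height $n$: the forest polynomial $\forest_a$ is a nonzero polynomial in $x_1,\ldots,x_n$ (its support lies in $\{x_1,\ldots,x_n\}$ because $\rho_F(v)\le$ the relevant index), and by Theorem~\ref{theorem:forestforest} applied to a suitable permutation (e.g.\ via the RC graph realization of the labeled forest, as in the proof of Lemma~\ref{lemma:foresthighrows}) it appears as a class of some bounded RC graph of height $n$. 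Conversely, the code of any class of height $n$ has length $n$, since its character lies in $\coma_n$.
\item The family $\{\forest_a\}_{a\in\mathrm{Comp}_n}$ is a $\mathbb{Z}$-basis of $\coma_n$. I would invoke unitriangularity with respect to monomials, $\forest_a=x^a+(\text{lower terms})$ \cite{nadeau2024forest}, whose leading term is the principal assignment $\tau=\rho_F$. Triangularity with unit diagonal gives a $\mathbb{Z}$-basis in each degree.
\end{enumerate}

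With (H1)--(H3) in hand, $E_a=\sum_{D:\fcode(D)=a}[D]$ coincides with $F_a$. Theorem~\ref{theorem:general_lr_template}(2) then gives $\alpha(F_a)=\forest_a^*$; this is also consistent with Theorem~\ref{theorem:forestformula}(II). Part (4) gives the product formula with coefficients $f_{a,b}^c$, together with the subring statement.

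I expect the main obstacle to be the surjectivity-onto-$\mathrm{Comp}_n$ and basis claim in (H3). The rest is bookkeeping once Lemma~\ref{lemma:forestfibers} is available. If realizability of every label at height $n$ proves delicate, the fallback is to compare ranks degree by degree. Theorem~\ref{theorem:forestforest} shows that the class characters span $\coma_n$, since Schubert polynomials do. Distinct labels give the distinct, triangular $\forest_a$, so their number in each degree must equal the number of monomials, which forces all of $\mathrm{Comp}_n$ to occur.
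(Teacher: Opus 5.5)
Your proposal is correct and follows the paper's proof: (H1) because forest-equivalent words are reduced words for the same permutation, (H2) by Lemma~\ref{lemma:forestfibers}, and (H3) by Theorem~\ref{theorem:forestformula}(I) together with the fact that the $\forest_a$, $a\in\mathrm{Comp}_n$, form a $\mathbb{Z}$-basis of $\coma_n$, after which Theorem~\ref{theorem:general_lr_template} gives the rest. Your attention to which labels actually occur at height $n$ goes beyond the paper, which simply cites \cite{nadeau2024forest}; of your two arguments for this, keep the fallback, since $\sch_w^n=\sum_{D\subseteq\RC_n(w)}\forest_{\fcode(D)}$ shows the class characters span $\coma_n$ over $\mathbb{Z}$ and unitriangularity gives their independence, whereas the ``suitable permutation'' realization in your step (1) is never actually spelled out.
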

\begin{proof}
(H1) and (H2) were verified in the proof of Lemma~\ref{lemma:forestideal}. For (H3), the character of a forest class $D$ of height $n$ is $\forest_{\fcode(D)}(x_1,\ldots,x_n)$ by Theorem~\ref{theorem:forestformula}(I), and the forest polynomials $\forest_a$, $a$ a weak composition of length $n$, form a $\mathbb{Z}$-basis of $\coma_n$ \cite{nadeau2024forest}. Now apply Theorem~\ref{theorem:general_lr_template}.
\end{proof}

\begin{theorem}[LR rule for dual forests]\label{theorem:LRforest}
Let $a,b$ be weak compositions of lengths $p$ and $q$, and let $c$ be a weak composition of length $p+q$. Define integers $f_{a,b}^c$ by
$$\forest_a^* \sqcupplus \forest_b^* = \sum_{c} f_{a,b}^c\,\forest_c^*.$$
For any $C\in\mathcal{C}_\forest(c)$, let $N(C)$ be the number of $R\in[C]$ such that $\clp^p(R)\in \mathcal{C}_\forest(a)$, $\trm^p(R)\in \mathcal{C}_\forest(b)$, $\wtt(\clp^p(R))=a$, and $\wtt(\trm^p(R))=b$. Then $N(C)$ depends only on $(a,b,c)$, and
$$f_{a,b}^c \;=\; N(C);$$
in particular $f_{a,b}^c\in\mathbb{Z}_{\geq 0}$.
\end{theorem}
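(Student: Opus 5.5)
This is Theorem~\ref{theorem:general_lr_template}(3) applied to the admissible relation of Proposition~\ref{proposition:forestadmissible}, mirroring the proof of Theorem~\ref{theorem:LRkey}. The only real work is choosing a representative function $\mathrm{rep}$ on forest classes that matches the conditions stated in the theorem.

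First, fix the admissibility data: the relation is ``same height and $\equiv_\forest$'', the label is $\lambda=\fcode$, and $B_a=\forest_a$. Hypotheses (H1) and (H2) come from Lemma~\ref{lemma:forestideal} and Lemma~\ref{lemma:forestfibers}; (H3) comes from Theorem~\ref{theorem:forestformula}(I). Theorem~\ref{theorem:general_lr_template}(3) then gives, for any class $X=[C]$ with $\fcode(C)=c$,
$$f_{a,b}^c=\sum_{D_1,D_2}m_X(D_1,D_2),$$
where the sum runs over classes $D_1$ of height $p$ with code $a$ and $D_2$ of height $q$ with code $b$. It also gives the counting reformulation via any choice of representatives. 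This reformulation already shows independence of the choice of $C$ and nonnegativity.

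Second, pick $\mathrm{rep}(D)$ to be the unique element $R\in D$ with $\wtt(R)=\fcode(D)$. To justify existence and uniqueness, use Theorem~\ref{theorem:forestformula}(I): the character $\sum_{R\in D}x^{\wtt(R)}$ equals $\forest_{\fcode(D)}$. Then show that $x^{\fcode(D)}$ occurs in $\forest_{\fcode(D)}$ with coefficient exactly $1$. Taking $\tau=\rho_F$ in the definition of $\forest_F$ is admissible and has weight $\code(F)$, since $\rho_F(\mathrm{left}(v))=\rho_F(v)$ along left chains and $\rho$ strictly increases on passing to a right child. The harder part is that no other $\tau$ has this weight. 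The plan is to argue that $\tau\le\rho_F$ pointwise and $\sum_v\tau(v)=\sum_v\rho_F(v)$ are forced by the weight equality, so $\tau=\rho_F$. Alternatively, one could cite the leading-monomial unitriangularity of forest polynomials from \cite{nadeau2024forest}, namely $\forest_a=x^a+$(terms lower in a suitable order). This unitriangularity is the step I expect to need the most care.

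Third, with this $\mathrm{rep}$, the condition ``$\clp^p R=\mathrm{rep}(\clp^pR)$ and the label is $a$'' is exactly ``$\clp^p(R)$ lies in a class of code $a$ and $\wtt(\clp^pR)=a$''. This is the same as the theorem's conditions $\clp^p(R)\in\mathcal{C}_\forest(a)$ and $\wtt(\clp^p(R))=a$, and similarly for $\trm^p$. Hence the count in Theorem~\ref{theorem:general_lr_template}(3) is exactly $N(C)$. It follows that $f_{a,b}^c=N(C)$, that this value is independent of $C$, and that it is nonnegative.
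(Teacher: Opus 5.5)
Your proposal is correct and follows the paper's proof: the paper likewise takes $\mathrm{rep}(D)$ to be the unique element of the class $D$ of weight $\fcode(D)$, which exists and is unique because $x^{\fcode(D)}$ has coefficient $1$ in $\forest_{\fcode(D)}$ (a fact the paper simply cites from Nadeau--Tewari), and then reads the statement off Theorem~\ref{theorem:general_lr_template}(3). Your direct justification of that coefficient is also sound. The uniqueness half is immediate, since $\tau\le\rho_F$ pointwise together with equal sums forces $\tau=\rho_F$; the existence half requires reading $\rho_F(v)$ recursively as the leftmost-leaf label, as in Nadeau--Tewari, rather than by the one-step formula displayed in the paper.
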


\begin{proof}[Proof of Theorem \ref{theorem:LRforest}]
The monomial $x^{\fcode(D)}$ occurs with coefficient $1$ in $\forest_{\fcode(D)}$ \cite{nadeau2024forest}, so by Theorem~\ref{theorem:forestformula}(I) every forest class $D$ contains exactly one element of weight $\fcode(D)$; let $\mathrm{rep}(D)$ be this element. The statement is Theorem~\ref{theorem:general_lr_template}(3) for the admissible relation of Proposition~\ref{proposition:forestadmissible} with this choice of $\mathrm{rep}$: $\clp^p(R)$ is the representative of its class with label $a$ exactly when $\clp^p(R)\in\mathcal{C}_\forest(a)$ and $\wtt(\clp^p(R))=a$, and likewise for $\trm^p(R)$.
\end{proof}

\begin{example}[Computing $\forest_a^*\sqcupplus\forest_b^*$ for $a=(0,0,2,0,2)$, $b=(0,1,0,0,2)$]\label{example:LRforest}
Let
$$a=(0,0,2,0,2),\qquad b=(0,1,0,0,2),\qquad c=(0,0,1,0,1,0,2,0,0,3).$$
Here $p=q=5$, so by Theorem~\ref{theorem:LRforest} the coefficient $f_{a,b}^c$ counts the $R$ in a fixed forest class $\mathcal{C}_\forest(c)$ of $10$-row RC graphs satisfying
$$\clp^{5}(R)\in\mathcal{C}_\forest(a),\quad \trm^{5}(R)\in\mathcal{C}_\forest(b),\quad \wtt(\clp^{5}(R))=a,\quad \wtt(\trm^{5}(R))=b.$$

Writing an RC graph as the set of ordered pairs $(i,j)\in\mathbb{Z}_{>0}\times\mathbb{Z}_{>0}$ recording the (row,\,column) of each crossing, there are exactly two such $R$, namely
\begin{align*}
R_1 &= \{(3,1),(3,2),(5,3),(5,8),(7,1),(10,1),(10,2)\},\\
R_2 &= \{(3,1),(3,6),(5,1),(5,8),(7,1),(10,1),(10,2)\}.
\end{align*}
Both have permutation
$$\wof{R_1}=\wof{R_2}=(1,2,4,3,6,5,9,7,8,13,10,11,12),$$
and they share a common clip/trim pair
$$\clp^{5}(R_i)=\{(3,1),(3,2),(5,1),(5,2)\},\qquad \trm^{5}(R_i)=\{(2,1),(5,1),(5,2)\},$$
with $\wtt(\clp^{5}(R_i))=a$ and $\wtt(\trm^{5}(R_i))=b$.

\begin{figure}[h]
\centering
\begin{tabular}{cc}
\resizebox{0.43\textwidth}{!}{\input{forest_lr_R1.tikz}} &
\resizebox{0.43\textwidth}{!}{\input{forest_lr_R2.tikz}} \\
$(R_1)$ & $(R_2)$
\end{tabular}
\caption{The two $10$-row witness pipe dreams in the forest class for $c$.}
\label{fig:forest_lr_witnesses}
\end{figure}

\begin{figure}[h]
\centering
\begin{tabular}{cc}
\resizebox{0.35\textwidth}{!}{\input{forest_lr_clip.tikz}} &
\resizebox{0.35\textwidth}{!}{\input{forest_lr_trim.tikz}} \\
$\clp^5(R_i)$ & $\trm^5(R_i)$
\end{tabular}
\caption{Common clip/trim pair for both witnesses $R_1,R_2$.}
\label{fig:forest_lr_cliptrim}
\end{figure}

Therefore Theorem~\ref{theorem:LRforest} gives
$$f_{a,b}^c=2.$$ 
\end{example}

\subsection{Geometric interpretation: branching on the quasisymmetric flag variety}\label{subsection:qflbranch}

The forest polynomials acquire a geometric meaning through the work of Bergeron, Gagnon, Nadeau, Spink, and Tewari \cite{bergeron2025qsymflag}, who introduce the \emph{quasisymmetric flag variety} $\mathrm{QFl}_n\subset \mathrm{Fl}_n$, a (reducible) toric complex built as a union of left-translated Richardson varieties indexed by $n$-leaf planar binary trees. Their main results give a Borel-type presentation
$$H^\bullet(\mathrm{QFl}_n) \;\cong\; \mathbb{Z}[x_1,\ldots,x_n]/\mathrm{QSym}_n^+,$$
where $\mathrm{QSym}_n^+$ is the ideal of quasisymmetric polynomials with no constant term \cite[Theorem~A and Corollary~12.5]{bergeron2025qsymflag}, and identify the forest polynomials $\{\forest_F : F\in \mathsf{Forest}_n\}$ as a free $\mathbb{Z}$-basis of this ring \cite[Corollary~12.5]{bergeron2025qsymflag}, Kronecker dual to the homology basis $\{[X(F)] : F\in\mathsf{Forest}_n\}$ of $H_\bullet(\mathrm{QFl}_n)$ \cite[Corollary~11.6, Theorem~12.12]{bergeron2025qsymflag}. Under the standard bijection between $\mathsf{Forest}_n$ and weak compositions of length $n$ (compare \cite{nadeau2024forest}), the basis $\forest_F$ matches our composition-indexed basis $\forest_a$; we write $[\forest_a]=\pi_n(\forest_a)$ for the corresponding cohomology class.

The dual forest LR rule (Theorem~\ref{theorem:LRforest}) admits a geometric reading on $\mathrm{QFl}_n$. The variable-splitting substitution
$$\rho_{p,q}:\;\mathbb{Z}[x_1,\ldots,x_{p+q}]\longrightarrow \mathbb{Z}[x_1,\ldots,x_p]\otimes \mathbb{Z}[x_{p+1},\ldots,x_{p+q}]$$
descends to a well-defined ring homomorphism
$$\iota_{p,q}^*:\;H^\bullet(\mathrm{QFl}_{p+q})\longrightarrow H^\bullet(\mathrm{QFl}_p)\otimes H^\bullet(\mathrm{QFl}_q).$$
Indeed, for $f\in\mathrm{QSym}_{p+q}^+$ the standard alphabet-sum identity for the QSym coproduct writes $\rho_{p,q}(f) = \sum f_{(1)}(x_1,\ldots,x_p)\otimes f_{(2)}(x_{p+1},\ldots,x_{p+q})$ with each $f_{(1)}$, $f_{(2)}$ quasisymmetric in its variables; since $\deg f>0$ and the degrees add, each term has at least one tensor factor in $\mathrm{QSym}^+$, so $\rho_{p,q}(f)\in \mathrm{QSym}_p^+\otimes\mathbb{Z}[y] + \mathbb{Z}[x]\otimes \mathrm{QSym}_q^+$. The splitting reformulation of Theorem~\ref{theorem:LRforest} then says exactly that, in the forest-polynomial basis,
$$\iota_{p,q}^*\,[\forest_c] \;=\; \sum_{a,b} f_{a,b}^c\;[\forest_a]\otimes [\forest_b]\qquad\text{in } H^\bullet(\mathrm{QFl}_p)\otimes H^\bullet(\mathrm{QFl}_q),$$
and Theorem~\ref{theorem:LRforest} gives an explicit positive cardinality formula for each $f_{a,b}^c$.

The map $\iota_{p,q}^*$ is the natural quasisymmetric-flag analogue of the type-A Levi comorphism $H^\bullet(\mathrm{Fl}_{p+q})\to H^\bullet(\mathrm{Fl}_p)\otimes H^\bullet(\mathrm{Fl}_q)$ of \cite{ressayre2009branching}, and the dual forest LR rule plays for it the role that Theorem~\ref{theorem:LR} plays for the Schubert basis. We do not know whether $\iota_{p,q}^*$ is induced by a geometric embedding $\mathrm{QFl}_p\times \mathrm{QFl}_q\hookrightarrow \mathrm{QFl}_{p+q}$ compatible with the Levi inclusion $\mathrm{Fl}_p\times \mathrm{Fl}_q\hookrightarrow \mathrm{Fl}_{p+q}$; the natural concatenation operation $\mathsf{Forest}_p\times \mathsf{Forest}_q\to \mathsf{Forest}_{p+q}$ on indexed forests, and the resulting matching $X(F)\times X(F')\subset \mathrm{QFl}_{p+q}$ one expects on the toric Richardson side, suggest that such an embedding should exist, but we do not pursue the geometric verification here. At the level of cohomology rings $\iota_{p,q}^*$ is in any case a well-defined ring homomorphism, and Theorem~\ref{theorem:LRforest} is, in this language, a manifestly positive branching Schubert calculus rule on the quasisymmetric flag variety, parallel to the one Theorem~\ref{theorem:LR} provides on $\mathrm{Fl}_n$. Dually, the family $\{\iota_{p,q}^*\}_{p,q\geq 0}$ assembles into a graded coproduct on $\bigoplus_n H_\bullet(\mathrm{QFl}_n)$ whose structure constants in the geometric forest basis $\{[X(F)]\}$ are again the $f_{a,b}^c$ of Theorem~\ref{theorem:LRforest}.


\section{Product rule for dual slide polynomials}\label{section:slide}
\subsection{Slide polynomials}

\begin{definition}
For a word $\mathbf{r}=\mathbf{r}_1\mathbf{r}_2\cdots \mathbf{r}_k$, a sequence of positive integers $a_1a_2\cdots a_k$ is \emph{compatible} with $\mathbf{r}$ if $a_i\leq \mathbf{r}_i$ for all $i$, $a_i\leq a_{i+1}$ for all $i$, and $a_i<a_{i+1}$ if $\mathbf{r}_i<\mathbf{r}_{i+1}$.

Slide polynomials $\mathfrak{F}(\mathbf{r})$ for a word $\mathbf{r}$ are defined by summing the monomials $x^a$ over all compatible sequences $a$ of $\mathbf{r}$.

Assaf and Searles \cite{assaf2017slide} derived an expansion of Schubert polynomials in terms of slide polynomials as follows. A \emph{quasi-Yamanouchi} RC graph is an RC graph $R$ such that if $R'$ is another RC graph with $\mathrm{word}(R)=\mathrm{word}(R')$ and $\hr(R)=\hr(R')$, then $\wtt(R)\leq \wtt(R')$ in dominance order and $\mathrm{flat}(\wtt(R'))$ refines $\mathrm{flat}(\wtt(R))$. The set of RC graphs $A$ such that $\mathrm{word}(A)=\mathrm{word}(R)$ and $\hr(A)=\hr(R)$ is a partially ordered set with $R$ as its smallest element. We then define
$$\QY(A)=R$$
\end{definition}

\begin{theorem}[{\cite[Theorem~3.13]{assaf2017slide}}]
  For a permutation $w\in S_\infty$, we have the formula
  $$\sch_w(x) = \sum_{R} \mathfrak{F}_{\wtt(R)}(x)$$
  where the sum is over all quasi-Yamanouchi RC graphs $R$ for $w$.
\end{theorem}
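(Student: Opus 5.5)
This statement is the Assaf--Searles slide expansion of Schubert polynomials, quoted from \cite[Theorem~3.13]{assaf2017slide}, and in the paper it is simply imported. If I were to reprove it with the tools above, I would start from Corollary~\ref{corollary:evschub}, which writes $\sch_w=\sum_{R\in\RC(w)}x^{\wtt(R)}$ with the height taken large. I would then partition $\RC(w)$ into fibres of the reading word. For a reduced word $\mathbf r$ of $w$, the RC graphs with reading word $\mathbf r$ are exactly the compatible sequences of $\mathbf r$, read with the reversal conventions appropriate to our reading order. This is the Billey--Jockusch--Stanley correspondence: the row sequence of $R$ is weakly increasing, bounded by the letters, and strictly increasing across ascents of the word. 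So $\sch_w=\sum_{\mathbf r}\sum_{a\text{ compatible with }\mathbf r}x^{a}$, and it remains to regroup each inner sum as a sum of slide polynomials.

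The second step is to group the compatible sequences of a fixed word $\mathbf r$ by their weights. The compatible sequences of $\mathbf r$ are the fillings of the ``runs'' of $\mathbf r$ by weakly increasing row labels, where runs are broken at ascents and labels must strictly increase there, subject to the bound $a_i\le\mathbf r_i$. I would show that this set has a unique dominance-minimal weight whose flattening is refined by every other weight's flattening; the associated $R$ is the quasi-Yamanouchi element, so $\QY$ is well defined. Concretely, I would construct it greedily by pushing each label as high as the constraints allow, reading from the right. I would then show that the weights of $\mathrm{word}(R)$-fibres which refine and dominate $\wtt(\QY(A))$ are exactly the monomials of $\slide_{\wtt(\QY(A))}$, each occurring once.

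That gives $\sum_{A:\mathrm{word}(A)=\mathbf r}x^{\wtt(A)}=\slide_{\wtt(\QY)}$ per word, up to the subtlety below. Summing over $\mathbf r$ yields the theorem.

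The main obstacle is that subtlety: a single reduced word may split into several slide polynomials. The fibre of a word is not the set of all compatible sequences of one flattened weight. Instead it is a union, and I must show this union decomposes along quasi-Yamanouchi elements, with each element's monomial set being exactly its slide polynomial. In the decomposition I would call a graph quasi-Yamanouchi when it is locally minimal, that is, when no nonempty row can be lowered into an empty row above it without breaking compatibility. The definition of $\QY$ in the paper is phrased per word, so I would reconcile the two notions by reading ``$\QY(A)$'' as the local minimum reached by this lowering procedure. I would prove it is reached uniquely by a confluence (diamond) argument on the lowering moves.

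If that becomes too delicate, I would fall back on citing \cite[Theorem~3.13]{assaf2017slide} directly, as the paper does.
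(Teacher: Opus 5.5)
The gap is the paragraph you single out as the main obstacle: its premise is false, and the repair you propose does not work. (The paper only cites Assaf--Searles here. Your first two steps are the standard route and suffice on their own. No reversal is needed either: with the paper's reading order, the row sequence of $R$ is already a compatible sequence for $\mathrm{word}(R)$ in the sense of Section~\ref{section:slide}.)

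The fibre of one reduced word $\mathbf r=\mathbf r_1\cdots\mathbf r_m$ is never a union of several slides. Since $\mathbf r$ has no equal adjacent letters, it splits into strictly decreasing runs. A compatible sequence is weakly increasing, with a strict increase forced at each run break. Because the letters decrease inside a run, the bound on the last letter of a run, together with strictness towards the next run, is the binding one for the whole run. Concretely, your right-to-left greedy rule ($a_m=\mathbf r_m$, then $a_k=\min(\mathbf r_k,a_{k+1})$ at a descent and $a_k=\min(\mathbf r_k,a_{k+1}-1)$ at an ascent) produces a sequence that is constant on each run, with values $\phi_1<\phi_2<\cdots$.

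Let $a$ be the weight of this greedy sequence. The weakly increasing sequence of content $b$ is compatible with $\mathbf r$ if and only if two things hold: $\mathrm{flat}(b)$ refines $\mathrm{flat}(a)$ (this is strictness at the run breaks), and $b\geq a$ in dominance (these are the bounds $\phi_j$). Moreover $b$ determines the sequence. So every nonempty fibre has character exactly $\mathfrak{F}_a$, which is what the paper uses in Proposition~\ref{proposition:slidebasis}. There is nothing further to decompose.

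Your substitute notion of quasi-Yamanouchi is also wrong, so the confluence argument cannot succeed. Take $w=1423$, whose only reduced word is $3\,2$. Its compatible sequences are $(1,1)$, $(1,2)$, $(2,2)$, and the whole fibre is the single slide $\sch_{1423}=x_1^2+x_1x_2+x_2^2=\mathfrak{F}_{(0,2)}$ of the $\QY$ graph $\{(2,1),(2,2)\}$.

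Now consider the graph $\{(1,3),(2,1)\}$, with sequence $(1,2)$. It admits no move of a nonempty row into an empty row. At height $2$ every row is nonempty. At larger height, sending row $1$ or row $2$ to a row $\geq 3$ breaks weak increase or the bound $a_2\leq 2$. So it is a local minimum in your sense although it is not $\QY$. A decomposition indexed by your local minima would therefore add a second slide $\mathfrak{F}_{(1,1)}=x_1x_2$ where there is none.

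The right local move, which is the Assaf--Searles quasi-Yamanouchi condition, transfers only the leftmost crossing of row $i$ to row $i+1$, including when row $i+1$ is nonempty. Equivalently, it increments the last entry of a block of equal labels. A compatible sequence admits no such move exactly when it is the greedy maximal one, so no confluence argument is needed. Drop the last paragraph and complete Step~2 with the run/flag argument above.
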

\begin{corollary}
  For a weak composition $c$ of length $p$, we have the formula
  $$\mathfrak{F}_c^* = \sum_{R=\QY(R)} \dsch_{\wof{R}}^p$$
  where the sum is over all quasi-Yamanouchi RC graphs $R$ such that $\wtt(R) = c$.
\end{corollary}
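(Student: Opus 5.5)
The plan is to mirror the proofs of Theorem~\ref{theorem:keyformula} and Theorem~\ref{theorem:forestformula}(II): reduce the claim to a pairing computation against the Schubert basis of $\coma_p$. Since $\{\dsch_w^p\}$ is the basis of $\dcoma_p$ dual to $\{\sch_w^p\}$, it suffices to show that for every $w$ with $\maxd(w)\le p$,
$$\langle \mathfrak{F}_c^*,\sch_w^p\rangle=\#\{R\in\RC_p(w): R=\QY(R),\ \wtt(R)=c\}.$$

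First, I would restrict the Assaf--Searles expansion to $p$ variables. By Corollary~\ref{corollary:evschub}, $\sch_w^p=\sum_{R\in\RC_p(w)}x^{\wtt(R)}$. The Assaf--Searles theorem groups these monomials by the classes $\{A:\mathrm{word}(A)=\mathrm{word}(R),\ \hr(A)=\hr(R)\}$, each with minimal element $\QY(A)$. This gives
$$\sch_w^p=\sum_{R\in\RC_p(w),\,R=\QY(R)}\mathfrak{F}_{\wtt(R)}(x_1,\ldots,x_p).$$
One point must be checked here: the slide polynomial of a class, namely the sum over compatible sequences for the fixed word bounded by row $p$, equals $\mathfrak{F}_{\wtt(\QY(A))}$ truncated to $p$ variables. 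Truncation kills exactly the compatible sequences using rows beyond $p$, and those correspond to RC graphs not of height $p$, so the height-$p$ version of the theorem follows by setting $x_{p+1}=x_{p+2}=\cdots=0$.

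Second, I would invoke the fact that the slide polynomials $\mathfrak{F}_a$, with $a$ a weak composition of length $p$, truncated to $p$ variables form a $\mathbb{Z}$-basis of $\coma_p$. This holds because they are unitriangular with respect to monomials, with leading term $x^a$ \cite{assaf2017slide}. Pairing the displayed expansion with $\mathfrak{F}_c^*$ then gives exactly the count above. Expanding $\mathfrak{F}_c^*=\sum_w\langle\mathfrak{F}_c^*,\sch_w^p\rangle\dsch_w^p$ yields the stated formula.

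The main obstacle is the truncation issue in the first step. We must confirm that the slide polynomials indexed by length-$p$ compositions, restricted to $x_1,\ldots,x_p$, remain linearly independent and that none of them vanishes. We must also check that every class relevant to height $p$ has its quasi-Yamanouchi representative of height $p$ with weight of length $p$. I would handle this through the triangularity $\mathfrak{F}_a=x^a+(\text{lower terms})$ and the fact that $\QY$ preserves height by definition.
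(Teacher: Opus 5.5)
Your proposal is correct and matches the paper's intended argument. The paper states the corollary as an immediate consequence of the Assaf--Searles expansion and the definition of the dual Schubert basis, and your pairing computation is the argument of Theorem~\ref{theorem:general_lr_template}(2) specialized to word classes. Your truncation concern is actually vacuous: every RC graph of $w$ already has its crossings in rows at most $\maxd(w)\le p$, so no compatible sequence is lost, and each slide $\mathfrak{F}_a$ with $a\in\mathrm{Comp}_p$ already lies in $\coma_p$.
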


\begin{remark}
The expansion of dual slides in the weak composition basis is given by the expansion of monomials into slides in \cite[Theorem~5.2]{nadeau2024ppartitions}.
\end{remark}

\begin{definition}[The submodule $\Sigma\subseteq \BRC$]
Let $\Sigma$ be the $\mathbb{Z}$-submodule of $\BRC$ generated by the elements $R_1-R_2$ such that $\QY(R_1)=\QY(R_2)$.
\end{definition}

\begin{lemma}\label{lemma:slideideal}
$\Sigma$ is a two-sided ideal of $\BRC$.
\end{lemma}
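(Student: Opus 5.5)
The plan is to obtain this from Theorem~\ref{theorem:general_lr_template}(1), exactly as Lemma~\ref{lemma:forestideal} was obtained: the relation $R_1\sim R_2\iff\QY(R_1)=\QY(R_2)$ must satisfy (H1) and (H2). The class of $R$ under $\sim$ is the set of RC graphs $A$ with $\mathrm{word}(A)=\mathrm{word}(R)$ and $\hr(A)=\hr(R)$. So (H1) is immediate: equal reading words give equal permutations, and the height is fixed by definition.

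For (H2), fix classes $X$, $D_1$, $D_2$ of heights $p+q$, $p$, $q$. I will show that $m_X(U,V)$ depends only on $(\mathrm{word}(U),\mathrm{word}(V))$.

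\emph{Varying $V$ within $D_2$.} Write $R=T\cup\shup^pV$ with $T=R_{\leq p}$. Then
\[
\mathrm{word}(R)=\mathrm{word}(T)\,\bigl(\mathrm{word}(V)+p\bigr),
\]
so $\mathrm{word}(R)$ depends on $V$ only through $\mathrm{word}(V)$. Replacing $V$ by any $V'$ with $\mathrm{word}(V')=\mathrm{word}(V)$ gives a bijection $T\cup\shup^pV\mapsto T\cup\shup^pV'$ between the two fibres. This map preserves membership in $X$ and leaves $\clp^p$ unchanged, and boundedness and reducedness depend only on $\wof{V}$. This is the same argument as the first half of Lemma~\ref{lemma:forestfibers}, with equality of words in place of $\equiv_\forest$.

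\emph{Varying $U$ within $D_1$.} I copy the chain bijection $\theta$ from the proof of Lemma~\ref{lemma:forestfibers}. Each $T\in F(U)$ is determined by $U$ together with its permutation chain $(u_h,\ldots,u_p)$, and $\theta(T)$ is the element of $F(U')$ with the same chain. It then suffices to prove that $\mathrm{word}(\theta(T))=\mathrm{word}(T)$ whenever $\mathrm{word}(U)=\mathrm{word}(U')$. For a single step this means: if $Y,Y'$ are bounded of height $k-1$ with the same reading word, then their lifts under $\zeromap^{-1}$ to $\RC(u,k)^0$ have the same reading word.

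To prove that, I use that each step of $\zeromap$ is a Little bump acting on the reading word at the crossing of the wires with right root $(k,s)$. Here $(k,s)$ depends only on the permutation, and the row sequence is unchanged. Little bumps at a fixed pair of wires are determined by the word alone (Definition~\ref{definition:littlebump}), and are injective as operations on words. Going forward, two graphs with equal words therefore have equal words after every step. Conversely, the inverse Monk move $m_{i,k}$ is recovered from the word, since it uses the wire labels $(i,k)$, and the index $i$ is fixed by the chain. So lifting along a common chain sends equal words to equal words. The row sequences will generally differ, which is harmless. A cleaner alternative is to note that the whole $\zeromap$ computation is a function of the reading word plus the chain, with the row sequence carried along passively.

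Once (H2) holds, Theorem~\ref{theorem:general_lr_template}(1) gives that $\Sigma=I_\sim$ is a two-sided ideal. The main obstacle is making the word-level determinism of the inverse transition step rigorous, in particular ruling out the terminal case where a crossing falls off the left edge. That case is excluded because the row sequence is preserved and no crossing is removed, as in Lemma~\ref{lemma:zeroweight}.
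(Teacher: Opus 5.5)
Your proof is correct and rests on the same two word-preserving bijections as the paper's. The first is $T\cup\shup^pV\mapsto T\cup\shup^pV'$. The second lifts along a fixed permutation chain, using that both the $\zeromap$ steps and the Monk moves $m_{i,h}$ act on reading words as Little bumps at word-determined positions, with the row sequence carried along unchanged.

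The only difference is the order of the argument. The paper proves the left and right ideal properties directly and afterwards reads off fiber constancy (Lemma~\ref{lemma:slidefibers}) from these bijections. You instead prove (H2) first and invoke Theorem~\ref{theorem:general_lr_template}(1), which is how the paper handles Lemma~\ref{lemma:forestideal}.

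One small correction: the ``falling off the left edge'' issue belongs to the decrementing $\zeromap$ step, not to the rightward-moving $m_{i,h}$. It is excluded because the last row is empty.
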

\begin{proof}
By definition $\QY(R_1)=\QY(R_2)$ means $\mathrm{word}(R_1)=\mathrm{word}(R_2)$ and $\hr(R_1)=\hr(R_2)=:n$; in particular $\wof{R_1}=\wof{R_2}$. Let $R$ be a bounded RC graph of height $m$.

\emph{Left ideal.} As in the proof of Lemma~\ref{lemma:thetaideal}, $T\cup\shup^mR_1\mapsto T\cup\shup^mR_2$ is a bijection from the terms of $R\sqcupplus R_1$ to those of $R\sqcupplus R_2$, since membership depends on $R_1$ only through $\wof{R_1}$. Corresponding terms have height $m+n$ and reading word $\mathrm{word}(T)$ followed by $\mathrm{word}(R_1)$ with every letter increased by $m$, which is the same for $R_1$ and $R_2$. Hence $R\sqcupplus R_1-R\sqcupplus R_2\in\Sigma$.

\emph{Right ideal.} We first record that $\zeromap$ and its inverse act at the level of reading words. A step of Definition~\ref{definition:zeromap} on $X\in\RC(u,h)^0$ with $\maxd(u)=h$ replaces $\mathrm{word}(X)$ by its Little bump at the position with right root $(h,s)$, where $s$ is determined by $u$, and keeps the row sequence; so $\mathrm{word}(X^{(1)})$ depends only on $\mathrm{word}(X)$. Conversely, by Lemma~\ref{lemma:zeroweight} this step is the inverse of the Monk move $m_{i,h}$, and $m_{i,h}$ is computed (see the description preceding Lemma~\ref{lemma:bpdconfine}) by replacing the crossing with right root $(i,h)$ and moving crossings to the right within their rows, which in terms of the reading word is an incrementing Little bump at a word-determined position, again with the row sequence unchanged. Consequently, if $X,X'\in\RC(u,h)^0$ have the same reading word, then $\zeromap(X)$ and $\zeromap(X')$ have the same reading word; and if $Y,Y'\in\RC_{h-1}$ have the same reading word and $Y=\zeromap(X)$ with $X\in\RC(u,h)^0$, then applying to $Y'$, with an empty row $h$ appended, the sequence of Monk moves $m_{i_k,h},\ldots,m_{i_1,h}$ that recovers $X$ from $Y$ produces at each stage an RC graph with the same reading word as the corresponding stage for $X$, hence with the same permutation, and yields $X'\in\RC(u,h)^0$ with $\mathrm{word}(X')=\mathrm{word}(X)$ and $\zeromap(X')=Y'$.

Now a bounded RC graph $R'$ of height $n+m$ is a term of $R_1\sqcupplus R$ if and only if $R'=T\cup\shup^nR$ with $T=R'_{\leq n}$, where $T\cup\shup^nR$ is a bounded RC graph and $\zeromap^{h-n}(T)=R_1$ for $h=\max(n,\maxd(\wof T))$ (Definition~\ref{definition:cliptrim}). The first condition depends only on $\mathrm{word}(T)$, and so does $h$. Iterating the previous paragraph $h-n$ times gives a bijection $T\mapsto T'$ from $\{T:\zeromap^{h-n}(T)=R_1\}$ onto $\{T':\zeromap^{h-n}(T')=R_2\}$ with $\mathrm{word}(T')=\mathrm{word}(T)$: at each stage the two computations pass through RC graphs with equal reading words, and the row sequences of $R_1$ and $R_2$ are carried along unchanged. Hence $T\cup\shup^nR\mapsto T'\cup\shup^nR$ is a bijection from the terms of $R_1\sqcupplus R$ onto the terms of $R_2\sqcupplus R$ preserving reading words and heights, and $R_1\sqcupplus R-R_2\sqcupplus R\in\Sigma$.
\end{proof}

\begin{lemma}
The homomorphism $\alpha:\BRC\to\dcoma$ factors through the quotient $\BRC/\Sigma$.
\end{lemma}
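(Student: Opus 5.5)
The statement is that $\alpha:\BRC\to\dcoma$ factors through $\BRC/\Sigma$. This follows the pattern of the analogous lemmas for $\Theta$ and $\Gamma$. I would show that $\Sigma\subseteq\Omega=\ker\alpha$, and then appeal to Theorem~\ref{theorem:semidirect} and Lemma~\ref{lemma:slideideal}.

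First, I would take a generator $R_1-R_2$ of $\Sigma$, so that $\QY(R_1)=\QY(R_2)$. By the definition of $\QY$, the fibre of $\QY$ over a quasi-Yamanouchi graph consists of RC graphs with the same reading word and the same height as that graph. Hence $\mathrm{word}(R_1)=\mathrm{word}(R_2)$ and $\hr(R_1)=\hr(R_2)$. This observation is already recorded at the start of the proof of Lemma~\ref{lemma:slideideal}.

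Second, since $\wof{R}$ is the product of the simple reflections of the reading word, the two graphs have the same permutation, $\wof{R_1}=\wof{R_2}$. Therefore
$$\alpha(R_1)=\dsch_{\wof{R_1}}^{\hr(R_1)}=\dsch_{\wof{R_2}}^{\hr(R_2)}=\alpha(R_2),$$
and $R_1-R_2$ is one of the generators of $\Omega$ in Definition~\ref{definition:omega}. Thus $\Sigma\subseteq\Omega=\ker\alpha$.

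Finally, $\alpha$ is a ring homomorphism with kernel $\Omega$ by Theorem~\ref{theorem:semidirect}, and $\Sigma$ is a two-sided ideal by Lemma~\ref{lemma:slideideal}. So $\alpha$ descends to a well-defined ring homomorphism $\overline\alpha:\BRC/\Sigma\to\dcoma$, which is surjective because $\alpha$ is. There is no real obstacle here; the only point needing care is reading the definition of $\QY$ correctly, namely that equal $\QY$-images force equal words and heights. Equal words then give equal permutations immediately.
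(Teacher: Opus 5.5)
Your proof is correct and is essentially the paper's argument: equal $\QY$-images force equal reading words and heights, hence equal permutations and heights, so each generator of $\Sigma$ lies in $\Omega=\ker\alpha$. Your appeal to Lemma~\ref{lemma:slideideal} to make the induced map a ring homomorphism matches the paper's setup as well.
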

\begin{proof}
This is because the generators of $\Sigma$ are all in $\Omega$: if $\QY(R_1)=\QY(R_2)$ then $\wof{R_1}=\wof{\QY(R_1)}=\wof{\QY(R_2)}=\wof{R_2}$ and $\hr(R_1)=\hr(R_2)$, so $\alpha(R_1)=\alpha(R_2)$.
\end{proof}

\begin{definition}
For a weak composition $a$, define an element $\mathcal{F}_a\in\BRC/\Sigma$ by
$$\mathcal{F}_a=\sum_{R: \QY(R)=R,\ \wtt(R)=a} R.$$
\end{definition}

\begin{lemma}[Fiber constancy for word classes]\label{lemma:slidefibers}
Let $X$, $D_1$, $D_2$ be classes of heights $p+q$, $p$, $q$ for the relation $\QY(R_1)=\QY(R_2)$. Then $m_X(U,V)$ is constant for $(U,V)\in D_1\times D_2$.
\end{lemma}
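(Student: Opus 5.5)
The approach mirrors the structure of Lemma~\ref{lemma:forestfibers}, but it is much simpler because the classes are now word classes. A class for $\QY$ is the set of bounded RC graphs of a fixed height with a fixed reading word. The proof varies the two arguments of $m_X$ separately, in each case exhibiting an explicit bijection between fibers inside $X$. Throughout, write $R\in\RC_{p+q}(w)$ as $R=T\cup\shup^pV$ with $T=R_{\leq p}$ and $V=\trm^pR$, as in the proof of Theorem~\ref{theorem:LR}. The reading word of $R$ is then $\mathrm{word}(T)$ followed by $\mathrm{word}(V)+p$. Since the height is fixed at $p+q$, membership $R\in X$ means exactly that this concatenated word equals the common word of $X$.

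\emph{Varying $V$.} Let $V,V'\in D_2$, so $\mathrm{word}(V)=\mathrm{word}(V')$ and in particular $\wof V=\wof{V'}$. The map $T\cup\shup^pV\mapsto T\cup\shup^pV'$ has the following properties.
\begin{enumerate}
\item It produces a bounded RC graph of height $p+q$, because reducedness and the last-descent condition depend on $V'$ only through $\wof{V'}$.
\item It keeps $\clp^p$ unchanged, since $\clp^p$ depends only on $T$.
\item It keeps the full reading word unchanged, since the suffix $\mathrm{word}(V)+p$ is the same.
\end{enumerate}
Hence it is a bijection $X\cap\Psi_p^{-1}(U,V)\to X\cap\Psi_p^{-1}(U,V')$.

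\emph{Varying $U$.} Let $U,U'\in D_1$, which share a reading word. Use the word-level transport established in the right-ideal part of the proof of Lemma~\ref{lemma:slideideal}. For fixed $h$, there is a bijection $T\mapsto T'$ from $\{T:\zeromap^{h-p}(T)=U\}$ onto $\{T':\zeromap^{h-p}(T')=U'\}$ with $\mathrm{word}(T')=\mathrm{word}(T)$. It is obtained by replaying on $U'$ the same sequence of inverse Monk moves, that is, incrementing Little bumps at word-determined positions, which reconstructs $T$ from $U$.

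The value $h=\max(p,\maxd(\wof T))$ depends only on $\mathrm{word}(T)$, so it is the same on both sides. The condition that $T\cup\shup^pV$ is a bounded RC graph also depends only on $\wof T$ and $V$. Therefore $T\cup\shup^pV\mapsto T'\cup\shup^pV$ maps the fiber over $(U,V)$ bijectively onto the fiber over $(U',V)$. It preserves the full reading word, hence preserves membership in $X$.

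Combining the two steps shows that $m_X$ is constant on $D_1\times D_2$.

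The only point needing care is that the transport bijection in the $U$ step really restricts to graphs with all crossings in rows $\le p$, and that reconstruction from $U'$ succeeds whenever it succeeds from $U$. Both follow from facts already used in Lemma~\ref{lemma:slideideal}. Row sequences are carried along unchanged, so weights agree. Each intermediate stage has the same word, hence the same permutation, so every Monk move applicable on one side is applicable on the other.
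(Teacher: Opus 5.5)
Your proof is correct and takes the paper's approach. Both arguments reuse the two bijections from the proof of Lemma~\ref{lemma:slideideal}, which preserve reading word and height: the suffix swap $T\cup\shup^pV\mapsto T\cup\shup^pV'$ for the $V$ argument, and the replay on $U'$ of the inverse Monk moves for the $U$ argument. Each of these restricts to a bijection between the relevant fibers inside $X$, and your extra remarks (that $h$ and boundedness depend only on $\mathrm{word}(T)$, and that the transported $T'$ keeps its crossings in rows $\le p$) spell out details the paper leaves implicit.
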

\begin{proof}
The two bijections constructed in the proof of Lemma~\ref{lemma:slideideal} preserve reading words and heights. For $V,V'\in D_2$ the first, $T\cup\shup^pV\mapsto T\cup\shup^pV'$, restricts to a bijection $X\cap\Psi_p^{-1}(U,V)\to X\cap\Psi_p^{-1}(U,V')$, since it fixes $\clp^p$ and sends $X$ to $X$. For $U,U'\in D_1$ the second, $T\cup\shup^pV\mapsto T'\cup\shup^pV$ with $\zeromap^{h-p}(T')=U'$ and $\mathrm{word}(T')=\mathrm{word}(T)$, restricts to a bijection $X\cap\Psi_p^{-1}(U,V)\to X\cap\Psi_p^{-1}(U',V)$.
\end{proof}

\begin{proposition}\label{proposition:slidebasis}
The relation $\QY(R_1)=\QY(R_2)$ is admissible (Definition~\ref{definition:admissible}) with respect to $\lambda(D)=\wtt(\QY(D))$, with $B_a=\mathfrak{F}_a$ and $E_a=\mathcal{F}_a$. Consequently $\alpha(\mathcal{F}_a)=\mathfrak{F}_a^*$, and
$$\mathcal{F}_a\sqcupplus\mathcal{F}_b=\sum_cs_{a,b}^c\,\mathcal{F}_c\quad\text{in }\BRC/\Sigma\text{,}\qquad\text{where }\mathfrak{F}_a^*\sqcupplus\mathfrak{F}_b^*=\sum_cs_{a,b}^c\,\mathfrak{F}_c^*\text{;}$$
the $\mathcal{F}_a$ span a subring of $\BRC/\Sigma$ mapped isomorphically onto $\dcoma$ by $\alpha$.
\end{proposition}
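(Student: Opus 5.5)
The plan is to follow the template of Propositions~\ref{proposition:keyadmissible} and~\ref{proposition:forestadmissible}: verify (H1)--(H3) of Definition~\ref{definition:admissible} for the relation $R_1\sim R_2\iff\QY(R_1)=\QY(R_2)$ with label $\lambda(D)=\wtt(\QY(D))$. Then apply Theorem~\ref{theorem:general_lr_template} to obtain $\overline\alpha(E_a)=\mathfrak{F}_a^*$, the structure constant identity, and the subring statement.

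For (H1), $\QY(R_1)=\QY(R_2)$ means by definition that $\mathrm{word}(R_1)=\mathrm{word}(R_2)$ and $\hr(R_1)=\hr(R_2)$. This gives equal permutations, exactly as noted in the proof of Lemma~\ref{lemma:slideideal}. Classes of different heights get different labels because a label is a weak composition whose length is the height. (H2) is Lemma~\ref{lemma:slidefibers}.

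For (H3), fix a class $D$, namely the set of height-$n$ RC graphs with a fixed reading word $\mathbf r$. Its character $\sum_{R\in D}x^{\wtt(R)}$ is the sum over admissible row sequences of $\mathbf r$ inside $n$ rows. Reading the word in reverse, these row sequences are the compatible sequences defining $\mathfrak{F}(\mathbf r)$, restricted to $x_1,\dots,x_n$. By Assaf--Searles, this equals the fundamental slide $\mathfrak{F}_{\wtt(\QY(D))}$, and it depends only on the label. The reversal conventions need checking against \cite{assaf2017slide}.

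The step I expect to be the main obstacle is the basis part of (H3). Not every weak composition $a$ of length $n$ must arise as $\wtt(\QY(D))$ for a class of height $n$, yet each $\mathfrak{F}_a$ with $a\in\mathrm{Comp}_n$ is needed. I would handle surjectivity of labels directly. Given $a$, build the RC graph whose row $i$ holds $a_i$ crossings packed so that the reading word has descents exactly at row breaks, by taking a suitable shifted decreasing run per row. Then check that it is quasi-Yamanouchi of weight $a$. Distinct classes with the same label then only repeat basis vectors, which is allowed, since $E_a$ sums over all classes with label $a$. Since the slide polynomials $\mathfrak{F}_a$, $a\in\mathrm{Comp}_n$, form a $\mathbb{Z}$-basis of $\coma_n$ by \cite{assaf2017slide}, (H3) holds.

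Finally, $E_a=\sum_{\lambda(D)=a}[D]$ equals $\mathcal{F}_a$, because each class contains exactly one quasi-Yamanouchi element, namely $\QY(D)$. The remaining conclusions are parts (2)--(4) of Theorem~\ref{theorem:general_lr_template}.
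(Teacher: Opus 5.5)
Your proposal is correct and follows the paper's proof. You get (H1) from equal words and heights, (H2) from Lemma~\ref{lemma:slidefibers}, and (H3) from Assaf--Searles; then $E_a=\mathcal{F}_a$ by uniqueness of the quasi-Yamanouchi element in each class, and Theorem~\ref{theorem:general_lr_template} gives the rest.

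Your extra check that every $a\in\mathrm{Comp}_n$ actually occurs as a label is something the paper leaves implicit. Your left-justified construction is the principal RC graph of the permutation with code $a$. That graph is bounded at height $n$, because a code supported in $\{1,\ldots,n\}$ forces the last descent to be at most $n$, and it is quasi-Yamanouchi. Alternatively, surjectivity is immediate: the class characters sum to the $\sch_w^n$, so they span $\coma_n$, while the $\mathfrak{F}_a$ with $a\in\mathrm{Comp}_n$ are linearly independent.
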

\begin{proof}
(H1) holds because a class consists of the RC graphs of a fixed height with a fixed reading word; (H2) is Lemma~\ref{lemma:slidefibers}. For (H3), the character of the class $D$ of height $n$ with reading word $\mathbf r$ is $\mathfrak{F}(\mathbf r)=\mathfrak{F}_{\wtt(\QY(D))}$ by \cite[Theorem~3.13]{assaf2017slide} and its proof, and the fundamental slide polynomials $\mathfrak{F}_a$, $a$ a weak composition of length $n$, form a $\mathbb{Z}$-basis of $\coma_n$ \cite{assaf2017slide}. Each class contains exactly one quasi-Yamanouchi element, so $E_a=\mathcal{F}_a$. Now apply Theorem~\ref{theorem:general_lr_template}.
\end{proof}

\begin{theorem} \label{theorem:lrfslide}
  Let $a,b$ be weak compositions of lengths $p$ and $q$. Write
$$\mathfrak{F}_a^*\sqcupplus\mathfrak{F}_b^*=\sum_c s_{a,b}^c\,\mathfrak{F}_c^*$$
For each $c$, fix an RC graph $C$ with $\wtt(\QY(C)) = c$. Then $s_{a,b}^c$ is the number of distinct RC graphs $R$ with $\mathrm{word}(R)=\mathrm{word}(C)$ with $\wtt(\clp^p(R)) = a,\wtt(\trm^p(R)) = b$ such that $\clp^p(R),\trm^p(R)$ are quasi-Yamanouchi, and is in particular nonnegative.
\end{theorem}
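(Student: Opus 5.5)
The plan is to derive Theorem~\ref{theorem:lrfslide} from Theorem~\ref{theorem:general_lr_template}(3), applied to the admissible relation of Proposition~\ref{proposition:slidebasis}, exactly as Theorems~\ref{theorem:LRkey} and~\ref{theorem:LRforest} were derived. Let $X$ be the class of $C$, namely the RC graphs of height $p+q$ with reading word $\mathrm{word}(C)$. Its label is $\lambda(X)=\wtt(\QY(C))=c$. The representative function is $\mathrm{rep}=\QY$: each class contains a unique quasi-Yamanouchi element, as noted in the proof of Proposition~\ref{proposition:slidebasis}. Then Theorem~\ref{theorem:general_lr_template}(3) says that $s_{a,b}^c$ equals the number of $R\in X$ satisfying four conditions: $\clp^pR=\QY(\clp^pR)$, $\trm^pR=\QY(\trm^pR)$, $\wtt(\QY(\clp^pR))=a$, and $\wtt(\QY(\trm^pR))=b$.

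It remains to match these conditions with those in the statement, which is a short unwinding.

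\begin{enumerate}
\item Membership $R\in X$ is, by definition of the classes, the condition $\mathrm{word}(R)=\mathrm{word}(C)$ together with $\hr(R)=p+q$. The height is built into the statement, since $R$ must have height $p+q$ for $\clp^p$ and $\trm^p$ to produce graphs of heights $p$ and $q$.
\item The condition $\clp^pR=\QY(\clp^pR)$ says exactly that $\clp^p(R)$ is quasi-Yamanouchi. Given this, $\wtt(\QY(\clp^pR))=\wtt(\clp^pR)$, so the weight condition becomes $\wtt(\clp^p(R))=a$.
\item The same reasoning applies to $\trm^p(R)$, with $b$ in place of $a$.
\end{enumerate}

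The word ``distinct'' in the statement only emphasizes that we are counting elements of $X$, which are sets.

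Two further points need a remark. First, Theorem~\ref{theorem:general_lr_template}(3) holds for \emph{every} class $X$ with label $c$. Hence the count is independent of which $C$ is fixed, and it is nonnegative because it is a cardinality. Second, one might worry that $\wtt(\clp^pR)$ is automatically the first $p$ entries of $\wtt(R)$, so the weight conditions constrain $R$ directly. This is harmless: the identification above is exact and needs no additional argument.

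I expect no real obstacle here. All the substantive content, namely fiber constancy (Lemma~\ref{lemma:slidefibers}) and the character identity (H3), was established in Proposition~\ref{proposition:slidebasis}. The only care needed is the bookkeeping that the labels of the classes $[\clp^pR]$ and $[\trm^pR]$ are computed through $\QY$. That computation is immediate once $\clp^pR$ and $\trm^pR$ are themselves quasi-Yamanouchi.
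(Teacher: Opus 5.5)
Your proposal is correct and is essentially the paper's proof. The paper likewise obtains the theorem as Theorem~\ref{theorem:general_lr_template}(3) for the admissible relation of Proposition~\ref{proposition:slidebasis} with $\mathrm{rep}=\QY$: the class of $C$ is the set of height-$(p+q)$ graphs with reading word $\mathrm{word}(C)$, and $\clp^p(R)$ (resp.\ $\trm^p(R)$) is the labeled representative exactly when it is quasi-Yamanouchi of weight $a$ (resp.\ $b$).
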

\begin{proof}
This is Theorem~\ref{theorem:general_lr_template}(3) for the admissible relation of Proposition~\ref{proposition:slidebasis}, with $\mathrm{rep}=\QY$: the class of $C$ consists of the RC graphs of height $p+q$ with reading word $\mathrm{word}(C)$, and $\clp^p(R)$ is the representative of its class with label $a$ exactly when it is quasi-Yamanouchi of weight $a$, and likewise for $\trm^p(R)$.
\end{proof}

\section*{Notation and conventions}\label{section:notation}

For the reader's convenience we collect here, in one place, the principal symbols used throughout the article. All notation is introduced in context where it is needed; this list is meant as a navigational aid rather than a definition.

\begingroup
\setlist[description]{
  font=\normalfont,
  leftmargin=2em,
  labelindent=0em,
  labelsep=0.5em,
  itemsep=2pt,
  parsep=0pt,
  topsep=4pt,
}

\subsubsection*{Permutations and combinatorial objects}
\begin{description}
\item[$S_\infty$] The group of permutations of $\mathbb{Z}_{>0}$ fixing all but finitely many elements.
\item[$\ell(w)$] The Coxeter length of $w\in S_\infty$.
\item[$\code(w)$, $\code^*(w)$] The (right) Lehmer code of $w$; the code of $w^{-1}$.
\item[$\maxd(w)$] The position of the last descent of $w$.
\item[$\dom_\lambda$] The dominant permutation with $\code^*(\dom_\lambda)=\lambda$.
\item[$\mathsf{IndexedForests}_n$] The set of indexed forests on $[n]$ in the sense of Nadeau--Tewari \cite{nadeau2024forest}; matched bijectively with $\mathrm{Comp}_n$ via $F\leftrightarrow\code(F)$, so we use $a\in\mathrm{Comp}_n$ and $F_a\in\mathsf{IndexedForests}_n$ interchangeably.
\item[$\mathsf{Forest}_n$] The subset of $\mathsf{IndexedForests}_n$ corresponding to the homology basis $\{[X(F)]\}$ of $H_\bullet(\mathrm{QFl}_n)$ in BGNST \cite[Cor.~11.6]{bergeron2025qsymflag}; in NST language, those $F$ with $n\notin\mathrm{LT}(F)$.
\item[$\RC_n$] The set of bounded RC graphs with $n$ rows (Definition~\ref{definition:rcgraph}).
\item[$\RC_n(w)$] The set of $R\in\RC_n$ with $\wof{R}=w$.
\item[$\RC_n^0$] The set of $R\in\RC_n$ whose last row is empty (\S\ref{section:brc}).
\item[$\wof{R}$] The permutation of an RC graph $R$.
\item[$\hr(R)$] The number of rows of $R$.
\item[$\wtt(R)$] The row weight composition of $R$.
\item[$\mathrm{word}(R)$] The reduced word read from $R$.
\end{description}

\subsubsection*{Row maps on RC graphs}
\begin{description}
\item[$\zeromap$] The transition map $\RC_n^0\to\RC_{n-1}$ of Definition~\ref{definition:zeromap}; it realizes the Lascoux--Sch\"utzenberger transition formula on the last row.
\item[$\trm^p(R)$] ``Truncate'': the RC graph obtained from the last $\hr(R)-p$ rows of $R$ (Definition~\ref{definition:cliptrim}).
\item[$\clp^p(R)$] ``Clip'': the RC graph in $\RC_p$ obtained from the top $p$ rows of $R$ by applying $\zeromap$ until the height is $p$ (Definition~\ref{definition:cliptrim}).
\item[$\shup^p$] Shift indices up by $p$ (applied to permutations or letters).
\end{description}

\subsubsection*{Crystal and forest structures}
\begin{description}
\item[$e_i,f_i$] Crystal raising and lowering operators on RC graphs (Definition~\ref{definition:crystal}).
\item[$\mathrm{Dem}(R)$] The Demazure crystal of $R$ (Assaf--Schilling; Definition~\ref{definition:crystal}).
\item[$\mathrm{Dem}_a$] $\{R:\mathrm{extwt}(R)=a\}$, the set of RC graphs with extremal weight $a$ (\S\ref{section:crystals}).
\item[$\mathrm{extwt}(R)$] The extremal weight of $\mathrm{Dem}(R)$, i.e.\ the weight of its unique lowest weight element (Definition~\ref{definition:crystal}).
\item[$\mathbb{Y}(R)$] The highest-weight representative of $R$ in its Demazure crystal (Definition~\ref{definition:crystal}).
\item[$\mathbb{Y}_\forest(R)$] The forest analog of $\mathbb{Y}(R)$: the canonical representative of $R$ in its forest equivalence class, used in the dual forest LR rule (\S\ref{section:forest}).
\item[$\finv(R)$] The forest invariant of $R$, an indexed forest (Nadeau--Tewari; \S\ref{section:forest}).
\item[$\fcode(R)\,(=\code_\forest(R))$] The composition associated to $\finv(R)$ (\S\ref{section:forest}). The forms $\fcode(R)$ and $\code_\forest(R)$ are used interchangeably.
\item[$\mathcal{C}_\forest(a)$] The set of forest classes of RC graphs with $\fcode=a$ (\S\ref{section:forest}).
\item[$\equiv_\forest$] Forest equivalence: $R\equiv_\forest R'$ iff $\finv(R)=\finv(R')$, computed on the reversed reading word (Definition~\ref{definition:forestequiv}).
\item[$\RC_\forest(c)$] The set of $R\in\RC_n$ with $\wtt(R)=c$ such that $\mathrm{word}(\mathbb{Y}_\forest(R))$ is lex-minimal among $R'$ with $\code_\forest(R')=\code_\forest(R)$ (Theorem~\ref{theorem:forestformula}).
\end{description}

\subsubsection*{The Schubert bialgebra and its dual}
\begin{description}
\item[$\coma_n$] The polynomial ring $\mathbb{Z}[x_1,\ldots,x_n]$ (\S\ref{section:bialgebra}).
\item[$\coma$] The Schubert bialgebra $\bigoplus_{n\geq 0}\coma_n$, with $\coma_m\cdot\coma_n=0$ when $m\neq n>0$ (\S\ref{section:bialgebra}).
\item[$\dcoma$] The graded dual bialgebra of $\coma$ (\S\ref{section:dualalgebra}).
\item[$\sch_u^n$] Schubert polynomial in $\coma_n$, $\maxd(u)\leq n$ (\S\ref{section:bialgebra}).
\item[$\dsch_u^n$] Element of $\dcoma$ dual to $\sch_u^n$ (\S\ref{section:dualalgebra}).
\item[$\mathrm{ev}$] The evaluation map $\BRC\to\coma$ defined by $\mathrm{ev}(R)=x^{\wtt(R)}$ on basis elements (\S\ref{section:brc}).
\item[$\kappa_a$, $\kappa_a^*$] Key polynomial / its dual; $a\in\mathrm{Comp}_n$ (\S\ref{section:crystals}).
\item[$\forest_a$, $\forest_a^*$] Forest polynomial / its dual; $a\in\mathrm{Comp}_n$ (\S\ref{section:forest}).
\item[{$x_a$, $[a]$, $e_{\mathbf b}$}] The monomial $x^{a}=x_1^{a_1}\cdots x_n^{a_n}\in\coma_n$; its dual basis element in $\dcoma_n$, written either $[a]$ or $e_{\mathbf b}$ depending on context.
\item[$\langle -,-\rangle$] The bialgebra pairing $\dcoma\otimes\coma\to\mathbb{Z}$ defined by $\langle [a],x^{b}\rangle=\delta_{a,b}$, equivalently $\langle e_{\mathbf a},x^{\mathbf b}\rangle=\delta_{\mathbf a,\mathbf b}$.
\item[$\mathrm{Comp}_n$] The set of weak compositions of length $n$.
\item[$\Delta$, $\nabla$, $\varepsilon$] Coproduct, product, counit on $\coma$ (\S\ref{section:bialgebra}).
\item[$\Delta^*$] The dual coproduct on $\dcoma$, dual to the polynomial product on $\coma$ (\S\ref{section:dualalgebra}).
\end{description}

\subsubsection*{Structure constants}
\begin{description}
\item[$c_{u,v}^w$] Schubert structure constants: $\sch_u\sch_v=\sum c_{u,v}^w\sch_w$.
\item[$d_{u,v}^w(p,q)$] Dual Schubert structure constants of Theorem~\ref{theorem:LR}.
\item[$k_{a,b}^c$] Dual key structure constants of Theorem~\ref{theorem:LRkey}.
\item[$f_{a,b}^c$] Dual forest structure constants of Theorem~\ref{theorem:LRforest}.
\item[$s_{a,b}^c$] Dual slide structure constants of Theorem~\ref{theorem:lrfslide}.
\item[$b_{\mu,\nu}^\lambda$] The general dual LR structure constants of Theorem~\ref{theorem:general_lr_template}.
\end{description}

\subsubsection*{The ring of RC graphs and its quotients}
\begin{description}
\item[$\BRC$] $\bigoplus_{n\geq 0}\mathbb{Z}^{\oplus\RC_n}$, with the product $\sqcupplus$ (\S\ref{section:brc}).
\item[$\BRC_n$] The graded component $\mathbb{Z}^{\oplus \RC_n}$ of $\BRC$ (RC graphs of height exactly $n$).
\item[$\mathcal{S}_w(n)$] $\sum_{R:\,\wof{R}=w,\,\hr(R)=n}R\in\BRC_n$, the sum of all $n$-row RC graphs of permutation $w$; satisfies $\alpha(\mathcal{S}_w(n))=|\RC_n(w)|\,\dsch_w^n$ and $\mathrm{ev}(\mathcal{S}_w(n))=\sch_w^n$ (\S\ref{section:brc}).
\item[$K_a$] $\sum_{R\in\mathrm{Dem}_a,\,R=\mathbb{Y}(R)} R\in\BRC/\Theta$, the dual key class sum; $\alpha(K_a)=\kappa_a^*$ (\S\ref{section:crystals}).
\item[$F_a$] $\sum_{[R]\in\mathcal{C}_\forest(a)}[R]\in\BRC/\Gamma$, the dual forest class sum; $\alpha(F_a)=\forest_a^*$ (\S\ref{section:forest}).
\item[$\sqcupplus$] The associative concatenation product on $\BRC$, on $\dcoma$, and on the quotients $\BRC/I_\sim$; built from $\clp^p$ and $\trm^p$ on $\BRC$ (\S\ref{section:ringproduct}).
\item[$\alpha:\BRC\to\dcoma$] Ring homomorphism with $\alpha(R)=\dsch_{\wof{R}}^{\hr(R)}$ (\S\ref{section:brc}).
\item[$\Omega$] $\ker\alpha$, the two-sided ideal of $\BRC$ on which $\alpha$ vanishes (Definition~\ref{definition:omega}; Theorem~\ref{theorem:semidirect}).
\item[$\Theta$] The two-sided ideal generated by $R_1-R_2$ with $\mathbb{Y}(R_1)=\mathbb{Y}(R_2)$ (Definition~\ref{definition:theta}); $\BRC/\Theta$ supports the dual key LR rule (Theorem~\ref{theorem:LRkey}).
\item[$\Gamma$] The two-sided ideal generated by $R_1-R_2$ with $\hr(R_1)=\hr(R_2)$ and $R_1\equiv_\forest R_2$ (Definition~\ref{definition:gamma}); $\BRC/\Gamma$ supports the dual forest LR rule (Theorem~\ref{theorem:LRforest}).
\item[$\mathcal{E}_n$] The elementary basis of $\coma_n$ (Theorem~\ref{theorem:elemtrans}).
\end{description}

\subsubsection*{Slide polynomials and the general LR template}
\begin{description}
\item[$\mathfrak{F}_a(x)$, $\mathfrak{F}_a^*$] Slide polynomial in $\coma$ (Assaf--Searles) indexed by a weak composition $a$; its dual in $\dcoma$ (\S\ref{section:slide}).
\item[$\QY(R)$] Quasi-Yamanouchi representative of $R$ in its $(\mathrm{word},\hr)$-class; smallest element under the dominance/refinement order on weights (\S\ref{section:slide}).
\item[$\RC_\QY(c)$] Set of RC graphs $R$ with $\wtt(R)=c$ such that $\mathrm{word}(\QY(R))$ is lex-minimal among $R'$ with $\wtt(\QY(R'))=\wtt(\QY(R))$ (\S\ref{section:slide}).
\item[$\Sigma$] Two-sided ideal of $\BRC$ generated by $R_1-R_2$ with $\QY(R_1)=\QY(R_2)$ (Lemma~\ref{lemma:slideideal}).
\item[$\mathcal{F}_a$] The element $\sum_{R=\QY(R),\,\wtt(R)=a} R\in\BRC/\Sigma$, a representative of the slide basis under $\alpha$ (Proposition~\ref{proposition:slidebasis}).
\item[$\sim$] An admissible equivalence relation on $\bigsqcup_n\RC_n$, i.e.\ one satisfying (H1)--(H3) of Definition~\ref{definition:admissible}.
\item[$I_\sim$] The two-sided ideal $\mathrm{span}_{\mathbb{Z}}\{R_1-R_2:R_1\sim R_2\}\subseteq\BRC$ (Theorem~\ref{theorem:general_lr_template}).
\item[$J$, $\lambda$] Index set, resp.\ labeling map $(\RC/\!\sim)\to J$ in the general LR template (Theorem~\ref{theorem:general_lr_template}).
\item[$E_\lambda$] The class sum $\sum_{[R]:\,\lambda([R])=\lambda}[R]\in\BRC/I_\sim$ (Theorem~\ref{theorem:general_lr_template}).
\item[$B_\lambda^*$, $b_{\mu,\nu}^\lambda$] Image $\alpha(E_\lambda)\in\dcoma$ and the corresponding structure constants in Theorem~\ref{theorem:general_lr_template}.
\end{description}

\subsubsection*{Quasisymmetric flag variety (\S\ref{subsection:qflbranch})}
\begin{description}
\item[$\mathrm{QFl}_n$] The quasisymmetric flag variety of Bergeron--Gagnon--Nadeau--Spink--Tewari \cite{bergeron2025qsymflag}, a toric subcomplex of $\mathrm{Fl}_n$ stratified by $X(F)$ for $F\in\mathsf{Forest}_n$.
\item[{$X(F)$, $[X(F)]$}] The toric Richardson variety attached to $F\in\mathsf{Forest}_n$, and its homology class; $\{[X(F)]\}$ is a $\mathbb{Z}$-basis of $H_\bullet(\mathrm{QFl}_n)$ \cite[Cor.~11.6]{bergeron2025qsymflag}.
\item[$H^\bullet(\mathrm{QFl}_n)$, $H_\bullet(\mathrm{QFl}_n)$] Cohomology and homology of $\mathrm{QFl}_n$, paired by the integration map $\int_{X(F)}(\cdot)$.
\item[$\mathrm{QSym}_n^+$] The ideal of quasisymmetric polynomials in $\mathbb{Z}[x_1,\ldots,x_n]$ with no constant term; $\mathrm{QSym}_n^+\!\cdot\coma_n$ is the kernel of the Borel surjection.
\item[$\pi_n$] The Borel-type surjection $\coma_n\twoheadrightarrow\coma_n/(\mathrm{QSym}_n^+\!\cdot\coma_n)\cong H^\bullet(\mathrm{QFl}_n)$ \cite[Thm.~A and Cor.~12.5]{bergeron2025qsymflag}.
\item[{$[\forest_a]$}] The cohomology class $\pi_n(\forest_a)\in H^\bullet(\mathrm{QFl}_n)$; under the Borel isomorphism, $\{[\forest_a]:F_a\in\mathsf{Forest}_n\}$ is a $\mathbb{Z}$-basis of $H^\bullet(\mathrm{QFl}_n)$, Kronecker dual to $\{[X(F_a)]\}$.
\item[$\rho_{p,q}$, $\iota_{p,q}^*$] The variable-splitting substitution $\coma_{p+q}\to\coma_p\otimes\coma_q$, and the induced ring homomorphism
$$H^\bullet(\mathrm{QFl}_{p+q})\to H^\bullet(\mathrm{QFl}_p)\otimes H^\bullet(\mathrm{QFl}_q)$$
(\S\ref{subsection:qflbranch}).
\item[$\epsilon_F(\mathbf{b})$] The path sign of \cite[Prop.~10.15]{nadeau2024quasisymmetric}: the coefficient of $\forest_F$ in the forest expansion of $x^{\mathbf{b}}$, valued in $\{-1,0,+1\}$. By Proposition~\ref{proposition:forestdualcoefficients}, $\langle\forest_a^*,x^{\mathbf{b}}\rangle=\epsilon_{F_a}(\mathbf{b})$.
\item[$V_F(\boldsymbol\lambda)$] The volume polynomial of \cite[\S 10.5]{nadeau2024quasisymmetric}, dual to $\forest_F$ under the divided-power $D$-pairing; $V_F(\boldsymbol\lambda)=\sum_{\mathbf{b}}\epsilon_F(\mathbf{b})\,\boldsymbol\lambda^{\mathbf{b}}/\mathbf{b}!$.
\end{description}
\endgroup

\appendix
\section{Proof of Proposition \ref{proposition:pullindex}}

\renewcommand{\theproposition2}{\Alph{proposition2}}
\setcounter{lemma}{0}

We dedicate this section to the otherwise distracting proof of Proposition \ref{proposition:pullindex}. Recall from Definition~\ref{definition:divdiff} that $\delta^i=\frac{1}{y_i-y_{i+1}}(1-s_i)$ acts on the $y$ variables, that $\delta^v\sch_w(x;y)=(-1)^{\ell(v)}\sch_{vw}(x;y)$ whenever $\ell(vw)=\ell(w)-\ell(v)$, and that the skew operators $\delta_u^w$ are characterized by the Leibniz formula $\delta^w(pq)=\sum_{u\leq w}\delta^u(p)\,\delta_u^w(q)$.

\begin{proposition2}[Special case of Pieri formula {\cite[Theorem~7.1]{samuelmolev}}] \label{proposition:pieri}
Suppose $k\geq 1$ and $u,w\in S_\infty$. If $u\not\tom{k} w$, then
$$\ddv{y}_u^w\prod_{i=1}^k(x_1 - y_i)=0$$
If $u\tom{k} w$, define
$$Q = \{u(i)\mid i\leq k\mbox{ and }u(i)=w(i)\}$$
then
$$\ddv{y}_u^w\prod_{i=1}^k(x_1 - y_i)=(-1)^{\ell(u,w)}\prod_{q\in Q}(x_1 - y_q)$$
\end{proposition2}
\begin{proof}
	The cited theorem, in the special case $p=k$ (the Molev--Sagan Pieri formula for double Schubert polynomials), is stated for the operators $\partial_u^w$ on the $x$ variables applied to $\prod_{i=1}^k(x_i-y_1)$, with value $\prod_{q\in Q}(x_q-y_1)$. Exchanging the roles of $x$ and $y$ gives $\delta_u^w\prod_{i=1}^k(y_i-x_1)=\prod_{q\in Q}(y_q-x_1)$. Since $\prod_{i=1}^k(y_i-x_1)=(-1)^k\prod_{i=1}^k(x_1-y_i)$, $\prod_{q\in Q}(y_q-x_1)=(-1)^{|Q|}\prod_{q\in Q}(x_1-y_q)$, and $|Q|=k-\ell(u,w)$ (the positions $i\leq k$ moved by the $\ell(u,w)$ transpositions witnessing $u\tom{k}w$ are distinct), the stated sign follows.
\end{proof}

\begin{proof}[Proof of Proposition \ref{proposition:pullindex}]
Suppose $v\in S_n$. Since $\ell(vw_0(n))=\ell(w_0(n))-\ell(v)$, we have
$$\sch_v(x;y)=(-1)^{\ell(vw_0(n))}\,\ddv{y}^{vw_0(n)}(\sch_{w_0(n)}(x;y))$$
This is equal to
$$(-1)^{\ell(vw_0(n))}\,\ddv{y}^{vw_0(n)}\left(\sch_{s_{n+1-i}\cdots s_1w_0(n)}(x^{(i)};y)\prod_{j=1}^{n+1-i}(x_i - y_j)\right)$$
and, applying the Leibniz formula together with $\delta^u\sch_{s_{n+1-i}\cdots s_1w_0(n)}(x^{(i)};y)=(-1)^{\ell(u)}\sch_{v'}(x^{(i)};y)$ for $u=v'w_0(n)s_1\cdots s_{n+1-i}$, is also equal to
$$\sum_{\substack{v'\in S_\infty\\\ell(v'w_0(n)s_1\cdots s_{n+1-i})\\=\ell(w_0(n)s_1\cdots s_{n+1-i})-\ell(v')}}(-1)^{\ell(vw_0(n))+\ell(v'w_0(n)s_1\cdots s_{n+1-i})}\,\sch_{v'}(x^{(i)};y)\ddv{y}_{v'w_0(n)s_1\cdots s_{n+1-i}}^{vw_0(n)}\prod_{j=1}^{n+1-i}(x_i - y_j)$$
By the restricted Pieri formula (Proposition \ref{proposition:pieri}), for this to be nonzero we need $v'w_0(n)s_1\cdots s_{n+1-i}\allowbreak\tom{n+1-i}vw_0(n)$. We note that $v'w_0(n)s_1\cdots s_{n+1-i}\tom{n+1-i}vw_0(n)$ if and only if 
$$v\Tom{i}v'w_0(n)s_1\cdots s_{n+1-i}w_0(n)=v's_ns_{n-1}\cdots s_i$$
We have that $v's_n\cdots s_i$ is exactly $\varphi_{i,n}(v')$ since $v'\in S_n$, so we require that
$$v\Tom{i}\varphi_{i,n}(v')$$
so the sum is over all $v'$ with $v\downvar{i} v'$.

Applying Proposition \ref{proposition:pieri}, the skew operator contributes a further sign $(-1)^{\ell(v'w_0(n)s_1\cdots s_{n+1-i},\,vw_0(n))}$; together with the signs already present this is $(-1)^{2\ell(vw_0(n))}=1$, so the result is equal to
$$\sum_{v\downvar{i} v'} \sch_{v'}(x^{(i)};y)\prod_{a\in A(v',v)}(x_i - y_a)$$
where $A(v',v)$ is the set of all $vw_0(n)(j)$ such that $1\leq j\leq n+1-i$ and $v'w_0(n)s_1\cdots s_{n+1-i}(j)=vw_0(n)(j)$. These values are the same as at the indices that comprise the set of all $1\leq j\leq n+1-i$ such that
$$v'(n+2-s_1\cdots s_{n+1-i}(j))=v(n+2-j)$$
Applying the $s_1\cdots s_{n+1-i}$ to $j$, since $1\leq j\leq n+1-i$ we have that 
$$s_1\cdots s_{n+1-i}(j)=j+1$$
Hence we need
$$v'(n+2-(j+1))=v'(n+1-j)=v(n+2-j)$$
Replacing $j$ with $n+2-p$, the indices are the set of all $p$ such that $i<p\leq n+1$ and
$$v'(p-1)=v(p)$$
Thus $A(v',v)$ is the set of all $v(p)$ such that $p>i$ and $v'(p-1)=v(p)$, which is exactly $Q_i(v',v)$, and we are done.
\end{proof}

\section*{Acknowledgements}

The author is deeply grateful to Zachary Hamaker for many illuminating conversations during the development of this work, and in particular for introducing the author to forest polynomials in the first place, in addition to pointing out the connections of RC graph crystal operators to  Little's bumping algorithm. Thanks are also due to Frank Sottile, whose work, much of it now classical, has been a sustained guiding influence on the author's thinking about Schubert calculus, and whose generosity with his time over the years has shaped the perspective taken here.

\bibliographystyle{acm}
\bibliography{schubert_bialgebra,schubert_bialgebra_bpd}

\end{document}

%% file: forest_lr_R1.tikz
\begin{tikzpicture}[scale=0.4]
  \draw[lightgray, very thin, opacity=0.3] (13,0) -- (0,0);
  \draw[lightgray, very thin, opacity=0.3] (13,0) -- (13,13);
  \draw[lightgray, very thin, opacity=0.3] (13,1) -- (0,1);
  \draw[lightgray, very thin, opacity=0.3] (12,0) -- (12,13);
  \draw[lightgray, very thin, opacity=0.3] (13,2) -- (0,2);
  \draw[lightgray, very thin, opacity=0.3] (11,0) -- (11,13);
  \draw[lightgray, very thin, opacity=0.3] (13,3) -- (0,3);
  \draw[lightgray, very thin, opacity=0.3] (10,0) -- (10,13);
  \draw[lightgray, very thin, opacity=0.3] (13,4) -- (0,4);
  \draw[lightgray, very thin, opacity=0.3] (9,0) -- (9,13);
  \draw[lightgray, very thin, opacity=0.3] (13,5) -- (0,5);
  \draw[lightgray, very thin, opacity=0.3] (8,0) -- (8,13);
  \draw[lightgray, very thin, opacity=0.3] (13,6) -- (0,6);
  \draw[lightgray, very thin, opacity=0.3] (7,0) -- (7,13);
  \draw[lightgray, very thin, opacity=0.3] (13,7) -- (0,7);
  \draw[lightgray, very thin, opacity=0.3] (6,0) -- (6,13);
  \draw[lightgray, very thin, opacity=0.3] (13,8) -- (0,8);
  \draw[lightgray, very thin, opacity=0.3] (5,0) -- (5,13);
  \draw[lightgray, very thin, opacity=0.3] (13,9) -- (0,9);
  \draw[lightgray, very thin, opacity=0.3] (4,0) -- (4,13);
  \draw[lightgray, very thin, opacity=0.3] (13,10) -- (0,10);
  \draw[lightgray, very thin, opacity=0.3] (3,0) -- (3,13);
  \draw[lightgray, very thin, opacity=0.3] (13,11) -- (0,11);
  \draw[lightgray, very thin, opacity=0.3] (2,0) -- (2,13);
  \draw[lightgray, very thin, opacity=0.3] (13,12) -- (0,12);
  \draw[lightgray, very thin, opacity=0.3] (1,0) -- (1,13);
  \draw[lightgray, very thin, opacity=0.3] (13,13) -- (0,13);
  \draw[lightgray, very thin, opacity=0.3] (0,0) -- (0,13);
  \draw[blue, line width=1.0pt] (0,12.5) .. controls (0.3,12.5) and (0.5,12.7) .. (0.5,13);
  \draw[blue, line width=1.0pt] (0.5,12) .. controls (0.5,12.3) and (0.7,12.5) .. (1,12.5);
  \draw[blue, line width=1.0pt] (1,12.5) .. controls (1.3,12.5) and (1.5,12.7) .. (1.5,13);
  \draw[blue, line width=1.0pt] (1.5,12) .. controls (1.5,12.3) and (1.7,12.5) .. (2,12.5);
  \draw[blue, line width=1.0pt] (2,12.5) .. controls (2.3,12.5) and (2.5,12.7) .. (2.5,13);
  \draw[blue, line width=1.0pt] (2.5,12) .. controls (2.5,12.3) and (2.7,12.5) .. (3,12.5);
  \draw[blue, line width=1.0pt] (3,12.5) .. controls (3.3,12.5) and (3.5,12.7) .. (3.5,13);
  \draw[blue, line width=1.0pt] (3.5,12) .. controls (3.5,12.3) and (3.7,12.5) .. (4,12.5);
  \draw[blue, line width=1.0pt] (4,12.5) .. controls (4.3,12.5) and (4.5,12.7) .. (4.5,13);
  \draw[blue, line width=1.0pt] (4.5,12) .. controls (4.5,12.3) and (4.7,12.5) .. (5,12.5);
  \draw[blue, line width=1.0pt] (5,12.5) .. controls (5.3,12.5) and (5.5,12.7) .. (5.5,13);
  \draw[blue, line width=1.0pt] (5.5,12) .. controls (5.5,12.3) and (5.7,12.5) .. (6,12.5);
  \draw[blue, line width=1.0pt] (6,12.5) .. controls (6.3,12.5) and (6.5,12.7) .. (6.5,13);
  \draw[blue, line width=1.0pt] (6.5,12) .. controls (6.5,12.3) and (6.7,12.5) .. (7,12.5);
  \draw[blue, line width=1.0pt] (7,12.5) .. controls (7.3,12.5) and (7.5,12.7) .. (7.5,13);
  \draw[blue, line width=1.0pt] (7.5,12) .. controls (7.5,12.3) and (7.7,12.5) .. (8,12.5);
  \draw[blue, line width=1.0pt] (8,12.5) .. controls (8.3,12.5) and (8.5,12.7) .. (8.5,13);
  \draw[blue, line width=1.0pt] (8.5,12) .. controls (8.5,12.3) and (8.7,12.5) .. (9,12.5);
  \draw[blue, line width=1.0pt] (9,12.5) .. controls (9.3,12.5) and (9.5,12.7) .. (9.5,13);
  \draw[blue, line width=1.0pt] (9.5,12) .. controls (9.5,12.3) and (9.7,12.5) .. (10,12.5);
  \draw[blue, line width=1.0pt] (10,12.5) .. controls (10.3,12.5) and (10.5,12.7) .. (10.5,13);
  \draw[blue, line width=1.0pt] (10.5,12) .. controls (10.5,12.3) and (10.7,12.5) .. (11,12.5);
  \draw[blue, line width=1.0pt] (11,12.5) .. controls (11.3,12.5) and (11.5,12.7) .. (11.5,13);
  \draw[blue, line width=1.0pt] (11.5,12) .. controls (11.5,12.3) and (11.7,12.5) .. (12,12.5);
  \draw[blue, line width=1.0pt] (12,12.5) .. controls (12.3,12.5) and (12.5,12.7) .. (12.5,13);
  \draw[blue, line width=1.0pt] (0,11.5) .. controls (0.3,11.5) and (0.5,11.7) .. (0.5,12);
  \draw[blue, line width=1.0pt] (0.5,11) .. controls (0.5,11.3) and (0.7,11.5) .. (1,11.5);
  \draw[blue, line width=1.0pt] (1,11.5) .. controls (1.3,11.5) and (1.5,11.7) .. (1.5,12);
  \draw[blue, line width=1.0pt] (1.5,11) .. controls (1.5,11.3) and (1.7,11.5) .. (2,11.5);
  \draw[blue, line width=1.0pt] (2,11.5) .. controls (2.3,11.5) and (2.5,11.7) .. (2.5,12);
  \draw[blue, line width=1.0pt] (2.5,11) .. controls (2.5,11.3) and (2.7,11.5) .. (3,11.5);
  \draw[blue, line width=1.0pt] (3,11.5) .. controls (3.3,11.5) and (3.5,11.7) .. (3.5,12);
  \draw[blue, line width=1.0pt] (3.5,11) .. controls (3.5,11.3) and (3.7,11.5) .. (4,11.5);
  \draw[blue, line width=1.0pt] (4,11.5) .. controls (4.3,11.5) and (4.5,11.7) .. (4.5,12);
  \draw[blue, line width=1.0pt] (4.5,11) .. controls (4.5,11.3) and (4.7,11.5) .. (5,11.5);
  \draw[blue, line width=1.0pt] (5,11.5) .. controls (5.3,11.5) and (5.5,11.7) .. (5.5,12);
  \draw[blue, line width=1.0pt] (5.5,11) .. controls (5.5,11.3) and (5.7,11.5) .. (6,11.5);
  \draw[blue, line width=1.0pt] (6,11.5) .. controls (6.3,11.5) and (6.5,11.7) .. (6.5,12);
  \draw[blue, line width=1.0pt] (6.5,11) .. controls (6.5,11.3) and (6.7,11.5) .. (7,11.5);
  \draw[blue, line width=1.0pt] (7,11.5) .. controls (7.3,11.5) and (7.5,11.7) .. (7.5,12);
  \draw[blue, line width=1.0pt] (7.5,11) .. controls (7.5,11.3) and (7.7,11.5) .. (8,11.5);
  \draw[blue, line width=1.0pt] (8,11.5) .. controls (8.3,11.5) and (8.5,11.7) .. (8.5,12);
  \draw[blue, line width=1.0pt] (8.5,11) .. controls (8.5,11.3) and (8.7,11.5) .. (9,11.5);
  \draw[blue, line width=1.0pt] (9,11.5) .. controls (9.3,11.5) and (9.5,11.7) .. (9.5,12);
  \draw[blue, line width=1.0pt] (9.5,11) .. controls (9.5,11.3) and (9.7,11.5) .. (10,11.5);
  \draw[blue, line width=1.0pt] (10,11.5) .. controls (10.3,11.5) and (10.5,11.7) .. (10.5,12);
  \draw[blue, line width=1.0pt] (10.5,11) .. controls (10.5,11.3) and (10.7,11.5) .. (11,11.5);
  \draw[blue, line width=1.0pt] (11,11.5) .. controls (11.3,11.5) and (11.5,11.7) .. (11.5,12);
  \draw[blue, line width=1.0pt, line cap=round] (0,10.5) -- (1,10.5);
  \draw[blue, line width=1.0pt, line cap=round] (0.5,10) -- (0.5,11);
  \draw[blue, line width=1.0pt] (1,10.5) .. controls (1.3,10.5) and (1.5,10.7) .. (1.5,11);
  \draw[blue, line width=1.0pt] (1.5,10) .. controls (1.5,10.3) and (1.7,10.5) .. (2,10.5);
  \draw[blue, line width=1.0pt, line cap=round] (2,10.5) -- (3,10.5);
  \draw[blue, line width=1.0pt, line cap=round] (2.5,10) -- (2.5,11);
  \draw[blue, line width=1.0pt] (3,10.5) .. controls (3.3,10.5) and (3.5,10.7) .. (3.5,11);
  \draw[blue, line width=1.0pt] (3.5,10) .. controls (3.5,10.3) and (3.7,10.5) .. (4,10.5);
  \draw[blue, line width=1.0pt] (4,10.5) .. controls (4.3,10.5) and (4.5,10.7) .. (4.5,11);
  \draw[blue, line width=1.0pt] (4.5,10) .. controls (4.5,10.3) and (4.7,10.5) .. (5,10.5);
  \draw[blue, line width=1.0pt] (5,10.5) .. controls (5.3,10.5) and (5.5,10.7) .. (5.5,11);
  \draw[blue, line width=1.0pt] (5.5,10) .. controls (5.5,10.3) and (5.7,10.5) .. (6,10.5);
  \draw[blue, line width=1.0pt] (6,10.5) .. controls (6.3,10.5) and (6.5,10.7) .. (6.5,11);
  \draw[blue, line width=1.0pt] (6.5,10) .. controls (6.5,10.3) and (6.7,10.5) .. (7,10.5);
  \draw[blue, line width=1.0pt] (7,10.5) .. controls (7.3,10.5) and (7.5,10.7) .. (7.5,11);
  \draw[blue, line width=1.0pt] (7.5,10) .. controls (7.5,10.3) and (7.7,10.5) .. (8,10.5);
  \draw[blue, line width=1.0pt] (8,10.5) .. controls (8.3,10.5) and (8.5,10.7) .. (8.5,11);
  \draw[blue, line width=1.0pt] (8.5,10) .. controls (8.5,10.3) and (8.7,10.5) .. (9,10.5);
  \draw[blue, line width=1.0pt] (9,10.5) .. controls (9.3,10.5) and (9.5,10.7) .. (9.5,11);
  \draw[blue, line width=1.0pt] (9.5,10) .. controls (9.5,10.3) and (9.7,10.5) .. (10,10.5);
  \draw[blue, line width=1.0pt] (10,10.5) .. controls (10.3,10.5) and (10.5,10.7) .. (10.5,11);
  \draw[blue, line width=1.0pt] (0,9.5) .. controls (0.3,9.5) and (0.5,9.7) .. (0.5,10);
  \draw[blue, line width=1.0pt] (0.5,9) .. controls (0.5,9.3) and (0.7,9.5) .. (1,9.5);
  \draw[blue, line width=1.0pt] (1,9.5) .. controls (1.3,9.5) and (1.5,9.7) .. (1.5,10);
  \draw[blue, line width=1.0pt] (1.5,9) .. controls (1.5,9.3) and (1.7,9.5) .. (2,9.5);
  \draw[blue, line width=1.0pt] (2,9.5) .. controls (2.3,9.5) and (2.5,9.7) .. (2.5,10);
  \draw[blue, line width=1.0pt] (2.5,9) .. controls (2.5,9.3) and (2.7,9.5) .. (3,9.5);
  \draw[blue, line width=1.0pt] (3,9.5) .. controls (3.3,9.5) and (3.5,9.7) .. (3.5,10);
  \draw[blue, line width=1.0pt] (3.5,9) .. controls (3.5,9.3) and (3.7,9.5) .. (4,9.5);
  \draw[blue, line width=1.0pt] (4,9.5) .. controls (4.3,9.5) and (4.5,9.7) .. (4.5,10);
  \draw[blue, line width=1.0pt] (4.5,9) .. controls (4.5,9.3) and (4.7,9.5) .. (5,9.5);
  \draw[blue, line width=1.0pt] (5,9.5) .. controls (5.3,9.5) and (5.5,9.7) .. (5.5,10);
  \draw[blue, line width=1.0pt] (5.5,9) .. controls (5.5,9.3) and (5.7,9.5) .. (6,9.5);
  \draw[blue, line width=1.0pt] (6,9.5) .. controls (6.3,9.5) and (6.5,9.7) .. (6.5,10);
  \draw[blue, line width=1.0pt] (6.5,9) .. controls (6.5,9.3) and (6.7,9.5) .. (7,9.5);
  \draw[blue, line width=1.0pt] (7,9.5) .. controls (7.3,9.5) and (7.5,9.7) .. (7.5,10);
  \draw[blue, line width=1.0pt] (7.5,9) .. controls (7.5,9.3) and (7.7,9.5) .. (8,9.5);
  \draw[blue, line width=1.0pt] (8,9.5) .. controls (8.3,9.5) and (8.5,9.7) .. (8.5,10);
  \draw[blue, line width=1.0pt] (8.5,9) .. controls (8.5,9.3) and (8.7,9.5) .. (9,9.5);
  \draw[blue, line width=1.0pt] (9,9.5) .. controls (9.3,9.5) and (9.5,9.7) .. (9.5,10);
  \draw[blue, line width=1.0pt] (0,8.5) .. controls (0.3,8.5) and (0.5,8.7) .. (0.5,9);
  \draw[blue, line width=1.0pt] (0.5,8) .. controls (0.5,8.3) and (0.7,8.5) .. (1,8.5);
  \draw[blue, line width=1.0pt] (1,8.5) .. controls (1.3,8.5) and (1.5,8.7) .. (1.5,9);
  \draw[blue, line width=1.0pt] (1.5,8) .. controls (1.5,8.3) and (1.7,8.5) .. (2,8.5);
  \draw[blue, line width=1.0pt] (2,8.5) .. controls (2.3,8.5) and (2.5,8.7) .. (2.5,9);
  \draw[blue, line width=1.0pt] (2.5,8) .. controls (2.5,8.3) and (2.7,8.5) .. (3,8.5);
  \draw[blue, line width=1.0pt, line cap=round] (3,8.5) -- (4,8.5);
  \draw[blue, line width=1.0pt, line cap=round] (3.5,8) -- (3.5,9);
  \draw[blue, line width=1.0pt] (4,8.5) .. controls (4.3,8.5) and (4.5,8.7) .. (4.5,9);
  \draw[blue, line width=1.0pt] (4.5,8) .. controls (4.5,8.3) and (4.7,8.5) .. (5,8.5);
  \draw[blue, line width=1.0pt] (5,8.5) .. controls (5.3,8.5) and (5.5,8.7) .. (5.5,9);
  \draw[blue, line width=1.0pt] (5.5,8) .. controls (5.5,8.3) and (5.7,8.5) .. (6,8.5);
  \draw[blue, line width=1.0pt] (6,8.5) .. controls (6.3,8.5) and (6.5,8.7) .. (6.5,9);
  \draw[blue, line width=1.0pt] (6.5,8) .. controls (6.5,8.3) and (6.7,8.5) .. (7,8.5);
  \draw[blue, line width=1.0pt, line cap=round] (7,8.5) -- (8,8.5);
  \draw[blue, line width=1.0pt, line cap=round] (7.5,8) -- (7.5,9);
  \draw[blue, line width=1.0pt] (8,8.5) .. controls (8.3,8.5) and (8.5,8.7) .. (8.5,9);
  \draw[blue, line width=1.0pt] (0,7.5) .. controls (0.3,7.5) and (0.5,7.7) .. (0.5,8);
  \draw[blue, line width=1.0pt] (0.5,7) .. controls (0.5,7.3) and (0.7,7.5) .. (1,7.5);
  \draw[blue, line width=1.0pt] (1,7.5) .. controls (1.3,7.5) and (1.5,7.7) .. (1.5,8);
  \draw[blue, line width=1.0pt] (1.5,7) .. controls (1.5,7.3) and (1.7,7.5) .. (2,7.5);
  \draw[blue, line width=1.0pt] (2,7.5) .. controls (2.3,7.5) and (2.5,7.7) .. (2.5,8);
  \draw[blue, line width=1.0pt] (2.5,7) .. controls (2.5,7.3) and (2.7,7.5) .. (3,7.5);
  \draw[blue, line width=1.0pt] (3,7.5) .. controls (3.3,7.5) and (3.5,7.7) .. (3.5,8);
  \draw[blue, line width=1.0pt] (3.5,7) .. controls (3.5,7.3) and (3.7,7.5) .. (4,7.5);
  \draw[blue, line width=1.0pt] (4,7.5) .. controls (4.3,7.5) and (4.5,7.7) .. (4.5,8);
  \draw[blue, line width=1.0pt] (4.5,7) .. controls (4.5,7.3) and (4.7,7.5) .. (5,7.5);
  \draw[blue, line width=1.0pt] (5,7.5) .. controls (5.3,7.5) and (5.5,7.7) .. (5.5,8);
  \draw[blue, line width=1.0pt] (5.5,7) .. controls (5.5,7.3) and (5.7,7.5) .. (6,7.5);
  \draw[blue, line width=1.0pt] (6,7.5) .. controls (6.3,7.5) and (6.5,7.7) .. (6.5,8);
  \draw[blue, line width=1.0pt] (6.5,7) .. controls (6.5,7.3) and (6.7,7.5) .. (7,7.5);
  \draw[blue, line width=1.0pt] (7,7.5) .. controls (7.3,7.5) and (7.5,7.7) .. (7.5,8);
  \draw[blue, line width=1.0pt, line cap=round] (0,6.5) -- (1,6.5);
  \draw[blue, line width=1.0pt, line cap=round] (0.5,6) -- (0.5,7);
  \draw[blue, line width=1.0pt] (1,6.5) .. controls (1.3,6.5) and (1.5,6.7) .. (1.5,7);
  \draw[blue, line width=1.0pt] (1.5,6) .. controls (1.5,6.3) and (1.7,6.5) .. (2,6.5);
  \draw[blue, line width=1.0pt] (2,6.5) .. controls (2.3,6.5) and (2.5,6.7) .. (2.5,7);
  \draw[blue, line width=1.0pt] (2.5,6) .. controls (2.5,6.3) and (2.7,6.5) .. (3,6.5);
  \draw[blue, line width=1.0pt] (3,6.5) .. controls (3.3,6.5) and (3.5,6.7) .. (3.5,7);
  \draw[blue, line width=1.0pt] (3.5,6) .. controls (3.5,6.3) and (3.7,6.5) .. (4,6.5);
  \draw[blue, line width=1.0pt] (4,6.5) .. controls (4.3,6.5) and (4.5,6.7) .. (4.5,7);
  \draw[blue, line width=1.0pt] (4.5,6) .. controls (4.5,6.3) and (4.7,6.5) .. (5,6.5);
  \draw[blue, line width=1.0pt] (5,6.5) .. controls (5.3,6.5) and (5.5,6.7) .. (5.5,7);
  \draw[blue, line width=1.0pt] (5.5,6) .. controls (5.5,6.3) and (5.7,6.5) .. (6,6.5);
  \draw[blue, line width=1.0pt] (6,6.5) .. controls (6.3,6.5) and (6.5,6.7) .. (6.5,7);
  \draw[blue, line width=1.0pt] (0,5.5) .. controls (0.3,5.5) and (0.5,5.7) .. (0.5,6);
  \draw[blue, line width=1.0pt] (0.5,5) .. controls (0.5,5.3) and (0.7,5.5) .. (1,5.5);
  \draw[blue, line width=1.0pt] (1,5.5) .. controls (1.3,5.5) and (1.5,5.7) .. (1.5,6);
  \draw[blue, line width=1.0pt] (1.5,5) .. controls (1.5,5.3) and (1.7,5.5) .. (2,5.5);
  \draw[blue, line width=1.0pt] (2,5.5) .. controls (2.3,5.5) and (2.5,5.7) .. (2.5,6);
  \draw[blue, line width=1.0pt] (2.5,5) .. controls (2.5,5.3) and (2.7,5.5) .. (3,5.5);
  \draw[blue, line width=1.0pt] (3,5.5) .. controls (3.3,5.5) and (3.5,5.7) .. (3.5,6);
  \draw[blue, line width=1.0pt] (3.5,5) .. controls (3.5,5.3) and (3.7,5.5) .. (4,5.5);
  \draw[blue, line width=1.0pt] (4,5.5) .. controls (4.3,5.5) and (4.5,5.7) .. (4.5,6);
  \draw[blue, line width=1.0pt] (4.5,5) .. controls (4.5,5.3) and (4.7,5.5) .. (5,5.5);
  \draw[blue, line width=1.0pt] (5,5.5) .. controls (5.3,5.5) and (5.5,5.7) .. (5.5,6);
  \draw[blue, line width=1.0pt] (0,4.5) .. controls (0.3,4.5) and (0.5,4.7) .. (0.5,5);
  \draw[blue, line width=1.0pt] (0.5,4) .. controls (0.5,4.3) and (0.7,4.5) .. (1,4.5);
  \draw[blue, line width=1.0pt] (1,4.5) .. controls (1.3,4.5) and (1.5,4.7) .. (1.5,5);
  \draw[blue, line width=1.0pt] (1.5,4) .. controls (1.5,4.3) and (1.7,4.5) .. (2,4.5);
  \draw[blue, line width=1.0pt] (2,4.5) .. controls (2.3,4.5) and (2.5,4.7) .. (2.5,5);
  \draw[blue, line width=1.0pt] (2.5,4) .. controls (2.5,4.3) and (2.7,4.5) .. (3,4.5);
  \draw[blue, line width=1.0pt] (3,4.5) .. controls (3.3,4.5) and (3.5,4.7) .. (3.5,5);
  \draw[blue, line width=1.0pt] (3.5,4) .. controls (3.5,4.3) and (3.7,4.5) .. (4,4.5);
  \draw[blue, line width=1.0pt] (4,4.5) .. controls (4.3,4.5) and (4.5,4.7) .. (4.5,5);
  \draw[blue, line width=1.0pt, line cap=round] (0,3.5) -- (1,3.5);
  \draw[blue, line width=1.0pt, line cap=round] (0.5,3) -- (0.5,4);
  \draw[blue, line width=1.0pt, line cap=round] (1,3.5) -- (2,3.5);
  \draw[blue, line width=1.0pt, line cap=round] (1.5,3) -- (1.5,4);
  \draw[blue, line width=1.0pt] (2,3.5) .. controls (2.3,3.5) and (2.5,3.7) .. (2.5,4);
  \draw[blue, line width=1.0pt] (2.5,3) .. controls (2.5,3.3) and (2.7,3.5) .. (3,3.5);
  \draw[blue, line width=1.0pt] (3,3.5) .. controls (3.3,3.5) and (3.5,3.7) .. (3.5,4);
  \draw[blue, line width=1.0pt] (0,2.5) .. controls (0.3,2.5) and (0.5,2.7) .. (0.5,3);
  \draw[blue, line width=1.0pt] (0.5,2) .. controls (0.5,2.3) and (0.7,2.5) .. (1,2.5);
  \draw[blue, line width=1.0pt] (1,2.5) .. controls (1.3,2.5) and (1.5,2.7) .. (1.5,3);
  \draw[blue, line width=1.0pt] (1.5,2) .. controls (1.5,2.3) and (1.7,2.5) .. (2,2.5);
  \draw[blue, line width=1.0pt] (2,2.5) .. controls (2.3,2.5) and (2.5,2.7) .. (2.5,3);
  \draw[blue, line width=1.0pt] (0,1.5) .. controls (0.3,1.5) and (0.5,1.7) .. (0.5,2);
  \draw[blue, line width=1.0pt] (0.5,1) .. controls (0.5,1.3) and (0.7,1.5) .. (1,1.5);
  \draw[blue, line width=1.0pt] (1,1.5) .. controls (1.3,1.5) and (1.5,1.7) .. (1.5,2);
  \draw[blue, line width=1.0pt] (0,0.5) .. controls (0.3,0.5) and (0.5,0.7) .. (0.5,1);
  \node[font=\tiny\bfseries, text=blue, anchor=south] at (0.5,13.3) {1};
  \node[font=\tiny\bfseries, text=blue, anchor=south] at (1.5,13.3) {2};
  \node[font=\tiny\bfseries, text=blue, anchor=south] at (2.5,13.3) {3};
  \node[font=\tiny\bfseries, text=blue, anchor=south] at (3.5,13.3) {4};
  \node[font=\tiny\bfseries, text=blue, anchor=south] at (4.5,13.3) {5};
  \node[font=\tiny\bfseries, text=blue, anchor=south] at (5.5,13.3) {6};
  \node[font=\tiny\bfseries, text=blue, anchor=south] at (6.5,13.3) {7};
  \node[font=\tiny\bfseries, text=blue, anchor=south] at (7.5,13.3) {8};
  \node[font=\tiny\bfseries, text=blue, anchor=south] at (8.5,13.3) {9};
  \node[font=\tiny\bfseries, text=blue, anchor=south] at (9.5,13.3) {10};
  \node[font=\tiny\bfseries, text=blue, anchor=south] at (10.5,13.3) {11};
  \node[font=\tiny\bfseries, text=blue, anchor=south] at (11.5,13.3) {12};
  \node[font=\tiny\bfseries, text=blue, anchor=south] at (12.5,13.3) {13};
  \node[font=\tiny\bfseries, text=blue, anchor=east] at (-0.3,12.5) {1};
  \node[font=\tiny\bfseries, text=blue, anchor=east] at (-0.3,11.5) {2};
  \node[font=\tiny\bfseries, text=blue, anchor=east] at (-0.3,10.5) {4};
  \node[font=\tiny\bfseries, text=blue, anchor=east] at (-0.3,9.5) {3};
  \node[font=\tiny\bfseries, text=blue, anchor=east] at (-0.3,8.5) {6};
  \node[font=\tiny\bfseries, text=blue, anchor=east] at (-0.3,7.5) {5};
  \node[font=\tiny\bfseries, text=blue, anchor=east] at (-0.3,6.5) {9};
  \node[font=\tiny\bfseries, text=blue, anchor=east] at (-0.3,5.5) {7};
  \node[font=\tiny\bfseries, text=blue, anchor=east] at (-0.3,4.5) {8};
  \node[font=\tiny\bfseries, text=blue, anchor=east] at (-0.3,3.5) {13};
  \node[font=\tiny\bfseries, text=blue, anchor=east] at (-0.3,2.5) {10};
  \node[font=\tiny\bfseries, text=blue, anchor=east] at (-0.3,1.5) {11};
  \node[font=\tiny\bfseries, text=blue, anchor=east] at (-0.3,0.5) {12};
  \draw[black, line width=1.2000000000000002pt] (13,8) rectangle (0,13);
  \node[font=\Large\bfseries, text=green!60!black, fill=white, inner sep=0.5pt, circle, transform shape] at (0.5,10.5) {3};
  \node[font=\Large\bfseries, text=green!60!black, fill=white, inner sep=0.5pt, circle, transform shape] at (2.5,10.5) {5};
  \node[font=\Large\bfseries, text=green!60!black, fill=white, inner sep=0.5pt, circle, transform shape] at (3.5,8.5) {8};
  \node[font=\Large\bfseries, text=green!60!black, fill=white, inner sep=0.5pt, circle, transform shape] at (7.5,8.5) {12};
  \node[font=\Large\bfseries, text=green!60!black, fill=white, inner sep=0.5pt, circle, transform shape] at (0.5,6.5) {7};
  \node[font=\Large\bfseries, text=green!60!black, fill=white, inner sep=0.5pt, circle, transform shape] at (0.5,3.5) {10};
  \node[font=\Large\bfseries, text=green!60!black, fill=white, inner sep=0.5pt, circle, transform shape] at (1.5,3.5) {11};
\end{tikzpicture}

%% file: forest_lr_R2.tikz
\begin{tikzpicture}[scale=0.4]
  \draw[lightgray, very thin, opacity=0.3] (13,0) -- (0,0);
  \draw[lightgray, very thin, opacity=0.3] (13,0) -- (13,13);
  \draw[lightgray, very thin, opacity=0.3] (13,1) -- (0,1);
  \draw[lightgray, very thin, opacity=0.3] (12,0) -- (12,13);
  \draw[lightgray, very thin, opacity=0.3] (13,2) -- (0,2);
  \draw[lightgray, very thin, opacity=0.3] (11,0) -- (11,13);
  \draw[lightgray, very thin, opacity=0.3] (13,3) -- (0,3);
  \draw[lightgray, very thin, opacity=0.3] (10,0) -- (10,13);
  \draw[lightgray, very thin, opacity=0.3] (13,4) -- (0,4);
  \draw[lightgray, very thin, opacity=0.3] (9,0) -- (9,13);
  \draw[lightgray, very thin, opacity=0.3] (13,5) -- (0,5);
  \draw[lightgray, very thin, opacity=0.3] (8,0) -- (8,13);
  \draw[lightgray, very thin, opacity=0.3] (13,6) -- (0,6);
  \draw[lightgray, very thin, opacity=0.3] (7,0) -- (7,13);
  \draw[lightgray, very thin, opacity=0.3] (13,7) -- (0,7);
  \draw[lightgray, very thin, opacity=0.3] (6,0) -- (6,13);
  \draw[lightgray, very thin, opacity=0.3] (13,8) -- (0,8);
  \draw[lightgray, very thin, opacity=0.3] (5,0) -- (5,13);
  \draw[lightgray, very thin, opacity=0.3] (13,9) -- (0,9);
  \draw[lightgray, very thin, opacity=0.3] (4,0) -- (4,13);
  \draw[lightgray, very thin, opacity=0.3] (13,10) -- (0,10);
  \draw[lightgray, very thin, opacity=0.3] (3,0) -- (3,13);
  \draw[lightgray, very thin, opacity=0.3] (13,11) -- (0,11);
  \draw[lightgray, very thin, opacity=0.3] (2,0) -- (2,13);
  \draw[lightgray, very thin, opacity=0.3] (13,12) -- (0,12);
  \draw[lightgray, very thin, opacity=0.3] (1,0) -- (1,13);
  \draw[lightgray, very thin, opacity=0.3] (13,13) -- (0,13);
  \draw[lightgray, very thin, opacity=0.3] (0,0) -- (0,13);
  \draw[blue, line width=1.0pt] (0,12.5) .. controls (0.3,12.5) and (0.5,12.7) .. (0.5,13);
  \draw[blue, line width=1.0pt] (0.5,12) .. controls (0.5,12.3) and (0.7,12.5) .. (1,12.5);
  \draw[blue, line width=1.0pt] (1,12.5) .. controls (1.3,12.5) and (1.5,12.7) .. (1.5,13);
  \draw[blue, line width=1.0pt] (1.5,12) .. controls (1.5,12.3) and (1.7,12.5) .. (2,12.5);
  \draw[blue, line width=1.0pt] (2,12.5) .. controls (2.3,12.5) and (2.5,12.7) .. (2.5,13);
  \draw[blue, line width=1.0pt] (2.5,12) .. controls (2.5,12.3) and (2.7,12.5) .. (3,12.5);
  \draw[blue, line width=1.0pt] (3,12.5) .. controls (3.3,12.5) and (3.5,12.7) .. (3.5,13);
  \draw[blue, line width=1.0pt] (3.5,12) .. controls (3.5,12.3) and (3.7,12.5) .. (4,12.5);
  \draw[blue, line width=1.0pt] (4,12.5) .. controls (4.3,12.5) and (4.5,12.7) .. (4.5,13);
  \draw[blue, line width=1.0pt] (4.5,12) .. controls (4.5,12.3) and (4.7,12.5) .. (5,12.5);
  \draw[blue, line width=1.0pt] (5,12.5) .. controls (5.3,12.5) and (5.5,12.7) .. (5.5,13);
  \draw[blue, line width=1.0pt] (5.5,12) .. controls (5.5,12.3) and (5.7,12.5) .. (6,12.5);
  \draw[blue, line width=1.0pt] (6,12.5) .. controls (6.3,12.5) and (6.5,12.7) .. (6.5,13);
  \draw[blue, line width=1.0pt] (6.5,12) .. controls (6.5,12.3) and (6.7,12.5) .. (7,12.5);
  \draw[blue, line width=1.0pt] (7,12.5) .. controls (7.3,12.5) and (7.5,12.7) .. (7.5,13);
  \draw[blue, line width=1.0pt] (7.5,12) .. controls (7.5,12.3) and (7.7,12.5) .. (8,12.5);
  \draw[blue, line width=1.0pt] (8,12.5) .. controls (8.3,12.5) and (8.5,12.7) .. (8.5,13);
  \draw[blue, line width=1.0pt] (8.5,12) .. controls (8.5,12.3) and (8.7,12.5) .. (9,12.5);
  \draw[blue, line width=1.0pt] (9,12.5) .. controls (9.3,12.5) and (9.5,12.7) .. (9.5,13);
  \draw[blue, line width=1.0pt] (9.5,12) .. controls (9.5,12.3) and (9.7,12.5) .. (10,12.5);
  \draw[blue, line width=1.0pt] (10,12.5) .. controls (10.3,12.5) and (10.5,12.7) .. (10.5,13);
  \draw[blue, line width=1.0pt] (10.5,12) .. controls (10.5,12.3) and (10.7,12.5) .. (11,12.5);
  \draw[blue, line width=1.0pt] (11,12.5) .. controls (11.3,12.5) and (11.5,12.7) .. (11.5,13);
  \draw[blue, line width=1.0pt] (11.5,12) .. controls (11.5,12.3) and (11.7,12.5) .. (12,12.5);
  \draw[blue, line width=1.0pt] (12,12.5) .. controls (12.3,12.5) and (12.5,12.7) .. (12.5,13);
  \draw[blue, line width=1.0pt] (0,11.5) .. controls (0.3,11.5) and (0.5,11.7) .. (0.5,12);
  \draw[blue, line width=1.0pt] (0.5,11) .. controls (0.5,11.3) and (0.7,11.5) .. (1,11.5);
  \draw[blue, line width=1.0pt] (1,11.5) .. controls (1.3,11.5) and (1.5,11.7) .. (1.5,12);
  \draw[blue, line width=1.0pt] (1.5,11) .. controls (1.5,11.3) and (1.7,11.5) .. (2,11.5);
  \draw[blue, line width=1.0pt] (2,11.5) .. controls (2.3,11.5) and (2.5,11.7) .. (2.5,12);
  \draw[blue, line width=1.0pt] (2.5,11) .. controls (2.5,11.3) and (2.7,11.5) .. (3,11.5);
  \draw[blue, line width=1.0pt] (3,11.5) .. controls (3.3,11.5) and (3.5,11.7) .. (3.5,12);
  \draw[blue, line width=1.0pt] (3.5,11) .. controls (3.5,11.3) and (3.7,11.5) .. (4,11.5);
  \draw[blue, line width=1.0pt] (4,11.5) .. controls (4.3,11.5) and (4.5,11.7) .. (4.5,12);
  \draw[blue, line width=1.0pt] (4.5,11) .. controls (4.5,11.3) and (4.7,11.5) .. (5,11.5);
  \draw[blue, line width=1.0pt] (5,11.5) .. controls (5.3,11.5) and (5.5,11.7) .. (5.5,12);
  \draw[blue, line width=1.0pt] (5.5,11) .. controls (5.5,11.3) and (5.7,11.5) .. (6,11.5);
  \draw[blue, line width=1.0pt] (6,11.5) .. controls (6.3,11.5) and (6.5,11.7) .. (6.5,12);
  \draw[blue, line width=1.0pt] (6.5,11) .. controls (6.5,11.3) and (6.7,11.5) .. (7,11.5);
  \draw[blue, line width=1.0pt] (7,11.5) .. controls (7.3,11.5) and (7.5,11.7) .. (7.5,12);
  \draw[blue, line width=1.0pt] (7.5,11) .. controls (7.5,11.3) and (7.7,11.5) .. (8,11.5);
  \draw[blue, line width=1.0pt] (8,11.5) .. controls (8.3,11.5) and (8.5,11.7) .. (8.5,12);
  \draw[blue, line width=1.0pt] (8.5,11) .. controls (8.5,11.3) and (8.7,11.5) .. (9,11.5);
  \draw[blue, line width=1.0pt] (9,11.5) .. controls (9.3,11.5) and (9.5,11.7) .. (9.5,12);
  \draw[blue, line width=1.0pt] (9.5,11) .. controls (9.5,11.3) and (9.7,11.5) .. (10,11.5);
  \draw[blue, line width=1.0pt] (10,11.5) .. controls (10.3,11.5) and (10.5,11.7) .. (10.5,12);
  \draw[blue, line width=1.0pt] (10.5,11) .. controls (10.5,11.3) and (10.7,11.5) .. (11,11.5);
  \draw[blue, line width=1.0pt] (11,11.5) .. controls (11.3,11.5) and (11.5,11.7) .. (11.5,12);
  \draw[blue, line width=1.0pt, line cap=round] (0,10.5) -- (1,10.5);
  \draw[blue, line width=1.0pt, line cap=round] (0.5,10) -- (0.5,11);
  \draw[blue, line width=1.0pt] (1,10.5) .. controls (1.3,10.5) and (1.5,10.7) .. (1.5,11);
  \draw[blue, line width=1.0pt] (1.5,10) .. controls (1.5,10.3) and (1.7,10.5) .. (2,10.5);
  \draw[blue, line width=1.0pt] (2,10.5) .. controls (2.3,10.5) and (2.5,10.7) .. (2.5,11);
  \draw[blue, line width=1.0pt] (2.5,10) .. controls (2.5,10.3) and (2.7,10.5) .. (3,10.5);
  \draw[blue, line width=1.0pt] (3,10.5) .. controls (3.3,10.5) and (3.5,10.7) .. (3.5,11);
  \draw[blue, line width=1.0pt] (3.5,10) .. controls (3.5,10.3) and (3.7,10.5) .. (4,10.5);
  \draw[blue, line width=1.0pt] (4,10.5) .. controls (4.3,10.5) and (4.5,10.7) .. (4.5,11);
  \draw[blue, line width=1.0pt] (4.5,10) .. controls (4.5,10.3) and (4.7,10.5) .. (5,10.5);
  \draw[blue, line width=1.0pt, line cap=round] (5,10.5) -- (6,10.5);
  \draw[blue, line width=1.0pt, line cap=round] (5.5,10) -- (5.5,11);
  \draw[blue, line width=1.0pt] (6,10.5) .. controls (6.3,10.5) and (6.5,10.7) .. (6.5,11);
  \draw[blue, line width=1.0pt] (6.5,10) .. controls (6.5,10.3) and (6.7,10.5) .. (7,10.5);
  \draw[blue, line width=1.0pt] (7,10.5) .. controls (7.3,10.5) and (7.5,10.7) .. (7.5,11);
  \draw[blue, line width=1.0pt] (7.5,10) .. controls (7.5,10.3) and (7.7,10.5) .. (8,10.5);
  \draw[blue, line width=1.0pt] (8,10.5) .. controls (8.3,10.5) and (8.5,10.7) .. (8.5,11);
  \draw[blue, line width=1.0pt] (8.5,10) .. controls (8.5,10.3) and (8.7,10.5) .. (9,10.5);
  \draw[blue, line width=1.0pt] (9,10.5) .. controls (9.3,10.5) and (9.5,10.7) .. (9.5,11);
  \draw[blue, line width=1.0pt] (9.5,10) .. controls (9.5,10.3) and (9.7,10.5) .. (10,10.5);
  \draw[blue, line width=1.0pt] (10,10.5) .. controls (10.3,10.5) and (10.5,10.7) .. (10.5,11);
  \draw[blue, line width=1.0pt] (0,9.5) .. controls (0.3,9.5) and (0.5,9.7) .. (0.5,10);
  \draw[blue, line width=1.0pt] (0.5,9) .. controls (0.5,9.3) and (0.7,9.5) .. (1,9.5);
  \draw[blue, line width=1.0pt] (1,9.5) .. controls (1.3,9.5) and (1.5,9.7) .. (1.5,10);
  \draw[blue, line width=1.0pt] (1.5,9) .. controls (1.5,9.3) and (1.7,9.5) .. (2,9.5);
  \draw[blue, line width=1.0pt] (2,9.5) .. controls (2.3,9.5) and (2.5,9.7) .. (2.5,10);
  \draw[blue, line width=1.0pt] (2.5,9) .. controls (2.5,9.3) and (2.7,9.5) .. (3,9.5);
  \draw[blue, line width=1.0pt] (3,9.5) .. controls (3.3,9.5) and (3.5,9.7) .. (3.5,10);
  \draw[blue, line width=1.0pt] (3.5,9) .. controls (3.5,9.3) and (3.7,9.5) .. (4,9.5);
  \draw[blue, line width=1.0pt] (4,9.5) .. controls (4.3,9.5) and (4.5,9.7) .. (4.5,10);
  \draw[blue, line width=1.0pt] (4.5,9) .. controls (4.5,9.3) and (4.7,9.5) .. (5,9.5);
  \draw[blue, line width=1.0pt] (5,9.5) .. controls (5.3,9.5) and (5.5,9.7) .. (5.5,10);
  \draw[blue, line width=1.0pt] (5.5,9) .. controls (5.5,9.3) and (5.7,9.5) .. (6,9.5);
  \draw[blue, line width=1.0pt] (6,9.5) .. controls (6.3,9.5) and (6.5,9.7) .. (6.5,10);
  \draw[blue, line width=1.0pt] (6.5,9) .. controls (6.5,9.3) and (6.7,9.5) .. (7,9.5);
  \draw[blue, line width=1.0pt] (7,9.5) .. controls (7.3,9.5) and (7.5,9.7) .. (7.5,10);
  \draw[blue, line width=1.0pt] (7.5,9) .. controls (7.5,9.3) and (7.7,9.5) .. (8,9.5);
  \draw[blue, line width=1.0pt] (8,9.5) .. controls (8.3,9.5) and (8.5,9.7) .. (8.5,10);
  \draw[blue, line width=1.0pt] (8.5,9) .. controls (8.5,9.3) and (8.7,9.5) .. (9,9.5);
  \draw[blue, line width=1.0pt] (9,9.5) .. controls (9.3,9.5) and (9.5,9.7) .. (9.5,10);
  \draw[blue, line width=1.0pt, line cap=round] (0,8.5) -- (1,8.5);
  \draw[blue, line width=1.0pt, line cap=round] (0.5,8) -- (0.5,9);
  \draw[blue, line width=1.0pt] (1,8.5) .. controls (1.3,8.5) and (1.5,8.7) .. (1.5,9);
  \draw[blue, line width=1.0pt] (1.5,8) .. controls (1.5,8.3) and (1.7,8.5) .. (2,8.5);
  \draw[blue, line width=1.0pt] (2,8.5) .. controls (2.3,8.5) and (2.5,8.7) .. (2.5,9);
  \draw[blue, line width=1.0pt] (2.5,8) .. controls (2.5,8.3) and (2.7,8.5) .. (3,8.5);
  \draw[blue, line width=1.0pt] (3,8.5) .. controls (3.3,8.5) and (3.5,8.7) .. (3.5,9);
  \draw[blue, line width=1.0pt] (3.5,8) .. controls (3.5,8.3) and (3.7,8.5) .. (4,8.5);
  \draw[blue, line width=1.0pt] (4,8.5) .. controls (4.3,8.5) and (4.5,8.7) .. (4.5,9);
  \draw[blue, line width=1.0pt] (4.5,8) .. controls (4.5,8.3) and (4.7,8.5) .. (5,8.5);
  \draw[blue, line width=1.0pt] (5,8.5) .. controls (5.3,8.5) and (5.5,8.7) .. (5.5,9);
  \draw[blue, line width=1.0pt] (5.5,8) .. controls (5.5,8.3) and (5.7,8.5) .. (6,8.5);
  \draw[blue, line width=1.0pt] (6,8.5) .. controls (6.3,8.5) and (6.5,8.7) .. (6.5,9);
  \draw[blue, line width=1.0pt] (6.5,8) .. controls (6.5,8.3) and (6.7,8.5) .. (7,8.5);
  \draw[blue, line width=1.0pt, line cap=round] (7,8.5) -- (8,8.5);
  \draw[blue, line width=1.0pt, line cap=round] (7.5,8) -- (7.5,9);
  \draw[blue, line width=1.0pt] (8,8.5) .. controls (8.3,8.5) and (8.5,8.7) .. (8.5,9);
  \draw[blue, line width=1.0pt] (0,7.5) .. controls (0.3,7.5) and (0.5,7.7) .. (0.5,8);
  \draw[blue, line width=1.0pt] (0.5,7) .. controls (0.5,7.3) and (0.7,7.5) .. (1,7.5);
  \draw[blue, line width=1.0pt] (1,7.5) .. controls (1.3,7.5) and (1.5,7.7) .. (1.5,8);
  \draw[blue, line width=1.0pt] (1.5,7) .. controls (1.5,7.3) and (1.7,7.5) .. (2,7.5);
  \draw[blue, line width=1.0pt] (2,7.5) .. controls (2.3,7.5) and (2.5,7.7) .. (2.5,8);
  \draw[blue, line width=1.0pt] (2.5,7) .. controls (2.5,7.3) and (2.7,7.5) .. (3,7.5);
  \draw[blue, line width=1.0pt] (3,7.5) .. controls (3.3,7.5) and (3.5,7.7) .. (3.5,8);
  \draw[blue, line width=1.0pt] (3.5,7) .. controls (3.5,7.3) and (3.7,7.5) .. (4,7.5);
  \draw[blue, line width=1.0pt] (4,7.5) .. controls (4.3,7.5) and (4.5,7.7) .. (4.5,8);
  \draw[blue, line width=1.0pt] (4.5,7) .. controls (4.5,7.3) and (4.7,7.5) .. (5,7.5);
  \draw[blue, line width=1.0pt] (5,7.5) .. controls (5.3,7.5) and (5.5,7.7) .. (5.5,8);
  \draw[blue, line width=1.0pt] (5.5,7) .. controls (5.5,7.3) and (5.7,7.5) .. (6,7.5);
  \draw[blue, line width=1.0pt] (6,7.5) .. controls (6.3,7.5) and (6.5,7.7) .. (6.5,8);
  \draw[blue, line width=1.0pt] (6.5,7) .. controls (6.5,7.3) and (6.7,7.5) .. (7,7.5);
  \draw[blue, line width=1.0pt] (7,7.5) .. controls (7.3,7.5) and (7.5,7.7) .. (7.5,8);
  \draw[blue, line width=1.0pt, line cap=round] (0,6.5) -- (1,6.5);
  \draw[blue, line width=1.0pt, line cap=round] (0.5,6) -- (0.5,7);
  \draw[blue, line width=1.0pt] (1,6.5) .. controls (1.3,6.5) and (1.5,6.7) .. (1.5,7);
  \draw[blue, line width=1.0pt] (1.5,6) .. controls (1.5,6.3) and (1.7,6.5) .. (2,6.5);
  \draw[blue, line width=1.0pt] (2,6.5) .. controls (2.3,6.5) and (2.5,6.7) .. (2.5,7);
  \draw[blue, line width=1.0pt] (2.5,6) .. controls (2.5,6.3) and (2.7,6.5) .. (3,6.5);
  \draw[blue, line width=1.0pt] (3,6.5) .. controls (3.3,6.5) and (3.5,6.7) .. (3.5,7);
  \draw[blue, line width=1.0pt] (3.5,6) .. controls (3.5,6.3) and (3.7,6.5) .. (4,6.5);
  \draw[blue, line width=1.0pt] (4,6.5) .. controls (4.3,6.5) and (4.5,6.7) .. (4.5,7);
  \draw[blue, line width=1.0pt] (4.5,6) .. controls (4.5,6.3) and (4.7,6.5) .. (5,6.5);
  \draw[blue, line width=1.0pt] (5,6.5) .. controls (5.3,6.5) and (5.5,6.7) .. (5.5,7);
  \draw[blue, line width=1.0pt] (5.5,6) .. controls (5.5,6.3) and (5.7,6.5) .. (6,6.5);
  \draw[blue, line width=1.0pt] (6,6.5) .. controls (6.3,6.5) and (6.5,6.7) .. (6.5,7);
  \draw[blue, line width=1.0pt] (0,5.5) .. controls (0.3,5.5) and (0.5,5.7) .. (0.5,6);
  \draw[blue, line width=1.0pt] (0.5,5) .. controls (0.5,5.3) and (0.7,5.5) .. (1,5.5);
  \draw[blue, line width=1.0pt] (1,5.5) .. controls (1.3,5.5) and (1.5,5.7) .. (1.5,6);
  \draw[blue, line width=1.0pt] (1.5,5) .. controls (1.5,5.3) and (1.7,5.5) .. (2,5.5);
  \draw[blue, line width=1.0pt] (2,5.5) .. controls (2.3,5.5) and (2.5,5.7) .. (2.5,6);
  \draw[blue, line width=1.0pt] (2.5,5) .. controls (2.5,5.3) and (2.7,5.5) .. (3,5.5);
  \draw[blue, line width=1.0pt] (3,5.5) .. controls (3.3,5.5) and (3.5,5.7) .. (3.5,6);
  \draw[blue, line width=1.0pt] (3.5,5) .. controls (3.5,5.3) and (3.7,5.5) .. (4,5.5);
  \draw[blue, line width=1.0pt] (4,5.5) .. controls (4.3,5.5) and (4.5,5.7) .. (4.5,6);
  \draw[blue, line width=1.0pt] (4.5,5) .. controls (4.5,5.3) and (4.7,5.5) .. (5,5.5);
  \draw[blue, line width=1.0pt] (5,5.5) .. controls (5.3,5.5) and (5.5,5.7) .. (5.5,6);
  \draw[blue, line width=1.0pt] (0,4.5) .. controls (0.3,4.5) and (0.5,4.7) .. (0.5,5);
  \draw[blue, line width=1.0pt] (0.5,4) .. controls (0.5,4.3) and (0.7,4.5) .. (1,4.5);
  \draw[blue, line width=1.0pt] (1,4.5) .. controls (1.3,4.5) and (1.5,4.7) .. (1.5,5);
  \draw[blue, line width=1.0pt] (1.5,4) .. controls (1.5,4.3) and (1.7,4.5) .. (2,4.5);
  \draw[blue, line width=1.0pt] (2,4.5) .. controls (2.3,4.5) and (2.5,4.7) .. (2.5,5);
  \draw[blue, line width=1.0pt] (2.5,4) .. controls (2.5,4.3) and (2.7,4.5) .. (3,4.5);
  \draw[blue, line width=1.0pt] (3,4.5) .. controls (3.3,4.5) and (3.5,4.7) .. (3.5,5);
  \draw[blue, line width=1.0pt] (3.5,4) .. controls (3.5,4.3) and (3.7,4.5) .. (4,4.5);
  \draw[blue, line width=1.0pt] (4,4.5) .. controls (4.3,4.5) and (4.5,4.7) .. (4.5,5);
  \draw[blue, line width=1.0pt, line cap=round] (0,3.5) -- (1,3.5);
  \draw[blue, line width=1.0pt, line cap=round] (0.5,3) -- (0.5,4);
  \draw[blue, line width=1.0pt, line cap=round] (1,3.5) -- (2,3.5);
  \draw[blue, line width=1.0pt, line cap=round] (1.5,3) -- (1.5,4);
  \draw[blue, line width=1.0pt] (2,3.5) .. controls (2.3,3.5) and (2.5,3.7) .. (2.5,4);
  \draw[blue, line width=1.0pt] (2.5,3) .. controls (2.5,3.3) and (2.7,3.5) .. (3,3.5);
  \draw[blue, line width=1.0pt] (3,3.5) .. controls (3.3,3.5) and (3.5,3.7) .. (3.5,4);
  \draw[blue, line width=1.0pt] (0,2.5) .. controls (0.3,2.5) and (0.5,2.7) .. (0.5,3);
  \draw[blue, line width=1.0pt] (0.5,2) .. controls (0.5,2.3) and (0.7,2.5) .. (1,2.5);
  \draw[blue, line width=1.0pt] (1,2.5) .. controls (1.3,2.5) and (1.5,2.7) .. (1.5,3);
  \draw[blue, line width=1.0pt] (1.5,2) .. controls (1.5,2.3) and (1.7,2.5) .. (2,2.5);
  \draw[blue, line width=1.0pt] (2,2.5) .. controls (2.3,2.5) and (2.5,2.7) .. (2.5,3);
  \draw[blue, line width=1.0pt] (0,1.5) .. controls (0.3,1.5) and (0.5,1.7) .. (0.5,2);
  \draw[blue, line width=1.0pt] (0.5,1) .. controls (0.5,1.3) and (0.7,1.5) .. (1,1.5);
  \draw[blue, line width=1.0pt] (1,1.5) .. controls (1.3,1.5) and (1.5,1.7) .. (1.5,2);
  \draw[blue, line width=1.0pt] (0,0.5) .. controls (0.3,0.5) and (0.5,0.7) .. (0.5,1);
  \node[font=\tiny\bfseries, text=blue, anchor=south] at (0.5,13.3) {1};
  \node[font=\tiny\bfseries, text=blue, anchor=south] at (1.5,13.3) {2};
  \node[font=\tiny\bfseries, text=blue, anchor=south] at (2.5,13.3) {3};
  \node[font=\tiny\bfseries, text=blue, anchor=south] at (3.5,13.3) {4};
  \node[font=\tiny\bfseries, text=blue, anchor=south] at (4.5,13.3) {5};
  \node[font=\tiny\bfseries, text=blue, anchor=south] at (5.5,13.3) {6};
  \node[font=\tiny\bfseries, text=blue, anchor=south] at (6.5,13.3) {7};
  \node[font=\tiny\bfseries, text=blue, anchor=south] at (7.5,13.3) {8};
  \node[font=\tiny\bfseries, text=blue, anchor=south] at (8.5,13.3) {9};
  \node[font=\tiny\bfseries, text=blue, anchor=south] at (9.5,13.3) {10};
  \node[font=\tiny\bfseries, text=blue, anchor=south] at (10.5,13.3) {11};
  \node[font=\tiny\bfseries, text=blue, anchor=south] at (11.5,13.3) {12};
  \node[font=\tiny\bfseries, text=blue, anchor=south] at (12.5,13.3) {13};
  \node[font=\tiny\bfseries, text=blue, anchor=east] at (-0.3,12.5) {1};
  \node[font=\tiny\bfseries, text=blue, anchor=east] at (-0.3,11.5) {2};
  \node[font=\tiny\bfseries, text=blue, anchor=east] at (-0.3,10.5) {4};
  \node[font=\tiny\bfseries, text=blue, anchor=east] at (-0.3,9.5) {3};
  \node[font=\tiny\bfseries, text=blue, anchor=east] at (-0.3,8.5) {6};
  \node[font=\tiny\bfseries, text=blue, anchor=east] at (-0.3,7.5) {5};
  \node[font=\tiny\bfseries, text=blue, anchor=east] at (-0.3,6.5) {9};
  \node[font=\tiny\bfseries, text=blue, anchor=east] at (-0.3,5.5) {7};
  \node[font=\tiny\bfseries, text=blue, anchor=east] at (-0.3,4.5) {8};
  \node[font=\tiny\bfseries, text=blue, anchor=east] at (-0.3,3.5) {13};
  \node[font=\tiny\bfseries, text=blue, anchor=east] at (-0.3,2.5) {10};
  \node[font=\tiny\bfseries, text=blue, anchor=east] at (-0.3,1.5) {11};
  \node[font=\tiny\bfseries, text=blue, anchor=east] at (-0.3,0.5) {12};
  \draw[black, line width=1.2000000000000002pt] (13,8) rectangle (0,13);
  \node[font=\Large\bfseries, text=green!60!black, fill=white, inner sep=0.5pt, circle, transform shape] at (0.5,10.5) {3};
  \node[font=\Large\bfseries, text=green!60!black, fill=white, inner sep=0.5pt, circle, transform shape] at (5.5,10.5) {8};
  \node[font=\Large\bfseries, text=green!60!black, fill=white, inner sep=0.5pt, circle, transform shape] at (0.5,8.5) {5};
  \node[font=\Large\bfseries, text=green!60!black, fill=white, inner sep=0.5pt, circle, transform shape] at (7.5,8.5) {12};
  \node[font=\Large\bfseries, text=green!60!black, fill=white, inner sep=0.5pt, circle, transform shape] at (0.5,6.5) {7};
  \node[font=\Large\bfseries, text=green!60!black, fill=white, inner sep=0.5pt, circle, transform shape] at (0.5,3.5) {10};
  \node[font=\Large\bfseries, text=green!60!black, fill=white, inner sep=0.5pt, circle, transform shape] at (1.5,3.5) {11};
\end{tikzpicture}

%% file: forest_lr_clip.tikz
\begin{tikzpicture}[scale=0.45]
  \draw[lightgray, very thin, opacity=0.3] (7,0) -- (0,0);
  \draw[lightgray, very thin, opacity=0.3] (7,0) -- (7,7);
  \draw[lightgray, very thin, opacity=0.3] (7,1) -- (0,1);
  \draw[lightgray, very thin, opacity=0.3] (6,0) -- (6,7);
  \draw[lightgray, very thin, opacity=0.3] (7,2) -- (0,2);
  \draw[lightgray, very thin, opacity=0.3] (5,0) -- (5,7);
  \draw[lightgray, very thin, opacity=0.3] (7,3) -- (0,3);
  \draw[lightgray, very thin, opacity=0.3] (4,0) -- (4,7);
  \draw[lightgray, very thin, opacity=0.3] (7,4) -- (0,4);
  \draw[lightgray, very thin, opacity=0.3] (3,0) -- (3,7);
  \draw[lightgray, very thin, opacity=0.3] (7,5) -- (0,5);
  \draw[lightgray, very thin, opacity=0.3] (2,0) -- (2,7);
  \draw[lightgray, very thin, opacity=0.3] (7,6) -- (0,6);
  \draw[lightgray, very thin, opacity=0.3] (1,0) -- (1,7);
  \draw[lightgray, very thin, opacity=0.3] (7,7) -- (0,7);
  \draw[lightgray, very thin, opacity=0.3] (0,0) -- (0,7);
  \draw[blue, line width=1.125pt] (0,6.5) .. controls (0.3,6.5) and (0.5,6.7) .. (0.5,7);
  \draw[blue, line width=1.125pt] (0.5,6) .. controls (0.5,6.3) and (0.7,6.5) .. (1,6.5);
  \draw[blue, line width=1.125pt] (1,6.5) .. controls (1.3,6.5) and (1.5,6.7) .. (1.5,7);
  \draw[blue, line width=1.125pt] (1.5,6) .. controls (1.5,6.3) and (1.7,6.5) .. (2,6.5);
  \draw[blue, line width=1.125pt] (2,6.5) .. controls (2.3,6.5) and (2.5,6.7) .. (2.5,7);
  \draw[blue, line width=1.125pt] (2.5,6) .. controls (2.5,6.3) and (2.7,6.5) .. (3,6.5);
  \draw[blue, line width=1.125pt] (3,6.5) .. controls (3.3,6.5) and (3.5,6.7) .. (3.5,7);
  \draw[blue, line width=1.125pt] (3.5,6) .. controls (3.5,6.3) and (3.7,6.5) .. (4,6.5);
  \draw[blue, line width=1.125pt] (4,6.5) .. controls (4.3,6.5) and (4.5,6.7) .. (4.5,7);
  \draw[blue, line width=1.125pt] (4.5,6) .. controls (4.5,6.3) and (4.7,6.5) .. (5,6.5);
  \draw[blue, line width=1.125pt] (5,6.5) .. controls (5.3,6.5) and (5.5,6.7) .. (5.5,7);
  \draw[blue, line width=1.125pt] (5.5,6) .. controls (5.5,6.3) and (5.7,6.5) .. (6,6.5);
  \draw[blue, line width=1.125pt] (6,6.5) .. controls (6.3,6.5) and (6.5,6.7) .. (6.5,7);
  \draw[blue, line width=1.125pt] (0,5.5) .. controls (0.3,5.5) and (0.5,5.7) .. (0.5,6);
  \draw[blue, line width=1.125pt] (0.5,5) .. controls (0.5,5.3) and (0.7,5.5) .. (1,5.5);
  \draw[blue, line width=1.125pt] (1,5.5) .. controls (1.3,5.5) and (1.5,5.7) .. (1.5,6);
  \draw[blue, line width=1.125pt] (1.5,5) .. controls (1.5,5.3) and (1.7,5.5) .. (2,5.5);
  \draw[blue, line width=1.125pt] (2,5.5) .. controls (2.3,5.5) and (2.5,5.7) .. (2.5,6);
  \draw[blue, line width=1.125pt] (2.5,5) .. controls (2.5,5.3) and (2.7,5.5) .. (3,5.5);
  \draw[blue, line width=1.125pt] (3,5.5) .. controls (3.3,5.5) and (3.5,5.7) .. (3.5,6);
  \draw[blue, line width=1.125pt] (3.5,5) .. controls (3.5,5.3) and (3.7,5.5) .. (4,5.5);
  \draw[blue, line width=1.125pt] (4,5.5) .. controls (4.3,5.5) and (4.5,5.7) .. (4.5,6);
  \draw[blue, line width=1.125pt] (4.5,5) .. controls (4.5,5.3) and (4.7,5.5) .. (5,5.5);
  \draw[blue, line width=1.125pt] (5,5.5) .. controls (5.3,5.5) and (5.5,5.7) .. (5.5,6);
  \draw[blue, line width=1.125pt, line cap=round] (0,4.5) -- (1,4.5);
  \draw[blue, line width=1.125pt, line cap=round] (0.5,4) -- (0.5,5);
  \draw[blue, line width=1.125pt, line cap=round] (1,4.5) -- (2,4.5);
  \draw[blue, line width=1.125pt, line cap=round] (1.5,4) -- (1.5,5);
  \draw[blue, line width=1.125pt] (2,4.5) .. controls (2.3,4.5) and (2.5,4.7) .. (2.5,5);
  \draw[blue, line width=1.125pt] (2.5,4) .. controls (2.5,4.3) and (2.7,4.5) .. (3,4.5);
  \draw[blue, line width=1.125pt] (3,4.5) .. controls (3.3,4.5) and (3.5,4.7) .. (3.5,5);
  \draw[blue, line width=1.125pt] (3.5,4) .. controls (3.5,4.3) and (3.7,4.5) .. (4,4.5);
  \draw[blue, line width=1.125pt] (4,4.5) .. controls (4.3,4.5) and (4.5,4.7) .. (4.5,5);
  \draw[blue, line width=1.125pt] (0,3.5) .. controls (0.3,3.5) and (0.5,3.7) .. (0.5,4);
  \draw[blue, line width=1.125pt] (0.5,3) .. controls (0.5,3.3) and (0.7,3.5) .. (1,3.5);
  \draw[blue, line width=1.125pt] (1,3.5) .. controls (1.3,3.5) and (1.5,3.7) .. (1.5,4);
  \draw[blue, line width=1.125pt] (1.5,3) .. controls (1.5,3.3) and (1.7,3.5) .. (2,3.5);
  \draw[blue, line width=1.125pt] (2,3.5) .. controls (2.3,3.5) and (2.5,3.7) .. (2.5,4);
  \draw[blue, line width=1.125pt] (2.5,3) .. controls (2.5,3.3) and (2.7,3.5) .. (3,3.5);
  \draw[blue, line width=1.125pt] (3,3.5) .. controls (3.3,3.5) and (3.5,3.7) .. (3.5,4);
  \draw[blue, line width=1.125pt, line cap=round] (0,2.5) -- (1,2.5);
  \draw[blue, line width=1.125pt, line cap=round] (0.5,2) -- (0.5,3);
  \draw[blue, line width=1.125pt, line cap=round] (1,2.5) -- (2,2.5);
  \draw[blue, line width=1.125pt, line cap=round] (1.5,2) -- (1.5,3);
  \draw[blue, line width=1.125pt] (2,2.5) .. controls (2.3,2.5) and (2.5,2.7) .. (2.5,3);
  \draw[blue, line width=1.125pt] (0,1.5) .. controls (0.3,1.5) and (0.5,1.7) .. (0.5,2);
  \draw[blue, line width=1.125pt] (0.5,1) .. controls (0.5,1.3) and (0.7,1.5) .. (1,1.5);
  \draw[blue, line width=1.125pt] (1,1.5) .. controls (1.3,1.5) and (1.5,1.7) .. (1.5,2);
  \draw[blue, line width=1.125pt] (0,0.5) .. controls (0.3,0.5) and (0.5,0.7) .. (0.5,1);
  \node[font=\tiny\bfseries, text=blue, anchor=south] at (0.5,7.3) {1};
  \node[font=\tiny\bfseries, text=blue, anchor=south] at (1.5,7.3) {2};
  \node[font=\tiny\bfseries, text=blue, anchor=south] at (2.5,7.3) {3};
  \node[font=\tiny\bfseries, text=blue, anchor=south] at (3.5,7.3) {4};
  \node[font=\tiny\bfseries, text=blue, anchor=south] at (4.5,7.3) {5};
  \node[font=\tiny\bfseries, text=blue, anchor=south] at (5.5,7.3) {6};
  \node[font=\tiny\bfseries, text=blue, anchor=south] at (6.5,7.3) {7};
  \node[font=\tiny\bfseries, text=blue, anchor=east] at (-0.3,6.5) {1};
  \node[font=\tiny\bfseries, text=blue, anchor=east] at (-0.3,5.5) {2};
  \node[font=\tiny\bfseries, text=blue, anchor=east] at (-0.3,4.5) {5};
  \node[font=\tiny\bfseries, text=blue, anchor=east] at (-0.3,3.5) {3};
  \node[font=\tiny\bfseries, text=blue, anchor=east] at (-0.3,2.5) {7};
  \node[font=\tiny\bfseries, text=blue, anchor=east] at (-0.3,1.5) {4};
  \node[font=\tiny\bfseries, text=blue, anchor=east] at (-0.3,0.5) {6};
  \node[font=\Large\bfseries, text=green!60!black, fill=white, inner sep=0.5pt, circle, transform shape] at (0.5,4.5) {3};
  \node[font=\Large\bfseries, text=green!60!black, fill=white, inner sep=0.5pt, circle, transform shape] at (1.5,4.5) {4};
  \node[font=\Large\bfseries, text=green!60!black, fill=white, inner sep=0.5pt, circle, transform shape] at (0.5,2.5) {5};
  \node[font=\Large\bfseries, text=green!60!black, fill=white, inner sep=0.5pt, circle, transform shape] at (1.5,2.5) {6};
\end{tikzpicture}

%% file: forest_lr_trim.tikz
\begin{tikzpicture}[scale=0.45]
  \draw[lightgray, very thin, opacity=0.3] (7,0) -- (0,0);
  \draw[lightgray, very thin, opacity=0.3] (7,0) -- (7,7);
  \draw[lightgray, very thin, opacity=0.3] (7,1) -- (0,1);
  \draw[lightgray, very thin, opacity=0.3] (6,0) -- (6,7);
  \draw[lightgray, very thin, opacity=0.3] (7,2) -- (0,2);
  \draw[lightgray, very thin, opacity=0.3] (5,0) -- (5,7);
  \draw[lightgray, very thin, opacity=0.3] (7,3) -- (0,3);
  \draw[lightgray, very thin, opacity=0.3] (4,0) -- (4,7);
  \draw[lightgray, very thin, opacity=0.3] (7,4) -- (0,4);
  \draw[lightgray, very thin, opacity=0.3] (3,0) -- (3,7);
  \draw[lightgray, very thin, opacity=0.3] (7,5) -- (0,5);
  \draw[lightgray, very thin, opacity=0.3] (2,0) -- (2,7);
  \draw[lightgray, very thin, opacity=0.3] (7,6) -- (0,6);
  \draw[lightgray, very thin, opacity=0.3] (1,0) -- (1,7);
  \draw[lightgray, very thin, opacity=0.3] (7,7) -- (0,7);
  \draw[lightgray, very thin, opacity=0.3] (0,0) -- (0,7);
  \draw[blue, line width=1.125pt] (0,6.5) .. controls (0.3,6.5) and (0.5,6.7) .. (0.5,7);
  \draw[blue, line width=1.125pt] (0.5,6) .. controls (0.5,6.3) and (0.7,6.5) .. (1,6.5);
  \draw[blue, line width=1.125pt] (1,6.5) .. controls (1.3,6.5) and (1.5,6.7) .. (1.5,7);
  \draw[blue, line width=1.125pt] (1.5,6) .. controls (1.5,6.3) and (1.7,6.5) .. (2,6.5);
  \draw[blue, line width=1.125pt] (2,6.5) .. controls (2.3,6.5) and (2.5,6.7) .. (2.5,7);
  \draw[blue, line width=1.125pt] (2.5,6) .. controls (2.5,6.3) and (2.7,6.5) .. (3,6.5);
  \draw[blue, line width=1.125pt] (3,6.5) .. controls (3.3,6.5) and (3.5,6.7) .. (3.5,7);
  \draw[blue, line width=1.125pt] (3.5,6) .. controls (3.5,6.3) and (3.7,6.5) .. (4,6.5);
  \draw[blue, line width=1.125pt] (4,6.5) .. controls (4.3,6.5) and (4.5,6.7) .. (4.5,7);
  \draw[blue, line width=1.125pt] (4.5,6) .. controls (4.5,6.3) and (4.7,6.5) .. (5,6.5);
  \draw[blue, line width=1.125pt] (5,6.5) .. controls (5.3,6.5) and (5.5,6.7) .. (5.5,7);
  \draw[blue, line width=1.125pt] (5.5,6) .. controls (5.5,6.3) and (5.7,6.5) .. (6,6.5);
  \draw[blue, line width=1.125pt] (6,6.5) .. controls (6.3,6.5) and (6.5,6.7) .. (6.5,7);
  \draw[blue, line width=1.125pt, line cap=round] (0,5.5) -- (1,5.5);
  \draw[blue, line width=1.125pt, line cap=round] (0.5,5) -- (0.5,6);
  \draw[blue, line width=1.125pt] (1,5.5) .. controls (1.3,5.5) and (1.5,5.7) .. (1.5,6);
  \draw[blue, line width=1.125pt] (1.5,5) .. controls (1.5,5.3) and (1.7,5.5) .. (2,5.5);
  \draw[blue, line width=1.125pt] (2,5.5) .. controls (2.3,5.5) and (2.5,5.7) .. (2.5,6);
  \draw[blue, line width=1.125pt] (2.5,5) .. controls (2.5,5.3) and (2.7,5.5) .. (3,5.5);
  \draw[blue, line width=1.125pt] (3,5.5) .. controls (3.3,5.5) and (3.5,5.7) .. (3.5,6);
  \draw[blue, line width=1.125pt] (3.5,5) .. controls (3.5,5.3) and (3.7,5.5) .. (4,5.5);
  \draw[blue, line width=1.125pt] (4,5.5) .. controls (4.3,5.5) and (4.5,5.7) .. (4.5,6);
  \draw[blue, line width=1.125pt] (4.5,5) .. controls (4.5,5.3) and (4.7,5.5) .. (5,5.5);
  \draw[blue, line width=1.125pt] (5,5.5) .. controls (5.3,5.5) and (5.5,5.7) .. (5.5,6);
  \draw[blue, line width=1.125pt] (0,4.5) .. controls (0.3,4.5) and (0.5,4.7) .. (0.5,5);
  \draw[blue, line width=1.125pt] (0.5,4) .. controls (0.5,4.3) and (0.7,4.5) .. (1,4.5);
  \draw[blue, line width=1.125pt] (1,4.5) .. controls (1.3,4.5) and (1.5,4.7) .. (1.5,5);
  \draw[blue, line width=1.125pt] (1.5,4) .. controls (1.5,4.3) and (1.7,4.5) .. (2,4.5);
  \draw[blue, line width=1.125pt] (2,4.5) .. controls (2.3,4.5) and (2.5,4.7) .. (2.5,5);
  \draw[blue, line width=1.125pt] (2.5,4) .. controls (2.5,4.3) and (2.7,4.5) .. (3,4.5);
  \draw[blue, line width=1.125pt] (3,4.5) .. controls (3.3,4.5) and (3.5,4.7) .. (3.5,5);
  \draw[blue, line width=1.125pt] (3.5,4) .. controls (3.5,4.3) and (3.7,4.5) .. (4,4.5);
  \draw[blue, line width=1.125pt] (4,4.5) .. controls (4.3,4.5) and (4.5,4.7) .. (4.5,5);
  \draw[blue, line width=1.125pt] (0,3.5) .. controls (0.3,3.5) and (0.5,3.7) .. (0.5,4);
  \draw[blue, line width=1.125pt] (0.5,3) .. controls (0.5,3.3) and (0.7,3.5) .. (1,3.5);
  \draw[blue, line width=1.125pt] (1,3.5) .. controls (1.3,3.5) and (1.5,3.7) .. (1.5,4);
  \draw[blue, line width=1.125pt] (1.5,3) .. controls (1.5,3.3) and (1.7,3.5) .. (2,3.5);
  \draw[blue, line width=1.125pt] (2,3.5) .. controls (2.3,3.5) and (2.5,3.7) .. (2.5,4);
  \draw[blue, line width=1.125pt] (2.5,3) .. controls (2.5,3.3) and (2.7,3.5) .. (3,3.5);
  \draw[blue, line width=1.125pt] (3,3.5) .. controls (3.3,3.5) and (3.5,3.7) .. (3.5,4);
  \draw[blue, line width=1.125pt, line cap=round] (0,2.5) -- (1,2.5);
  \draw[blue, line width=1.125pt, line cap=round] (0.5,2) -- (0.5,3);
  \draw[blue, line width=1.125pt, line cap=round] (1,2.5) -- (2,2.5);
  \draw[blue, line width=1.125pt, line cap=round] (1.5,2) -- (1.5,3);
  \draw[blue, line width=1.125pt] (2,2.5) .. controls (2.3,2.5) and (2.5,2.7) .. (2.5,3);
  \draw[blue, line width=1.125pt] (0,1.5) .. controls (0.3,1.5) and (0.5,1.7) .. (0.5,2);
  \draw[blue, line width=1.125pt] (0.5,1) .. controls (0.5,1.3) and (0.7,1.5) .. (1,1.5);
  \draw[blue, line width=1.125pt] (1,1.5) .. controls (1.3,1.5) and (1.5,1.7) .. (1.5,2);
  \draw[blue, line width=1.125pt] (0,0.5) .. controls (0.3,0.5) and (0.5,0.7) .. (0.5,1);
  \node[font=\tiny\bfseries, text=blue, anchor=south] at (0.5,7.3) {1};
  \node[font=\tiny\bfseries, text=blue, anchor=south] at (1.5,7.3) {2};
  \node[font=\tiny\bfseries, text=blue, anchor=south] at (2.5,7.3) {3};
  \node[font=\tiny\bfseries, text=blue, anchor=south] at (3.5,7.3) {4};
  \node[font=\tiny\bfseries, text=blue, anchor=south] at (4.5,7.3) {5};
  \node[font=\tiny\bfseries, text=blue, anchor=south] at (5.5,7.3) {6};
  \node[font=\tiny\bfseries, text=blue, anchor=south] at (6.5,7.3) {7};
  \node[font=\tiny\bfseries, text=blue, anchor=east] at (-0.3,6.5) {1};
  \node[font=\tiny\bfseries, text=blue, anchor=east] at (-0.3,5.5) {3};
  \node[font=\tiny\bfseries, text=blue, anchor=east] at (-0.3,4.5) {2};
  \node[font=\tiny\bfseries, text=blue, anchor=east] at (-0.3,3.5) {4};
  \node[font=\tiny\bfseries, text=blue, anchor=east] at (-0.3,2.5) {7};
  \node[font=\tiny\bfseries, text=blue, anchor=east] at (-0.3,1.5) {5};
  \node[font=\tiny\bfseries, text=blue, anchor=east] at (-0.3,0.5) {6};
  \node[font=\Large\bfseries, text=green!60!black, fill=white, inner sep=0.5pt, circle, transform shape] at (0.5,5.5) {2};
  \node[font=\Large\bfseries, text=green!60!black, fill=white, inner sep=0.5pt, circle, transform shape] at (0.5,2.5) {5};
  \node[font=\Large\bfseries, text=green!60!black, fill=white, inner sep=0.5pt, circle, transform shape] at (1.5,2.5) {6};
\end{tikzpicture}

%% file: schubert_bialgebra_separate_paper.bbl
\begin{thebibliography}{10}

\bibitem{assaf2018demazure}
{\sc Assaf, S., and Schilling, A.}
\newblock A {D}emazure crystal construction for {S}chubert polynomials.
\newblock {\em Algebraic Combinatorics 1}, 2 (2018), 225--247.

\bibitem{assaf2017slide}
{\sc Assaf, S., and Searles, D.}
\newblock Schubert polynomials, slide polynomials, {S}tanley symmetric
  functions and quasi-{Y}amanouchi pipe dreams.
\newblock {\em Advances in Mathematics 306\/} (2017), 89--122.

\bibitem{bbrc}
{\sc Bergeron, N., and Billey, S.}
\newblock {R}{C}-graphs and {S}chubert polynomials.
\newblock {\em Experimental Math. 2}, 4 (1993), 257--269.

\bibitem{bergeron2025qsymflag}
{\sc Bergeron, N., Gagnon, L., Nadeau, P., Spink, H., and Tewari, V.}
\newblock The quasisymmetric flag variety: a toric complex on noncrossing
  partitions.
\newblock {\em arXiv preprint arXiv:2508.12171\/} (2025).

\bibitem{coprod}
{\sc Bergeron, N., and Sottile, F.}
\newblock Schubert polynomials, the {B}ruhat order, and the geometry of flag
  manifolds.
\newblock {\em Duke Math. J. 95}, 2 (1998), 373--423.

\bibitem{bsskew}
{\sc Bergeron, N., and Sottile, F.}
\newblock Skew {S}chubert functions and the {P}ieri formula for flag manifolds.
\newblock {\em Trans. Amer. Math. Soc. 354}, 2 (2001), 651--673.

\bibitem{billey2019bijective}
{\sc Billey, S.~C., Holroyd, A.~E., and Young, B.~J.}
\newblock A bijective proof of {M}acdonald's reduced word formula.
\newblock {\em Algebraic Combinatorics 2}, 2 (2019), 217--248.

\bibitem{demazure1974desingularisation}
{\sc Demazure, M.}
\newblock D{\'e}singularisation des vari{\'e}t{\'e}s de {S}chubert
  g{\'e}n{\'e}ralis{\'e}es.
\newblock {\em Annales scientifiques de l'{\'E}cole Normale Sup{\'e}rieure 7},
  1 (1974), 53--88.

\bibitem{demazure1974nouvelle}
{\sc Demazure, M.}
\newblock Une nouvelle formule des caract{\`e}res.
\newblock {\em Bulletin des Sciences Math{\'e}matiques. 2e S{\'e}rie 98}, 3
  (1974), 163--172.

\bibitem{edelman1987balanced}
{\sc Edelman, P., and Greene, C.}
\newblock Balanced tableaux.
\newblock {\em Advances in Mathematics 63}, 1 (1987), 42--99.

\bibitem{GaoHuang2023}
{\sc Gao, Y., and Huang, D.}
\newblock The canonical bijection between pipe dreams and bumpless pipe dreams.
\newblock {\em International Mathematics Research Notices 2023}, 21 (November
  2023), 18629--18663.

\bibitem{guo2026forest}
{\sc Guo, A., and Woodruff, D.}
\newblock Forest polynomials and pattern avoidance, 2026.
\newblock arXiv:2602.04036 [math.CO].

\bibitem{hamaker2014relating}
{\sc Hamaker, Z., and Young, B.}
\newblock Relating {E}delman--{G}reene insertion to the {L}ittle map.
\newblock {\em Journal of Algebraic Combinatorics 40}, 3 (2014), 693--710.

\bibitem{henrymath}
{\sc {Henry V}}.
\newblock Positivity of branching of key polynomials: references and
  significance.
\newblock MathOverflow answer, 2026.
\newblock URL: \url{https://mathoverflow.net/q/511128} (version: 2026-05-09).

\bibitem{huang2020monk}
{\sc Huang, D.}
\newblock Bijective proofs of {M}onk's rule for {S}chubert and double
  {S}chubert polynomials with bumpless pipe dreams.
\newblock {\em arXiv preprint arXiv:2010.15048\/} (2020).

\bibitem{HuangShimozonoYu2024}
{\sc Huang, D., Shimozono, M., and Yu, T.}
\newblock Marked bumpless pipedreams and compatible pairs.
\newblock {\em arXiv preprint arXiv:2407.18160\/} (2024).

\bibitem{joseph1985demazure}
{\sc Joseph, A.}
\newblock On the {D}emazure character formula.
\newblock {\em Annales scientifiques de l'{\'E}cole Normale Sup{\'e}rieure 18},
  3 (1985), 389--419.

\bibitem{kashiwara1993crystal}
{\sc Kashiwara, M.}
\newblock The crystal base and {L}ittelmann's refined {D}emazure character
  formula.
\newblock {\em Duke Mathematical Journal 71}, 3 (1993), 839--858.

\bibitem{lam2021back}
{\sc Lam, T., Lee, S.~J., and Shimozono, M.}
\newblock Back stable {S}chubert calculus.
\newblock {\em Compositio Mathematica 157}, 5 (2021), 883--962.

\bibitem{lascoux1990keys}
{\sc Lascoux, A., and Sch{\"u}tzenberger, M.-P.}
\newblock Keys \& standard bases.
\newblock In {\em Invariant theory and tableaux ({M}inneapolis, {MN}, 1988)},
  vol.~19 of {\em IMA Vol. Math. Appl.} Springer, New York, 1990, pp.~125--144.

\bibitem{littelmann1995crystal}
{\sc Littelmann, P.}
\newblock Crystal graphs and {Y}oung tableaux.
\newblock {\em Journal of Algebra 175}, 1 (1995), 65--87.

\bibitem{little2003combinatorial}
{\sc Little, D.~P.}
\newblock Combinatorial aspects of the {L}ascoux--{S}ch{\"u}tzenberger tree.
\newblock {\em Advances in Mathematics 174}, 2 (2003), 236--253.

\bibitem{notes}
{\sc Macdonald, I.~G.}
\newblock {\em Notes on {S}chubert {P}olynomials}.
\newblock Universit{\'e} Du Qu{\'e}bec {\`a} Montr{\'e}al, D{\'e}p. de
  math{\'e}matiques et d{'}informatique, 1991.

\bibitem{mason2009explicit}
{\sc Mason, S.}
\newblock An explicit construction of type {A} {D}emazure atoms.
\newblock {\em Journal of Algebraic Combinatorics 29}, 3 (2009), 295--313.

\bibitem{mathieu1990filtrations}
{\sc Mathieu, O.}
\newblock Filtrations of {$G$}-modules.
\newblock {\em Annales scientifiques de l'{\'E}cole Normale Sup{\'e}rieure 23},
  4 (1990), 625--644.

\bibitem{morse2016crystal}
{\sc Morse, J., and Schilling, A.}
\newblock Crystal approach to affine {S}chubert calculus.
\newblock {\em International Mathematics Research Notices 2016}, 8 (2016),
  2239--2294.

\bibitem{nadeau2024quasisymmetric}
{\sc Nadeau, P., Spink, H., and Tewari, V.}
\newblock Quasisymmetric divided differences.
\newblock {\em arXiv preprint arXiv:2406.01510\/} (2024).

\bibitem{nadeau2024forest}
{\sc Nadeau, P., and Tewari, V.}
\newblock Forest polynomials and the class of the permutahedral variety.
\newblock {\em Advances in Mathematics 453\/} (2024), 109834.

\bibitem{nadeau2024ppartitions}
{\sc Nadeau, P., and Tewari, V.}
\newblock {$P$}-partitions with flags and back stable quasisymmetric functions.
\newblock {\em Combinatorial Theory 4}, 2 (2024).
\newblock hal-04053201.

\bibitem{polo1989excellent}
{\sc Polo, P.}
\newblock Vari{\'e}t{\'e}s de {S}chubert et excellentes filtrations.
\newblock {\em Ast{\'e}risque}, 173--174 (1989), 281--311.

\bibitem{postnikov2009chains}
{\sc Postnikov, A., and Stanley, R.~P.}
\newblock Chains in the {B}ruhat order.
\newblock {\em Journal of Algebraic Combinatorics 29}, 2 (2009), 133--174.

\bibitem{reiner1995key}
{\sc Reiner, V., and Shimozono, M.}
\newblock Key polynomials and a flagged {L}ittlewood-{R}ichardson rule.
\newblock {\em Journal of Combinatorial Theory, Series A 70}, 1 (1995),
  107--143.

\bibitem{ressayre2009branching}
{\sc Ressayre, N., and Richmond, E.}
\newblock Branching {S}chubert calculus and the {B}elkale--{K}umar product on
  cohomology.
\newblock {\em Proceedings of the American Mathematical Society 139}, 3 (2011),
  835--848.
\newblock arXiv:0909.0865.

\bibitem{samuelleibniz}
{\sc Samuel, M.~J.}
\newblock {\em The {L}eibniz formula for divided difference operators
  associated to {K}ac-{M}oody root systems}.
\newblock PhD thesis, Rutgers, The State University of New Jersey, 2014.

\bibitem{samuelmolev}
{\sc Samuel, M.~J.}
\newblock A {M}olev-{S}agan type formula for double {S}chubert polynomials.
\newblock {\em J. Pure Appl. Algebra 228}, 7 (2024), 107636.

\bibitem{samuelforest}
{\sc Samuel, M.~J.}
\newblock A {L}ittlewood--{R}ichardson rule for forest polynomials.
\newblock arXiv preprint arXiv:2606.23876, 2026.

\bibitem{schubmult}
{\sc Samuel, M.~J.}
\newblock {\verb|schubmult| python package}.
\newblock {\url{git@github.com:matthematics/schubmult.git}. Version 5.2.1dev.
  Stable version available on PyPI.}, 2026-10-15.

\bibitem{sottile}
{\sc Sottile, F.}
\newblock Pieri{'}s rule for flag manifolds and {S}chubert polynomials.
\newblock {\em Ann. Inst. Fourier 46}, 1 (1996), 89--110.

\bibitem{vanderkallen1989longest}
{\sc van~der Kallen, W.}
\newblock Longest weight vectors and excellent filtrations.
\newblock {\em Mathematische Zeitschrift 201}, 1 (1989), 19--31.

\bibitem{Weigandt2021}
{\sc Weigandt, A.}
\newblock Bumpless pipe dreams and alternating sign matrices.
\newblock {\em Journal of Combinatorial Theory, Series A 182\/} (August 2021),
  105470.

\bibitem{Yu2025EmbeddingBP}
{\sc Yu, T.}
\newblock Embedding {B}umpless {P}ipedreams as {B}ruhat {C}hains.
\newblock {\em International Mathematics Research Notices 2025}, 22 (2025),
  rnaf336.

\end{thebibliography}
